\newcommand{\defin}[1]{\textbf{#1}}
\DeclareMathOperator{\Ad}{Ad}
\DeclareMathOperator{\Crit}{Crit}
\DeclareMathOperator{\Fix}{Fix}
\DeclareMathOperator{\id}{id}
\DeclareMathOperator{\length}{length}
\newcommand{\restricted}[2]{{\left.{#1}\right|_{#2}}}
\newcommand{\lcan}{{\lambda_{\mathrm{can}}}}
\newcommand{\p}{\partial}
\newcommand{\lie}[1]{{\mathcal{L}_{#1}}}
\newcommand{\abs}[1]{{\mathopen\lvert #1\mathclose\rvert}}
\newcommand{\norm}[1]{{\mathopen\lVert #1\mathclose\rVert}}
\renewcommand{\epsilon}{\varepsilon}
\newcommand{\0}{{\mathbf 0}}
\newcommand{\CC}{{\mathbb C}}
\newcommand{\DD}{{\mathbb D}}
\newcommand{\gfrak}{{\mathfrak g}}
\newcommand{\NN}{{\mathbb N}}
\newcommand{\QQ}{{\mathbb Q}}
\newcommand{\RR}{{\mathbb R}}
\renewcommand{\SS}{{\mathbb S}}
\newcommand{\TT}{{\mathbb T}}
\newcommand{\tfrak}{{\mathfrak t}}
\newcommand{\x}{{\mathbf x}}
\newcommand{\y}{{\mathbf y}}
\theoremstyle{plain}
\newcounter{maintheorem}
\newtheorem{main_theorem}[maintheorem]{Theorem}
\newtheorem*{theorem_symplectic_action_always_hamiltonian}{\Cref{corollary:
symplectic group action}}
\newtheorem*{theorem_fixed_points_boundary_components}{\Cref{theorem:
fixed points and boundary components of hamiltonian S1-manifolds}}
\newtheorem*{theorem_boundary_connected_dim4}{\Cref{boundary connected in dimension 4}}
\newtheorem*{theorem_rank_two_connected_boundary}{\Cref{rank two connected boundary}}
\newtheorem*{theorem_connected_level_sets}{\Cref{theorem: connected level sets}}
\newtheorem*{convergence_trajectory}{\Cref{convergence gradient trajectory}}
\newtheorem{theorem}{Theorem}[section]
\newtheorem{lemma}[theorem]{Lemma}
\newtheorem{corollary}[theorem]{Corollary}
\newtheorem*{coru}{Corollary}
\newtheorem{question}[theorem]{Question}
\newtheorem{proposition}[theorem]{Proposition}
\theoremstyle{remark}
\newtheorem{remark}[theorem]{Remark}
\newtheorem{example}[theorem]{Example}
\theoremstyle{definition}
\newtheorem{definition}{Definition}
\numberwithin{equation}{section}
\DeclareMathOperator{\Interior}{Int}
\begin{document}
\title[Symplectic actions on manifolds with boundary]{Symplectic
  circle actions on manifolds with contact type boundary}

\author[A.\ Marinković]{Aleksandra Marinković}

\address[A.\ Marinković]{Matematicki fakultet, Studentski trg 16, 11
  000 Belgrade, SERBIA}

\email{aleks@math.bg.ac.rs}

\author[K.\ Niederkrüger-Eid]{Klaus Niederkrüger-Eid}

\address[K.\ Niederkrüger-Eid]{
	Institut Camille Jordan\\
        Université Claude Bernard Lyon~1\\
	43 boulevard du 11 novembre 1918\\
	F-69622 Villeurbanne Cedex \\
        FRANCE}

\email{niederkruger@math.univ-lyon1.fr}

\begin{abstract}
  Many of the existing results for closed Hamiltonian $G$-manifolds
  are based on the analysis of the corresponding Hamiltonian functions
  using Morse-Bott techniques.  In general such methods fail for
  non-compact manifolds or for manifolds with boundary.

  In this article, we consider circle actions only on symplectic
  manifolds that have (convex) contact type boundary.  In this
  situation we show that many of the key ideas of Morse-Bott theory
  still hold, allowing us to generalize several results from the
  closed setting.

  Among these, we show that in our situation any symplectic group
  action is always Hamiltonian, we show several results about the
  topology of the symplectic manifold and in particular about the
  connectedness of its boundary.  We also show that after attaching
  cylindrical ends, a level set of the Hamiltonian of a circle action
  is either empty or connected.

  We concentrate mostly on circle actions, but we believe that with
  our methods many of the classical results can be generalized from
  closed symplectic manifolds to symplectic manifolds with contact
  type boundary.
\end{abstract}

\maketitle

\tableofcontents

Many of the classical results on Hamiltonian group actions are based
on the study of the Hamiltonian functions with Morse-Bott methods: The
fixed points of a Hamiltonian circle action are precisely the critical
points of the corresponding Hamiltonian function, and its gradient
flow partitions the symplectic manifold into even dimensional
submanifolds.  Among these results one finds for example
\cite{AtiyahConvexity, GuilleminSternbergConvexity,
  McDuffFixedPointsHamiltonianGroupsActions,
  AharaHattoriHamiltonianCircleActions,
  KarshonHamiltonianCircleActions} but also many others.

For non-compact manifolds or for manifolds that have non-empty
boundary, Morse theory encounters serious problems.  The reason for
this is that gradient trajectories do not need to converge anymore to
critical points, but may instead enter and leave the manifold at the
boundary so that the topology of the underlying manifold cannot be
controlled anymore by the critical points and their stable and
unstable manifolds alone, and one needs to understand the behavior of
the gradient vector field along the boundary.

Note that in contrast to Smale's cobordism theory where the Morse
function is an auxiliary tool that can be chosen to be constant on
boundary components, the Morse-Bott function in our setup is given by
the Hamiltonian group action, and cannot be modified without changing
the action.  For this reason, most results about Hamiltonian
$\SS^1$-manifolds concerning closed symplectic manifolds, have only
been generalized to Hamiltonian actions with \emph{proper} moment
maps.

\smallskip

In this article, we will deal with the case of compact Hamiltonian
$\SS^1$-manifolds that have (convex) contact type boundary and
Hamiltonian $\SS^1$-manifolds with cylindrical ends.  This includes
some physically relevant situations as for example the configuration
space of many mechanical systems.  As we will see the contact property
implies that the boundary is also convex with respect to the gradient
flow of the Hamiltonian function, that is, gradient trajectory cannot
\emph{touch} the boundary from the inside of the manifold.  This fact
alone gives us sufficient control on the gradient flow to work out
Morse theoretic methods in case that all fixed points are confined to
the interior of the symplectic manifold.

If the circle action does have fixed points on the boundary of the
symplectic manifold, then we show that our Morse-Bott methods continue
to work provided the corresponding gradient field satisfies in the
neighborhood of the fixed points lying on the boundary an additional
property.  Circle actions on contact type boundaries automatically
satisfy this additional assumption.

\medskip

We have organized our article in the following way:

In \Cref{sec: morse-bott with convex boundary}, we explain how
Morse-Bott theory behaves on manifolds that have boundary, assuming
that none of the gradient trajectories \emph{touches} the boundary
from the inside.  Here we do not make any reference to Hamiltonian
group actions in the hope that our arguments might be interesting to
researchers from other fields than symplectic geometry.  Unknown to
us, convexity and concavity with respect to gradient flows of Morse
functions had already been studied much earlier
\cite{MorseSingularPointsBoundary}, see also the beautiful monograph
\cite{KatzMorseWithConvexBoundary}.  The situation we are interested
in imposes on us to consider Morse-Bott functions, leading to several
non-trivial complications that our knowledge had not been studied so
far because there might be critical points on the boundary (see
\Cref{example convex and concave boundary}).

\medskip

In \Cref{sec: hamiltonian with contact type boundary}, we apply then
the results obtained in the previous section to Hamiltonian actions.
For symplectic manifolds with contact type boundary we show that every
symplectic action of a compact Lie group~$G$ is automatically
Hamiltonian, that is, every symplectic $G$-manifold admits a moment
map.

\begin{main_theorem}\label{corollary: symplectic group action}
  Let $G$ be a compact Lie group that acts symplectically on a compact
  symplectic manifold with non-empty contact type boundary.  Then it
  follows that the action is Hamiltonian.
\end{main_theorem}

For a Hamiltonian $\SS^1$-manifold~$W$ with contact type boundary, we
distinguish the subsets $\xi^+$ and $\xi^-$ of the boundary, where the
orbits of the circle action are positively transverse or negatively
transverse to the contact structure.  In the context of
$3$-dimensional contact manifolds, this decomposition goes back to
\cite{Lutz_invariantes}.  We show that the properties of $\xi^+$ and
$\xi^-$ are strongly related via the gradient flow to the circle
action on the interior of the symplectic manifold.

Recall that the Hamiltonian of an $\SS^1$-action on a \emph{closed}
symplectic manifold will always have a unique component of local
maxima and a unique component of local minima.  In the case with
contact type boundary, we show that there is either a unique component
of $\xi^+$ and none of the critical points of the Hamiltonian is a
local maximum, or there is a unique component of critical points that
are local maxima, and $\xi^+$ is empty.  An analogous statement holds
for the connected components of local minima and $\xi^-$.

\begin{main_theorem}\label{theorem: fixed points and boundary components of
    hamiltonian S1-manifolds}
  Let $(W,\omega)$ be a connected compact Hamiltonian $\SS^1$-manifold
  with convex contact type boundary, and let $H\colon W\to \RR$ be an
  associated Hamiltonian function.

  \medskip
  
  \textbf{(a)} Then it follows that the set of critical points
  $\Crit(H)$ is equal to the set of fixed points $\Fix(\SS^1)$ of the
  circle action.  These decompose into finitely many connected
  components
  \begin{equation*}
    \Fix(\SS^1) = \bigsqcup_j C_j \;,
  \end{equation*}
  that intersect $\p W$ transversely.  A component~$C_j$ of
  $\Fix(\SS^1)$ is a closed symplectic submanifold if and only if
  $C_j\cap \p W = \emptyset$, otherwise $C_j$ is a compact symplectic
  manifold with convex contact type boundary~$\p C_j = C_j \cap \p W$.

  \medskip
  
  \textbf{(b)} The choice of an $\SS^1$-invariant Liouville vector
  field along the boundary defines an invariant contact
  structure~$\xi$ on $\p W$.  Let $X$ be the infinitesimal generator
  of the circle action.  Decompose then the boundary of $W$ into the
  three subsets
  \begin{align*}
    \xi^+ &= \bigl\{p\in \p W\bigm|
            \text{ $X(p)$ is positively transverse to $\xi_p$}\bigr\} \;, \\
    \xi^- &= \bigl\{p\in \p W\bigm|
            \text{ $X(p)$ is negatively transverse to $\xi_p$}\bigr\} \text{ and }\\
    \xi^0 &= \bigl\{p\in \p W\bigm| \; X(p) \in \xi_p\bigr\} \;.
  \end{align*}
  If $H' = \lambda(X)$ is the Hamiltonian function of $X$ associated
  to the Liouville form~$\lambda := \iota_Y\omega$, then
  $\xi^+ = \{H' > 0\}$, $\xi^- = \{H' < 0\}$, $\xi^0 = \{H' = 0\}$.

  The closed subset~$\xi^0$ is composed of all fixed points in
  $\Fix(\SS^1) \cap \p W$ and all non-trivial isotropic $\SS^1$-orbits
  in $\p W$.  The fixed points in $\p W$ form a finite collection of
  closed contact submanifolds.  The non-trivial isotropic orbits form
  a finite union of cooriented disjoint hypersurfaces in $\p W$ that
  separate $\xi^-$ on one side from $\xi^+$ on the other side, so that
  $\xi^0$ is nowhere dense in $\p W$.

  The hypersurfaces of non-trivial isotropic orbits do not need to be
  compact as their closure may contain fixed points.  If this is the
  case, we can also not expect that the closure of the hypersurface is
  a smooth submanifold.

  \medskip
  
  \textbf{(c)} The function~$H$ is Morse-Bott, and all the Morse-Bott
  indices of the different components of $\Crit(H)$ are even.
  
  Let $C_j$ be a fixed point component that intersects $\p W$.  It
  follows that $\p C_j = C_j \cap \p W$ necessarily lies in $\xi^0$.
  It is surrounded in $\p W$ by $\xi^+$, if and only if $C_j$ is a
  local minimum of $H$.  Similarly, $\p C_j$ is surrounded by $\xi^-$,
  if and only if $C_j$ is a local maximum of $H$.

  \smallskip
  
  One of the following mutually exclusive statements holds:
  \begin{itemize}
  \item The set of critical points~$\Crit(H)$ has a unique
    component~$C_{\max}$ whose points are local maxima (a unique
    component~$C_{\min}$ whose points are local minima) and $H$ is
    everywhere else on $W\setminus C_{\max}$ strictly smaller than on
    $C_{\max}$ (everywhere else on $W\setminus C_{\min}$ strictly
    larger than on $C_{\min}$), so that these local maxima (local
    minima) are actually the global ones.

    The subset~$\xi^+$ is empty ($\xi^-$ is empty).  The inclusion
    $C_{\max} \hookrightarrow W$ ($C_{\min} \hookrightarrow W$)
    induces a \emph{surjective} homomorphism
    $\pi_1(C_{\max}) \to \pi_1(W)$ ($\pi_1(C_{\min}) \to \pi_1(W)$).
  \item None of the points in $\Crit(H)$ are local maxima (local
    minima), and $H$ takes its global maximum on $\xi^+ \subset \p W$
    (its global minimum on $\xi^-$).

    The subset~$\xi^+$ ($\xi^-$) is open, non-empty and connected.
    The inclusion $\xi^+ \hookrightarrow W$ (or
    $\xi^- \hookrightarrow W$) induces a \emph{surjective}
    homomorphism $\pi_1(\xi^+) \to \pi_1(W)$ (or
    $\pi_1(\xi^-) \to \pi_1(W)$).
  \end{itemize}
\end{main_theorem}

Our initial aim with this project was to find symplectic manifolds
with disconnected contact boundary.  We do not know of any example of
dimension$\ge 6$, but we can show that in dimension~$4$ there aren't
any.

\begin{main_theorem}\label{boundary connected in dimension 4}
  A $4$-dimensional compact Hamiltonian $\SS^1$-manifold cannot have
  disconnected contact type boundary.
\end{main_theorem}

A compact symplectic manifold with a torus action also has always
connected contact type boundary.

\begin{main_theorem}\label{rank two connected boundary}
  Let $G$ be a compact Lie group of rank\footnote{Recall that the rank
    of a compact Lie group is the dimension of its maximal torus.}  at
  least~$2$.  If $(W,\omega)$ is a connected compact Hamiltonian
  $G$-manifold with convex contact type boundary, then it follows that
  $\p W$ cannot be disconnected.
\end{main_theorem}

\medskip

It is well-known that the level sets of a Hamiltonian function that
generates a circle action on a closed manifold is are connected or
empty.  For manifolds with contact type boundary, this claim turns out
to be wrong but in \Cref{sec: hamiltonian circle manifold with
  cylindrical ends has connected level sets} we show that the
statement can be saved by attaching cylindrical ends to the manifold.
This includes for example cotangent bundles.

We prove:

\begin{main_theorem}\label{theorem: connected level sets}
  Let $(W,\omega)$ be a connected compact Hamiltonian $\SS^1$-manifold
  that has convex contact type boundary, and let $H\colon W\to \RR$ be
  the Hamiltonian function of the circle action.  Complete $W$ by
  attaching cylindrical ends with respect to some invariant Liouville
  field and denote the resulting manifold by
  $(\widehat{W}, \widehat \omega)$ and the extended Hamiltonian by
  $\widehat{H}$.

  Then it follows that the level sets of $\widehat{H}$ are either
  connected or empty.
\end{main_theorem}

As an easy corollary, we obtain the convexity for $\TT^2$-actions, see
\Cref{sec: hamiltonian circle manifold with cylindrical ends has
  connected level sets}.

\begin{coru}[Convexity for $\TT^2$-actions]
  Let $(\widehat{W},\widehat{\omega})$ be a symplectic manifold with
  cylindrical ends equipped with a Hamiltonian $\TT^2$-action, as
  explained in \Cref{def: cylindrical end}.  The corresponding moment
  map image $\widehat{\mu}(\widehat W)$ is then a convex set.
\end{coru}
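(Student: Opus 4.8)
The plan is to deduce convexity of the image $K:=\widehat\mu(\widehat W)\subseteq\RR^2$ directly from \Cref{theorem: connected level sets}, via the elementary criterion that a subset $C\subseteq\RR^2$ is convex if and only if its intersection with every affine line is connected or empty. (The nontrivial direction is immediate: given $p,q\in C$, applying the hypothesis to the line through $p$ and $q$ shows that the whole segment $[p,q]$ lies in $C$.) Identifying $\tfrak\cong\RR^2\cong\tfrak^*$, every affine line can be written as $\ell=\{v\in\RR^2:\langle v,\xi\rangle=c\}$ for some $\xi\in\RR^2\setminus\{\0\}$ and $c\in\RR$, and since $\langle\widehat\mu(x),\xi\rangle=\widehat H_\xi(x)$ with $\widehat H_\xi:=\langle\widehat\mu,\xi\rangle$, one has the identity $K\cap\ell=\widehat\mu\bigl(\widehat H_\xi^{-1}(c)\bigr)$. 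Being a continuous image, this intersection is connected (or empty) as soon as the level set $\widehat H_\xi^{-1}(c)$ is. Thus the corollary reduces entirely to showing that every level set of every $\widehat H_\xi$ is connected or empty.

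For a rational direction, say $\xi=(a,b)$ with $a,b$ coprime integers, the element $\xi$ generates a circle subgroup $\SS^1_\xi\subseteq\TT^2$ whose Hamiltonian is precisely $\widehat H_\xi=a\widehat H_1+b\widehat H_2$. Restricting the $\TT^2$-action to $\SS^1_\xi$ presents $\widehat W$ as a Hamiltonian $\SS^1$-manifold with cylindrical ends, so \Cref{theorem: connected level sets} applies verbatim and the level sets of $\widehat H_\xi$ are connected or empty; the same holds for any rational multiple $\lambda\xi$, since its level sets agree with those of $\widehat H_\xi$ up to rescaling. By the reduction above, $K\cap\ell$ is then connected or empty for every line $\ell$ of rational slope, i.e.\ $K$ is already convex ``in all rational directions''. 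This is the step that makes the statement an easy consequence of \Cref{theorem: connected level sets}.

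The main obstacle is to handle the remaining, irrational directions, where $\xi$ no longer generates a circle action and \Cref{theorem: connected level sets} does not literally apply. I expect the cleanest resolution is to observe that the connectedness of level sets is not really a feature of circle actions but of the Morse--Bott geometry of $\widehat H_\xi$: writing $X_1,X_2$ for the generators of the two circle factors, for an arbitrary $\xi=(\xi_1,\xi_2)\in\RR^2$ the function $\widehat H_\xi$ is still the Hamiltonian of the (now irrational) flow generated by $X_\xi=\xi_1 X_1+\xi_2 X_2$, its critical set $\{X_\xi=0\}$ equals $\Fix(\TT^2)$, and because the fixed-point components are symplectic it is Morse--Bott with even indices, exactly as in part (c) of \Cref{theorem: fixed points and boundary components of hamiltonian S1-manifolds}. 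Since the contact-type convexity along the boundary and along the cylindrical ends depends on $\xi$ only through the behaviour of $\widehat H_\xi$ near $\p\widehat W$, the Morse--Bott connectedness mechanism developed in \Cref{sec: morse-bott with convex boundary} that underlies \Cref{theorem: connected level sets} should apply to $\widehat H_\xi$ for every $\xi$, yielding connected level sets in all directions and hence convexity. The point demanding real care is precisely this extension: one must verify that the even-index Morse--Bott structure and the ``gradient trajectories never touch the boundary from inside'' condition genuinely persist for the irrational generators. By contrast, trying to reach the irrational directions by a topological density argument from the rational ones is treacherous, because $\widehat\mu$ need not be proper along the ends, so $K$ need not be closed, and connectedness of all rational-slope slices alone does not force convexity of a non-closed planar set.
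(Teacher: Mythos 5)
Your proposal is correct, and for the delicate (irrational) directions it takes a genuinely different route from the paper. The reduction to connectedness of affine slices and the treatment of rational directions via \Cref{theorem: connected level sets} coincide with the paper's argument, including the identity $\widehat\mu(\widehat W)\cap\ell = \widehat\mu\bigl(\widehat H_\xi^{-1}(c)\bigr)$. For irrational directions, however, the paper does exactly what you call ``treacherous'': it approximates $p_0,p_1$ by rational moment values (using irreducibility and a small perturbation at points with trivial stabilizer) and passes to the limit along the approximating segments --- and the reason this is legitimate is \Cref{proposition: image is closed}, a separate proposition proved with the Guillemin--Sternberg local normal form, showing that $\widehat\mu(\widehat W)$ is closed even though $\widehat\mu$ is not proper. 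So your instinct about the pitfall of the density argument is exactly right, and the paper's fix is precisely the closedness statement you did not anticipate. Conversely, your alternative for irrational $\xi$ requires less care than you fear: \Cref{connected level set for cylindrical end} is stated for an \emph{arbitrary} adapted Morse--Bott function without index-$1$ components and makes no reference to a circle action, so one only checks (i) adaptedness, immediate since $\widehat H_\xi(s,p) = e^s\,\alpha_p\bigl((X_\xi)_V\bigr) + \text{const}$ on the ends, and (ii) the Morse--Bott property with even indices: for irrational $\xi$ the closure of $\exp(\RR\xi)$ is all of $\TT^2$, so $\Crit(\widehat H_\xi)=\Fix(\TT^2)$ as you claim, the invariant Liouville field is tangent to $\Fix(\TT^2)$ (giving transversality to $\p W$), and on the complex nonzero weight spaces of an invariant $J$ the Hessian eigenvalues $\xi_1 m+\xi_2 n$ with $(m,n)\in\ZZ^2\setminus\{0\}$ never vanish; the $\nabla$-convexity of the truncations is automatic from \Cref{cylindrical end convex with respect to gradient}, which holds for any adapted function. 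What each approach buys: yours is more uniform, proves the stronger statement that every affine slice of the image is connected, and bypasses \Cref{proposition: image is closed} entirely --- indeed it is the natural route toward the higher-rank convexity the paper leaves as an open question --- whereas the paper's route uses \Cref{theorem: connected level sets} strictly as stated (only for genuine circle actions) and produces the closedness of the moment image as a by-product of independent interest.
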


\subsection*{Acknowledgments}

We thank Krzysztof Kurdyka for explaining to us why a trajectory of a
Morse-Bott function always converges to a critical point, and Emmanuel
Giroux for suggesting to us to talk to Krzysztof Kurdyka.  We thank
Marco Mazzucchelli for helping us out with several questions regarding
the dynamics of vector fields.  We thank Dusa McDuff for discussing
with us the details of why a symplectic circle action on a closed
manifold is Hamiltonian if and only if the action has fixed points.
We thank Jean-Yves Welschinger for pointing out to us that our dynamic
viewpoint of convexity can also be interpreted as $J$-convexity for
holomorphic annuli.

\section{Definitions and preliminary remarks}\label{sec: definitions}

Let $G$ be a compact Lie group with Lie algebra~$\gfrak$ that acts
smoothly and effectively on a manifold~$W$.  To every $X\in \gfrak$,
we associate the vector field
\begin{equation*}
  X_W(p) := \left.\tfrac{d}{dt}\right|_{t=0} \exp(tX)\cdot p
\end{equation*}
called the \defin{infinitesimal generator of $X$}.

\medskip

A \defin{(weakly) Hamiltonian action} of $G$ on a symplectic
manifold~$(W,\omega)$ is a smooth action preserving the symplectic
structure, and for which we can additionally associate to every
$X\in \gfrak$ a function $H_X\colon W \to \RR$ satisfying
\begin{equation*}
  \iota_{X_{W}}\omega = -dH_X \;.  
\end{equation*}
The collection of Hamiltonian functions can be represented in a
unified way by defining a \defin{moment map}
\begin{equation*}
  \mu\colon W \to \gfrak^*
\end{equation*}
that satisfies $\langle \mu(p), X\rangle = H_X(p)$ for any $p\in W$,
and any $X\in \gfrak$.  Here $\langle \cdot, \cdot\rangle$ denotes the
natural pairing between $\gfrak^*$ and $\gfrak$.

Every weakly Hamiltonian action admits a moment map: Simply choose a
basis of $\gfrak$ given by elements $X_1,\dotsc,X_k$, each with a
Hamiltonian function $H_1,\dotsc, H_k$, and define
$\langle \mu(p), a_1 X_1 + \dotsm + a_k X_k\rangle = a_1\,H_1(p) +
\dotsm + a_k\,H_k(p)$.

We say that a weakly Hamiltonian $G$-action is \defin{Hamiltonian}, if
it can be equipped with a moment map $\mu\colon W \to \gfrak^*$ that
is $G$-equivariant.  Here $G$ acts via the coadjoint representation on
$\gfrak^*$, given for every $g\in G$, $X\in \gfrak$, and
$\nu \in \gfrak^*$ by
\begin{equation}
  \langle\Ad_g^* \nu, X\rangle := \langle \nu, \Ad_{g^{-1}}X\rangle \;.
  \label{eq:coadjoint equivariance}
\end{equation}
For a circle action, a weakly Hamiltonian action is of course
trivially Hamiltonian.

\bigskip

A symplectic manifold~$(W,\omega)$ has (convex) \defin{contact type
  boundary~$V = \p W$} if there exists a Liouville vector field~$Y$ in
a neighborhood of $V$ that points transversely \emph{out} of $W$.  The
vector field~$Y$ induces a Liouville
form~$\lambda_Y := \iota_Y \omega$ on the boundary collar which in
turn determines a (cooriented) contact
structure~$\xi = \ker \bigl(\restricted{\lambda_Y}{TV}\bigr)$ on $V$.

Note that $\alpha = \restricted{\lambda_Y}{T\p W}$ is a contact form
for $\xi$ that defines a volume form $\alpha\wedge (d\alpha)^{n-1}$
that is positive with respect to the boundary orientation of $\p W$.

\begin{example}\label{example: cotangent bundle}
  The most natural examples from classical mechanics are cotangent
  bundles with their natural actions induced by an action on the base
  manifold.
  
  Let $L$ be any closed manifold carrying a Riemannian metric.  The
  cotangent bundle~$T^*L$ is a symplectic manifold with symplectic
  structure $d\lcan$.  This is the standard phase space of classical
  mechanics.  A classical Hamiltonian function is of the form
  \begin{equation*}
    H\colon T^*L \to \RR,\quad (x,\nu_x) \mapsto
    \frac{\norm{\nu_x}^2}{2m} + V(x) \;,
  \end{equation*}
  where $x\in L$ and $\nu_x \in T^*_xL$.  The first term
  $\frac{1}{2m}\,\norm{\nu_x}^2$ represents the kinetic energy of the
  system, and $V\colon L \to \RR$ the potential energy.

  We can define a Liouville vector field~$Y$ by the equation
  $\iota_Y d\lcan = \lcan$.  Indeed, if $\pi\colon T^*L \to L$ is the
  natural bundle projection, the canonical Liouville form~$\lcan$
  satisfies for every $(x,\nu_x)\in T^*L$ and every
  $v\in T_{(x,\nu_x)}T^*L$ the equation
  \begin{equation*}
    \lcan(v) = \nu_x\bigl(d\pi_{(x,\nu_x)}\,v\bigr) \;.
  \end{equation*}
  It is not hard to show that $Y$ is transverse to any level set of
  $H$ that does not intersect the $0$-section so that if we choose
  $C \gg 0$ large enough, the subdomain
  $H^{-1}\bigl((-\infty,C]\bigr)$ yields a symplectic manifold with
  contact type boundary (the boundary being simply a constant energy
  level hypersurface).

  \medskip

  Note that every diffeomorphism $\phi\colon L\to L$ lifts to a
  symplectomorphism $\Phi\colon T^*L\to T^*L$ given for every $x\in L$
  and $\nu_x \in T_x^*L$ by
  \begin{equation*}
    \Phi(x,\nu_x) := \bigl(\phi(x),\;\nu_x \circ (d\phi_x)^{-1}\bigr)
    \in T_{\phi(x)}^*L \;,
  \end{equation*}
  where $d\phi_x\colon T_xL \to T_{\phi(x)}L$ is the differential of
  $\phi$ at the point~$x$.

  It follows that any smooth action of a Lie-group~$G$ on $L$ induces a
  natural $G$-action on $T^*L$ by symplectomorphisms, and in fact,
  combining \Cref{lemma: hamiltonian function through liouville form}
  with $\bigl(d\pi \, X_{T^*L}\bigr)(x,\nu_x) = X_L(x)$, we obtain a
  $G$-equivariant moment map~$\mu$ defined by
  \begin{equation*}
    \langle \mu(x,\nu_x), X \rangle =
    \lcan\bigl(X_{T^*L}(x,\nu_x)\bigr) = \nu_x \bigl(d\pi_{(x,\nu_x)}
    X_{T^*L}(x,\nu_x)\bigr) = \nu_x\bigl(X_L(x)\bigr)
  \end{equation*}
  for every $X$ in the Lie algebra of $G$, and every
  $(x,\nu_x)\in T^*L$.

  This shows that the $G$-action on $T^*L$ is always Hamiltonian.
  Additionally assuming that $G$ also preserves the energy
  function~$H$, we obtain a Hamiltonian $G$-action on the symplectic
  domain $H^{-1}\bigl((-\infty,C]\bigr)$ with contact type boundary.
\end{example}

\section{Morse-Bott functions on manifolds with convex
  boundary}\label{sec: morse-bott with convex boundary}

A classical result states that the Hamiltonian function of a circle
action on a closed symplectic manifold is of Morse-Bott type, and the
study of Hamiltonian actions on closed manifolds relies fundamentally
on this fact.  Unfortunately, Morse theory breaks down when
considering non-compact manifolds or manifolds with boundary because
gradient trajectories can escape, and cannot be controlled anymore.

In this article we are interested in symplectic manifolds that have
contact type boundary.  In this case we will show that the boundary is
\emph{convex with respect to the gradient flow} of the Hamiltonian
function.  Additionally we need a second fact that in our case will
also always be satisfied: The stable set of a local maximum and the
unstable set of a local minimum need to be submanifolds.

These two properties allow us to generalize many results from closed
manifolds to our situation.

\medskip

The description in this section is about gradient flows and does not
make any reference to symplectic topology.  We will discuss in how far
the study of Morse-Bott functions on closed manifolds generalizes to
compact manifolds whose boundary is convex with respect to the
gradient field (see \Cref{def: X-convex boundary}).  In the context of
this article, we define a Morse-Bott function for a manifold with
boundary as follows (for other possible definitions see
\cite{LaudenbachMorseBoundary,MorseBottBoundary}).

\begin{definition}\label{def:Morse-Bott}
  Let $W$ be a compact manifold with boundary.  We say that a smooth
  function $f\colon W\to \RR$ is \defin{Morse-Bott} if
  \begin{itemize}
  \item The set of its critical points~$\Crit(f)$ is a disjoint union
    of finitely many smooth compact submanifolds
    \begin{equation*}
      \Crit(f) = \bigsqcup_j C_j
    \end{equation*}
    that might possibly have boundary.
  \item If a component~$C_j$ intersects $\p W$ then it does so
    transversely and the boundary of $C_j$ is
    $\p C_j = C_j \cap \p W$.  In particular, isolated critical points
    never lie in $\p W$, and more generally the \emph{closed}
    components of $\Crit(f)$ are those that do not intersect the
    boundary of $W$.
  \item The Hessian of $f$ is at every critical point $p\in C_j$
    non-degenerate in the normal direction to $C_j$.  At a boundary
    point $p\in \Crit(f) \cap \p W$, we mean by this that the Hessian
    of $\restricted{f}{\p W}$ is non-degenerate when restricted to the
    normal direction of $\p C_j$ in $\p W$.
  \end{itemize}

  The \defin{index of $f$ at a component~$C_j\subset \Crit(f)$} is the
  pair
  \begin{equation*}
    \bigl(i^-(C_j) , i^+(C_j)\bigr) \;,
  \end{equation*}
  where $i^-(C_j)$ is the number of negative eigenvalues of the
  Hessian, and $i^+(C_j)$ is the number of positive eigenvalues of the
  Hessian, so that
  \begin{equation*}
    \dim W =  \dim C_j + i^+(C_j) + i^-(C_j)    \;.
  \end{equation*}
\end{definition}

\begin{remark}\label{indices of boundary crit in relation to normal index}
  Let $f$ be a Morse-Bott function on a compact manifold~$W$ with
  boundary.  Assume that $C_j$ is a component of $\Crit(f)$ that
  intersects $\p W$ so that $\p C_j = C_j \cap \p W$ is a component of
  Morse-Bott type of the critical points
  $\Crit\bigl(\restricted{f}{\p W}\bigr)$ for the restricted function
  $\restricted{f}{\p W}$.  The indices of $C_j$ in $W$ and the ones of
  $\p C_j$ in $\p W$ are related by
  \begin{equation*}
    i^-(C_j) = i^-(\p C_j)\;,\quad
    i^+(C_j) = i^+(\p C_j)\;, \quad \text{ and } \quad
    \dim(C_j) =  \dim(\p C_j) + 1 \;.
  \end{equation*}
\end{remark}

\bigskip

Let $(W, g)$ be a compact Riemannian manifold that may have boundary,
and let $f\colon W\to \RR$ be any smooth function.  Define the
gradient vector field~$\nabla f$ with respect to the metric~$g$, and
denote its flow by $\Phi^{\nabla f}_t$.  Independently of whether $f$
is Morse-Bott, we can define the following stable and unstable subsets
(that may or may not be submanifolds).

Let $A$ be any subset of $W$.  We say that the gradient trajectory
$\gamma(t) = \Phi^{\nabla f}_t(p)$ through $p\in W$ \defin{accumulates
  at $A$}, if $\gamma$ exists for any positive time and if we find for
every neighborhood~$U$ of $A$, a time~$T_U > 0$ such that all
$\gamma(t)$ with $t > T_U$ lie in $U$.

\begin{definition}
  The \defin{stable} and \defin{unstable sets of a component
    $C_j \subset \Crit(f)$} are defined as
  \begin{equation*}
    \begin{split}
      W^s(f;C_j) &:= \bigl\{p\in W\bigm|\;
      \text{$\Phi_{\; t}^{\nabla f}(p)$ accumulates at $C_j$} \bigr\}
      \quad\text{ and }\\
      W^u(f;C_j) &:= \bigl\{p\in W\bigm|\;
      \text{$\Phi_{\; t}^{-\nabla f}(p)$ accumulates at $C_j$} \bigr\}
    \end{split}
  \end{equation*}
  respectively.
\end{definition}

For Morse-Bott functions, a gradient trajectory converges on a closed
manifold always to a critical point.  This result which simplifies
significantly the previous definition was explained to us by Krzysztof
Kurdyka.  The underlying idea going back to Łojasiewicz for analytic
functions, see \cite{Lojasiewicz} where applied to Morse-Bott
functions in \cite{KurdykaGradientConjecture}.

\begin{theorem}[Łojasiewicz]\label{convergence gradient trajectory}
  Let $(W, g)$ be a compact Riemannian manifold that might possibly
  have boundary, and let $f\colon W\to \RR$ be a Morse-Bott function.

  If $\gamma$ is a gradient trajectory such that $\gamma(t)$ is
  defined for all $t\ge 0$, then it follows that $\gamma(t)$ converges
  for $t\to \infty$ to a critical point of $f$.
\end{theorem}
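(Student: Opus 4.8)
The plan is to prove convergence via the \L{}ojasiewicz gradient inequality, which is the standard mechanism for upgrading ``the trajectory accumulates somewhere on the critical set'' to ``the trajectory converges to a single critical point.'' First I would establish the preliminary dynamics: since $W$ is compact and $\gamma(t)$ is defined for all $t\ge 0$, the $\omega$-limit set $\omega(\gamma)$ is non-empty, compact and connected, and the function $t\mapsto f(\gamma(t))$ is non-decreasing and bounded, hence converges to some value $c$. A routine argument then shows $\omega(\gamma)$ consists entirely of critical points at level $c$: any limit point $q$ satisfies $f(q)=c$, and if $\nabla f(q)\neq 0$ one derives a contradiction with $f(\gamma(t))\to c$ by flowing through $q$. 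Thus $\omega(\gamma)$ is a non-empty connected subset of $\Crit(f)\cap\{f=c\}$; since the components $C_j$ are disjoint, $\omega(\gamma)$ lies in a single component $C:=C_{j_0}$. It remains to upgrade ``accumulates at $C$'' to genuine convergence to a \emph{single point} of $C$.

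The key tool is the \L{}ojasiewicz inequality for Morse-Bott functions: near a critical component $C$ there exist constants $\theta\in(0,1)$ and $c_0>0$ and a neighborhood $U$ of $C$ such that
\begin{equation*}
  \abs{f(x) - f(C)}^{\theta} \le c_0 \,\norm{\nabla f(x)}
  \quad\text{ for all } x\in U \;.
\end{equation*}
For a genuinely Morse-Bott function this follows by passing to a normal-form chart along $C$ (by the Morse-Bott lemma the function is, up to the metric, a nondegenerate quadratic form in the normal directions), or more generally by invoking the \L{}ojasiewicz inequality of \cite{Lojasiewicz} together with the analysis in \cite{KurdykaGradientConjecture}. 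The standard consequence is that the arc length of the trajectory is finite: writing $\sigma(t) = f(\gamma(t)) - c \le 0$ (after possibly reversing signs, with $\sigma\nearrow 0$), one computes along the flow $\frac{d}{dt}\abs{\sigma}^{1-\theta} = (1-\theta)\abs{\sigma}^{-\theta}\norm{\nabla f(\gamma(t))}^2 \ge (1-\theta)\,c_0^{-1}\,\norm{\nabla f(\gamma(t))} = (1-\theta)\,c_0^{-1}\,\norm{\dot\gamma(t)}$, valid once $\gamma(t)$ has entered $U$ (which happens for all $t\ge T$ since $\omega(\gamma)\subset C$). Integrating from $T$ to $\infty$ bounds the total length $\int_T^\infty \norm{\dot\gamma(t)}\,dt \le \tfrac{c_0}{1-\theta}\,\abs{\sigma(T)}^{1-\theta} < \infty$. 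Finite length forces $\gamma(t)$ to be Cauchy as $t\to\infty$, so it converges to a single limit point, which lies in $\omega(\gamma)\subset C\subset \Crit(f)$, as desired.

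The boundary is the feature that distinguishes this statement from the classical closed case, so I would address it explicitly rather than quote the closed-manifold result verbatim. Because $C$ is a Morse-Bott component that may intersect $\p W$ transversely (per \Cref{def:Morse-Bott}), the normal-form and the \L{}ojasiewicz inequality need to be set up in a neighborhood of a boundary critical point as well. The clean way is to use a collar $\p W \times [0,\epsilon)$ and the fact that, by transversality of $C$ with $\p W$, one can choose adapted coordinates in which the Morse-Bott normal form extends smoothly up to the boundary; the \L{}ojasiewicz inequality then holds on a half-neighborhood $U\cap W$, and the length estimate above goes through unchanged since the trajectory remains inside $W$. I expect this boundary bookkeeping to be the main obstacle: one must be careful that the gradient $\nabla f$ (with respect to $g$, which need not make $\p W$ totally geodesic) behaves well near $\p W$, and that the trajectory's accumulation is genuinely at $C$ and cannot leak along the boundary. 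In the regime of this theorem, however, convergence is purely a consequence of $\gamma$ being defined for all positive time together with compactness and the \L{}ojasiewicz inequality; no convexity of the boundary with respect to the flow is required for this particular statement, so the argument is local near $C$ and the only real work is verifying the inequality up to the boundary.
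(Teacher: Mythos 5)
Your proposal is correct and follows essentially the same route as the paper: the paper likewise extracts an accumulation point on $\Crit(f)$ (\Cref{flow can only accumulate at critical points}), proves the Łojasiewicz inequality with exponent $\theta=\tfrac12$, namely $\norm{\nabla f}\ge c\,\sqrt{\abs{f}}$, directly from a Morse-Bott normal-form chart (\Cref{gradient estimate close to critical point}), and integrates $\tfrac{d}{dt}\sqrt{\abs{f\circ\gamma}}$ to obtain finite length and hence Cauchy convergence (\Cref{length bound for certain gradient curves}). The only real divergence is the boundary bookkeeping you flag as the main obstacle: rather than establishing the inequality up to $\p W$ in a collar, the paper simply doubles $(W,f)$ to a closed manifold via \Cref{doubling Morse-Bott} at the outset, so the entire argument runs in the boundaryless setting.
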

\begin{proof}
  The proof of this statement can be found in \Cref{section:
    Lojasiewicz} of the appendix.  We follow the explanations kindly
  given to us by Krzysztof Kurdyka.
\end{proof}

Thus if $f$ is a Morse-Bott function, we can equivalently characterize
the stable and unstable sets of a component $C_j \subset \Crit(f)$ by
\begin{equation*}
  W^s(f;C_j) := \bigl\{p\in W\bigm|\; \lim_{t\to \infty} \Phi_{\;
    t}^{\nabla f}(p) \in C_j \bigr\}
  \quad\text{ and }\quad
  W^u(f;C_j) := \bigl\{p\in W\bigm|\; \lim_{t\to -\infty}
  \Phi_{\; t}^{\nabla f}(p) \in C_j \bigr\}
\end{equation*}
respectively.

It is a well-known fact that on \emph{closed} manifolds, stable and
unstable subsets of Morse-Bott functions are smooth submanifolds.
This follows from the Hadamard-Perron theorem in combination with the
fact that functions increase along their gradient trajectories.  This
is true even without assuming any particular form of the Riemannian
metric close to the critical points.  We state this result in
\Cref{stable subset smooth on closed manifold} in the appendix as a
reference to deal later on with manifolds with boundary.

\medskip

If $W$ has boundary, the flow of $X$ is usually not defined for all
times $t\in \RR$, and we denote for every point $p\in W$ the maximal
time up to which $\Phi_t^{\nabla f}(p)$ exists in forward direction by
$T_{\max}(p)$ and the maximal time in backward direction by
$T_{\min}(p)$, so that $T_{\max}(p) \in [0,\infty) \cup \{+\infty\}$
and $T_{\min}(p) \in (-\infty,0] \cup \{-\infty\}$.

\begin{definition}
  The \defin{stable and unstable sets of a boundary
    component~$\p_k W$} are the subsets
  \begin{align*}
    W^s(f;\p_k W) &:= \bigl\{p\in W\bigm|\; \text{$T_{\max}(p) < \infty$ and
                    $\Phi_{T_{\max}(p)}^{\nabla f}(p)  \in \p_k W$} \bigr\}
    \intertext{ and }
    W^u(f;\p_k W) &:= \bigl\{p\in W\bigm|\; \text{$T_{\min}(p) > -\infty$
                    and $\Phi_{T_{\min}(p)}^{\nabla f}(p) \in \p_k W$} \bigr\}
  \end{align*}
  respectively.
\end{definition}

Note that if $\Phi^{\nabla f}_t(p)$ tends for $t\to \infty$ to a
critical point~$p_\infty$ that lies in the intersection of a
component~$C_j \subset \Crit(f)$ with the boundary~$\p W$, then $p$
will clearly lie in the stable set of $C_j$, but by our definition it
does \emph{not} lie in the stable set of the boundary component: for
this the boundary component has to be reached by the gradient
trajectory \emph{in finite time} (and stop there); converging to the
boundary for $t\to \infty$ is not sufficient.  In particular, all
points of $\p C_j$ lie in $ W^s(f;C_j)$, but not in $W^s(f;\p_k W)$.

\begin{lemma}\label{stable sets partition}
  Let $(W, g)$ be a compact Riemannian manifold with boundary, and let
  $f\colon W\to \RR$ be a Morse-Bott function.  Then it follows that
  $W$ is partitioned by the stable sets
  \begin{equation}
    W = \qquad \bigsqcup_{\mathclap{C_j\subset \Crit(f)}} W^s(f;C_j)
    \qquad\bigsqcup\qquad \bigsqcup_{\mathclap{\p_k W\subset \p W}} W^s(f;\p_k W)
    \label{eq: partition into stable manifolds}
  \end{equation}
  where the $C_j$ and $\p_kW$ are the different components of
  $\Crit(f)$ and $\p W$ respectively.  Analogously, $W$ is also
  partitioned by the corresponding unstable sets.
\end{lemma}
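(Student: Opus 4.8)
The plan is to classify every point $p\in W$ according to the behavior of its forward gradient trajectory $\gamma(t)=\Phi_t^{\nabla f}(p)$, using the dichotomy $T_{\max}(p)=\infty$ versus $T_{\max}(p)<\infty$. These two cases are mutually exclusive and exhaust $W$, and I will show that the first places $p$ in exactly one stable set $W^s(f;C_j)$ while the second places it in exactly one boundary stable set $W^s(f;\p_k W)$. Disjointness of the whole decomposition then follows for free: the definition of ``accumulates at $A$'' requires $\gamma$ to exist for all positive time, so no point with $T_{\max}(p)<\infty$ can lie in any $W^s(f;C_j)$; conversely membership in $W^s(f;\p_k W)$ requires $T_{\max}(p)<\infty$; and within each family the target component is unique because the $C_j$ (respectively the $\p_k W$) are pairwise disjoint.

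For the case $T_{\max}(p)=\infty$ the forward trajectory is defined for all $t\ge 0$, so \Cref{convergence gradient trajectory} applies and $\gamma(t)$ converges as $t\to\infty$ to a critical point $p_\infty\in\Crit(f)$. Since $p_\infty$ lies in a single connected component $C_j$, we obtain $p\in W^s(f;C_j)$ and in no other critical stable set. This already covers the possibility that $p_\infty\in C_j\cap\p W$: such a $p$ is recorded in $W^s(f;C_j)$ and, because the boundary is only reached in the limit $t\to\infty$ rather than in finite time, it is deliberately excluded from every $W^s(f;\p_k W)$, consistent with the remark preceding the statement.

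The finite-time case is where the real work lies, and I expect it to be the main obstacle. Assume $T:=T_{\max}(p)<\infty$. First I would argue that the exit point $\Phi_T^{\nabla f}(p)$ is well defined: since $W$ is compact, $\abs{\nabla f}$ is bounded, so $\gamma$ has bounded speed on $[0,T)$, is therefore uniformly continuous there, and hence extends continuously to a limit $q:=\lim_{t\to T^-}\gamma(t)\in W$. Next I would show $q\in\p W$: if $q$ were an interior point, local existence of the flow at $q$ would let me prolong $\gamma$ beyond $T$ inside $W$, contradicting the maximality of $T=T_{\max}(p)$. Hence $q$ lies on the boundary, in a unique component $\p_k W$, so $p\in W^s(f;\p_k W)$ and in no other. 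One may additionally note, although it is not needed for the partition, that a non-constant trajectory cannot reach a critical point in finite time, by uniqueness of solutions of $\dot\gamma=\nabla f$ against the constant solution; so $q$ is genuinely an exit through the boundary. Combining the two cases yields the asserted partition \eqref{eq: partition into stable manifolds}.

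Finally, the analogous partition by unstable sets is obtained by applying the entire argument to the function $-f$, whose gradient is $-\nabla f$; this exchanges the roles of $T_{\max}$ and $T_{\min}$ and of forward and backward trajectories, turning each stable set into the corresponding unstable set.
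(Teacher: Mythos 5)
Your proposal is correct and follows essentially the same route as the paper's proof: the dichotomy $T_{\max}(p)=\infty$ versus $T_{\max}(p)<\infty$, with \Cref{convergence gradient trajectory} handling the infinite-time case and disjointness read off directly from the definitions of the stable sets. Your only addition is to spell out the detail the paper leaves implicit in the finite-time case --- that boundedness of $\norm{\nabla f}$ on the compact $W$ forces the trajectory to extend continuously to $T_{\max}(p)$, and that the limit point must lie on $\p W$ since an interior limit (regular by uniqueness of solutions, hence allowing prolongation) would contradict maximality --- which is a welcome but minor elaboration rather than a different argument.
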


The proof of this lemma generalizes easily to any smooth function~$f$
that does not need to be Morse-Bott but for which all components of
$\Crit(f)$ are isolated in the interior of $W$.

\begin{proof}
  Let $p$ be any point in $W$, and let $\gamma$ be the gradient
  trajectory with $\gamma(0) = p$.  If $\gamma$ is defined in forward
  direction only up to time~$T_{\max}(p) < \infty$, then $\gamma$
  necessarily hits one of the boundary components at $T_{\max}(p)$,
  and $p$ lies in the stable set of that component.  If the flow is
  instead defined for all $t>0$, then it follows from
  \Cref{convergence gradient trajectory} that $\gamma(t)$ converges
  for $t\to \infty$ to a critical point of $f$ so that
  $p \in W^s(f;C_j)$ for some component~$C_j$ of $\Crit(f)$.

  This shows that every point in $W$ lies in the stable set of a
  boundary component $\p_k W$ or in the stable set of a critical
  component~$C_j$.
  
  \smallskip

  To prove that $W$ is partitioned by the stable sets, note firstly
  that a point cannot lie in the stable set of a boundary component
  and in the stable set of a critical point, because the first
  condition requires the flow only to exist for finite time while the
  other one requires that the flow exists for all times.  Furthermore
  a point cannot lie in the stable sets of two different boundary
  components $\p_k W$ and $\p_{k'}W$, because by our definition $p$
  lies in $W^u(f;\p_k W)$ if and only if the trajectory through $p$
  \emph{ends} on $\p_k W$ (in finite time).  The trajectory might
  intersect other boundary components before reaching the final one,
  but this is not sufficient to lie in their respective stable sets.
  Finally, $p$ lies in the stable set of a component~$C_j$ of
  $\Crit(f)$ if the trajectory of $p$ \emph{converges} to a point in
  $C_j$, thus excluding that it also converges to a point in another
  component of $\Crit(f)$.

  \smallskip

  The argument in backward time does not require any modifications and
  shows that the corresponding statement about the unstable sets is
  also true.
\end{proof}

\medskip

We need to be more precise about the boundary: Let $W$ be a manifold
with non-empty boundary, and let $X$ be a vector field on $W$.  We
distinguish the following three subsets that are determined by the
behavior of $X$ along $\p W$
\begin{equation*}
  \begin{aligned}
    \p^+ W &:= \bigl\{ p\in \p W\bigm|\, \text{$X(p)$ points
      transversely out of  $W$}\bigr\} \;, \\
    \p^- W &:= \bigl\{ p\in \p W\bigm|\, \text{$X(p)$ points
      transversely into $W$}\bigr\} \;, \\
    \p^0 W &:= \bigl\{ p\in \p W\bigm|\, X(p)\in T_p (\p W)\bigr\} \;.
  \end{aligned}
\end{equation*}

\begin{definition}\label{def: X-convex boundary}
  We say that the boundary of $W$ is \defin{convex with respect to the
    vector field~$X$}, if the maximal trajectory of $X$ of any point
  $p\in \p^0 W$ is just $p$ itself, that is, $X(p)$ either vanishes so
  that $\Phi_t^X(p) = p$ for all $t\in \RR$ or if $X(p)\ne 0$, then
  the flow~$\Phi_t^X(p)$ is not defined for any $t \ne 0$.
\end{definition}

The intuition behind this definition is that if we embed $W$ into a
slightly larger open manifold~$\widehat{W}$, and then we extend the
vector field~$X$ smoothly to $\widehat{W}$, the convexity of $\p W$
implies that the only non-trivial trajectories of $X$ that are tangent
to $\p W$ touch the boundary from the \emph{outside} of $W$, see
\Cref{fig: X-convex boundary}.

\begin{figure}[htbp]
  \centering
  \includegraphics[height=2.5cm,keepaspectratio]{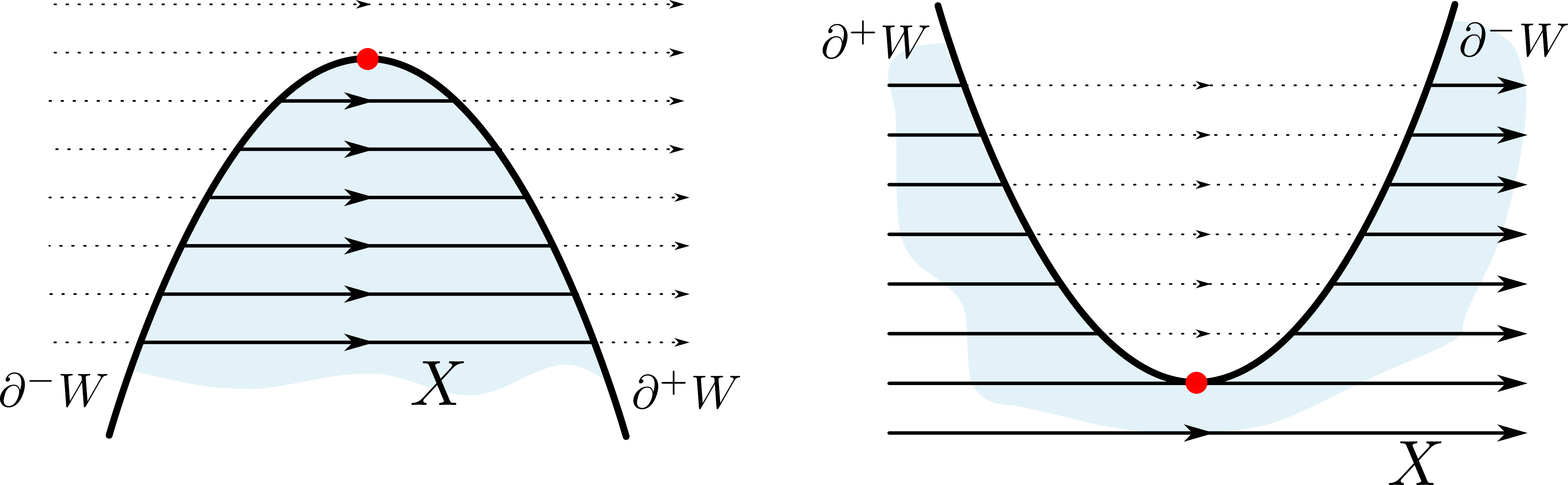}
  \caption{\label{fig: X-convex boundary} In the two cases depicted
    above, the domain~$W$ is colored in blue, and the vector field~$X$
    has horizontal flow lines.  On the left side, we have drawn the
    case of a boundary that is \emph{convex with respect to $X$}.  The
    vector field enters through $\p^- W$ on one side and leaves again
    through $\p^+ W$ on the other side.  It is only tangent to $\p W$
    at the red point, but touching $W$ from the outside its flow is
    neither defined (in $W$) for positive nor negative times.  On the
    right side, $X$ also has a tangency with $\p W$ in the red dot,
    but this time the flow comes from the interior of $W$, becomes
    tangent to $\p W$, and then moves back into the interior of $W$ so
    that this boundary is \emph{not} convex.}
\end{figure}

\begin{example}\label{example convex and concave boundary}
  \begin{itemize}
  \item [(a)] Consider the vector field~$X := \partial_x$ on $\RR^2$
    so that the flow lines are horizontal lines.  It is then easy to
    convince oneself that the closed unit disk (or any other
    geometrically convex domain) is convex with respect to $X$.  For
    an annulus on the other hand, the inner boundary is not convex
    with respect to $X$.
  \item [(b)] Consider again the closed annulus~$A$ of all
    points~$(x,y)$ in $\RR^2$ with $1/2 \le x^2 + y^2 \le 1$, and let
    $X = -x\, \partial x$ be the gradient vector field of the function
    $f(x,y) = -x^2/2$ with respect to the Euclidean metric.

    Note that against all intuition, this time $\p A$ is convex,
    because $X$ vanishes at all boundary points where $X$ is not
    transverse to $\p A$.  This shows that convexity by itself is not
    sufficient to obtain for example results like the ones in
    \Cref{topology simple if no codim 1 stable manifolds}.  We will
    have to impose additionally properties on the vector field close
    to critical points lying on the boundary of the domain.
  \end{itemize}
  \begin{figure}[htbp]
    \begin{center}
      \includegraphics[height=3.5cm,keepaspectratio]{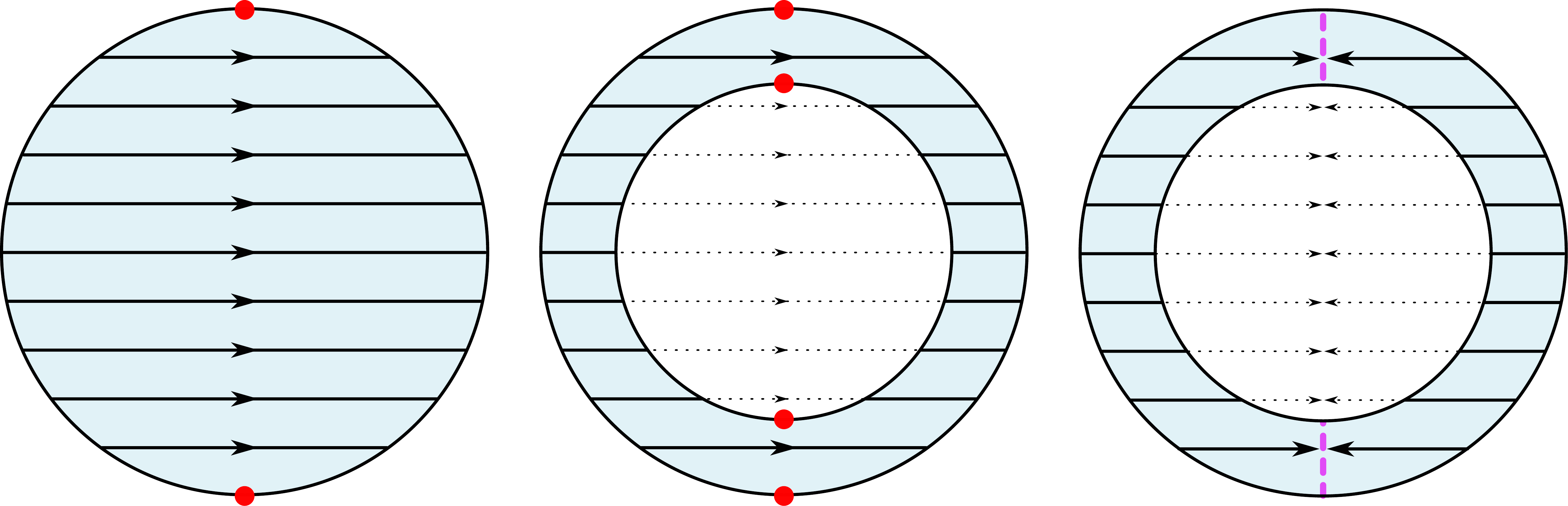}
    \end{center}
    \caption{The left picture and the central picture show the two
      situations described in \Cref{example convex and concave
        boundary}.(a). The disk is convex with respect to the
      horizontal flow, while the annulus isn't.  The picture on the
      right shows \Cref{example convex and concave boundary}.(b).
      There is a vertical line of critical points, and the field
      points horizontally towards this line.  Note that this example
      is also convex, because the gradient field is everywhere
      transverse to the boundary except for the points where the
      gradient vanishes.}
  \end{figure}
\end{example}

In this article, we will be mostly interested in gradient vector
fields.  When there is no possible confusion we use the following
notation.

\begin{definition}
  Let $(W,g)$ be a Riemannian manifold with boundary carrying a smooth
  function $f\colon W\to \RR$.  If $\p W$ is convex with respect to
  $\nabla f$ we often say for simplicity that $\p W$ is
  \defin{$\nabla$-convex}.
\end{definition}

The gradient vector field is almost everywhere transverse to a
$\nabla$-convex boundary:

\begin{lemma}\label{convex boundary cannot be everywhere tangent to
    gradient}
  Let $(W,g)$ be a connected Riemannian manifold with boundary, and
  let $f\colon W\to \RR$ be a non-constant Morse-Bott function such
  that $W$ has convex boundary with respect to $\nabla f$.

  The subset of points where $\nabla f$ is transverse to the boundary
  is open and dense in $\p W$.  In particular, none of the boundary
  components of $W$ can lie in
  $\p^0W = \bigl\{ p\in \p W\bigm|\, \nabla f(p)\in T_p (\p
  W)\bigr\}$.
\end{lemma}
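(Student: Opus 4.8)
The plan is to establish openness and density of the transverse set $\p W\setminus\p^0 W$ separately; the ``in particular'' claim then follows for free, since a boundary component is open in $\p W$ and so cannot be contained in the closed, nowhere-dense set $\p^0 W$. Openness is the easy half: transversality of $\nabla f$ to $\p W$ at a point amounts to $\nabla f(p)\notin T_p\p W$, i.e.\ to the normal component of $\nabla f$ being nonzero, which is an open condition and hence persists in a neighborhood. Consequently $\p^0 W$ is closed in $\p W$, and it remains only to show that it has empty interior.

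For density I would argue by contradiction, assuming that $\p^0 W$ contains a nonempty subset $U$ that is open in $\p W$. By the definition of $\p^0 W$, the field $\nabla f$ is tangent to $\p W$ at every point of $U$. The crucial step is to combine this with convexity to force $\nabla f$ to vanish identically on $U$. Fix $p\in U$ and suppose $\nabla f(p)\ne 0$. Since $U$ is open in $\p W$ and $\nabla f$ is everywhere tangent to $\p W$ along $U$, the restriction $\nabla f|_U$ is a smooth vector field on the manifold $U$, and its integral curve through $p$ exists for all sufficiently small times and stays inside $U\subset \p W\subset W$. Because $\nabla f$ is tangent along $U$, this curve is at the same time a \emph{nontrivial} integral curve of $\nabla f$ in $W$ through the point $p\in\p^0 W$. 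This contradicts the convexity of $\p W$ with respect to $\nabla f$ (\Cref{def: X-convex boundary}), which requires the maximal trajectory through any point of $\p^0 W$ to be constant. Hence $\nabla f(p)=0$, and as $p\in U$ was arbitrary, $U\subset\Crit(f)$.

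To conclude I would invoke the Morse-Bott structure. Pick $p\in U$ and let $C_j$ be the component of $\Crit(f)=\bigsqcup_j C_j$ containing $p$; since $p\in\p W$ we have $p\in \p C_j=C_j\cap\p W$, and near $p$ the critical set coincides with $C_j$, which meets $\p W$ transversely (\Cref{def:Morse-Bott}). The inclusion $U\subset\Crit(f)$ then forces, in a neighborhood of $p$, the identity $U=\p C_j$, so $\p C_j$ is open in $\p W$ and therefore $\dim\p C_j=\dim\p W=\dim W-1$. By the transversality relation $\dim C_j=\dim\p C_j+1$ (\Cref{indices of boundary crit in relation to normal index}) this gives $\dim C_j=\dim W$, so $C_j$ is an open subset of $W$; being also a closed compact submanifold, $C_j$ is clopen and hence equals $W$ by connectedness. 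But then every point of $W$ is critical, so $f$ is constant, contradicting the hypothesis. This contradiction shows that $\p^0 W$ has empty interior, which is exactly the assertion that the transverse set is dense.

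I expect the substantive point to be the flow-tangency step in the second paragraph: the precise reason convexity fails is that tangency of $\nabla f$ along an \emph{open} piece $U$ lets a trajectory run inside $\p W$, producing a genuine nonconstant $W$-trajectory through a point of $\p^0 W$. This is where both the openness of $U$ and the exact form of \Cref{def: X-convex boundary} are essential, and it is worth noting that \Cref{example convex and concave boundary}(b) shows the conclusion genuinely needs this argument: there $\p^0 W$ is nonempty, but precisely because $\nabla f$ vanishes on it rather than being nonzero and tangent. The dimension-and-connectedness endgame in the last paragraph is then routine.
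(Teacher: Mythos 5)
Your proposal is correct and follows essentially the same route as the paper's proof: openness of transversality is immediate, then tangency of $\nabla f$ along an open subset $U\subset \p^0 W$ together with convexity forces $\nabla f$ to vanish on $U$ (by producing a nonconstant gradient trajectory contained in $\p W$ through a point of $\p^0 W$), and finally the Morse-Bott transversality condition makes the critical component containing $U$ full-dimensional, so $f$ would be constant on the connected $W$. The only difference is cosmetic: where you integrate the restricted field $\restricted{\nabla f}{U}$ intrinsically on $\p W$ and invoke uniqueness of integral curves, the paper extends $\nabla f$ over a collar and exhibits the boundary trajectory in a flow-box chart via a graph computation ($\partial_{x_1}h=0$) --- your version is, if anything, slightly more direct.
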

\begin{proof}
  Assume that $U\subset \p W$ is an open set in $\p W$ that lies in
  $\p^0W$.  If $p \in U$ were a point such that $\nabla f(p) \ne 0$,
  then attach a small collar to $W$ along $\p W$ and extend $\nabla f$
  smoothly to this enlarged manifold.

  Choose a flow-box chart in $\RR^n$ with coordinates
  $(x_1,\dotsc, x_n)$ such that $p$ corresponds to the origin and such
  that $\nabla f$ agrees with $\partial_{x_1}$.  We can assume by a
  rotation of the coordinates that $\p W$ is at the origin of the
  coordinate chart tangent to the hyperplane $\{x_n = 0\}$.  Then we
  can write $\p W$ as a graph of a smooth function~$h$, that is,
  $\p W$ is parametrized by
  \begin{equation*}
    (x_1,\dotsc, x_{n-1})\mapsto
    \bigl(x_1,\dotsc, x_{n-1}, h(x_1,\dotsc, x_{n-1})\bigr)
  \end{equation*}
  where $h$ is a function that vanishes at the origin, and whose
  differential also vanishes at the origin.

  By our assumption that $\nabla f$ is everywhere along $U$ tangent to
  $\p W$ it follows that $\partial_{x_1} h = 0$ on a neighborhood of
  the origin so that $h$ does not depend on the $x_1$-coordinate.  In
  particular, we obtain that $\gamma(s) = (s,0,\dotsc,0)$ is a segment
  of a non-trivial gradient trajectory that is contained in $\p W$.
  This is a contradiction to the convexity assumption.  We deduce that
  $\nabla f$ needs to vanish everywhere on $U$.

  By our definition of Morse-Bott function, $\Crit(f)$ intersects
  $\p W$ transversely.  Thus if $U\subset \Crit(f)$, there would need
  to be a neighborhood of $p$ in $W$ that lied in $\Crit(f)$, and
  since $\Crit(f)$ is a finite union of submanifolds, it would follow
  that $W = \Crit(f)$ so that $f$ had to be constant.
\end{proof}

We can often verify the boundary convexity using the following
(sufficient but not necessary) criterion.

Let $W$ be a manifold with boundary carrying a smooth vector
field~$X$.  Choose a boundary collar of the form
$(-\epsilon,0]\times \p W$ with coordinates $(s,p)$ such that
$\{0\}\times \p W$ corresponds to $\p W$.  The vector field~$X$ is on
this collar of the form
\begin{equation}
  X(s,p) = S(s,p)\, \p_s + Z(s,p) \;,
  \label{eq: decomposition of vector field with respect boundary collar}
\end{equation}
where $S(s,p)$ is a smooth function, and $Z(s,p)$ is a vector field
that is tangent to the slices $\{s_0\}\times \p W$.

Clearly, it follows that
$\p^\pm W = \bigl\{p\in \p W |\; \pm S(0,p) > 0\bigr\}$, and
$\p^0 W = \bigl\{p\in \p W |\; S(0,p) = 0\bigr\}$.

\begin{definition}\label{def: strongly convex}
  The boundary~$\p W$ is \defin{strongly convex with respect to $X$},
  if $X$ given by \Cref{eq: decomposition of vector field with respect
    boundary collar} satisfies at every point $p \in \p^0 W$ either
  that
  \begin{itemize}
  \item $Z(0,p)$ vanishes or that
  \item $dS\bigl(Z\bigr) > 0$.
  \end{itemize} 
\end{definition}

\begin{proposition}\label{read off convexity from derivatives}
  \emph{Strong} convexity implies convexity.
\end{proposition}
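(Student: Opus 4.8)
The plan is to read off the behaviour of a trajectory starting at a boundary point directly from the collar decomposition in \Cref{eq: decomposition of vector field with respect boundary collar}. Fix a point $p\in \p^0 W$, which by the description of $\p^0 W$ means $S(0,p) = 0$. I want to show that the maximal trajectory of $X$ through $(0,p)$ inside $W$ reduces to the single point $(0,p)$, unless $X$ vanishes there. Write the trajectory as $\gamma(t) = \bigl(s(t), q(t)\bigr)$ in the collar coordinates (valid for small~$t$), so that the $s$-component satisfies $\dot s(t) = S\bigl(\gamma(t)\bigr)$. Since $W$ corresponds locally to $\{s\le 0\}$ and $\p_s$ points out of $W$, a trajectory leaves $W$ precisely when $s$ becomes positive. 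I would then simply treat the two alternatives of \Cref{def: strongly convex} separately.

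In the first case, $Z(0,p) = 0$, and together with $S(0,p) = 0$ this gives $X(0,p) = S(0,p)\,\p_s + Z(0,p) = 0$, so $(0,p)$ is a genuine fixed point and $\Phi_t^X(0,p) = (0,p)$ for all~$t$; this is exactly the first alternative in \Cref{def: X-convex boundary}. In the second case, $dS(Z)(0,p) > 0$, and in particular $Z(0,p)\ne 0$, so $X(0,p) = Z(0,p)\ne 0$. Here I would examine the second-order Taylor expansion of $s(t)$ at $t=0$. Since $s(0) = 0$ and $\dot s(0) = S(0,p) = 0$, the first non-trivial term is governed by
\begin{equation*}
  \ddot s(0) = \frac{d}{dt}\Big|_{t=0} S\bigl(\gamma(t)\bigr)
  = dS\bigl(X(0,p)\bigr) = S(0,p)\,\p_s S + dS\bigl(Z(0,p)\bigr) = dS\bigl(Z(0,p)\bigr) > 0 \;,
\end{equation*}
where I used $S(0,p) = 0$ to drop the $\p_s$-term. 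Hence $s(t) = \tfrac12\,\ddot s(0)\,t^2 + o(t^2)$ with $\ddot s(0) > 0$, so $s(t) > 0$ for all sufficiently small $t\ne 0$, on \emph{both} sides of the origin. This means the trajectory leaves $W$ immediately in either time direction, so its maximal trajectory inside~$W$ is $\{(0,p)\}$, which is the second alternative in \Cref{def: X-convex boundary}.

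The main point to be careful about is that the trajectory is honestly defined near the boundary point and that the conclusion does not depend on auxiliary choices. To make sense of $\gamma(t)$ for $t\ne 0$ one extends $X$ smoothly across $\{0\}\times \p W$ to a slightly larger open collar, exactly as in the intuition following \Cref{def: X-convex boundary}; the crucial value $\ddot s(0) = dS\bigl(Z(0,p)\bigr)$ is computed purely from $Z(0,p)$, which is tangent to $\{0\}\times \p W$, and from the derivative of $S$ \emph{along} the boundary, so it is intrinsic to $W$ and independent of the chosen extension. It then only remains to note that the \emph{strict} positivity $\ddot s(0) > 0$ is what forces $s$ to be positive on both sides of $t=0$ (a non-strict inequality would not suffice), so that the dichotomy of \Cref{def: strongly convex} matches precisely the dichotomy in the definition of convexity. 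This establishes the proposition.
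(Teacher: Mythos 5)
Your proof is correct and follows essentially the same route as the paper's: both arguments reduce to showing that $s(0)=0$, $\dot s(0)=S(0,p)=0$, and $\ddot s(0)=dS\bigl(Z(0,p)\bigr)>0$ force $s(t)$ to have a strict minimum at $t=0$, after extending $X$ across the boundary collar. Your only (cosmetic) variation is computing $\ddot s(0)$ intrinsically via the chain rule $dS(X)=S\,\p_s S+dS(Z)$ rather than in a chart with $Z(0,p)=\p_{x_1}$, together with the welcome remark that the result is independent of the chosen extension.
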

\begin{proof}
  Assume that $p\in \p^0 W$ so that $S(0,p) = 0$.  If $Z(0,p) = 0$,
  then $X(p) = 0$ and the trajectory through $p$ is constant and thus
  satisfies the definition of convexity.

  If on the other hand $X(p) \ne 0$, then if follows in particular
  that $Z(0,p) \ne 0$.  Extend $X$ smoothly to
  $(-\epsilon,\epsilon)\times \p W$, and let
  $\gamma(t) = \bigl(s(t), p(t)\bigr)$ be the trajectory of $X$
  through $\gamma(0) = (0,p)$.  The convexity of $\p W$ at $\gamma(0)$
  is equivalent to the function~$s(t)$ having a strict minimum at
  $t=0$.

  Choose a chart for $\p W$ that is centered at $p$ and that has
  coordinates $\x = (x_1,\dotsc,x_n)$ such that
  $Z(0,\dotsc,0) = \p_{x_1}$.  In these coordinates, the trajectory is
  of the form $\gamma(t) = \bigl(s(t); x_1(t), \dotsc, x_n(t)\bigr)$,
  where $s(t), x_1(t),\dotsc, x_n(t)$ are smooth functions satisfying
  $s(0) = x_1(0) = \dotsb = x_n(0) = 0$, and
  \begin{equation*}
    s'(t) = S\bigl(s(t); x_1(t), \dotsc, x_n(t)\bigr)\;, \text{ and }
    x_j'(t) = Z_j\bigl(s(t); x_1(t), \dotsc, x_n(t)\bigr)
    \text{ for all $j\in \{1,\dotsc,n\}$.}
  \end{equation*}

  A sufficient condition for $s(t)$ to have a strict minimum at $t= 0$
  is that $s'(0) = 0$ and $s''(0) > 0$.  The first condition is
  obviously satisfied, since $s'(0) = S(0,\dotsc,0) = 0$; for the
  second condition we compute
  \begin{equation*}
    s''(0) = s'(0)\cdot \frac{\partial S}{\partial s}(0,\dotsc,0) +
    \sum_{j=1}^n x_j'(0)\cdot \frac{\partial S}{\partial x_j} (0,\dotsc,0)
    = \frac{\partial S}{\partial x_1} (0,\dotsc,0) \;.
  \end{equation*}
  Since $\frac{\partial S}{\partial x_1} (0,\dotsc,0) = dS (Z) > 0$,
  the second condition is also verified.
\end{proof}

\bigskip

\begin{definition}
  Let $\p^+W$, $\p^-W$, and $\p^0W$ be defined with respect to the
  gradient field~$\nabla f$.  Then we denote the \defin{stable subset
    of a connected component~$\p_l^+W$ of $\p^+W$} by
  \begin{align*}
    W^s(f;\p_l^+ W) &:= \bigl\{p\in W\bigm|\; \text{$T_{\max}(p) < \infty$ and
                    $\Phi_{T_{\max}(p)}^{\nabla f}(p)  \in \p_l^+ W$} \bigr\} \; ,
    \intertext{and  the \defin{unstable subset of a connected component~$\p_l^-W$ of $\p^-W$} by}
    W^u(f;\p_l^- W) &:= \bigl\{p\in W\bigm|\; \text{$T_{\min}(p) > -\infty$
                      and $\Phi_{T_{\min}(p)}^{\nabla f}(p) \in \p_l^- W$} \bigr\} \;.
  \end{align*}
\end{definition}

Unfortunately, if $C_j$ is a component of $\Crit(f)$ intersecting
$\p W$, then the stable and unstable subsets of $C_j$ will in general
not be \emph{nicely embedded} submanifolds (even assuming that $\p W$
is $\nabla$-convex).

Consider again \Cref{example convex and concave boundary}.(b).  The
gradient trajectories are horizontal segments pointing towards the
$y$-axis.  The stable set of the inner boundary is an open subset, but
the stable set of the components of critical points are not.  The
reason is that the critical points that lie on the inner boundary have
gradient trajectories that do not lie in the interior of the
corresponding stable set.  Note that no such problem appears at the
critical points lying on the outer boundary.

This behavior is depicted in \Cref{fig: stable subset is manifold if
  boundary suitable}, and further illustrated by the elementary
example below.  The behavior does not only depend on the Morse-Bott
function itself, but also on the choice of the Riemannian metric.

\begin{example}\label{example gradient along boundary depends on metric}
  Let $f\colon W\to \RR$ be a Morse-Bott function on a $2$-dimensional
  Riemannian manifold~$(W,g)$ with boundary, and let $C$ be a
  $1$-dimensional component of $\Crit(f)$ intersecting $\p W$.

  Take a Morse-Bott chart with coordinates $(x,z)$ centered at a point
  $p\in C\cap \p W$ such that $(x,0)$ corresponds to the boundary of
  $W$ and such that $f(x,z) = -x^2$ on this chart.

  Assume the Riemannian metric~$g_1$ to be of the form
  \begin{equation*}
    g_1(x,z) =
    \begin{pmatrix}
      1  &  c \\
      c & 1
    \end{pmatrix} \;,
  \end{equation*}
  where $c$ is a constant such that $|c| < 1$.  The gradient
  vector field is then $\nabla_1 f = \frac{2x}{1-c^2} \, (-1, c)$.

  The sign of $2x c$ determines if $\nabla_1 f$ points transversely
  into or out of $W$.  The oriented trajectories of $\nabla_1 f$ are
  for $x<0$ parallel to the vector~$(1, -c)$, and for $x>0$ parallel
  to $(-1, c)$.  Thus if $c>0$, then the stable set~$W^s(f;C)$ of $C$
  is the union of $\bigl\{ x\le 0, \; z\le 0 \bigr\}$ and
  $\bigl\{ x> 0, \; z\le -c x \bigr\}$, that is, the stable set is
  \emph{not} a neat submanifold\footnote{The boundary of $W^s(f,C)$
    does not lie in $\p W$ and as a consequence
    $W^s(f,C)\cap \Interior W$ is not an open subset of
    $\Interior W$.}  of $W$.  This situation corresponds to the
  behavior shown on the left side of \Cref{fig: stable subset is
    manifold if boundary suitable}.

  \medskip

  Consider now instead the Riemannian metric~$g_2$ of the form
  \begin{equation*}
    g_2(x,z) =
    \begin{pmatrix}
      1  &  -c x \\
      -c x & 1
    \end{pmatrix} \;,
  \end{equation*}
  where $c$ is a constant such that $|c| < 1$.  The gradient vector
  field is then $\nabla_2 f = -\frac{2x}{1-c^2x^2} \, (1, c x)$.

  The oriented trajectories of $\nabla_2 f$ are for $x<0$ parallel to
  the vector~$(1, c x)$, and for $x>0$ parallel to $(-1, -c x)$.  If
  $c >0$, then the stable set~$W^s(f;C)$ fills up the entire chart,
  and in particular it is a neat submanifold of $W$.  The considered
  situation is thus of the type shown on right side of \Cref{fig:
    stable subset is manifold if boundary suitable}.
\end{example}

We will study now the properties of the different stable and unstable
subsets.  Under additional technical assumptions (that are always
satisfied for Hamiltonian circle actions), we will be able to show
that all stable/unstable subsets of maximal dimension, that is, the
ones that correspond to local extrema or to the positive/negative
boundary components are all mutually disjoint open subsets of $W$.

For some time, we had hoped that we could always choose a Riemannian
metric that preserved the convexity of the boundary and such that all
stable and unstable subsets of a Morse-Bott function would be smooth
submanifolds.  The \Cref{example convex and concave boundary}.(b)
contradicts \Cref{topology simple if no codim 1 stable manifolds}
showing that our belief was wrong.

\medskip

We call a point of $W$ that does not lie in $\p W$, an \defin{interior
  point of $W$}.  Furthermore, if $A$ is a subset of $W$, we denote
the set of all points in $A$ that are interior points of $W$, that is,
$A\setminus \p W$ by $\Interior A$.  In particular,
$\Interior W = W \setminus \p W$.

\begin{proposition}\label{stable sets are submanifolds}
  Let $(W, g)$ be a compact Riemannian manifold with boundary, and let
  $f\colon W\to \RR$ be a Morse-Bott function.  Suppose that the
  boundary of $W$ is $\nabla$-convex, then the stable and unstable
  sets satisfy:
  \begin{itemize}
  \item[(a)] All subsets $W^s(f;\p_l^+ W)$ and $W^u(f;\p_l^- W)$ are
    open in $W$ and intersect $\p W$ transversely.
  \item[(b)] If $C_j \subset \Crit(f)$ is a component that may or may
    not intersect $\p W$, then it follows that the interior of the
    stable and unstable sets $\Interior W^s(f;C_j)$ and
    $\Interior W^u(f;C_j)$ are \emph{contained} in smooth submanifolds
    of dimension $\dim C_j + i^-(C_j)$ and $\dim C_j + i^+(C_j)$
    respectively.
  \item[(c)] If $C_j \subset \Crit(f)$ does not intersect $\p W$, then
    it follows that
    \begin{equation*}
      W^s(f;C_j) \quad\text{ and }\quad W^u(f;C_j)
    \end{equation*}
    are smooth submanifolds with boundary.  Their respective
    dimensions are $\dim C_j + i^-(C_j)$ and $\dim C_j + i^+(C_j)$,
    and they intersect $\p W$ transversely, and
    $\p W^s(f;C_j) \subset \p W$ and $\p W^u(f;C_j)\subset \p W$.
  \end{itemize}
\end{proposition}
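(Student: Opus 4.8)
The plan is to work throughout with the exterior extension foreshadowed after \Cref{def: X-convex boundary}: embed $W$ into an open manifold~$\widehat W$ by attaching a collar, and extend $g$ and $f$ smoothly, writing $\nabla\widehat f$ for the extended gradient, which agrees with $\nabla f$ on~$W$. Convexity guarantees that the only nontrivial trajectories tangent to $\p W$ touch $W$ from the \emph{outside}, and by \Cref{convex boundary cannot be everywhere tangent to gradient} transversality of $\nabla f$ to $\p W$ holds on an open dense subset. For (a) I would prove openness by straightening the flow: if $p\in W^s(f;\p_l^+ W)$ has exit point $q=\Phi^{\nabla f}_{T_{\max}(p)}(p)\in\p_l^+ W$, then $\nabla f(q)$ points transversely out of $W$ and $\p_l^+ W$ is open in $\p W$, so flowing $q$ slightly back into $\Interior W$ and applying a flow-box chart makes the exit-time function smooth and forces nearby initial conditions to leave transversely through $\p_l^+ W$ as well; pushing this neighborhood back to~$p$ by the diffeomorphism $\Phi^{\nabla f}_{-T_{\max}(p)}$ shows a whole neighborhood of $p$ lies in $W^s(f;\p_l^+ W)$. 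The same chart exhibits a one-sided inner collar over $\p_l^+ W$, giving transversality to $\p W$; the statement for $W^u(f;\p_l^- W)$ is identical with $-\nabla f$.

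The core is (b). For an interior point $p\in\Interior W^s(f;C_j)$ whose trajectory converges, by \Cref{convergence gradient trajectory}, to a critical point $c\in C_j$, I would distinguish two cases. If $c\in\Interior W$, the tail of the trajectory stays in a fixed interior neighborhood of $c$, so the closed-manifold stable manifold theorem (\Cref{stable subset smooth on closed manifold}) produces a smooth local stable manifold of dimension $\dim C_j+i^-(C_j)$, and saturating it by the interior flow places a neighborhood of $p$ inside a smooth submanifold of that dimension. If instead $c\in\p C_j$, I would prolong $C_j$ across the boundary: since $C_j\pitchfork\p W$ and $f$ is Morse–Bott, the normal form near~$c$ extends through $\p W$ into $\widehat W$, yielding a boundaryless critical submanifold $\widehat C_j\supset C_j$ of $\widehat f$ whose normal indices are preserved, in accordance with \Cref{indices of boundary crit in relation to normal index}. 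Applying \Cref{stable subset smooth on closed manifold} to $\widehat C_j$ in $\widehat W$ gives a smooth $W^s(\widehat f;\widehat C_j)$ of dimension $\dim C_j+i^-(C_j)$; as the $W$-trajectory of~$p$ is unchanged in $\widehat W$ and converges to $c\in\widehat C_j$, one obtains $p\in W^s(\widehat f;\widehat C_j)$, hence $\Interior W^s(f;C_j)\subset W^s(\widehat f;\widehat C_j)$. This \emph{containment} rather than equality is exactly what the statement asserts, and the unstable case follows by replacing $f$ with~$-f$.

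Finally (c): when $C_j\subset\Interior W$ is closed no prolongation is needed. The interior analysis above, together with convexity (an interior trajectory converging to an interior critical set cannot meet $\p W$, since by \Cref{def: X-convex boundary} it could only do so by crossing transversely through $\p^+ W$ and then leaving), shows that $W^s(f;C_j)\cap\Interior W$ is a smooth submanifold of $\Interior W$ of dimension $\dim C_j+i^-(C_j)$. For a boundary point $q\in\p W\cap W^s(f;C_j)$ convexity forces $q\in\p^- W$: it cannot lie in $\p^+ W$ (the forward trajectory would leave $W$ at once) nor in $\p^0 W$ (the trajectory would be trivial or exterior and never reach the interior~$C_j$), so $\nabla f(q)$ is transverse inward. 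A flow-box chart at such~$q$ then identifies $W^s(f;C_j)$ near~$q$ with the preimage of the interior stable manifold under the inward flow, i.e. a neat half-space meeting $\p W$ transversely with boundary in $\p W$; gluing the interior and boundary descriptions yields that $W^s(f;C_j)$ is a smooth submanifold with boundary of the stated dimension, with $\p W^s(f;C_j)=W^s(f;C_j)\cap\p W\subset\p W$, and the unstable statement again follows by passing to~$-f$. I expect the main obstacle to be the boundary-critical case of (b): arranging the prolongation $\widehat C_j$ so that normal hyperbolicity and the indices are genuinely preserved, which is precisely where the transverse intersection $C_j\pitchfork\p W$ and the convexity of the boundary are indispensable.
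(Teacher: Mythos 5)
Your outline follows the paper's route in broad strokes---extend past the boundary, invoke the closed-manifold stable manifold theorem, and use convexity plus flow saturation---but there are two places where the proposal, as written, has genuine gaps. First, in (b) you fix an arbitrary smooth extension of $f$ at the outset and then assert that near a boundary critical point ``the normal form extends through $\p W$,'' producing a boundaryless critical manifold $\widehat C_j \supset C_j$ with preserved indices. An arbitrary extension will not do this: the extended function need not be Morse--Bott, and its critical set near $\p C_j$ need not even be a manifold. Constructing an extension with exactly this property is nontrivial---the paper devotes \Cref{doubling Morse-Bott} to it, using a Lichtenstein-type reflection to build a $C^2$ Morse--Bott cap-off in which every boundary component of $\Crit(f)$ is completed to a \emph{closed} component of the critical set on a \emph{closed} manifold $W^{\mathrm{cap}}$. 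Your one-sided collar prolongation instead leaves $\widehat C_j$ non-compact inside an open $\widehat W$, so you cannot directly apply \Cref{stable subset smooth on closed manifold} (stated for closed manifolds; its proof uses \Cref{convergence gradient trajectory} and the Hadamard--Perron theorem for a compact invariant manifold). The same objection hits your interior-point case in (b) when $C_j$ meets $\p W$: there $C_j$ is a compact manifold with boundary, not a closed invariant manifold, so the ``local'' application of the closed-manifold theorem near $c\in\Interior W$ is not licit without first capping off. You flagged this as the expected main obstacle, but the proposal does not resolve it.

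Second, in (c) the claim that your interior analysis already shows $W^s(f;C_j)\cap\Interior W$ \emph{is} a submanifold overreaches: part (b) only gives containment in a submanifold, and to upgrade this to equality near $C_j$ you must show that points of the extended stable manifold close to $C_j$ actually lie in $W^s(f;C_j)$. This is not automatic, because in $\widehat W$ a trajectory may exit $W$ through $\p^+ W$, travel in the collar, and re-enter through $\p^- W$ before converging to $C_j$, so the stable sets of $f$ and of the extension need not agree a priori. The paper closes this with the no-return neighborhoods $U_0\subset U_1$ of \Cref{box around critical component}.(b): any trajectory entering $U_0$ and escaping $U_1$ can never return, which forces the two stable sets to coincide on $U_0$, and only then does flow saturation propagate the submanifold structure. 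Your flow-box gluing at a boundary point $q$ tacitly presumes this identification. Your part (a), and your observation that a boundary point of $W^s(f;C_j)$ must lie in $\p^- W$ by convexity, are correct and match the paper's argument.
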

\begin{proof}
  The convexity assumption imposes that any non-trivial gradient
  trajectory intersecting the boundary of $W$ necessarily does so
  transversely.  Thus if $p_0$ and $p_1$ are any two points in the
  interior of $W$ that lie on the same gradient trajectory~$\gamma$,
  say with $\gamma(0) = p_0$ and $\gamma(T) = p_1$, then it follows
  that none of the points $\gamma(t)$ with $t\in [0,T]$ can intersect
  $\p W$ so that $\gamma\bigl([0,T]\bigr)$ lies in $\Interior W$.  It
  follows that there are open neighborhoods~$U_0$ of $p_0$ and $U_1$
  of $p_1$ such that $\Phi^{\nabla f}_T$ restricts to a diffeomorphism
  between $U_0$ and $U_1$.

  \smallskip
  
  \textbf{(a)} To show that $W^s(f;\p_l^+ W)$ is open, note first that
  transversality is an open property so that $\p_l^+ W$ is an open
  subset of $\p W$.  Let $p_0$ be a point that lies in $\p_l^+ W$,
  then we can choose a small open neighborhood~$U$ of $p_0$ in $\p W$,
  and an $\epsilon > 0$ such that $(t,p)\mapsto \Phi^{\nabla f}_t(p)$
  defines a diffeomorphism of $(-\epsilon, 0]\times U$ onto a
  neighborhood of $p_0$ in $W$.  We have thus found an open
  neighborhood of $p_0$ that lies in $W^s(f;\p_l^+ W)$.

  If $p_0\in W^s(f;\p_l^+ W)$ is now an interior point of $W$, then
  let $\gamma$ be the gradient trajectory starting at $p_0$
  intersecting $\p_l^+ W$ at time $t = T$.  In order to avoid any
  technicalities due to the boundary, use that $\gamma(T)$ has a small
  open neighborhood~$U$ that lies in $W^s(f;\p_l^+ W)$.  If
  $T'\in (0,T)$ is chosen sufficiently close to $T$, then
  $p_1 = \gamma(T')$ will lie in $U$.  By our remark above, there are
  open neighborhoods~$U_0$ of $p_0$ and $U_1$ of $p_1$ such that
  $\Phi^{\nabla f}_{T'}$ is a diffeomorphism between $U_0$ and $U_1$.
  After possibly shrinking the size of $U_1$, we may assume that $U_1$
  lies in $U$.  Replace $U_0$ by $U_0 := \Phi^{\nabla f}_{-T'}(U_1)$,
  then by construction $U_0$ is a neighborhood of $p_0$ that lies in
  $W^s(f;\p_l^+ W)$ proving that $W^s(f;\p_l^+ W)$ is indeed an open
  subset.

  \medskip

  \textbf{(b)} In order to avoid some of the technicalities arising
  along the boundary, cap-off $W$ using \Cref{doubling Morse-Bott} to
  obtain a closed manifold~$W^{\mathrm{cap}}$ and a Morse-Bott
  function~$f^{\mathrm{cap}}$ that extends $f$ to all of
  $W^{\mathrm{cap}}$.  We can also extend $g$ to a metric
  $g^{\mathrm{cap}}$ on all of $W^{\mathrm{cap}}$.  Denote the cap-off
  of $C_j$ in $\Crit( f^{\mathrm{cap}})$ by $C_j^{\mathrm{cap}}$.
  Then it follows that $W^s( f^{\mathrm{cap}}; C_j^{\mathrm{cap}})$ is
  a smooth submanifold of dimension $\dim C_j + i^-(C_j)$, see
  \Cref{stable subset smooth on closed manifold}.  Since the
  intersection of any submanifold with an open set is still a
  submanifold, it follows that $\Interior W^s(f; C_j)$ is contained in
  the submanifold
  $W^s( f^{\mathrm{cap}}; C_j^{\mathrm{cap}}) \cap \Interior W$.
  
  \medskip
  
  \textbf{(c)} Let $C_j \subset \Crit(f)$ be a component of $\Crit(f)$
  that does not intersect $\p W$.  Capping-off $(W,g)$ and $f$ to
  $(W^{\mathrm{cap}}, g^{\mathrm{cap}})$ and $f^{\mathrm{cap}}$ as in
  (b), we obtain that $C_j^{\mathrm{cap}} = C_j$ and the stable subset
  $W^s(f^{\mathrm{cap}};C_j)$ in $W^{\mathrm{cap}}$ is a smooth
  submanifold of dimension $\dim C_j + i^-(C_j)$.

  This reproves that $\Interior W^s(f;C_j)$ is contained in a smooth
  submanifold, but to show that $W^s(f;C_j)$ itself is a smooth
  submanifold, we argue as follows: First we will show that
  $W^s( f^{\mathrm{cap}};C_j)$ and $W^s(f;C_j)$ agree on a
  sufficiently small neighborhood of $C_j$.  For this choose small
  open neighborhoods~$U_0$ and $U_1$ of $C_j$ as in \Cref{box around
    critical component}.(b) such that $U_0\subset U_1$ lie in
  $\Interior W$ and such that every gradient trajectory of
  $\nabla f^{\mathrm{cap}}$ that passes through $U_0$ and later
  escapes from $U_1$ can never again return to $U_0$.  It follows that
  every point~$q\in U_0$ lying in $W^s( f^{\mathrm{cap}};C_j)$,
  automatically also lies in $W^s(f;C_j)$ showing that
  $W^s( f^{\mathrm{cap}};C_j)\cap U_0 \subset W^s(f;C_j)$.

  If $p$ is now any point in $W^s(f;C_j)$, then we can choose $T> 0$
  such that $\gamma(T) = q \in W^s(f;C_j) \cap U_0$.  If
  $p \notin \p W$, then $\gamma(t)$ does not touch for $t\in [0,T]$
  the boundary of $W$ anywhere due to $\nabla$-convexity.  By the
  remark at the beginning of this proof, there are open
  neighborhoods~$U_p$ and $U_q$ of $p$ and $q$ respectively such that
  $\Phi_T^{\nabla f}$ is a diffeomorphism between $U_p$ and $U_q$.  We
  can shrink $U_q$ so that it lies in $U_0$, and since
  $W^s( f^{\mathrm{cap}};C_j)\cap U_q = W^s(f;C_j)\cap U_q$, it
  follows that the later is a smooth submanifold.

  We obtain from the invariance of $W^s(f;C_j)$ under the gradient
  flow that
  $W^s(f;C_j) \cap U_p = \Phi_{-T}^{\nabla f}\bigl(W^s(f;C_j)\cap
  U_q\bigr)$ so that $\Interior W^s(f;C_j)$ is globally a smooth
  submanifold.

  \smallskip
  
  If $p \in W^s(f;C_j)$ does lie in $\p W$, then note that
  $\nabla f(p)$ necessarily points by $\nabla$-convexity transversely
  into $W$.  This proves that $W^s( f^{\mathrm{cap}};C_j)$ intersects
  $\p W$ transversely at $p$, and
  $W^s( f^{\mathrm{cap}};C_j)\cap \p W$ is then in a neighborhood of
  $p$ a smooth hypersurface of $W^s( f^{\mathrm{cap}};C_j)$ that is
  transverse to $\nabla f$.  Let $\gamma$ be the gradient trajectory
  with $\gamma(0) = p$, and assume that
  $\gamma(T) = q \in W^s(f;C_j) \cap U_0$.  Note that by
  $\nabla$-convexity, $\gamma(t)$ does not intersect $\p W$ for any
  $t\in (0,T]$.  As above, we can choose neighborhoods $U_p$ of $p$
  and $U_q$ of $q$ such that
  $W^s( f^{\mathrm{cap}};C_j) \cap U_p = \Phi_{-T}^{\nabla
    f^{\mathrm{cap}}}\bigl(W^s(f;C_j)\cap U_q\bigr)$.

  The subset $W^s( f^{\mathrm{cap}};C_j) \cap U_p$ is split by the
  hypersurface $W^s( f^{\mathrm{cap}};C_j)\cap \p W$ into the part
  that lies in $W$ and the part that lies in the complement of $W$.
  The gradient trajectories of the points in the first half cannot
  touch $\p W$ by $\nabla$-convexity from the interior, and thus it
  follows that they all lie in $W^s(f;C_j)$.
  
  This shows that $W^s(f;C_j)$ is a smooth submanifold with boundary
  such that $\p W^s(f;C_j) \subset \p W$, and such that $W^s(f;C_j)$
  intersects $\p W$ transversely.
  
  \smallskip
  
  To prove the statements about the unstable sets, it suffices to
  invert the sign of $f$.
\end{proof}

Above we have shown that the stable set of a component of $\Crit(f)$
intersecting $\p W$ is always contained in a submanifold whose
dimension is given by the usual index formula.  In general though, it
does not need to be a submanifold itself.  The following lemma
provides a technical condition that solves this problem.

\begin{lemma}\label{lemma for stable sets with boundary}
  Let $(W', g')$ be a closed Riemannian manifold, and let
  $f'\colon W' \to \RR$ be a Morse-Bott function.  Suppose that $W$ is
  a compact subdomain with $\nabla$-convex boundary.

  Consider a component~$C'_j$ of $\Crit( f')$ that intersects $\p W$
  transversely, and denote the restriction of $ f'$ to $W$ by $f$, and
  the intersection $C'_j\cap W \subset \Crit(f)$ by $C_j$.

  \begin{itemize}
  \item If $C_j \cap \p W$ admits a neighborhood~$U^-$ in $\p W$ such
    that $\nabla f$ does not point anywhere along
    $W^s\bigl(\hat f; C'_j\bigr)\cap U^-$ transversely \emph{out of
      $W$}, see \Cref{fig: stable subset is manifold if boundary
      suitable}, then it follows that the interior
    $\Interior W^s(f; C_j)$ of the stable subset of $C_j$ in $W$ is a
    smooth submanifold of dimension $\dim C_j + i^-(C_j)$.
  \item Similarly, if $C'_j \cap \p W$ admits a neighborhood~$U^+$ in
    $\p W$ such that $\nabla f$ does not point anywhere along
    $W^u\bigl( f' ; C'_j\bigr)\cap U^+$ transversely \emph{into $W$},
    then it follows that the interior $\Interior W^u(f; C_j)$ of the
    unstable subset of $C_j$ in $W$ is a smooth submanifold of
    dimension $\dim C_j + i^+(C_j)$.
  \end{itemize}
\end{lemma}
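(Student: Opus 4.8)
The plan is to work directly in the ambient closed manifold~$W'$. By \Cref{stable subset smooth on closed manifold} the stable set $\Sigma := W^s(f'; C'_j)$ (this is what the statement writes as $W^s(\hat f; C'_j)$) is a smooth submanifold of $W'$ of dimension $\dim C'_j + i^-(C'_j) = \dim C_j + i^-(C_j)$, using \Cref{indices of boundary crit in relation to normal index} and $C_j = C'_j\cap W$. Since $f = f'|_W$ and $g = g'|_W$, a $\nabla f$-trajectory through an interior point coincides with the corresponding $\nabla f'$-trajectory for as long as it remains in $W$. Consequently $\Interior W^s(f; C_j)\subseteq \Sigma\cap \Interior W$, and a point $q\in \Sigma\cap \Interior W$ lies in $\Interior W^s(f;C_j)$ \emph{precisely} when its forward $\nabla f'$-trajectory never leaves $W$. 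As $\Sigma\cap\Interior W$ is an open submanifold of $\Sigma$, it therefore suffices to prove that $\Interior W^s(f;C_j)$ is \emph{open} in $\Sigma$; it is then automatically a smooth submanifold of the asserted dimension contained in $\Interior W$.

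The heart of the argument is a local statement near $C_j$. Using \Cref{box around critical component} I would fix a trapping neighborhood~$U_0$ of $C'_j$ in $W'$ such that every $\Sigma$-trajectory entering $U_0$ stays in $U_0$ for all positive time and converges to $C'_j$; after shrinking $U_0$ around $C_j$ I may also arrange $U_0\cap \p W\subseteq U^-$. I then claim $\Sigma\cap U_0\cap \Interior W\subseteq \Interior W^s(f;C_j)$. Indeed, take $q$ in this set and suppose for contradiction that its forward trajectory~$\gamma$, which stays in $U_0\cap\Sigma$, first meets $\p W$ at some finite time~$t_0$ in a point $p=\gamma(t_0)\in U_0\cap\p W\subseteq U^-$. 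Since $\gamma$ approaches $p$ from the \emph{interior}, $\nabla f(p)$ cannot point strictly into~$W$, so $p\in\p^+W\cup\p^0W$. The case $p\in\p^+W$ is ruled out by the hypothesis, because $p\in\Sigma\cap U^-$ and $\nabla f$ does not point transversely out of $W$ there. The case $p\in\p^0W$ is ruled out by \Cref{def: X-convex boundary}: if $\nabla f(p)=0$ the trajectory would be constant and could not reach $p$ from interior points, while if $\nabla f(p)\neq 0$ then convexity forbids a trajectory lying in $W$ for $t<t_0$. Hence $\gamma$ never meets $\p W$, stays in $\Interior W$, and converges to a point of $C'_j\cap W=C_j$, so $q\in\Interior W^s(f;C_j)$. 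With the obvious reverse inclusion this gives $\Interior W^s(f;C_j)\cap U_0=\Sigma\cap U_0\cap\Interior W$, which is open in $\Sigma$.

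To propagate openness to an arbitrary $p_0\in\Interior W^s(f;C_j)$, I would flow forward to a point $q=\gamma(T)\in\Sigma\cap U_0\cap\Interior W$; since $p_0$ lies in the \emph{interior} stable set, $\gamma([0,T])$ stays in $\Interior W$ and $\nabla$-convexity guarantees it never touches $\p W$. By the flow-box remark opening the proof of \Cref{stable sets are submanifolds}, $\Phi^{\nabla f}_{T}$ is a diffeomorphism between a neighborhood~$U_{p_0}$ of $p_0$ and a neighborhood~$U_q\subseteq U_0$ of $q$, both in $\Interior W$. Because $\Sigma$ and $\Interior W^s(f;C_j)$ are invariant under the flow wherever it stays in $\Interior W$, applying $\Phi^{\nabla f}_{-T}$ carries the open piece $\Sigma\cap U_q\subseteq\Interior W^s(f;C_j)$ onto a neighborhood of $p_0$ in $\Sigma$ that again lies in $\Interior W^s(f;C_j)$. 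This proves openness, hence the first bullet. The second bullet follows verbatim after replacing $f$ by $-f$, which interchanges stable with unstable sets, $i^-$ with $i^+$, and ``out of $W$'' with ``into $W$''.

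The step I expect to be the main obstacle is the local trapping claim of the second paragraph, since everything hinges on simultaneously controlling the trajectory near the \emph{interior} part of $C_j$ and near its boundary $\p C_j = C_j\cap\p W$. Away from $\p W$ the standard Morse-Bott box suffices, but near $\p C_j$ one must combine the given one-sided hypothesis on $\nabla f$ along $\Sigma\cap U^-$ with $\nabla$-convexity to exclude \emph{every} way a trajectory could escape through the boundary. The delicate point, illustrated by \Cref{example gradient along boundary depends on metric} and \Cref{fig: stable subset is manifold if boundary suitable}, is that without the hypothesis the stable set genuinely fails to be a neat submanifold, so the argument has no slack and both cases $p\in\p^+W$ and $p\in\p^0W$ must be closed off cleanly.
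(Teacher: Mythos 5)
Your proof is correct and follows essentially the same route as the paper's: invoke \Cref{stable subset smooth on closed manifold} for $\Sigma = W^s(f';C'_j)$ in the closed manifold, use a trapping neighborhood from \Cref{box around critical component}.(b) together with the one-sided hypothesis along $U^-$ and $\nabla$-convexity to rule out both the $\p^+W$ and $\p^0W$ escape scenarios and thereby identify $\Interior W^s(f;C_j)$ with $\Sigma\cap\Interior W$ near $C_j$, then propagate by flow-box diffeomorphisms. One small repair: \Cref{box around critical component}.(b) provides a \emph{pair} $U_0\subset U_1$ (a $\Sigma$-trajectory entering $U_0$ is trapped in $U_1$, not necessarily in $U_0$ itself), so your single trapping neighborhood should be replaced by such a pair with $U_1\cap\p W\subseteq U^-$, exactly as the paper does; this leaves the rest of your argument unchanged.
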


\begin{figure}[htbp]
  \begin{center}
    \includegraphics[height=3.5cm,keepaspectratio]{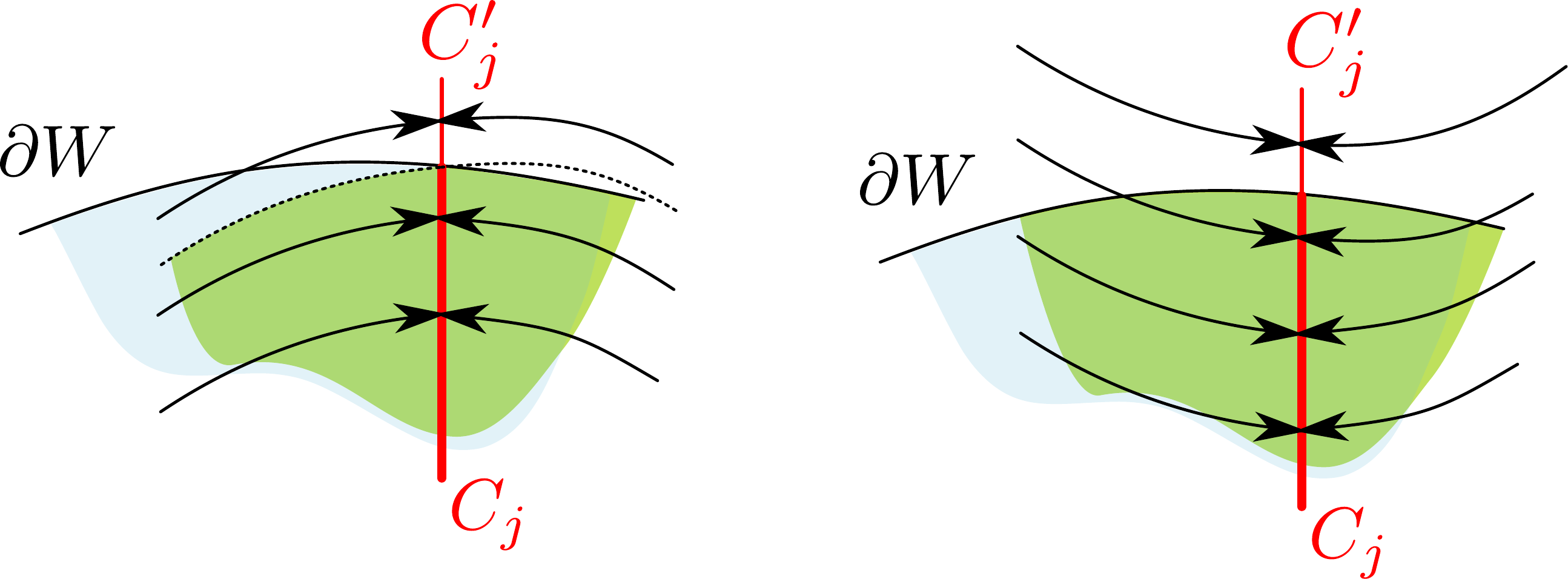}
  \end{center}  
  \caption{In the two pictures above, we have sketched in green color
    the stable set $\Interior W^s(f;C_j)$ of a
    component~$C_j \subset \Crit(f)$ that intersects $\p W$
    transversely.  According to \Cref{stable sets are
      submanifolds}.(b), $\Interior W^s(f;C_j)$ is always
    \emph{contained in} a submanifold, but in the left picture, we see
    that the stable set might be cut-off in an unfortunate way, while
    the picture on the right illustrates how the boundary condition
    stated in \Cref{lemma for stable sets with boundary} avoids this
    problem. \label{fig: stable subset is manifold if boundary
      suitable}}
\end{figure}
\begin{proof}
  We will only consider stable sets; to prove the statements about the
  unstable ones, simply invert the sign of $ f'$.  The stable set
  $W^s\bigl(f'; C'_j\bigr)$ in $W'$ is according to \Cref{stable
    subset smooth on closed manifold} a submanifold of dimension
  $\dim C'_j + i^-(C'_j)$.  We can choose arbitrarily small open
  neighborhoods~$U_0\subset U_1$ of $C'_j$ as in \Cref{box around
    critical component}.(b) such that
  \begin{itemize}
  \item a gradient trajectory of $\nabla  f'$ that passes through
    $U_0$ and later escapes from $U_1$ can never again return to
    $U_0$;
  \item the only critical points lying in $U_1$ are the ones in
    $C'_j$.
  \end{itemize}
  We can furthermore suppose that $U_1$ is so small that
  \begin{itemize}
  \item the intersection of $U_1$ with $\p W$ is contained in $U^-$.
  \end{itemize}

  We will first show that
  $\Interior W^s(f;C_j)\cap U_0 = W^s( f'; C'_j) \cap U_0$ so that
  $\Interior W^s(f;C_j)\cap U_0$ is a smooth submanifold.  The
  inclusion $W^s(f;C_j) \subset W^s( f'; C'_j)$ is obvious, so assume
  that $p$ is any point in $\Interior W^s( f'; C'_j) \cap U_0$.  By
  our assumption, the $\nabla f'$-trajectory through $p$ cannot escape
  from $U_1$, because otherwise it could never return to $U_0$ again,
  and $p$ would certainly not lie in the stable set of $C'_j$.  Since
  $U^-$ does not allow the trajectory to leave $U_1\cap \Interior W$
  either, we see that $p \in W^s(f; C_j)$ as desired.

  If $p$ is now any point in $\Interior W^s(f;C_j)$ and if
  $\gamma\subset W$ is its gradient trajectory, then it follows from
  \Cref{convergence gradient trajectory} that $\gamma$ converges to a
  point on $C_j$.  There is thus a time~$T > 0$ such that $\gamma(T)$
  lies for every $t \ge T$ in the neighborhood~$U_0$.  Furthermore, by
  the convexity of the boundary, $\gamma$ does not intersect $\p W$ on
  the interval $[0,T]$.

  There is a small neighborhood~$U_p$ of $p$ and a small
  neighborhood~$U_q$ of $q = \gamma(T)$ such that
  $U_q \subset U_0\cap \Interior W$ and such that $\Phi_T^{\nabla f}$
  restricts to a diffeomorphism between $U_p$ and $U_q$.  We have
  shown above that
  $\Interior W^s(f;C_j)\cap U_0 = \Interior W^s( f'; C'_j) \cap U_0$
  so that also $W^s(f;C_j)\cap U_q = W^s( f'; C'_j) \cap U_q$.  We
  obtain that
  $W^s(f;C_j)\cap U_p = \Phi_{-T}^{\nabla f}\bigl(W^s(f;C_j)\cap
  U_q\bigr)$ is a smooth submanifold.
\end{proof}

The previous lemma is in general not very practical for direct
applications, because to verify its conditions one would need to
compute the intersection of the boundary with the stable subsets.  The
situation simplifies significantly, if the gradient field does not
point \emph{anywhere} in the neighborhood of $\p C_j$ transversely out
of $W$.  For the main result of this section, \Cref{topology simple if
  no codim 1 stable manifolds}, it is even only necessary to show that
the stable subsets of local maxima and the unstable subsets of local
minima are open.

\begin{corollary}\label{stable set of max min open}
  Let $(W, g)$ be a compact Riemannian manifold with boundary, and let
  $f\colon W\to \RR$ be a Morse-Bott function such that $\p W$ is
  $\nabla$-convex.

  If $C_j$ is a component of $\Crit(f)$ that is a local maximum (a
  local minimum), and if either
  \begin{itemize}
  \item $\p C_j = \emptyset$, or
  \item if $\p C_j$ has a neighborhood~$U_j$ in $\p W$ such that
    $\nabla f$ does not point anywhere along $U_j$ transversely out of
    (into) $W$,
  \end{itemize}
  then it follows that $\Interior W^s(f;C_j)$ is an open subset of
  $\Interior W$ ($\Interior W^u(f;C_j)$ is an open subset of
  $\Interior W$).
\end{corollary}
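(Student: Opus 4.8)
The plan is to reduce everything to the structural results already at our disposal, exploiting the single decisive observation that a local maximum $C_j$ has $i^+(C_j)=0$, so that its stable set carries the \emph{full} dimension $\dim C_j + i^-(C_j) = \dim W$. Once $\Interior W^s(f;C_j)$ is known to be a genuine (boundaryless) smooth submanifold of this dimension lying inside $\Interior W$, invariance of domain forces it to be open. I would only treat the local maximum (stable set) case explicitly; the local minimum (unstable set) case follows verbatim after replacing $f$ by $-f$, which swaps the roles of $\p^+W$ and $\p^-W$ and of "transverse out of'' and "transverse into''.

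First I would dispose of the case $\p C_j = \emptyset$. Here $C_j$ is a closed component sitting in the interior of $W$, so \Cref{stable sets are submanifolds}.(c) applies directly and tells us that $W^s(f;C_j)$ is a smooth submanifold with boundary of dimension $\dim C_j + i^-(C_j) = \dim W$, with $\p W^s(f;C_j)\subset \p W$ and transverse to $\p W$. Deleting the boundary points leaves $\Interior W^s(f;C_j) = W^s(f;C_j)\setminus \p W$, a boundaryless submanifold of dimension $\dim W$ inside the $\dim W$-dimensional manifold $\Interior W$, hence an open subset.

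For the case $\p C_j \neq \emptyset$ I would cap off $(W,g)$ and $f$ to a closed Riemannian manifold $(W',g')$ with a Morse-Bott extension $f'$ and capped component $C'_j$, exactly as in the proof of \Cref{stable sets are submanifolds}.(b), so as to place ourselves in the hypotheses of \Cref{lemma for stable sets with boundary}. That lemma only requires a neighborhood $U^-$ of $C_j\cap\p W$ in $\p W$ along which $\nabla f$ never points transversely out of $W$ \emph{when restricted to} $W^s(f';C'_j)\cap U^-$; our hypothesis furnishes a neighborhood $U_j$ along which $\nabla f$ never points transversely out of $W$ \emph{at any point}, which is strictly stronger. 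Taking $U^- = U_j$ (and noting $W^s(f';C'_j)\cap U_j \subset U_j$), the lemma yields that $\Interior W^s(f;C_j)$ is a smooth submanifold of dimension $\dim C_j + i^-(C_j)$. Since $C_j$ is a local maximum, the normal Hessian is negative definite, so $i^+(C_j)=0$ and $i^-(C_j)=\dim W - \dim C_j$; thus this submanifold again has dimension $\dim W$ and, being boundaryless and full-dimensional inside $\Interior W$, is open.

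The one point that deserves genuine care — and the main obstacle — is not the dimension count but the passage from \Cref{stable sets are submanifolds}.(b), where $\Interior W^s(f;C_j)$ is only \emph{contained in} a submanifold, to the honest submanifold statement needed here. This is precisely the gap illustrated on the left of \Cref{fig: stable subset is manifold if boundary suitable}, where the stable set is cut off in an unfortunate way; the non-transversality assumption on $U_j$ is exactly what \Cref{lemma for stable sets with boundary} uses to rule out such a cut-off, guaranteeing that $\Interior W^s(f;C_j)$ is boundaryless. I would therefore emphasize that the two bullet hypotheses of the corollary are tailored to feed directly into the two bullets of that lemma, so the only real work is verifying that the cap-off can be arranged compatibly with the condition on $U_j$ and that the indices of $C'_j$ in $W'$ coincide with those of $C_j$ in $W$.
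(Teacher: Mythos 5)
Your proof is correct and takes essentially the same route as the paper's: the case $\p C_j = \emptyset$ is handled by \Cref{stable sets are submanifolds}.(c) together with the full-dimension count coming from $i^+(C_j)=0$, and the case $\p C_j \neq \emptyset$ by capping off via \Cref{doubling Morse-Bott} and then invoking \Cref{lemma for stable sets with boundary}. Your extra observations — that the corollary's hypothesis along all of $U_j$ is strictly stronger than the lemma's condition along $W^s(f';C'_j)\cap U^-$, and the explicit index bookkeeping giving dimension $\dim W$ — are accurate fillings-in of details the paper's two-line proof leaves implicit.
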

\begin{proof}
  If $C_j$ does not intersect $\p W$, then it follows from
  \Cref{stable sets are submanifolds}.(c) that $\Interior W^s(f;C_j)$
  and $\Interior W^u(f;C_j)$ respectively are full dimensional
  submanifolds and thus open subsets.  Otherwise if
  $C_j\cap \p W\ne \emptyset$, take first the cap-off of $W$ and $f$
  as in \Cref{doubling Morse-Bott}, and then apply \Cref{lemma for
    stable sets with boundary} so that we obtain again the desired
  claim.
\end{proof}

\medskip

The assumptions about the intersection between local extrema and the
boundary of the manifold made in \Cref{stable set of max min open} and
in \Cref{topology simple if no codim 1 stable manifolds} below may
seem quite artificial, but they are automatically satisfied by
Hamiltonian circle actions so that the results developed in this
section apply to the symplectic manifolds we are interested in.

\begin{theorem}\label{topology simple if no codim 1 stable manifolds}
  Let $(W,g)$ be a connected compact Riemannian manifold with
  boundary, and let $f\colon W \to \RR$ be a Morse-Bott function such
  that $\p W$ is $\nabla$-convex.
  
  We assume that none of the components~$C_j \subset\Crit(f)$ has
  index $i^-(C_j) = 1$ ($i^+(C_j) = 1$).  If $C_j$ is a local maximum
  (local minimum), then we additionally suppose that $C_j$ is either
  closed, or that its boundary $\p C_j = C_j \cap \p W$ has a
  neighborhood $U_j$ in $\p W$ such that $\nabla f$ does not point
  anywhere along $U_j$ transversely out of $W$ (transversely into
  $W$).

  \smallskip

  Then we are in one of the following two situations:
  \begin{itemize}
  \item The set of critical points~$\Crit(f)$ has a unique
    component~$C_{\max}$ that is a local maximum (a unique
    component~$C_{\min}$ that is a local minimum), and $f$ is
    everywhere else on $W\setminus C_{\max}$ strictly smaller than on
    $C_{\max}$ (everywhere else on $W\setminus C_{\min}$ strictly
    larger than on $C_{\min}$), so that this local maximum (local
    minimum) is actually the global one.  The gradient
    field~$\nabla f$ does not point anywhere along $\p W$ transversely
    out of $W$ (into $W$).  Finally, every loop in $W$ can be
    homotoped to one that lies in $C_{\max}$ (that lies in
    $C_{\min}$).
  \item The subset~$\p^+ W$ ($\p^- W$) is non-empty and connected, and
    every loop in $W$ can be homotoped to one that lies in $\p^+ W$
    ($\p^- W$).  None of the components of $\Crit(f)$ is a local
    maximum (local minimum), and $f$ takes its global maximum (global
    minimum) on the boundary of $W$.
  \end{itemize}
\end{theorem}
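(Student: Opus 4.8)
The plan is to flow everything upward and to read off the two cases from the partition of $W$ into stable sets. First I would record which stable sets are full-dimensional. By \Cref{stable sets partition}, and since convexity forces a trajectory that leaves $W$ in finite time to exit through $\partial^+W$, we have the disjoint decomposition
\[
W \;=\; \bigsqcup_{C_j\subset\Crit(f)} W^s(f;C_j)\;\sqcup\;\bigsqcup_{\partial^+_lW}W^s(f;\partial^+_lW) \;.
\]
By \Cref{stable sets are submanifolds}(a) each $W^s(f;\partial^+_lW)$ is open, and the extra assumption made here on local maxima is precisely the hypothesis of \Cref{stable set of max min open}, so $\Interior W^s(f;C_j)$ is open whenever $C_j$ is a local maximum. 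For every component $C_j$ that is \emph{not} a local maximum, \Cref{stable sets are submanifolds}(b) places $\Interior W^s(f;C_j)$ inside a submanifold whose codimension the index hypothesis forbids from being one; hence each such stable set lies in a submanifold of codimension at least two and is in particular nowhere dense. Let $U$ be the union of the open strata, that is, of the sets $W^s(f;\partial^+_lW)$ together with the $\Interior W^s(f;C_j)$ for $C_j$ a local maximum.

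Second, I would check that $U$ is connected, dense, nonempty, and $\pi_1$-surjective. Its complement $W\setminus U$ is a finite union of the nowhere-dense stable sets above, hence is contained in a finite union of submanifolds of codimension $\ge 2$, possibly meeting $\partial W$. A standard general-position argument then shows that $W\setminus U$ does not disconnect $W$, so $U$ is connected, and that every loop in $W$ can be perturbed within $W$ off this codimension-$\ge 2$ set to lie in $U$, giving surjectivity of $\pi_1(U)\to\pi_1(W)$. Nonemptiness is automatic: $f$ attains its global maximum on the compact manifold $W$, and at such a point $\nabla f$ is either zero, so the point lies on a local-maximum component whose open stable set meets $U$, or an outward normal, so the point lies in $\partial^+W$; a tangential possibility is excluded, since a boundary maximum annihilates all tangential derivatives.

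Third — and this is the crux of the dichotomy — I would extract the two cases purely from the fact that the \emph{connected} set $U$ is presented as a \emph{disjoint union of open} subsets: a connected space cannot be written as a disjoint union of two or more nonempty open sets, so exactly one of the constituent strata is nonempty. Either it is $\Interior W^s(f;C_{\max})$ for a single local maximum $C_{\max}$, in which case no positive boundary stratum occurs and hence $\partial^+W=\emptyset$, so $\nabla f$ points nowhere transversely out of $W$; or it is $W^s(f;\partial^+_lW)$ for a single component, in which case $\partial^+W$ is nonempty and connected and there are no local maxima at all. The forward gradient flow retracts the surviving stratum onto its top: trajectories in $\Interior W^s(f;C_{\max})$ converge to $C_{\max}$ by \Cref{convergence gradient trajectory}, while those in $W^s(f;\partial^+_lW)$ reach $\partial^+W$ in finite time. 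Using the boundary collar to keep the homotopy continuous up to the exit time, this is a deformation retraction of $U$ onto $C_{\max}$, respectively onto $\partial^+W$, which composed with the $\pi_1$-surjectivity of $U\hookrightarrow W$ yields that every loop of $W$ is homotopic into $C_{\max}$ (resp. $\partial^+W$) and the asserted surjections on fundamental groups. In the first case the strict inequality $f<\max f$ off $C_{\max}$ follows because any other point either flows up to $C_{\max}$ with strictly increasing $f$, or, were it a critical global maximum, would produce a second local-maximum component contradicting uniqueness; in the second case $f$ attains its maximum on $\partial^+W$ by the nonemptiness discussion.

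The main obstacle is the second step. The non-maximal stable sets are only \emph{contained in} codimension-$\ge 2$ submanifolds, are in general not closed, and may run into $\partial W$, so one must run the $\pi_0$- and $\pi_1$-surjectivity argument in general position relative to the boundary, and simultaneously verify that the forward-flow deformation retraction stays continuous as trajectories strike $\partial^+W$ in finite time. Once these technical points are settled, the disjoint-open-cover observation that produces the dichotomy is immediate.
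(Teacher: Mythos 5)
Your proposal is correct and follows essentially the same route as the paper's proof: partition $W$ by stable sets via \Cref{stable sets partition}, use \Cref{stable sets are submanifolds} and \Cref{stable set of max min open} to see that the strata of $\p_l^+W$ and of local maxima are open while all other stable sets lie in codimension-$\ge 2$ submanifolds (here the hypothesis $i^+\ne 1$ enters), deduce the dichotomy from connectedness of the union of open strata by a general-position argument, and obtain the $\pi_1$-statements and the location of the global maximum by flowing forward. The only (harmless) divergences are cosmetic: at a boundary global maximum you argue via vanishing tangential derivatives where the paper uses a level-set argument at regular points of $\p^0W$, and you assert a full deformation retraction onto $C_{\max}$ where the paper, more cautiously, retracts onto arbitrarily small tubular neighborhoods of $C_{\max}$ before collapsing them.
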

\begin{proof}
  Let $W_0$ be the union of all the stable sets of boundary components
  of $W$ and of all the stable sets of components
  $C_j \subset \Crit (f)$ that are local maxima.  According to
  \Cref{stable sets partition}, the stable subsets partition $W$ so
  that the complement of $W_0$ is the union of all
  $\Interior W^s(f;C_k)$ for which $C_k\subset \Crit(f)$ is not a
  local maximum.

  With our assumptions it follows from \Cref{stable set of max min
    open} that $\Interior W^s(f; C_j)$ is open for every
  component~$C_j$ that is composed of local maxima.  By the convexity
  assumption, the flow line of a point $p\in \Interior W$ intersecting
  the boundary of $W$ will do so transversely so that
  $\Interior W^s(f;\p_k W)$ is the disjoint union of all
  $\Interior W^s(f;\p_l^+ W)$ with $\p_l^+W\subset \p_k W$, and
  furthermore each of the stable subsets~$\Interior W^s(f;\p_l^+ W)$
  is by \Cref{stable sets are submanifolds}.(a) open.

  Thus we see that $\Interior W_0$ decomposes into subsets that are
  open and pairwise disjoint.  We will show that $\Interior W_0$ is
  path-connected, so that all but one of the stable subsets composing
  $\Interior W_0$ have to be empty.

  \smallskip
  
  Let $p$ and $p'$ be any two points in the interior of $W_0$, and
  join them by a path~$\gamma$ in $\Interior W$.  We can perturb
  $\gamma$ in such a way that it will lie in $\Interior W_0$: if
  $C_k \subset \Crit(f)$ is a \emph{closed} component, recall that by
  \Cref{stable sets are submanifolds}.(c) the interior of its stable
  subset $W^s(f;C_k)$ is a smooth submanifold of dimension
  $\dim C_k + i^-(C_k)$; if $C_k$ is a component of $\Crit(f)$ with
  $\p C_k \ne \emptyset$, then its stable set $\Interior W^s(f;C_k)$
  does not need to be a smooth submanifold, but by \Cref{stable sets
    are submanifolds}.(b) it is nonetheless \emph{contained} in a
  smooth submanifold of dimension $\dim C_k + i^-(C_k)$.  It follows
  that the complement of $\Interior W_0$ in $\Interior W$ lies in the
  finite union of smooth submanifolds that are each at least of
  codimension~$2$.
  
  A generic perturbation of $\gamma$ will be transverse to all of
  these submanifolds (see for example
  \cite[Theorem~\S~3.2.5]{HirschDiffTopology}; it is not required that
  the submanifolds are closed).  By the dimension formula, the
  intersection between $\gamma$ and any of the stable subsets
  $W^s(f;C_k)$ in the complement of $W_0$ would be at most of
  dimension~$-1$, that is, the intersection has to be empty.  This
  shows that the perturbed path~$\gamma$ is contained in
  $\Interior W_0$ so that $\Interior W_0$ is indeed path-connected.
  
  We deduce that $\Interior W_0$ is only composed of a single stable
  set, either $\Interior W^s(f; \p_l^+ W)$ or $\Interior W^s(f; C_j)$
  for $C_j$ a local maximum --- all other potential stable sets
  composing $\Interior W_0$ need to be empty.  This proves that either
  there is a unique component of $\Crit(f)$ that is a local maximum or
  that $\partial^+W$ is non-empty and connected.

  \medskip

  In order to prove that every loop~$\gamma \subset W$ can be
  homotoped either to a loop in $\p^+ W$ or to a loop in $C_{\max}$,
  apply the same transversality argument as above to ensure that
  $\gamma$ lies in $\Interior W_0$.  Since $\Interior W_0$ is composed
  of a single stable set, it retracts via the gradient flow either to
  $\p^+W$ or to an arbitrarily small neighborhood of $C_{\max}$ thus
  proving the desired claim.

  \medskip
  
  Finally note that since $W$ is compact $f$ takes somewhere a
  maximum, either in the interior of $W$ or on its boundary.  If there
  is no component~$C_j$ in $\Crit(f)$ that is a local maximum, then it
  is clear that the maximum of $f$ has to lie on $\p W$.

  If there is on the other hand a component~$C_{\max}$, then $\p^+ W$
  is empty by what we have just shown.  The global maximum of $f$
  cannot lie at a point of $\p^-W$, because the gradient points along
  $\p^-W$ transversely into $W$ so that the function is necessarily
  increasing in inward direction.  Assume thus that the maximum lies
  at a point $p$ on $\p^0W$.  If $p$ is a regular point of $f$, then
  the level set of $p$ is locally a regular hypersurface that is
  transverse to $\p W$.  This implies that we find close to $p$ a
  point~$p'$ in $\Interior W$ lying on the same level set as $p$ such
  that the level set is also regular in $p'$.  Following the gradient
  flow from $p'$ the function~$f$ increases further so that the global
  maximum cannot lie at $p'$ and thus also not at $p$.

  We deduce that if the global maximum lies on $\p W$ it necessarily
  needs to lie at a point where $\nabla f$ vanishes, that is, it lies
  on a critical point of $f$.  With our definition of Morse-Bott
  function, we know that every component of $\Crit(f)$ is transverse
  to $\p W$ and thus it follows that $f$ takes its maximum on a
  component $C_j \subset \Crit(f)$.  This component needs to be a
  local maximum so that $C_j = C_{\max}$ as desired.
  
  The statements in parenthesis are easily deduced from the original
  ones by changing the sign of $f$.
\end{proof}

\subsection{Closed $1$-forms of Morse-Bott type}
\label{sec: local to global}

Consider a connected Riemannian manifold~$(W,g)$ with possibly
non-empty boundary, and let $\eta$ be a closed $1$-form on $W$.  We
can define a vector field~$X_\eta$ by the equation
\begin{equation*}
  g(X_\eta,\cdot) = \eta \;.
\end{equation*}
Locally, every closed $1$-form is exact so that $X_\eta$ is locally a
gradient vector field.

In this section we give a sufficient criterion for $\eta$ to be
globally exact that we will then be used in Section~\ref{sec:
  Hamiltonian circle action} to prove that every compact symplectic
$\SS^1$-manifold with contact type boundary is Hamiltonian
(\Cref{theorem: boundary+symplectic is Hamiltonian}).  The initial
motivation for this theorem was a result due to McDuff
\cite{McDuffFixedPointsHamiltonianGroupsActions} stating that a closed
symplectic $\SS^1$-manifold is Hamiltonian if the $1$-form obtained as
the contraction of the symplectic form with the infinitesimal
generator of the circle action has a local maximum or minimum.  We
also reprove this result.

\medskip

We will always assume in this section that $\eta$ is of
\defin{Morse-Bott type}, that is, the components of $\Crit(\eta)$ are
smooth submanifolds, and the local primitives of $\eta$ around the
critical points are of Morse-Bott type.  If $W$ has non-empty
boundary, then we assume additionally that the components of
$\Crit(\eta)$ intersect $\p W$ transversely.

The indices of a critical point of $\eta$ are simply the Morse-Bott
indices of the corresponding local primitive and we say that a
critical point of $\eta$ is a local maximum or local minimum if it is
a local maximum or minimum of the local primitive.

The main result we want to prove in this section is the following
theorem.

\begin{theorem}\label{theorem:exactness with boundary}
  Let $\eta$ be a closed $1$-form of Morse-Bott type on a compact
  connected Riemannian manifold~$(W,g)$.  We assume that
  \begin{itemize}
  \item none of the critical points of $\eta$ has indices $i^- = 1$ or
    $i^+ = 1$;
  \item $\p W$ is either empty or convex with respect to $X_\eta$;
  \item every critical point~$p$ that is a local maximum of $\eta$ and
    that lies in $\p W$ admits an open neighborhood~$U_p \subset \p W$
    in the boundary, such that $X_\eta$ does not point anywhere along
    $U_p$ out of $W$;
  \item every critical point~$p$ that is a local minimum of $\eta$ and
    that lies in $\p W$ admits an open neighborhood~$U_p \subset \p W$
    in the boundary, such that $X_\eta$ does not point anywhere along
    $U_p$ into $W$.
  \end{itemize}

  If at least one of the two following conditions hold, then $\eta$ is
  exact:
  \begin{itemize}
  \item [(a)] one of the components of $\Crit(\eta)$ is composed of
    local minima or local maxima;
  \item [(b)] there exists a boundary component~$\p_0 W$ of $W$ such
    that the restriction~$\restricted{\eta}{\p_0 W}$ is exact.
  \end{itemize}
\end{theorem}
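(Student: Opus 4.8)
The plan is to reduce exactness to the vanishing of all periods of~$\eta$. Recall that a closed $1$-form on~$W$ is exact if and only if $\int_\gamma \eta = 0$ for every loop~$\gamma$, and that this integral is invariant under homotopy of~$\gamma$. The whole argument rests on applying \Cref{topology simple if no codim 1 stable manifolds} to the flow of~$X_\eta$: although that theorem is phrased for a globally defined Morse-Bott function, its proof uses only the flow of the gradient field, the fact that local primitives strictly increase along non-constant trajectories, the $\nabla$-convexity of~$\p W$, and the index and local-extremum hypotheses. Since $\eta$ is locally exact and of Morse-Bott type, $X_\eta$ is locally the gradient of such a primitive, and the hypotheses of \Cref{topology simple if no codim 1 stable manifolds} translate verbatim into our four bullet assumptions. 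Hence the dichotomy of that theorem applies to~$X_\eta$ (and, after changing the sign of~$\eta$, to~$-X_\eta$). If $\eta \equiv 0$ there is nothing to prove, so I assume $\eta \not\equiv 0$ throughout.

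First I would dispose of case~(a). Suppose a component of $\Crit(\eta)$ consists of local maxima. Because the second alternative of \Cref{topology simple if no codim 1 stable manifolds} contains no local maximum, the presence of such a component forces the first alternative: there is a unique local-maximum component~$C_{\max}$, and every loop in~$W$ can be homotoped into~$C_{\max}$. Since $\eta$ vanishes identically on $\Crit(\eta)$, and in particular on~$C_{\max}$, every loop lying in~$C_{\max}$ has zero period; by homotopy invariance every loop of~$W$ has zero period, and $\eta$ is exact. If instead the distinguished component consists of local minima, the same argument applied to~$-\eta$ (whose local maxima are the local minima of~$\eta$) yields the conclusion.

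For case~(b), I first run case~(a): if $\Crit(\eta)$ has any local-maximum or local-minimum component we are already done. So I may assume there are none, in which case both dichotomies land in their second alternative: $\p^+W$ and $\p^-W$ are each non-empty, connected, and every loop of~$W$ homotopes into~$\p^+W$ as well as into~$\p^-W$. Now let $\p_0 W$ be a boundary component with $\restricted{\eta}{\p_0 W} = dh_0$ exact. The crucial step is to locate $\p^+W$ or $\p^-W$ inside~$\p_0 W$. By \Cref{convex boundary cannot be everywhere tangent to gradient} applied to a local primitive, $\p^0 W$ is nowhere dense in~$\p W$, so the open-and-closed component~$\p_0 W$ cannot be contained in~$\p^0 W$; therefore $\p_0 W$ meets $\p^+W$ or $\p^-W$. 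If it meets~$\p^+W$, then since $\p^+W$ is connected and every connected subset of~$\p W$ lies in a single boundary component, we conclude $\p^+W \subseteq \p_0 W$; consequently $\restricted{\eta}{\p^+W}$ is the restriction of the exact form~$dh_0$ and is itself exact. As every loop of~$W$ homotopes into~$\p^+W$, all periods vanish and $\eta$ is exact. If $\p_0 W$ meets~$\p^-W$ instead, the symmetric argument with~$\p^-W$ and the $-\eta$ dichotomy applies.

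The step I expect to be the main obstacle is exactly this matching in case~(b): the hypothesis only provides exactness on \emph{some} boundary component, whereas the topological retraction forces loops into the a priori unrelated connected set~$\p^+W$ (or~$\p^-W$). Reconciling the two hinges on the two facts that $\p^0W$ is nowhere dense (so $\p_0 W$ genuinely meets~$\p^\pm W$) and that $\p^+W$, $\p^-W$ are connected (so meeting~$\p_0 W$ forces containment). A secondary point to check carefully is the legitimacy of transporting \Cref{topology simple if no codim 1 stable manifolds} from honest functions to the closed form~$\eta$; this is routine once one observes that every construction in its proof is phrased through the flow of the gradient field and the monotonicity of local primitives, neither of which requires a global primitive.
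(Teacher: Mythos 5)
There is a genuine gap, and it sits exactly at the step you yourself flag as ``routine'': the transport of \Cref{topology simple if no codim 1 stable manifolds} from honest Morse-Bott functions to the closed form~$\eta$. The proof of that theorem rests on \Cref{stable sets partition}, which in turn rests on \Cref{convergence gradient trajectory} (\L{}ojasiewicz): every forward-complete gradient trajectory \emph{converges} to a critical point. That convergence statement is proved for a genuine, single-valued Morse-Bott function (the proof manipulates $f-f(p_\infty)$ and the inequality $\norm{\nabla f}\ge c\sqrt{\abs{f}}$), and it is simply false for the dual field~$X_\eta$ of a non-exact closed $1$-form: on $\TT^2$ with $\eta = d\theta$ the trajectories of $X_\eta$ are periodic, and for an irrational linear form they are dense, while $\Crit(\eta)=\emptyset$. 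Local monotonicity of primitives does not exclude such recurrence, precisely because the primitive is multivalued: each circuit of a loop with nonzero period raises its value, so a trajectory can return arbitrarily close to its starting point with the local primitive strictly increased. Once recurrent (non-convergent) trajectories are possible, points on them lie in no stable set, the partition of $W$ underlying \Cref{topology simple if no codim 1 stable manifolds} fails, and with it the whole open-plus-connected argument that produces your dichotomy (uniqueness of $C_{\max}$, connectedness of $\p^+W$, and the homotoping of loops into $C_{\max}$ or $\p^+W$). Note that this dichotomy downstairs is essentially equivalent to what you are trying to prove: it becomes available for $X_\eta$ only \emph{after} exactness is known, so invoking it is circular. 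A further warning sign: if your transport were legitimate, McDuff's theorem (\Cref{theorem: maximum+symplectic is Hamiltonian}) would follow with no work at all, whereas it is a genuinely nontrivial result.

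This is exactly why the paper's proof takes a different route: it lifts $\eta$ to the minimal cover $\pi\colon \widetilde W\to W$ on which $\pi^*\eta = d\tilde f$ is exact, so that upstairs one has a true Morse-Bott function whose trajectories do converge, and then the entire difficulty of the theorem becomes the non-compactness of $\widetilde W$ --- handled by \Cref{no gradient trajectory escapes to infinity}, which confines gradient trajectories of $\tilde f$ in $\widetilde W_{[a,b]}$ to balls of uniform radius. The dichotomy-style connectedness argument is then run inside $\widetilde W_{[a,b]}$ to show the cover must be simple, which yields exactness. Your endgame, by contrast, is sound as far as it goes and is a genuinely nice shortcut \emph{given} the dichotomy: homotoping loops into $C_{\max}$ (where $\eta$ vanishes) in case~(a), and in case~(b) locating $\p^+W$ inside $\p_0 W$ via nowhere-density of $\p^0W$ (\Cref{convex boundary cannot be everywhere tangent to gradient} is indeed local, so that part is fine) together with connectedness of $\p^+W$, then killing periods. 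But since the dichotomy for $X_\eta$ on $W$ itself is unavailable without the covering argument, the proposal as written does not constitute a proof; to repair it you would have to redo your two cases in the cover, which is precisely the content of the paper's argument.
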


To prove this theorem, we will lift $\eta$ to the smallest covering
space $\pi\colon \widetilde W\to W$ on which the lift of $\eta$ is
exact.  This way, we have a genuine Morse-Bott function on
$\widetilde W$ that satisfies all properties required in
\Cref{topology simple if no codim 1 stable manifolds} except of course
for the compactness of $\widetilde W$.  A uniqueness statement for
local extrema or for boundary components of $\widetilde W$ as in
\Cref{topology simple if no codim 1 stable manifolds} would force
$\widetilde W\to W$ to be a simple cover as otherwise every local
extremum or every boundary component in $W$ downstairs would lead to
several such components upstairs.  The main technical complication in
our proof is thus to deal with the non-compactness of $\widetilde W$.

\medskip

\textbf{Construction of the cover $\widetilde W\to W$:} Let $W$ be a
smooth compact connected manifold with possibly non-empty boundary,
and let $\eta$ be a closed $1$-form on $W$.  Analogously to the
construction of the universal cover, fix a point $x_0\in W$ and
consider the set of all piecewise smooth paths
$\Gamma = \bigl\{\gamma\colon [0,1]\to W\bigm|\; \gamma(0) =
x_0\bigr\}$.  We define an equivalence relation~''$\sim$'' on $\Gamma$
by saying $\gamma \sim \gamma'$ for $\gamma, \gamma' \in \Gamma$, if
and only if $\gamma(1) = \gamma'(1)$ and
$\int_\gamma \eta = \int_{\gamma'}\eta$.  Denote the space of
equivalence classes by $\widetilde W := \Gamma/\!\sim$, and note that
there is a natural surjective map $\pi\colon \widetilde W \to W$ given
by $\pi([\gamma]) = \gamma(1)$.

To construct a smooth structure on $\widetilde W$, let $U_x$ be any
open neighborhood of a point~$x\in W$ such that
$\restricted{\eta}{U_x}$ is exact.  We find for every
$\tilde x \in \widetilde W$ with $\pi(\tilde x) = x$, a natural lift
$\rho_{\tilde x} \colon U_x\to \widetilde W$ such that
$\rho_{\tilde x}(x) = \tilde x$ and
$\pi\circ \rho_{\tilde x} = \id_{U_x}$.  More precisely, if
$\gamma\in \Gamma$ is a path representing $\tilde x$, then choose for
any point~$x'\in U_x$, a path~$\psi$ in $U_x$ connecting $x$ to $x'$
and define $\rho_{\tilde x}(x')$ as the point in $\widetilde W$
represented by the concatenation $\gamma \cdot \psi$.  For any other
choice of path~$\psi' \subset U_x$ connecting $x$ to $x'$, we have
$\int_{\psi'} \eta = \int_\psi \eta$ so that
$\gamma \cdot \psi \sim \gamma \cdot \psi'$ which implies that
$\rho_{\tilde x}$ does not depend on the choice of $\psi$.  Similarly,
one can also see that the construction does not depend on the choice
of $\gamma$.

Furthermore if $\tilde x \ne \tilde x'$ are any two points lying in
the same fiber over $x$, then the images of $\rho_{\tilde x}$ and
$\rho_{\tilde x'}$ are disjoint, for otherwise, there would be a
point~$y\in U_x$ such that $\rho_{\tilde x}(y) = \rho_{\tilde x'}(y)$.
This way, if $\tilde x$ and $\tilde x'$ are represented by
paths~$\gamma$ and $\gamma'$ respectively, we obtain the points
$\rho_{\tilde x}(y)$ and $\rho_{\tilde x'}(y)$ by attaching to
$\gamma$ and $\gamma'$ respectively a path~$\psi\subset U_x$ from $x$
to $y$.  Since the integrals of $\eta$ over $\gamma\cdot \psi$ and
$\gamma'\cdot \psi$ agree by definition, the integrals over $\gamma$
and $\gamma'$ also have to agree, implying that
$\tilde x = \tilde x'$.

With the help of these maps, we can lift any local structure from $W$
to $\widetilde W$.  In particular, we can equip $\widetilde W$ with a
topology such that $\pi\colon \widetilde W \to W$ is a covering space,
and $\widetilde W$ carries a unique smooth structure coming from the
base.  We will always assume that $\widetilde W$ is equipped with the
pull-back metric $\tilde g = \pi^* g$.

Furthermore, $\widetilde W$ is path-connected: Denote by
$\tilde x_0\in\widetilde W$ the class of the constant path at $x_0$,
and let $\tilde x$ be a point in $\widetilde W$ that is represented by
a path~$\gamma$.  Consider the family of paths~$\gamma_s$ for
$s\in[0,1]$, given by
\begin{equation*}
  \gamma_s(t) := \begin{cases}
    \gamma(t),&  t\in[0,s],\\
    \gamma(s), & t\in[s,1].
  \end{cases}
\end{equation*}
For any fixed $s\in[0,1]$, $[\gamma_s]$ represents a point in
$\widetilde W$, and the map $s\mapsto [\gamma_s]$ is a path in
$\widetilde W$ that connects $\tilde x_0$ to $\tilde x = [\gamma]$.

\begin{lemma}\label{lemma lift of exact neighborhood}
  Let $U\subset W$ be a connected open set on which $\eta$ is exact.
  Then, it follows that $\pi^{-1}(U)\to U$ is a trivial cover, that
  is, $\pi^{-1}(U)$ is for every choice of $x\in U$ naturally
  diffeomorphic to the disjoint union
  $\sqcup_{\tilde x \in \pi^{-1}(x)} \{\tilde x\}\times U$.
\end{lemma}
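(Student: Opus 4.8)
The plan is to exhibit, for a fixed basepoint $x\in U$ and each $\tilde x\in \pi^{-1}(x)$, a global smooth section $\rho_{\tilde x}\colon U\to \widetilde W$ of $\pi$, and then to check that these sections have pairwise disjoint images whose union is all of $\pi^{-1}(U)$. Granting this, the map $\bigsqcup_{\tilde x\in \pi^{-1}(x)} \{\tilde x\}\times U \to \pi^{-1}(U)$ given by $(\tilde x, x')\mapsto \rho_{\tilde x}(x')$ is a bijection that restricts on each slice to a diffeomorphism onto an open subset of $\widetilde W$, hence is the desired natural trivialization. The whole argument is powered by one observation: since $U$ is connected and $\restricted{\eta}{U}$ is exact, the integral $\int_\psi \eta$ over a path $\psi$ contained in $U$ depends only on the endpoints of $\psi$, because any two such paths differ by a loop in $U$ on which $\eta$ integrates to zero.

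First I would construct the sections. Represent $\tilde x$ by a path $\gamma$, and for $x'\in U$ set $\rho_{\tilde x}(x') := [\gamma\cdot \psi]$, where $\psi$ is any path from $x$ to $x'$ inside $U$. The path-independence just noted shows that this does not depend on the choice of $\psi$, and a comparison of representatives shows it does not depend on the choice of $\gamma$ either; by construction $\pi\circ \rho_{\tilde x} = \id_U$. This is exactly the recipe used to define the chart maps of $\widetilde W$, but now carried out over the whole connected set $U$ rather than over a small exact neighborhood.

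Next I would verify smoothness. The key is that on a small exact neighborhood $U_{x'}\subset U$ of any point $x'\in U$, the global lift $\rho_{\tilde x}$ agrees with the chart map $\rho_{\rho_{\tilde x}(x')}$ used to define the smooth structure: for $x''\in U_{x'}$ both assign $[\gamma\cdot \psi'']$ for a path $\psi''$ from $x$ to $x''$ in $U$, and the two admissible choices differ only by a loop in $U$, over which $\int\eta = 0$. Thus $\rho_{\tilde x}$ is locally one of the defining charts, hence a smooth section, and therefore a diffeomorphism onto an open subset of $\widetilde W$.

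Finally I would check disjointness and surjectivity. If $\rho_{\tilde x}(y) = \rho_{\tilde x'}(y)$ for some $y\in U$, then representing $\tilde x,\tilde x'$ by $\gamma,\gamma'$ and using a common path $\psi$ from $x$ to $y$ forces $\int_\gamma \eta = \int_{\gamma'}\eta$, whence $\gamma\sim \gamma'$ and $\tilde x = \tilde x'$; so distinct sections have disjoint images. For surjectivity, given $\tilde z\in \pi^{-1}(U)$ with $\pi(\tilde z)=z$ represented by a path $\delta$, choose a path $\psi$ from $z$ to $x$ in $U$ and put $\tilde x := [\delta\cdot \psi]\in \pi^{-1}(x)$; evaluating $\rho_{\tilde x}$ at $z$ along the reverse path $\bar\psi$ yields $[\delta\cdot \psi\cdot \bar\psi] = [\delta] = \tilde z$, so $\tilde z\in \rho_{\tilde x}(U)$. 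The main obstacle, although elementary in this setting, is precisely the bookkeeping of the preceding step: one must confirm that exactness of $\restricted{\eta}{U}$ upgrades the locally defined chart lifts into a single globally consistent section over all of the connected set $U$. Once the path-independence of $\int\eta$ inside $U$ is established, every remaining assertion reduces to comparing integrals of $\eta$ over paths that differ by a null-homologous loop.
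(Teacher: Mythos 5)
Your proposal is correct and follows essentially the same route as the paper: both construct the sections $\rho_{\tilde x}$ over all of the connected set $U$ via the path-concatenation recipe, using exactness of $\restricted{\eta}{U}$ for path-independence of $\int\eta$, and then verify disjointness of the images and that they exhaust $\pi^{-1}(U)$. The paper's proof is merely terser—it invokes the lifts already built in the construction of $\widetilde W$ and asserts the decomposition $\pi^{-1}(U) = \sqcup_{\tilde x \in \pi^{-1}(x)} \rho_{\tilde x}(U)$ without detail—whereas you spell out the smoothness comparison with the defining charts and the surjectivity argument via $[\delta\cdot\psi\cdot\bar\psi] = [\delta]$, both of which are sound.
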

\begin{proof}
  As explained above, we find for every $\tilde x\in \widetilde W$
  with $\pi(\tilde x) = x$ a natural lift~$\rho_{\tilde x}$.  The
  images of two lifts~$\rho_{\tilde x}$ and $\rho_{\tilde x'}$ for two
  different points $\pi(\tilde x) = \pi(\tilde x') = x$ are disjoint.

  This way we obtain
  $\pi^{-1}(U) = \sqcup_{\tilde x \in \pi^{-1}(x)} \rho_{\tilde
    x}(U)$.  Since the $\rho_{\tilde x}$ were used to lift the smooth
  structure from $W$ to $\widetilde W$, we obtain by definition that
  each $\rho_{\tilde x}$ is a diffeomorphism showing the desired
  statement.
\end{proof}

\medskip

\begin{proposition}\label{proposition:exact}
  The lift of $\eta$ to the covering~$\widetilde W$ is exact.
  Furthermore, if $\tilde f\colon \widetilde W\to \RR$ is a primitive
  of $\pi^*\eta$, it follows that two points $\tilde x, \tilde x'$
  lying in the same fiber of $\pi$ are equal if and only if
  $\tilde f(\tilde x) = \tilde f(\tilde x')$.
\end{proposition}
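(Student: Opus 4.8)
The plan is to exhibit a concrete primitive of $\pi^*\eta$ whose values encode exactly the integral data used to define $\widetilde W$, and then to observe that the second claim is essentially a restatement of the equivalence relation. First I would define a candidate primitive
\begin{equation*}
  \tilde f_0\colon \widetilde W \to \RR, \qquad
  \tilde f_0\bigl([\gamma]\bigr) := \int_\gamma \eta \;.
\end{equation*}
This is well-defined on equivalence classes precisely because the relation ''$\sim$'' defining $\widetilde W$ declares $\gamma \sim \gamma'$ only when $\int_\gamma \eta = \int_{\gamma'}\eta$ (in addition to $\gamma(1)=\gamma'(1)$), so the value $\int_\gamma\eta$ does not depend on the representing path.

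Next I would check that $\tilde f_0$ is a primitive of $\pi^*\eta$, working locally with the lifts $\rho_{\tilde x}$ introduced in the construction of the smooth structure. Fix $\tilde x$ with $\pi(\tilde x)=x$, choose a connected neighborhood~$U_x$ on which $\eta = dh$ is exact, and represent $\tilde x$ by a path~$\gamma$. For $x'\in U_x$ joined to $x$ by a path~$\psi \subset U_x$, the definition of $\rho_{\tilde x}$ gives
\begin{equation*}
  \tilde f_0\bigl(\rho_{\tilde x}(x')\bigr)
  = \int_{\gamma\cdot\psi}\eta
  = \int_\gamma \eta + \int_\psi \eta
  = \tilde f_0(\tilde x) + h(x') - h(x) \;.
\end{equation*}
Since $\rho_{\tilde x}$ is a diffeomorphism onto its image satisfying $\pi\circ\rho_{\tilde x}=\id_{U_x}$, differentiating shows $d\tilde f_0 = \pi^*(dh) = \pi^*\eta$ on the image of $\rho_{\tilde x}$. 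As these images cover $\widetilde W$ (by \Cref{lemma lift of exact neighborhood}), we conclude that $d\tilde f_0 = \pi^*\eta$ globally, proving exactness of the lift.

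Finally, for the characterization of fibers, I would first reduce to $\tilde f_0$: since $\widetilde W$ is connected (as shown in the construction), any two primitives of $\pi^*\eta$ differ by a global additive constant, so the truth of the ''$\tilde f(\tilde x)=\tilde f(\tilde x')$'' condition is independent of the chosen primitive. It therefore suffices to argue with $\tilde f_0$. If $\tilde x=[\gamma]$ and $\tilde x'=[\gamma']$ lie in the same fiber, then by definition $\gamma(1)=\pi(\tilde x)=\pi(\tilde x')=\gamma'(1)$, so the equivalence relation reduces on this fiber to the single condition $\int_\gamma\eta=\int_{\gamma'}\eta$, i.e.\ $\tilde f_0(\tilde x)=\tilde f_0(\tilde x')$. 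Thus $\tilde x=\tilde x'$ if and only if $\tilde f_0(\tilde x)=\tilde f_0(\tilde x')$, which is the desired equivalence. I do not expect a genuine obstacle here; the only point requiring care is the local primitive computation verifying $d\tilde f_0=\pi^*\eta$, and the observation that the cohomological content of the second statement is exactly what was built into the equivalence relation defining $\widetilde W$, so the proof is essentially tautological once $\tilde f_0$ is identified.
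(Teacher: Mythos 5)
Your proposal is correct and follows essentially the same route as the paper: both define the primitive $\tilde f_0\bigl([\gamma]\bigr) = \int_\gamma \eta$, verify $d\tilde f_0 = \pi^*\eta$ locally via the lifts $\rho_{\tilde x}$, and observe that the fiber characterization is built into the equivalence relation defining $\widetilde W$. Your explicit reduction from an arbitrary primitive $\tilde f$ to $\tilde f_0$ via connectedness of $\widetilde W$ is a small point the paper leaves implicit, and it is handled correctly.
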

\begin{proof}
  Define a function $\tilde f\colon \widetilde W \to \RR$ by
  $\tilde f(\tilde x) := \int_\gamma \eta$, where $\gamma\in \Gamma$
  is any path representing the point $\tilde x \in \widetilde W$.  By
  our construction, $\tilde f$ is well-defined, and for every $x\in W$
  and every contractible open neighborhood~$U_x$ of $x$, we easily
  recognize that $\pi^*\eta = d\tilde f$ over $\pi^{-1}(U_x)$.  This
  proves that $\tilde f$ is a primitive of $\pi^*\eta$.

  \smallskip

  Let now $\tilde x$ and $\tilde x'$ be two points in $\widetilde W$
  lying in the same fiber over a point $x \in W$ such that
  $\tilde f(\tilde x) = \tilde f(\tilde x')$.  If $\gamma\in \Gamma$
  is a path representing $\tilde x$ and $\gamma'\in \Gamma$ is a path
  representing $\tilde x'$, then $x = \gamma(1) = \gamma'(1)$.  By our
  assumption $\tilde f(\tilde x) = \int_\gamma \eta$ and
  $\tilde f(\tilde x') = \int_{\gamma'} \eta$ are equal.  But this
  means precisely that $\gamma \sim \gamma'$ represent the same point
  in $\widetilde W$, that is, $\tilde x = \tilde x'$ as we wanted to
  show.
\end{proof}

\medskip

\textbf{Boundary convexity on $\widetilde W$:} Let $(W, g)$ be a
Riemannian manifold with possible non-empty compact boundary and let
$\eta$ be a closed $1$-form on $W$ with dual vector field~$X_\eta$
such that $g(X_\eta, \cdot) = \eta$.  Lift $\eta$ to the covering
$(\widetilde W, \tilde g)$ constructed above.  It follows by
\Cref{proposition:exact} that $\tilde \eta = \pi^*\eta$ admits a
primitive function $\tilde f\colon \widetilde W \to \RR$.  The
gradient vector field~$\nabla\tilde f$ with respect to the pull-back
metric~$\tilde g$ is then related to $X_\eta$ by
\begin{equation}\label{equation: gradient upstairs downstairs}
  D\pi \cdot \nabla \tilde f = X_\eta\circ \pi \;.
\end{equation}
For the corresponding flows we find
\begin{equation}\label{equation: flows commute with projection}
  \pi\circ\Phi_t^{\nabla \tilde f} = \Phi_t^{X_\eta}\circ \pi \;.
\end{equation}

\begin{lemma}\label{convexity upstairs}
  The boundary of $\widetilde W$ is
  $\p \widetilde W = \pi^{-1}(\p W)$, and the decomposition
  $\p W = \p^+ W \cup \p^- W \cup \p^0 W$ with respect to $X_\eta$
  lifts to the decomposition
  $\p \widetilde W = \p^+ \widetilde W \cup \p^- \widetilde W \cup
  \p^0 \widetilde W$ with respect to $\nabla \tilde f$, that is,
  \begin{equation*}
    \p^+\widetilde W = \pi^{-1}(\p^+W) \;, \quad
    \p^-\widetilde W = \pi^{-1}(\p^-W) \;,  \text{ and }
    \p^0\widetilde W = \pi^{-1}(\p^0W) \;.
  \end{equation*}
  Moreover, $\p W$ is (strongly) convex with respect to $X_\eta$, if
  and only if $\p\widetilde W$ is (strongly) convex with respect to
  $\nabla\tilde f$.
\end{lemma}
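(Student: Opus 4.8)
The plan is to use that $\pi\colon\widetilde W\to W$ is a covering map, hence a local diffeomorphism whose local inverses are exactly the lifts $\rho_{\tilde p}$ employed above to transport the smooth structure from $W$ to $\widetilde W$. Consequently every local notion (boundary, tangency, inward/outward direction, collar coordinates) has a canonical counterpart upstairs, and the two displayed relations \eqref{equation: gradient upstairs downstairs} and \eqref{equation: flows commute with projection} let me pass trajectories back and forth between the two spaces.

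First I would show $\p\widetilde W=\pi^{-1}(\p W)$. Since the smooth structure of $\widetilde W$ is by construction pulled back through the diffeomorphisms $\rho_{\tilde p}$, each such lift carries the boundary of $W$ to the boundary of $\widetilde W$ and the interior to the interior; thus $\tilde p\in\p\widetilde W$ precisely when $\pi(\tilde p)\in\p W$. For the decomposition I would note that at a boundary point $\tilde p$ with $p=\pi(\tilde p)$ the differential $D_{\tilde p}\pi$ is a linear isomorphism $T_{\tilde p}\widetilde W\to T_pW$ restricting to an isomorphism of boundary tangent spaces and carrying inward-pointing vectors to inward-pointing vectors (because $\rho_{\tilde p}$ is a boundary-preserving diffeomorphism). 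As \eqref{equation: gradient upstairs downstairs} sends $\nabla\tilde f(\tilde p)$ to $X_\eta(p)$, the former points out of, into, or along $\p\widetilde W$ exactly when the latter does so for $\p W$, giving $\p^{\pm}\widetilde W=\pi^{-1}(\p^{\pm}W)$ and $\p^0\widetilde W=\pi^{-1}(\p^0W)$.

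For the convexity equivalence I would fix $\tilde p\in\p^0\widetilde W$ and $p=\pi(\tilde p)\in\p^0W$. Since $D\pi$ is an isomorphism, $\nabla\tilde f(\tilde p)=0$ if and only if $X_\eta(p)=0$, which settles the trivial (vanishing) alternative in \Cref{def: X-convex boundary}. When the generators do not vanish, the forward implication is immediate: a trajectory $\Phi_t^{\nabla\tilde f}(\tilde p)$ existing for some $t\ne 0$ would, by \eqref{equation: flows commute with projection}, force $\Phi_t^{X_\eta}(p)$ to exist for the same $t$, contradicting convexity of $\p W$. For the converse I would lift a hypothetical non-constant trajectory $\sigma(t)=\Phi_t^{X_\eta}(p)$ to the unique path $\tilde\sigma$ in $\widetilde W$ with $\tilde\sigma(0)=\tilde p$ using the path-lifting property of the covering; differentiating $\pi\circ\tilde\sigma=\sigma$ and invoking the injectivity of $D\pi$ together with \eqref{equation: gradient upstairs downstairs} shows that $\tilde\sigma$ is itself an integral curve of $\nabla\tilde f$, so a failure of convexity downstairs produces one upstairs. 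I expect this boundary path-lifting to be the one delicate point: one must be sure the lifted curve does not terminate spuriously, which is guaranteed by the covering property together with $\p\widetilde W=\pi^{-1}(\p W)$.

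Finally, for strong convexity I would transport a boundary collar $(-\epsilon,0]\times\p W$ upstairs through the local inverses of $\pi$, obtaining a collar with coordinate $\tilde s=s\circ\pi$ whose slices are the $\pi$-preimages of the slices downstairs. The relation \eqref{equation: gradient upstairs downstairs} then forces the componentwise decomposition $\nabla\tilde f=\tilde S\,\p_{\tilde s}+\tilde Z$ with $\tilde S=S\circ\pi$ and $\tilde Z$ the $\pi$-lift of $Z$, whence $\tilde Z(0,\tilde p)=0$ if and only if $Z(0,p)=0$, and $d\tilde S(\tilde Z)=dS(Z)\circ\pi$. Therefore the two alternatives of \Cref{def: strongly convex} hold at $\tilde p$ exactly when they hold at $p$, which gives the claimed equivalence of strong convexity and completes the proof.
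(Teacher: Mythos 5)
Your proof is correct and takes exactly the route of the paper, whose entire proof is the one-sentence observation that all the claims are local properties preserved because the covering $\pi$ is a local diffeomorphism. Your write-up simply fills in the details of that remark (boundary preservation under the lifts $\rho_{\tilde x}$, transport of transversality via \eqref{equation: gradient upstairs downstairs}, trajectory projection and path lifting for the convexity equivalence, and the lifted collar for strong convexity), all of which are sound.
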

\begin{proof} 
  The claims in the lemma are all about local properties, and these
  are preserved because the covering is locally diffeomorphic to the
  base manifold.
\end{proof}

\bigskip

We are now ready to prove the main result of this section:

\begin{proof}[Proof of \Cref{theorem:exactness with boundary}]
  Let $\tilde f$ be a primitive of $\tilde\eta$ upstairs.  To show
  that $\eta$ is exact, we will prove that the covering
  $\pi\colon \widetilde W\to W$ is simple so that $\pi$ is a
  diffeomorphism.  Then $\tilde f\circ \pi^{-1}$ is a primitive of
  $\eta$ downstairs.

  \smallskip
  
  All local properties like being Morse-Bott etc.\ lift directly to
  $\widetilde W$.  It thus follows that $\tilde f$ is a Morse-Bott
  function with $\Crit(\tilde f) = \pi^{-1}\bigl(\Crit(\eta)\bigr)$,
  and none of the indices~$i^+$ or $i^-$ of a point in
  $\Crit(\tilde f)$ is equal to $1$.  As explained in \Cref{convexity
    upstairs}, the boundary $\p \widetilde W$ is convex with respect
  to $\nabla \tilde f$, and every critical point
  $\tilde p \in \Crit(\tilde f) \cap \p \widetilde W$ that is a local
  maximum admits a neighborhood~$U_{\tilde p}$ in $\p \widetilde W$
  such that $\nabla \tilde f$ does not point anywhere along
  $U_{\tilde p}$ transversely out of $\widetilde W$; every point
  $\tilde p \in \Crit(\tilde f) \cap \p \widetilde W$ that is a local
  minimum of $\tilde f$ admits a neighborhood~$U_{\tilde p}$ in
  $\p \widetilde W$ such that $\nabla \tilde f$ does not point
  anywhere along $U_{\tilde p}$ transversely into $\widetilde W$.

  \smallskip

  We will choose now values $a,b \in \RR$ with $a< b$ to work with a
  suitable subdomain
  $\widetilde W_{[a,b]} = \tilde f^{-1}\bigl([a,b]\bigr)$.  The choice
  of these $a$ and $b$ depend on whether we are in case~(a) or (b)
  below.

  \smallskip
  
  \textbf{(a)} Let $C\subset \Crit(\eta)$ be a component of critical
  points of $\eta$ consisting of local maxima.  Clearly
  $\restricted{\eta}{C}$ is exact, because $\eta$ vanishes along $C$.
  The restriction of $\eta$ to a tubular neighborhood~$U$ retracting
  to $C$ is then also exact.  If we assume that the covering map~$\pi$
  is not injective, then there are by \Cref{lemma lift of exact
    neighborhood} at least two components $\widetilde C_1$ and
  $\widetilde C_2$ of $\Crit(\tilde f)$ that project diffeomorphically
  onto $C$.  Furthermore, $\tilde f$ is constant on each of these
  components, but
  $\tilde f\bigl(\widetilde C_1\bigr) \ne \tilde f\bigl(\widetilde
  C_2\bigr)$.

  Since $\widetilde W$ is path connected, there is a
  path~$\tilde \gamma$ joining $\widetilde C_1$ to $\widetilde C_2$.
  We can choose the interval $[a,b]$ so large that the compact subset
  $\widetilde C_1 \cup \widetilde C_2 \cup \tilde \gamma$ is contained
  in $\tilde f^{-1}\bigl([a,b]\bigr)$, and by Sard's theorem, we can
  furthermore perturb $a$ and $b$ to be regular values both of the
  function~$\tilde f$ and of $\restricted{\tilde f}{\p \widetilde W}$
  so that $\tilde f^{-1}([a,b])$ is a smooth manifold with boundary
  and corners.  To simplify the notation we restrict to the connected
  component of $\tilde f^{-1}([a,b])$ that contains $\widetilde C_1$
  and $\widetilde C_2$ and denote it by $\widetilde W_{[a,b]}$.

  If there is a component of $\Crit(f)$ that is a local minimum, we
  could equally well apply all arguments in this proof by replacing
  $\eta$ first by $-\eta$.

  \smallskip

  \textbf{(b)} Let $\p_0 W\subset \p W$ be one of the connected
  components of the boundary on which $\eta$ is exact.  Assume that
  the covering map~$\pi$ is not injective.  Applying \Cref{lemma lift
    of exact neighborhood} to a collar neighborhood of $\p_0 W$, we
  can find at least two boundary components $\p_1\widetilde W$ and
  $\p_2 \widetilde W$ of $\p \widetilde W$ that project
  diffeomorphically onto $\p_0 W$.  In particular, it follows that
  $\p_1\widetilde W$ and $\p_2\widetilde W$ are compact.

  Furthermore, if follows from \Cref{convex boundary cannot be
    everywhere tangent to gradient} (the argument is local) that
  $\nabla \tilde f$ cannot be everywhere tangent to $\p_1\widetilde W$
  or to $\p_2\widetilde W$, and up to reversing the sign of $\eta$ if
  necessary, we may always assume that both boundary components have a
  point along which the gradient points outwards.
  
  Since $\widetilde W$ is connected, there is a path~$\tilde \gamma$
  joining $\p_1\widetilde W$ to $\p_2\widetilde W$.  Using that this
  path, that $\p_1\widetilde W$, and that $\p_2\widetilde W$ are
  compact and that $\tilde f$ is continuous, it follows that
  $\tilde f$ is bounded on
  $\p_1\widetilde W \cup \p_2\widetilde W \cup \tilde \gamma$, and in
  particular, we can find two real numbers~$a$ and $b$ with $a<b$ such
  that
  $\p_1\widetilde W \cup \p_2\widetilde W \cup \tilde \gamma \subset
  \tilde f^{-1}([a,b])$.  By slightly perturbing $a$ and $b$, we can
  again guarantee that $\tilde f^{-1}\bigl([a,b]\bigr)$ will be a
  smooth manifold with boundary and corners, and we denote the
  component of $\tilde f^{-1}\bigl([a,b]\bigr)$ containing
  $\p_1\widetilde W$ and $\p_2\widetilde W$ by $\widetilde W_{[a,b]}$.

  \medskip

  Note that in both cases, $\widetilde W_{[a,b]}$ does \emph{not} need
  to be compact.  However, even so, we show in \Cref{no gradient
    trajectory escapes to infinity} below that the situation in the
  cover is sufficiently tame so that none of the gradient trajectories
  of $\tilde f$ can escape to infinity.  This will be the key property
  that allows us to apply a strategy similar to the one used in the
  proof of \Cref{topology simple if no codim 1 stable manifolds} even
  though $\widetilde W_{[a,b]}$ may not be compact.

  \smallskip
  
  Denote $f^{-1}(a) \cap \widetilde W_{[a,b]}$ by $\widetilde W_a$ and
  $f^{-1}(b) \cap \widetilde W_{[a,b]}$ by $\widetilde W_b$.  Study
  now a gradient trajectory~$\tilde \gamma$ of $\widetilde f$ passing
  through an inner point of $\widetilde W_{[a,b]}$, and follow it for
  positive time inside $\widetilde W_{[a,b]}$ for as long as possible.
  By \Cref{no gradient trajectory escapes to infinity} below, the
  orbit~$\tilde \gamma$ is contained in a compact subset of
  $\widetilde W_{[a,b]}$ so that precisely one of the following
  statements will be true for $\tilde\gamma$:
  \begin{itemize}
  \item [(A)] $\tilde \gamma$ reaches in finite time $\widetilde W_b$,
    or some of the boundary components of
    $\p \widetilde W\cap \widetilde W_{[a,b]}$;
  \item [(B)] $\tilde \gamma(t)$ converges for $t\to \infty$ to a
    critical point of $\tilde f$ in $\widetilde W_{[a,b]}$ other than
    a local maximum;
  \item [(C)] $\tilde \gamma(t)$ converges for $t\to \infty$ to a
    local maximum of $\tilde f$ lying in $\widetilde W_{[a,b]}$.
  \end{itemize}

  We will partition the interior of $\widetilde W_{[a,b]}$ according
  the cases listed above, and study the properties of this
  decomposition.

  \smallskip
  
  For case~(A) note that the boundary of the
  domain~$\widetilde W_{[a,b]}$ is composed of
  $\p \widetilde W\cap \widetilde W_{[a,b]}$, of $\widetilde W_a$ and
  of $\widetilde W_b$.  If $\tilde \gamma$ intersects
  $\p \widetilde W_{[a,b]}$, it will necessarily do so transversely,
  because $\p \widetilde W$ is $\nabla$-convex, and because
  $\widetilde W_a$ and $\widetilde W_b$ are regular level sets of
  $\tilde f$.  Clearly, $\tilde \gamma$ cannot hit $\widetilde W_a$ or
  $\p^- \widetilde W$ in forward time, because $\nabla \tilde f$
  points along these boundary points into $\p \widetilde W_{[a,b]}$.
  This also excludes that $\tilde \gamma$ reaches any of the corners
  in $\p \widetilde W\cap \widetilde W_a$ or in
  $\p^- \widetilde W\cap \widetilde W_b$.  The remaining corners in
  $\p \widetilde W\cap \widetilde W_b$ are composed of
  $\p^+ \widetilde W\cap \widetilde W_b$, and
  $\p^0 \widetilde W\cap \widetilde W_b$.  Again by convexity,
  $\tilde \gamma$ cannot reach any point of $\p^0 \widetilde W$, and
  thus, $\tilde\gamma$ may only intersect corners given by
  $\p^+ \widetilde W\cap \widetilde W_b$ where $\nabla \tilde f$ is
  positively transverse both to $\p \widetilde W$ and to
  $\widetilde W_b$.

  Let $p \in \Interior\widetilde W_{[a,b]}$ be a point whose gradient
  trajectory~$\tilde \gamma$ satisfies (A).  It follows that all
  trajectories through nearby points also end up on
  $\p \widetilde W_{[a,b]}$: If $\tilde \gamma$ hits a point in
  $\p \widetilde W$ or in $\widetilde W_b$ that is not a corner point,
  then the claim can be easily deduced from a consideration as in the
  proof of \Cref{stable sets are submanifolds}.(a); to see that the
  claim is true if $\tilde \gamma$ ends up at a corner point
  $q \in \p^+ \widetilde W\cap \widetilde W_b$ consider
  $\widetilde W_{[a,b+\epsilon]}$, glue a collar to a neighborhood of
  $q$ in $\p \widetilde{W}$, and extend $\widetilde W_b$ and the
  gradient vector field to this collar.  Since $\nabla \tilde f$ hits
  both $\p^+ \widetilde W$ and $\widetilde W_b$ transversely, the
  gradient trajectory through any point close to $p$ will also hit
  both hypersurfaces transversely.  Depending on whether it reaches
  first $\p^+ \widetilde W$ or first $\widetilde W_b$ or both at the
  same time, the trajectory will exit from $\widetilde W_{[a,b]}$
  either through a point on $\p^+ \widetilde W$, through a point on
  $\widetilde W_b$ or through a corner point.  In any case, any
  trajectory passing through a point close to $p$ will hit the
  boundary of $\widetilde W_{[a,b]}$ in positive time.

  Thus the points in $\Interior \widetilde W_{[a,b]}$ satisfying (A)
  form an open subset.  Note that if not empty, $\p_1\widetilde W$ and
  $\p_2\widetilde W$ are disjoint and they are also isolated from the
  remaining points in $\p \widetilde W_{[a,b]}$.  In this case, we can
  further subpartition the points satisfying (A) into the open subsets
  of points whose trajectories end on $\p_1\widetilde W$, the points
  whose trajectories end on $\p_2\widetilde W$, and the ones ending on
  any of the remaining boundary points.  Since we are assuming that
  $\nabla \tilde f$ is somewhere positively transverse both to
  $\p_1\widetilde W$ and to $\p_2\widetilde W$, it follows that the
  stable subsets corresponding to the first two components are not
  empty.
    
  \smallskip

  A point satisfying (B) lies on the stable subset of a component of
  $\Crit(\tilde f) \cap \widetilde W_{[a,b]}$ that is not a local
  maximum.  Our aim here is to show that each such stable subset is
  contained in a smooth submanifold of dimension at least~$2$.  If
  $\widetilde W$ were compact and without corners, this would directly
  follow from \Cref{stable sets are submanifolds}.(b).  Nonetheless we
  will see below that such a statement also holds in our situation:
  Locally, we argue with the Hadamard-Perron Theorem
  \cite[Theorem~4.1]{HirshInvariantManifolds}.  If a component of
  $\Crit(\tilde f)$ is \emph{closed} then it admits a small
  neighborhood in which the local stable subset is a smooth
  submanifold with the desired properties.  By \Cref{box around
    critical component}.(b), there exist two neighborhoods of
  $\widetilde C_j$ such that every point of
  $W^s(\tilde f; \widetilde C_j)$ in the smaller one of the two
  neighborhoods lies on the local stable subset given by the
  Hadamard-Perron Theorem.  This shows that
  $W^s(\tilde f; \widetilde C_j)$ is close to $\widetilde C_j$ a
  smooth submanifold.

  On a global level, there are also no problems with the stable
  subsets if $\widetilde C_j$ is closed, because the boundary of
  $\widetilde W_{[a,b]}$ is $\nabla$-convex in the sense that no
  gradient trajectory can \emph{touch} the boundary from the interior.
  It follows that the gradient flow between two points lying in the
  interior of the manifold defines a diffeomorphism between their
  neighborhoods.  The structure of the stable subset agrees thus at
  every point of $\Interior\widetilde W_{[a,b]}$ with the manifold
  structure obtained locally around the critical points and thus
  $\Interior W^s(\tilde f; \widetilde C_j)$ is a submanifold of
  codimension at least~$2$.

  \smallskip

  If $\widetilde C_j$ is a component of $\Crit(\tilde f)$ that does
  intersect $\p\widetilde W$, then double first $\widetilde W_{[a,b]}$
  along the boundary components intersecting $\widetilde C_j$ using
  the method presented in the proof of \Cref{doubling Morse-Bott}.
  The potential problems due to the existence of corners are avoided
  by choosing a collar neighborhood that is tangent to
  $\widetilde W_a$ and $\widetilde W_b$.  Note that $\Crit(\tilde f)$
  does never intersect the boundaries~$\widetilde W_a$ and
  $\widetilde W_b$.  After this extension of $\widetilde W_{[a,b]}$,
  $\widetilde C_j$ embeds into a closed component.

  Furthermore note that because $\widetilde C_j$ is compact, we only
  need to double $\widetilde W_{[a,b]}$ along finitely many boundary
  components, and we can furthermore assume that any boundary
  component corresponding to $\p \widetilde W$ or to its doubling is
  $\nabla$-convex, and that $\tilde f$ is regular along
  $\widetilde W_a$, $\widetilde W_b$, and along their doubles.  We can
  apply the same steps as for closed components of $\Crit(\tilde f)$,
  and it follows that the stable subset inside the doubled domain is a
  submanifold.  When reducing back to $\widetilde W_{[a,b]}$, we
  retain then that $\Interior W^s(\tilde f; \widetilde C_j)$ is
  \emph{contained} is a submanifold of codimension at least~$2$.

  \smallskip
  
  One easily convinces oneself that the points corresponding to
  situation~(C) form an open subset.  The argument that the stable set
  of every component of $\Crit(\tilde f)$ that is a local maximum is
  contained in a full dimensional submanifold is identical to the
  proof given above for (B).  By \Cref{stable set of max min open} it
  follows that such stable subsets are really submanifolds.  Since
  none of the trajectories can touch the boundary from the inside, the
  presence of corners does not affect the validity of the claim.

  Note that if there are two connected components~$\widetilde C_1$ and
  $\widetilde C_2$ as described in case~(a), then we can further
  subpartition all points satisfying (C) into the subsets
  $W^s(\tilde f;\widetilde C_1)$ and $W^s(\tilde f;\widetilde C_2)$
  and the stable subsets of any remaining local maximum.  Each of the
  parts in the decomposition is open.

  \medskip

  Remove now all points that satisfy property~(B) from
  $\Interior\widetilde W_{[a,b]}$, and denote the complement by
  $\Interior \widetilde W_{[a,b]}^0$.  Since
  $\Interior \widetilde W_{[a,b]}^0$ is the union of all points that
  verify properties~(A) and (C), $\Interior \widetilde W_{[a,b]}^0$ is
  open, and we will show now that it is also path connected: Choose
  any two points~$p$ and $p'$ in $\Interior \widetilde W_{[a,b]}^0$
  and connect them inside the larger set
  $\Interior \widetilde W_{[a,b]}$ with a path~$\gamma$.  Note that
  $\Crit(\tilde f)$ may decompose in $\Interior \widetilde W_{[a,b]}$
  into infinitely many components.  Nonetheless $\gamma$ is compact
  and every gradient trajectory is contained by \Cref{no gradient
    trajectory escapes to infinity} in a ball of uniform radius.  This
  implies that there is a compact set~$K \subset \widetilde W_{[a,b]}$
  such that $\gamma$ can only encounter the stable subsets of the
  components of $\Crit(\tilde f)$ that intersect $K$.  In particular
  it follows from this that $\gamma$ only intersects a finite number
  of different stable subsets.

  Since every stable set $W^s(\tilde f;\widetilde C_k)$ corresponding
  to a component~$\widetilde C_k$ that is not a local maximum, is
  contained in a submanifold of codimension at least~$2$, we can
  assume after a perturbation that $\gamma$ does not intersect any of
  these stable sets.  This implies in particular that $\gamma$ lies in
  $\Interior \widetilde W_{[a,b]}^0$, proving as desired that
  $\Interior \widetilde W_{[a,b]}^0$ is path connected.
  
  Having shown that $\Interior \widetilde W_{[a,b]}^0$ is a path
  connected set that is partitioned by the open subsets listed above
  for (A) and (C), it follows that only one of these open subsets may
  be non-empty.  This leads to a contradiction both in situation~(a)
  where $\p_1\widetilde W$ and $\p_2\widetilde W$ and in
  situation~(b), where $\widetilde C_1$ and $\widetilde C_2$ are not
  empty.  As desired we obtain that the cover $\widetilde W \to W$ has
  to be simple.
\end{proof}


\begin{proposition}\label{no gradient trajectory escapes to infinity}
  Let $(W,g)$ be a compact manifold that might have boundary, and that
  carries a closed $1$-form~$\eta$ of Morse-Bott type.  Let
  $\pi \colon \widetilde W\to W$ be the minimal cover such that
  $\pi^*\eta$ is an exact $1$-form with primitive~$\tilde f$, and let
  $a < b$ be regular values both of $\tilde f$ and of
  $\restricted{\tilde f}{\p \widetilde W}$.
  
  Then there exists a constant~$R > 0$ such that every gradient
  trajectory of $\tilde f$ in
  $\widetilde W_{[a,b]} = \tilde f^{-1}([a,b])$ with respect to the
  pull-back metric~$\tilde g := \pi^*g$ lies in a ball of radius~$R$.
  Furthermore, every gradient trajectory is of finite length.
\end{proposition}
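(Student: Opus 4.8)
The plan is to bound the length of every gradient trajectory of $\tilde f$ inside $\widetilde W_{[a,b]}$ by a constant depending only on $a$, $b$ and the downstairs data; since $\widetilde W$ is complete (being a Riemannian cover of the compact manifold $W$ with the pull-back metric $\tilde g = \pi^*g$), a uniform length bound $L$ immediately confines each trajectory to the ball of radius $L$ about any of its points, yielding both assertions simultaneously. Throughout I use that $\pi$ is a local isometry, so lengths upstairs agree with lengths of the projected curves and the local lower bounds of $\abs{X_\eta}$ transfer to $\abs{\nabla\tilde f}$ via \eqref{equation: gradient upstairs downstairs}. Along a gradient trajectory $\gamma$ one has $\tfrac{d}{dt}\tilde f(\gamma) = \abs{\nabla\tilde f}^2$, hence $\tilde f$ is strictly increasing with total increase at most $b-a$, and the arclength satisfies $ds = \abs{\nabla\tilde f}\,dt = d\tilde f/\abs{\nabla\tilde f}$ wherever $\nabla\tilde f\neq 0$.

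First I would split each trajectory into the portions lying in a fixed neighbourhood of $\Crit(\tilde f)$ and the portions lying outside it. Since $\Crit(\eta)$ has finitely many compact components and $\tilde f$ is Morse--Bott, the Morse--Bott normal form gives a \L ojasiewicz inequality $\abs{\nabla\tilde f}\ge c\,\abs{\tilde f - v}^{1/2}$ on a tubular neighbourhood of each component, where $v$ is that component's critical value; because $\pi$ is a local isometry and the downstairs critical set is compact, the constant $c>0$, the tube radius $r$, and a bound $\abs{\tilde f - v}\le\epsilon$ on each tube can all be chosen \emph{uniformly} for every lifted component $\widetilde C\subset\Crit(\tilde f)$. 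Let $\widetilde N$ be the union of these uniform tubes, arranged via \Cref{box around critical component}.(b) so that each carries the no-return property. Outside $\widetilde N$ one has $\abs{\nabla\tilde f}\ge\delta>0$, again uniformly. Using $ds = d\tilde f/\abs{\nabla\tilde f}$ and monotonicity of $\tilde f$, the length accumulated outside the tubes is at most $(b-a)/\delta$, while the length accumulated during a single visit to one tube is at most $\int_{v-\epsilon}^{v+\epsilon} d\tilde f/\bigl(c\,\abs{\tilde f-v}^{1/2}\bigr) \le 4\sqrt{\epsilon}/c$.

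The main obstacle is that, upstairs, the critical values need \emph{not} be discrete: the periods of $\eta$ may be dense in $\RR$, so a priori a single trajectory could brush past infinitely many distinct lifted components while keeping $\tilde f$ within $[a,b]$, accumulating infinite length. To exclude this I would establish a uniform positive separation between distinct components of $\Crit(\tilde f)$. For a single downstairs component $C$, a tubular neighbourhood $U$ on which $\eta$ is exact lifts by \Cref{lemma lift of exact neighborhood} to a disjoint union of sheets; any path joining two different sheets projects to a loop in $W$ with nonzero $\eta$-period, which is therefore non-contractible and so has length at least the systole $\mathrm{sys}(W)>0$. Choosing $U$ of diameter less than $\tfrac12\,\mathrm{sys}(W)$ forces distinct lifts of $C$ to lie at uniformly positive distance; combining this with the positive distances among the finitely many downstairs components produces a uniform $\sigma_0>0$ such that distinct tubes of $\widetilde N$ never come within $\sigma_0$. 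The no-return property guarantees each tube is met in a single arc, so the tubes are visited in a simple increasing sequence, and passing from one tube to a different one costs at least $\sigma_0$ of length outside $\widetilde N$; since the total outside length is at most $(b-a)/\delta$, the number of tubes visited is at most $1+(b-a)/(\delta\sigma_0)$.

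Putting the pieces together, the length of any trajectory in $\widetilde W_{[a,b]}$ is bounded by $R := (b-a)/\delta + \bigl(1+(b-a)/(\delta\sigma_0)\bigr)\cdot 4\sqrt{\epsilon}/c$, a constant independent of the trajectory; this is the asserted finite-length bound and confines each trajectory to the ball of radius $R$ about its starting point. I would emphasise that the argument nowhere invokes convergence of the trajectory---which is precisely what the trichotomy (A)--(C) in the proof of \Cref{theorem:exactness with boundary} will later extract from this proposition through \Cref{convergence gradient trajectory}---so the estimates apply verbatim whether the trajectory exits $\p\widetilde W_{[a,b]}$ in finite time or persists for all positive time.
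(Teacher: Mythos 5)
Your proof is correct in substance, but it takes a genuinely different route from the paper's. The paper never bounds the length of the pieces of $\tilde\gamma$ near the critical set: it covers the time interval by portions~$A$ (trapped in lifted neighborhoods~$\widetilde U_1$, each of which sits in a ball of uniform radius~$R_0$) and portions~$B$ (where $\norm{\nabla\tilde f}$ is bounded below, handled via the rescaled field $Z = \nabla\tilde f/\norm{\nabla\tilde f}^2$, whose flow increases $\tilde f$ at unit rate), and it counts the components of $A$ using \Cref{function increases to leave box around critical component}: every passage through $\widetilde U_0$ followed by an escape from $\widetilde U_1$ forces $\tilde f$ to grow by at least~$\epsilon_0$, so there are at most $\lceil (b-a)/\epsilon_0\rceil + 1$ such passages, giving the radius bound $R = R_0N' + K(b-a)(N'+1)$; the finite-length claim is then proved \emph{separately} afterwards (doubling via \Cref{doubling Morse-Bott}, then \Cref{length bound for certain gradient curves}). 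You instead prove a uniform \emph{length} bound outright, using the Łojasiewicz-type inequality of \Cref{gradient estimate close to critical point} to bound each tube visit by $4\sqrt{\epsilon}/c$, and you replace the $\epsilon_0$-counting by a metric separation argument: distinct sheets of a lifted tube are at distance at least $\tfrac12\operatorname{sys}(W)$. Your route buys a stronger and cleaner conclusion (one uniform estimate subsumes both claims of the proposition); the paper's route reuses a lemma it needs elsewhere anyway and avoids the separation discussion entirely.

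Three details in your write-up need tightening, all fixable. First, a path $\tilde\psi$ joining two sheets projects to a \emph{path}, not a loop; you must close it up inside $U$ (adding length at most $\operatorname{diam} U < \tfrac12\operatorname{sys}(W)$) and check that the resulting loop has $\eta$-period equal to $c_2 - c_1 \ne 0$, the difference of the two sheet constants, which is nonzero by \Cref{proposition:exact}; nonzero period then gives non-contractibility, and the positivity of the systole-type constant on the compact manifold $W$ (possibly with boundary) follows from a Lebesgue-number argument. Second, ``each tube is met in a single arc'' is not literally what \Cref{box around critical component}.(b) provides: the trajectory may dip in and out of $U_1\setminus U_0$ without ever entering $U_0$, so no-return is not triggered. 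Define a visit as a maximal arc in a component of $\widetilde U_1$ containing a point of $\widetilde U_0$ (as in the paper's set~$A$); no-return then shows distinct visits go to distinct tubes, your per-visit estimate survives because the $\tilde f$-integral runs over a monotone range, and excursions through $U_1\setminus U_0$ live where $\norm{\nabla\tilde f}\ge\delta$ and are charged to the outside budget $(b-a)/\delta$. Third, for critical components meeting $\p W$ the Morse--Bott chart underlying \Cref{gradient estimate close to critical point} is not immediately available at boundary points; apply the doubling trick of \Cref{doubling Morse-Bott} first, exactly as the paper does for its own finite-length step.
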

\begin{proof}
  Before considering a gradient trajectory, let us first study the
  manifolds~$W$ and $\widetilde W$ more in detail.  Since the base
  manifold~$W$ is compact, $\Crit(\eta)$ has only finitely many
  components~$C_1, \dotsc, C_N$.

  Clearly $\eta$ vanishes along $C_j$, so that
  $\restricted{\eta}{C_j}$ is trivially exact.  We can find for every
  $C_j$ a tubular neighborhood and a function~$f_j$ defined on this
  neighborhood such that $X_\eta$ is the gradient of $f_j$.  By
  \Cref{lemma lift of exact neighborhood}, it follows that the lift of
  $\pi^{-1}(C_j)$ is a disjoint union of components of
  $\Crit(\tilde f)$ and each of these components projects via $\pi$
  diffeomorphically onto $C_j$.  In particular, every component of
  $\Crit(\tilde f)$ in $\widetilde W$ upstairs is compact.

  More precisely, there exists for every component
  $C_j\subset \Crit(f)$ an $R_j>0$ such that any component
  $\widetilde{C}_j \subset \Crit(\tilde f)$ covering $C_j$ is
  contained in a ball of radius smaller than $R_j$.  To find a
  suitable radius for one such $\widetilde{C}_j$, it suffices to
  combine that $\widetilde{C}_j$ is compact with an exhaustion of
  $\widetilde W$ by open balls of increasing size.

  To show that every component~$\widetilde{C}_j'$ of $\Crit(\tilde f)$
  covering $C_j$ also fits into a ball of radius~$R_j$, let
  $\tilde p_0'$ and $\tilde p_1'$ be two points in $\widetilde{C}_j'$.
  Project $\tilde p_0'$ and $\tilde p_1'$ down to $C_j\subset W$ and
  then lift them to points~$\tilde p_0$ and $\tilde p_1$ in
  $\widetilde{C}_j$.  There is then a path~$\tilde \psi$ with
  $\tilde \psi(0) = \tilde p_0$ and $\tilde \psi(1) = \tilde p_1$ that
  is of length less than $R_j$.  Project now this path to $W$, and
  lift it to a new path $\tilde \psi'$ such that
  $\tilde \psi'(0) = \tilde p_0'$.  The lifted path has the same
  length as the initial one, and it just remains to convince oneself
  that its end point $\tilde \psi'(1)$ is $\tilde p_1'$.
    
  By \Cref{proposition:exact}, it suffices to verify that
  $\tilde f\bigl(\tilde \psi'(1)\bigr) = \tilde f(\tilde p_1')$.
  Since $\tilde f(\tilde p_0') = \tilde f(\tilde p_1')$, we can as
  well just check that
  $\tilde f\bigl(\tilde \psi'(1)\bigr) - \tilde f\bigl(\tilde
  \psi'(0)\bigr) = \int_{\tilde \psi'} d\tilde f$ vanishes.  This is
  true, because
  $\int_{\tilde \psi'} d\tilde f = \int_{\pi\circ \tilde \psi'} \eta =
  \int_{\tilde \psi} d\tilde f = \tilde f (\tilde p_1) - \tilde f
  (\tilde p_0) = 0$.

  Define $R_0 = \max\{R_1,\dotsc,R_N\}$.

  \medskip

  Choose for every component~$C_j$ neighborhoods
  $U_{j,0} \subset U_{j,1}$ as described in \Cref{function increases
    to leave box around critical component} that are sufficiently
  small so that the restriction of $\eta$ to $U_{j,1}$ is exact and
  such that no two $U_{i,1}$ and $U_{j,1}$ intersect for $i\ne j$.
  Using that the components of $\Crit(f)$ are compact, we can slightly
  enlarge $R_0$ (and shrink the size of $U_{j,1}$) so that every
  component of $\pi^{-1}\bigl(U_{j,1}\bigr)$ fits into a ball of
  size~$R_0$.

  We will now start working in $\widetilde W_{[a,b]}$.  Denote by
  $\widetilde U_0$ the union in $\widetilde W_{[a,b]}$ of the small
  neighborhoods $\pi^{-1}(U_{j,0})\cap \widetilde W_{[a,b]}$ for
  $j=1,\dotsc,N$, and by $\widetilde U_1$ the union of all larger
  neighborhoods $\pi^{-1}(U_{j,1})\cap \widetilde W_{[a,b]}$ for
  $j=1,\dotsc,N$.

  Let us now study a gradient trajectory~$\tilde\gamma$ in
  $\widetilde W_{[a,b]}$ for positive time (for negative time, apply
  the same reasoning to $-\tilde f$).  Let $I$ be the maximal interval
  on which $\tilde \gamma$ is defined, so that $I=[0,T_{\max})$ or
  $I=[0,T_{\max}]$.  We will cover $I$ by two subsets~$A$ and $B$
  where
  \begin{itemize}
  \item $A$ is the union of all intervals in
    $\tilde \gamma^{-1}\bigl(\widetilde U_1\bigr)$ that contain at
    least one point that is mapped into the smaller
    neighborhood~$\widetilde U_0$;
  \item $B$ is the union of all intervals in
    $\tilde \gamma^{-1}\bigl(\widetilde W_{[a,b]}\setminus \widetilde
    U_0\bigr)$ that each contain at least one point that is mapped
    into the complement of the larger neighborhood~$\widetilde U_1$.
  \end{itemize}

  Clearly, if $\tilde\gamma(t)$ lies in $\widetilde U_0$ then it
  follows that $t$ lies in $A$; if $\tilde\gamma(t)$ lies in the
  complement of $\widetilde U_1$, then $t$ lies in $B$.  The only
  remaining points are those for which $\tilde\gamma(t)$ lies in
  $\widetilde U_1\setminus \widetilde U_0$: if such a $t$ does neither
  lie in $A$ nor in $B$, it follows that we are in the particular case
  where $\tilde\gamma(I)$ lies entirely in one of the components of
  $\widetilde U_1\setminus \widetilde U_0$.  Since each such component
  lies in a ball of radius~$R_0$, it follows as desired that
  $\tilde \gamma$ is also contained in this ball.

  If we exclude this particular situation, we find that $A$ and $B$
  cover together all of $I$.  Below we will show that the number of
  components of $A$ is bounded by some constant~$N'$ that is
  independent of the choice of $\tilde \gamma$.  This obviously
  implies that the restriction of $\tilde \gamma$ to each component of
  $A$ is trapped in a ball of radius~$R_0$.

  The components of $A$ and $B$ alternate so that $B$ may have at most
  $N'+1$ components.  We will then show that the restriction of
  $\tilde \gamma$ to every component~$B_l$ of $B$ has uniformly
  bounded length~$\ell_B$ so that the total length of
  $\tilde \gamma(B)$ is bounded.  Together these facts prove that
  $\tilde \gamma$ lies in a ball of uniformly bounded
  radius~$R = R_0N' + \ell_B\, (N'+1)$.

  \medskip

  We will now show that there is a uniform upper bound on the number
  of times a gradient trajectory of $\tilde f$ can move into the
  vicinity of $\Crit(\tilde f)$ and then again again out of it.  Let
  $f_j$ be the local primitive of $\eta$ on $U_{j,1}$ introduced
  above.  By \Cref{box around critical component}.(b), it follows that
  an $X_\eta$-orbit that enters one of the components of $U_{j,0}$
  either
  \begin{itemize}
  \item is trapped inside $U_{j,1}$, and hits $\p W$ or it eventually
    accumulates at $C_j$,
  \item or it leaves $U_{j,1}$ and the value of $f_j$ increases on the
    way out of $U_{j,1}$ by more than some constant~$\epsilon_j > 0$,
    see \Cref{function increases to leave box around critical
      component}.
  \end{itemize}
  Since there are only finitely many components~$C_j$, denote the
  minimum of the $\epsilon_1,\dotsc,\epsilon_N$ by $\epsilon_0$.  The
  gradient trajectories of $\tilde f$ inherit these properties with
  respect to the components of the lifted neighborhoods
  $\widetilde U_0$ and $\widetilde U_1$ as can be easily deduced from
  \Cref{lemma lift of exact neighborhood}.

  This allows us to show that a gradient trajectory~$\tilde\gamma$
  cannot pass infinitely often from the smaller neighborhood
  $\widetilde U_0$ to the complement of the larger neighborhood
  $\widetilde U_1$.  The reason is simply that every time
  $\tilde\gamma$ crosses one of the neighborhoods~$\widetilde U_0$ and
  then escapes from the larger neighborhood~$\widetilde U_1$, the
  function~$\tilde f$ will increase by more than $\epsilon_0 > 0$.
  Since the values of the function~$\tilde f$ on
  $\widetilde W_{[a,b]}$ lie all in the interval $[a,b]$, none of the
  gradient trajectories in $\widetilde W_{[a,b]}$ can enter
  $\widetilde U_0$ and then leave $\widetilde U_1$ more than
  $N' = \bigl\lceil (b-a)/\epsilon_0\bigr\rceil + 1$ times.  In
  particular, $A$ cannot have more than $N'$ components, and $B$
  cannot have more than $N'+1$ components.
  
  \medskip
  
  Let us now study the restriction of $\tilde \gamma$ to a
  component~$B_0$ of $B$, that is, $\tilde \gamma(B_0)$ does not
  intersect the smaller neighborhood~$\widetilde U_0$.  Rescale
  $\nabla \tilde f$ on the complement of $\widetilde U_0$ to be of the
  form $Z = \frac{\nabla\tilde f}{\|\nabla\tilde f\|^2}$, and let
  $\widetilde \psi$ be the $Z$-trajectory that agrees up to
  parametrization with $\restricted{\tilde \gamma}{B_0}$.
  
  There is a $k > 0$ bounding $\|X_\eta\|$ on the compact set
  $W\setminus \cup_jU_{j,0}$ from below.  As a consequence,
  $\|Z\| < K$ with $K = 1/k$ on the complement of $\widetilde U_0$.
  Furthermore we see from
  \begin{equation*}
    \frac{d}{dt} \tilde f \bigl(\widetilde\psi(t)\bigr) = d\tilde f(Z) = 1
  \end{equation*}
  that for every $t\in\RR$ for which $\widetilde\psi(t)$ is defined
  \begin{equation*}
    \tilde f \bigl(\widetilde\psi(t)\bigr) = 
    \tilde f \bigl(\widetilde\psi(0)\bigr) + t \;.
  \end{equation*}
  Clearly, the trajectory~$\widetilde \psi(t)$ can certainly not exist
  for times larger than $b-a$, because $\widetilde \psi(t)$ will
  either have hit $\widetilde U_0$, $\p \widetilde W$ or
  $\widetilde W_b$ before.  On the complement of $\widetilde U_0$,
  $\|Z\|$ is bounded by $K$ proving that $\widetilde \psi$ and thus
  also $\restricted{\tilde \gamma}{B_0}$ are paths of finite length
  less than $K\,(b-a)$.

  This completes the proof that $\tilde \gamma$ lies in a ball of
  radius $R = R_0N' + K\,(b-a)\, (N'+1)$.

  \medskip

  It still remains to show that the length of $\tilde \gamma$ is
  bounded.  Let $\gamma$ be the projection of $\tilde \gamma$ to $W$,
  and note that length of $\gamma$ and the one of $\tilde \gamma$
  agree.  Since $\|\gamma'\| = \|X_\eta\|$ is bounded on the compact
  manifold~$W$, it is clear that the only trajectories that could
  possibly have infinite length are those defined for all
  $t\in [0,\infty)$.  Suppose thus from now on that $\tilde \gamma$ is
  of this type.
  
  By what we proved above, $\tilde \gamma$ lies in a ball of
  radius~$R$.  By slightly varying the radius of the ball, and cutting
  off its complement, we can suppose that $\tilde \gamma$ lies in a
  compact domain with boundary and corners to which we can apply then
  the doubling trick in \Cref{doubling Morse-Bott} (the corners do not
  pose a problem as explained in the proof of \Cref{theorem:exactness
    with boundary}).  Once in this situation, we obtain the desired
  result by \Cref{length bound for certain gradient curves}.
\end{proof}

\subsection{Connectedness of level sets of a Morse-Bott function on a
  manifold with cylindrical ends}
\label{sec: Morse-Bott with cylindrical ends}

In the previous section we generalized several classical results about
Morse-Bott functions on closed manifolds to Morse-Bott functions on
compact manifolds with convex boundary.

By a classical result, every level set of a Hamiltonian function
generating a circle action on a closed symplectic manifold is either
connected or empty.  This statement, which is one of the key steps for
the proof of the Atiyah--Guillemin-Sternberg convexity theorem for
Hamiltonian torus actions turns out to be false for manifolds with
boundary, even assuming that the boundary is convex, see
\Cref{disconnected level sets}.  To solve this minor technical
problem, we attach cylindrical ends to our manifold.  This will lead
us in this section to \Cref{connected level set for cylindrical end}.

\smallskip

The strategy to show that the level sets of the function~$f$ are
connected is to consider the flow of the vector field
$Z = \frac{1}{\norm{\nabla f}^2}\, \nabla f$.  For closed manifolds,
every point that lies in a level set~$c_0$ is transported in time~$t$
to the level set~$c_0 + t$ unless the point lies in one of the stable
or unstable manifolds of critical points with value between $c_0$ and
$c_0 + t$.  In our situation where the manifold has cylindrical ends
and is hence not compact, the main technical difficulty will be to
show that the trajectories of $Z$ can never escape in finite time to
infinity.

\bigskip

Consider a non-compact manifold~$\widehat{W}$ containing a compact
domain~$W$ with non-empty boundary~$V = \p W$ such that $\widehat{W}$
decomposes as
\begin{equation*}
  \widehat{W} = W \cup_V [0,\infty) \times V \;.
\end{equation*}
We call $[0,\infty) \times V$ the \defin{cylindrical ends} of the
manifold.

\begin{definition}\label{definition function adapted to cylindrical end}
  Let $\hat f\colon \widehat{W}\to \RR$ be a smooth function on a
  manifold
  \begin{equation*}
    \widehat{W} = W \cup [0,\infty) \times V
  \end{equation*}
  with cylindrical ends.  We say that $\hat f$ is \defin{adapted to
    the cylindrical end $[0,\infty) \times V$}, if the restriction of
  $\hat f$ to the cylindrical end is of the product form
  \begin{equation*}
    \restricted{\hat f}{[0,\infty) \times V} (s,p) =
    e^s \, f_V(p) + c\;,
  \end{equation*}
  where $c$ is locally constant (so that it is constant on each of the
  connected components of $V$), and $f_V\colon V \to \RR$ is a smooth
  function on $V$.
\end{definition}

\medskip

We will from now on always assume in this section if not stated
otherwise, that we have chosen a Riemannian metric~$g$ on
$\widehat{W}$ that restricts on $[0,\infty) \times V$ to
\begin{equation*}
  \restricted{g}{[0,\infty) \times V} = e^s\cdot \bigl(ds^2 \oplus g_V\bigr) \;,
\end{equation*}
where $g_V$ is the restriction of $g$ to $V = \p W$.

\begin{remark}\label{geodesically complete}
  A Riemannian metric of this form is geodesically complete.
\end{remark}

Note that if $f_V$ is somewhere on $V$ strictly larger than $0$, then
$\hat f$ is unbounded from above, if $f_V$ is somewhere on $V$
strictly smaller than $0$, then $\hat f$ is unbounded from below.

\smallskip

Let $\hat f$ be a function that is adapted to the cylindrical ends of
$\widehat{W}$ so that it restricts on the ends to $e^s \, f_V(p) + c$,
and let $g$ be a Riemannian metric such that
$\restricted{g}{[0,\infty) \times V} = e^s\cdot \bigl(ds^2 \oplus
g_V\bigr)$.

One easily verifies that the gradient of $\hat f$ simplifies on the
cylindrical ends to
\begin{equation}\label{eq: gradient in cylindrical end}
  \nabla \hat f(s,p) =  f_V(p)\, \partial_s + \nabla f_V(p) \;,
\end{equation}
where $\nabla f_V$ is the gradient vector field of $f_V$ on $V$ with
respect to the metric~$g_V$.

To determine a trajectory $\gamma(t) = \bigl(s(t),p(t)\bigr)$ of
$\nabla \hat f$, integrate first the gradient flow of $f_V$ on $V$ to
find $p(t)$.  The $s(t)$-component is obtained in a second step by
solving $s(t) = \int_0^t f_V(p(\tau))\, d\tau + s_0$.

We easily see from \Cref{eq: gradient in cylindrical end} that a point
$(s,p) \in [0,\infty)\times V$ in the cylindrical end is a critical
point of $\hat f$ if and only if $p \in \Crit(f_V)$ and $f_V(p) = 0$.
In particular, the critical set of $\hat f$ in the cylindrical ends is
thus invariant under $s$-translations.

Note that $\hat f$ is a Morse-Bott function if and only if the
restriction of $\hat f$ to the compact domain~$W$ is a Morse-Bott
function (so that $\Crit(\hat f)$ is transverse to $\p W$).  In this
case it follows that all critical points of $f_V$ lying in
$f_V^{-1}(0)$ are also of Morse-Bott type (even though $f_V$ itself is
usually not).

\medskip

The main result we want to prove in this section is the following
theorem.

\begin{theorem}\label{connected level set for cylindrical end}
  Let $\widehat{W}$ be a manifold with cylindrical ends, and let
  $\hat f$ be a Morse-Bott function on $\widehat{W}$ that is adapted
  to the cylindrical ends and that does not have any critical points
  of index~$i^- = 1$ or $i^+ = 1$.

  Then it follows that the level sets of $\hat f$ are either connected
  or empty.
\end{theorem}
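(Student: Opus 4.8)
The plan is to study the level sets through the \emph{rescaled} gradient field $Z = \nabla\hat f/\norm{\nabla \hat f}^2$, whose flow satisfies $\tfrac{d}{dt}\hat f\bigl(\Phi^Z_t(p)\bigr)=1$ and therefore carries $\hat f^{-1}(c)$ diffeomorphically onto $\hat f^{-1}(c')$ across any interval $[c,c']$ of regular values. Consequently $\#\pi_0\bigl(\hat f^{-1}(c)\bigr)$ is constant on each component of the regular values, and everything reduces to understanding the jumps at the finitely many critical values (finitely many because $\Crit(\hat f)\cap W$ is compact). At a critical value $c_*$, with critical components $C_1,\dots,C_m$ at that level, I would use the Morse-Bott normal form $\hat f = c_* - \norm{x_-}^2+\norm{x_+}^2$ on a tube $C_i\times D^{i^-(C_i)}\times D^{i^+(C_i)}$ to describe the passage from $\hat f^{-1}(c_*-\epsilon)$ to $\hat f^{-1}(c_*+\epsilon)$ as a surgery removing an $S^{i^-(C_i)-1}$-bundle and gluing in an $S^{i^+(C_i)-1}$-bundle over $C_i$. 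This is exactly where the hypothesis is used: since no index equals $1$, every attaching sphere $S^{i^\pm-1}$ is empty (if the index is $0$) or connected (if it is $\ge 2$), so the surgery can neither merge two components nor split one; it can only \emph{create} a component (when $i^-(C_i)=0$, a local minimum) or \emph{destroy} one (when $i^+(C_i)=0$, a local maximum). This yields the crossing formula $\#\pi_0\bigl(\hat f^{-1}(c_*+\epsilon)\bigr)-\#\pi_0\bigl(\hat f^{-1}(c_*-\epsilon)\bigr)=m_{\min}-m_{\max}$, where $m_{\min}$ and $m_{\max}$ count the minimum and maximum components at level $c_*$.

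Because components never merge and never split, each connected component of a regular level set extends uniquely up and down the $c$-axis until it is born (at a local minimum, or, on the end over a component of $\{f_V<0\}$, at $c\to-\infty$) and dies (at a local maximum, or, over a component of $\{f_V>0\}$, at $c\to+\infty$). I would then show that the union of one such maximal family of level-set components, together with its birth and death critical sets, is an open and closed subset of $\widehat W$: openness from the local product structure of the $Z$-flow away from $\Crit(\hat f)$ combined with the normal form near it, and closedness because the complement is the union of the remaining families. Since each family is connected, these subsets are precisely the connected components of $\widehat W$. As $\widehat W$ is connected there is a single family, whence every nonempty level set is connected; in particular $\{f_V>0\}$ and $\{f_V<0\}$, being the level sets in the limits $c\to\pm\infty$, come out connected as a byproduct.

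The hard part will be the non-compactness of $\widehat W$: a priori the $Z$-trajectories identifying level sets across a regular interval could escape to infinity in finite time, which would invalidate the product structure and the localisations above. This is resolved by the explicit form of the field on the ends. From \Cref{eq: gradient in cylindrical end} one computes $\norm{\nabla\hat f}^2 = e^s\bigl(f_V^2+\norm{\nabla f_V}^2\bigr)$, so the $\partial_s$-component of $Z$ is $f_V\big/\bigl(e^s(f_V^2+\norm{\nabla f_V}^2)\bigr)$, of size $O(e^{-s})$ on the traversed region. Along a trajectory this gives $\tfrac{d}{dt}e^{s(t)}=O(1)$, so $s$ grows at most logarithmically and can never reach $+\infty$ in finite time; moreover flowing toward increasing $\hat f$ over $\{f_V<0\}$ (and toward decreasing $\hat f$ over $\{f_V>0\}$) moves \emph{inward}, so no escape occurs in those directions either. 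This plays here the role that \Cref{no gradient trajectory escapes to infinity} plays in the proof of \Cref{theorem:exactness with boundary}, and it legitimises both the flow-identifications across regular intervals and the local surgery analysis.

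Finally I would tidy up two points. The critical components lying in the ends are non-compact cylinders $[0,\infty)\times K$ over the boundary critical sets $K=\Crit(f_V)\cap f_V^{-1}(0)$, all situated at a single value; their transverse indices agree with those of the critical components of $\restricted{\hat f}{W}$ meeting $\p W$ via \Cref{indices of boundary crit in relation to normal index}, so they also avoid index $1$ and the surgery computation applies fibrewise in spite of the non-compactness. To pass from regular to arbitrary level sets one compares $\hat f^{-1}(c_*)$ with the adjacent regular level sets through the connected slab $\hat f^{-1}\bigl([c_*-\epsilon,c_*+\epsilon]\bigr)$, which retracts by the gradient flow outside the critical tubes, forcing $\hat f^{-1}(c_*)$ to be connected whenever the neighbouring regular level sets are.
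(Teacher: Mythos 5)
Your strategy is sound and reaches the theorem by a genuinely different route than the paper. The paper's proof proceeds globally: it first shows that the normalized field $Z=\nabla\hat f/\norm{\nabla\hat f}^2$ cannot blow up in finite time (\Cref{lemma: Z flow complete} — your explicit estimate $\tfrac{d}{dt}e^{s(t)}\le 1/f_V(p(0))$ via the monotonicity of $f_V$ along trajectories is essentially the same computation as the paper's bound $\norm{Z}\le e^{-s/2}/\abs{f_V}$ combined with geodesic completeness); it then invests considerable effort in \Cref{stable and unstable manifolds in cylindrical completion} to prove that \emph{all} stable and unstable sets are genuine smooth submanifolds of codimension at least~$2$ (via truncations $\widehat{W}_{\le s_0}$, capping, and a direct-limit argument), removes them to obtain a connected open set $U$ on which the $Z$-flow gives a global product structure $(T_-,T_+)\times(N_0\cap U)\cong U$ (\Cref{regular level sets connected}), and finally treats critical levels by flowing up to a nearby regular level, perturbing paths off the codimension-$\ge 2$ unstable sets, and flowing back. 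You instead run an Atiyah-style crossing analysis: $\pi_0$ of the level sets is constant on regular intervals, surgery along attaching sphere bundles at critical values cannot merge or split components because no index equals~$1$, and a global open--closed argument on maximal families of components finishes the proof. What your approach buys is that it avoids the global stable-manifold proposition entirely (connectivity of the attaching spheres replaces codimension-$2$ genericity); what it costs is that the local surgery model must be established along \emph{non-compact} critical components, which is where the paper's machinery does real work.

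That last point is the one step you should not wave through with ``applies fibrewise.'' On an end with boundary constant $c_i$, the nearby level sets satisfy $f_V=\pm\epsilon\,e^{-s}$, so $N_{c_i\pm\epsilon}$ accumulates on the critical cylinder $[0,\infty)\times K$ as $s\to\infty$: a tube of \emph{fixed} radius around the cylinder never contains the whole surgery region for any $\epsilon>0$, so the naive localization fails. Two repairs are available and both are in the spirit of what you wrote: either rescale the normal coordinates by $u_\pm=e^{s/2}x_\pm$, after which $\hat f-c_i=-\norm{u_-}^2+\norm{u_+}^2$ holds on a uniform (tapering in the original coordinates) tube, or use that the projection of the gradient flow to $V$ is the autonomous flow of $\nabla f_V$ on the \emph{compact} manifold $V$ (cf.\ \Cref{eq: gradient in cylindrical end} and the projection argument in the proof of \Cref{stable and unstable manifolds in cylindrical completion}), which reduces the hyperbolic analysis near the cylinder to a compact base with the $s$-equation slaved. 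A related check worth recording: on a regular band $\hat f^{-1}([c,c'])$ the Morse--Bott estimate $\norm{\nabla f_V}^2\ge c'\abs{f_V}$ near $K$ gives $\norm{\nabla\hat f}^2\ge c'\abs{\hat f-c_i}$, so $\norm{Z}$ stays bounded even where the band approaches the cylinder at infinity, which is what your flow-identification across regular intervals actually needs. Finally, your closing ``slab retraction'' should be phrased as the paper does it: the slab does \emph{not} retract onto the middle level set; rather, regular points of $N_{c_*}$ flow freely to $N_{c_*+\epsilon}$, critical points are joined to regular ones inside $N_{c_*}$ by the normal-form path (using $i^\pm\ge 2$), and the connecting path in $N_{c_*+\epsilon}$ must be perturbed off the unstable sets of the level-$c_*$ components before flowing back down — for which you do, after all, need those unstable sets to be (locally) submanifolds of codimension at least~$2$ near the relevant trajectories, so some version of the tube analysis above is unavoidable even in your scheme.
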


We break up the proof into several steps.  For this we assume from now
on that $\widehat{W}$ is a manifold with cylindrical ends, that
$\hat f$ is a smooth function on $\widehat{W}$ that is adapted to the
cylindrical ends $[0,\infty)\times V$ so that it restricts to
$\hat f(s,p) = e^s \, f_V(p) + c$ for $(s,p) \in [0,\infty)\times V$,
and that $g$ is a Riemannian metric on $\widehat{W}$ that agrees on
$[0,\infty)\times V$ with
$\restricted{g}{[0,\infty) \times V} = e^s\,\bigl(ds^2 \oplus
g_V\bigr)$.

\begin{lemma}\label{cylindrical end convex with respect to gradient}
  \begin{itemize}
  \item [(a)] Let $(s_0,p_0)$ be a point in the cylindrical end at
    which $\nabla \hat f$ has strictly positive (strictly negative)
    $\partial_s$-component.  Then it follows that the gradient
    trajectory~$\gamma$ through this point has for all $t\ge 0$
    strictly increasing $s$-coordinate (strictly decreasing
    $s$-coordinate for all $t\le 0$), and in particular, $\gamma(t)$
    stays for every $t\ge 0$ (for every $t\le 0$) in the cylindrical
    end.  Furthermore $\gamma(t)$ does not encounter any critical
    point, and tends to $s = + \infty$ as $t$ increases towards
    $t=+\infty$ (as $t$ decreases towards $t=-\infty$).
  \item [(b)] The truncation of $\widehat{W}$ along any hypersurface
    $\{s_0\}\times \p W$ in the cylindrical end is a compact
    domain~$\widehat{W}_{\le s_0}$ whose boundary is strongly convex
    with respect to $\nabla \hat f$.
  \end{itemize}
\end{lemma}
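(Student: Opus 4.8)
The plan is to reduce everything to the explicit form of the gradient on the cylindrical end recorded in \Cref{eq: gradient in cylindrical end}, namely $\nabla\hat f(s,p) = f_V(p)\,\partial_s + \nabla f_V(p)$, and to exploit the fact that the $V$-component of any gradient trajectory is an honest gradient trajectory of $f_V$ on the \emph{compact} manifold $V$. This decoupling is the conceptual heart of both parts.

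For part~(a), I would write a trajectory lying in the end as $\gamma(t) = (s(t),p(t))$, so that $\dot p = \nabla f_V(p)$ and $\dot s = f_V(p)$. The first equation says $p(t)$ is a gradient trajectory of $f_V$ on the compact manifold $V$; it is therefore defined for all $t\in\RR$, and along it $\frac{d}{dt} f_V(p(t)) = \norm{\nabla f_V(p(t))}^2 \ge 0$, so $t\mapsto f_V(p(t))$ is non-decreasing. The whole argument hinges on this monotonicity. Suppose first that the $\partial_s$-component $f_V(p_0)$ is strictly positive. Then $f_V(p(t)) \ge f_V(p_0) > 0$ for every $t\ge 0$, hence $\dot s(t) = f_V(p(t)) > 0$ and $s$ is strictly increasing; integrating, $s(t) = s_0 + \int_0^t f_V(p(\tau))\,d\tau \ge s_0 + f_V(p_0)\,t \to +\infty$. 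Since $s(t)\ge s_0 \ge 0$, the trajectory never leaves the end for $t\ge 0$, so \Cref{eq: gradient in cylindrical end} keeps governing $\gamma$; and since $f_V(p(t)) > 0$, it can meet no critical point of $\hat f$ in the end, as those require $f_V = 0$. The strictly negative case I would obtain by applying the positive case to $-\hat f$, which is again adapted to the ends with $f_V$ replaced by $-f_V$ and whose gradient flow is the time-reversed flow of $\hat f$; this converts the claim for $t\le 0$ into the one already established for $t\ge 0$.

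For part~(b), I would first note that the truncation $\widehat W_{\le s_0} = W \cup ([0,s_0]\times V)$ is compact as a union of two compact sets. To check strong convexity of its boundary $\{s_0\}\times V$, I would introduce the collar coordinate $\sigma := s - s_0 \in (-\epsilon,0]$, so that $\{\sigma = 0\}$ is the boundary and $\partial_\sigma = \partial_s$ points out of $\widehat W_{\le s_0}$. Comparing $\nabla\hat f = f_V(p)\,\partial_\sigma + \nabla f_V(p)$ with the collar decomposition in \Cref{eq: decomposition of vector field with respect boundary collar}, one reads off $S(\sigma,p) = f_V(p)$ and $Z(\sigma,p) = \nabla f_V(p)$. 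A boundary point $(0,p)$ lies in $\partial^0\widehat W_{\le s_0}$ precisely when $f_V(p) = 0$; there either $\nabla f_V(p) = 0$, so that $Z$ vanishes, or $\nabla f_V(p)\ne 0$ and $dS(Z) = df_V(\nabla f_V) = \norm{\nabla f_V}_{g_V}^2 > 0$. In either case \Cref{def: strongly convex} is satisfied, which is exactly the assertion.

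The calculations are routine, so the points requiring care are conceptual rather than technical. The first is recognising that the $V$-component of the flow decouples as the gradient flow of $f_V$ on a \emph{compact} base, which is what guarantees global existence and the monotonicity of $f_V\circ p$ that drives all of part~(a). The second, in part~(b), is matching the sign conventions of the collar so that $dS(Z) = \norm{\nabla f_V}^2$ comes out with the correct positive sign; once this is arranged, strong convexity is immediate, since $S = f_V$ and $Z = \nabla f_V$ are both governed by $f_V$, so $S$ vanishes together with its $Z$-derivative only where $\nabla f_V$ itself vanishes.
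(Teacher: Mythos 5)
Your proposal is correct and follows essentially the same route as the paper's proof: both parts rest on the formula $\nabla\hat f = f_V\,\partial_s + \nabla f_V$ from \Cref{eq: gradient in cylindrical end}, the monotonicity $\lie{\nabla\hat f} f_V = \norm{\nabla f_V}^2 \ge 0$ for part~(a) with the parenthetical claims obtained by replacing $\hat f$ by $-\hat f$, and for part~(b) the direct verification of \Cref{def: strongly convex} with $S = f_V$ and $Z = \nabla f_V$ at points of $\p^0\widehat{W}_{\le s_0}$. Your explicit remark that the $V$-component of the flow decouples as a gradient trajectory of $f_V$ on the compact base merely spells out the global-existence point that the paper handles with the observation that $f_V$ is bounded.
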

\begin{proof}
  \textbf{(a)} The $\partial_s$-component of $\nabla \hat f$ at a
  point~$(s,p)$ in the cylindrical end is $f_V(p)$, see \Cref{eq:
    gradient in cylindrical end}.  If $(s_0,p_0)$ is a point at which
  $\nabla \hat f$ is positively transverse to the level set
  $\{s=s_0\}$, then $f_V(p_0) > 0$, and since
  $\lie{\nabla \hat f} f_V = \norm{\nabla f_V}^2 \ge 0$, it follows
  that $f_V$ increases along the flow line from that moment on.  In
  particular, the trajectory cannot hit any critical point in forward
  direction, since $f_V$ does not decrease.

  Furthermore, the $s$-coordinate of the trajectory continues
  increasing so that it eventually hits $s = +\infty$ for
  $t\to +\infty$ (since $f_V$ is bounded, the trajectory cannot reach
  $s= +\infty$ in finite time).  For the claim in parenthesis, just
  invert the sign of $\hat f$.

  \smallskip
  
  \textbf{(b)} Clearly $\nabla \hat f$ is only tangent to the boundary
  of the domain $\widehat{W}_{\le s_0}$ along the subset
  $\p^0 \widehat{W}_{\le s_0} = \{(s,p)|\; s=s_0, \; f_V(p)=0\}$.
  According to \Cref{eq: gradient in cylindrical end} and \Cref{def:
    strongly convex}, the boundary of $\widehat{W}_{\le s_0}$ is
  strongly convex with respect to $\nabla \hat f$, because at every
  point $(s_0,p) \in \p^0 \widehat{W}_{\le s_0}$, either
  $\nabla f_V(p) = 0$ so that $\nabla \hat f(s_0,p) = 0$, or
  $df_V(\nabla f_V) = \norm{\nabla f_V(s_0,p)}^2$ is strictly
  positive.
\end{proof}

As already explained, we will use a rescaled gradient flow in the
proof of \Cref{connected level set for cylindrical end} to compare the
different level sets of $\hat f$.  The following lemma shows that the
trajectories of this flow do not escape in \emph{finite} time through
the cylindrical end.

\begin{lemma}\label{lemma: Z flow complete}
  Consider the vector field
  \begin{equation*}
    Z := \frac{1}{\norm{\nabla\hat f}^2}\cdot \nabla\hat f
  \end{equation*}
  defined  on $\widehat{W}\setminus \Crit(\hat f)$, let $x$ be any
  point in the domain of $Z$, and let $\gamma$ be the $Z$-trajectory
  through $x$.  Then:
  \begin{itemize}
  \item If $x$ lies in the stable set $W^s(\hat f; \widehat{C}_j)$ of
    one of the connected components~$\widehat{C}_j$ of
    $\Crit(\hat f)$, then $\gamma$ is defined for time $[0,T_x)$ with
    $T_x = \hat f\bigl(\widehat{C}_j\bigr) - \hat f(x)$, and $\gamma$
    extends to a continuous map on $[0,T_x]$ such that $\gamma(T_x)$
    is a point in $\widehat{C}_j$.
  \item If $x$ does not lie in the stable set of any of the components
    of $\Crit(\hat f)$, then $\gamma$ is defined for all $t > 0$ and
    $\gamma(t)$ tends for $t\to \infty$ towards $s = +\infty$ in the
    cylindrical end of $\widehat{W}$.
  \end{itemize}
\end{lemma}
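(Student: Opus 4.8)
The plan is to exploit the single defining feature of the rescaled flow: that $\hat f$ increases along it at unit rate. Since $Z$ is a strictly positive pointwise multiple of $\nabla \hat f$ away from $\Crit(\hat f)$, every $Z$-trajectory traces out the same unparametrized curve as a $\nabla\hat f$-trajectory, only re-timed. Along $\gamma$ we compute
\[
  \frac{d}{dt}\,\hat f\bigl(\gamma(t)\bigr)
  = d\hat f(Z)
  = \frac{1}{\norm{\nabla\hat f}^2}\,d\hat f(\nabla \hat f) = 1 \;,
\]
so that $\hat f\bigl(\gamma(t)\bigr) = \hat f(x) + t$. This identity already pins down the time~$T_x$ in the first case and drives the completeness argument in the second.

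For the first bullet, suppose $x \in W^s(\hat f;\widehat C_j)$, so that the $\nabla \hat f$-trajectory through $x$ converges (membership in the stable set, together with \Cref{convergence gradient trajectory} applied in a compact neighborhood of $\widehat C_j$) to a point $p_\infty \in \widehat C_j$. As $\hat f$ equals $\hat f(\widehat C_j)$ on $\widehat C_j$ and increases strictly along the non-constant trajectory --- which therefore passes through no critical point and keeps $Z$ defined --- $\hat f$ rises monotonically toward $\hat f(\widehat C_j)$ along the curve. Combined with $\hat f(\gamma(t)) = \hat f(x)+t$, this forces $\gamma$ to be defined precisely on $[0,T_x)$ with $T_x = \hat f(\widehat C_j) - \hat f(x)$, and $\gamma(t) \to p_\infty$ as $t \to T_x^-$; setting $\gamma(T_x) := p_\infty \in \widehat C_j$ yields the asserted continuous extension.

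For the second bullet, fix $s_0$ with $x \in \widehat W_{\le s_0}$. This truncation is compact with strongly convex boundary by \Cref{cylindrical end convex with respect to gradient}.(b), and $\restricted{\hat f}{\widehat W_{\le s_0}}$ is Morse-Bott. If the forward $\nabla \hat f$-trajectory remained in $\widehat W_{\le s_0}$ for all times, \Cref{convergence gradient trajectory} would make it converge to a critical point, placing $x$ in a stable set --- contrary to hypothesis. Hence the trajectory leaves $\widehat W_{\le s_0}$; by convexity it does so transversely through $\partial^+\widehat W_{\le s_0} = \{(s_0,p): f_V(p) > 0\}$, and \Cref{cylindrical end convex with respect to gradient}.(a) then confines it to the end with strictly increasing $s$, no further critical points, and $s \to +\infty$. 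Along this escape $f_V$ is non-decreasing, since $\lie{\nabla\hat f} f_V = \norm{\nabla f_V}^2 \ge 0$, hence bounded below by some $\delta > 0$.

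It remains to transfer these conclusions to the $Z$-reparametrization and, above all, to rule out a finite-time escape to $s = +\infty$; this is the main obstacle. On the compact part the trajectory is a finite arc bounded away from $\Crit(\hat f)$, so $\norm{Z}$ is bounded there and $\gamma$ reaches the end in finite~$t$. Once in the end I use $\hat f = e^s f_V + c$ together with $\hat f(\gamma(t)) = \hat f(x)+t$ to obtain $e^{s(t)} = (\hat f(x)+t-c)/f_V \le (\hat f(x)+t-c)/\delta$, which stays finite for finite~$t$. Thus $s(t)$ remains bounded on bounded time intervals, keeping $\gamma$ inside a compact region where $Z$ is bounded, so no blow-up occurs and $\gamma$ extends to all $t>0$; since $f_V$ is also bounded above on the compact $V$, the relation $e^{s(t)} f_V = \hat f(x)+t-c$ forces $e^{s(t)} \to \infty$, that is $s(t) \to +\infty$, as $t\to\infty$.
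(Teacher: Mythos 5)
Your proof is correct, and while its skeleton coincides with the paper's --- the identity $\lie{Z}\hat f = 1$ pinning down $T_x$, the reparametrization of gradient trajectories, Łojasiewicz convergence (\Cref{convergence gradient trajectory}) for the stable-set case, and \Cref{cylindrical end convex with respect to gradient} for the escape case --- you handle the crux of the lemma, namely ruling out escape to $s=+\infty$ in \emph{finite} time, by a genuinely different mechanism. The paper bounds the \emph{speed}: from $\nabla\hat f = f_V\,\partial_s + \nabla f_V$ it obtains $\norm{Z} = e^{-s/2}\big/\sqrt{f_V^2 + \norm{\nabla f_V}^2} \le 1/\abs{f_V(p(0))}$ along the escaping trajectory (using the same monotonicity $df_V(Z)\ge 0$ that you use) and then invokes geodesic completeness of the cylindrical metric (\Cref{geodesically complete}) to conclude that the trajectory cannot cover infinite distance in finite time. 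You instead bound the \emph{position}: the exact relation $e^{s(t)}\,f_V\bigl(p(t)\bigr) = \hat f(x) + t - c$, combined with the monotone lower bound $f_V \ge \delta$ inherited from the transverse exit through $\p^+$, confines $\gamma$ on any bounded time interval to a compact subset of $\widehat{W}\setminus\Crit(\hat f)$ (compactness away from the critical locus being guaranteed precisely because critical points in the end sit in $\{f_V = 0\}$), so the standard escape lemma for maximal integral curves forbids blow-up; no completeness of the metric is needed. Your route is somewhat more elementary and yields the asymptotic $s(t) = \log t + O(1)$ for free, which makes the claim $s(t)\to+\infty$ explicit rather than inferred from the unparametrized picture; the paper's speed bound is the more robust tool, applying whenever $\norm{Z}$ stays bounded without an exact formula for $\hat f$ on the end. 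Both arguments ultimately rest on the same two inputs, $\lie{Z}\hat f = 1$ and $df_V(Z)\ge 0$.
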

\begin{proof}
  The vector fields~$Z$ and $\nabla \hat f$ are conformal on
  $\widehat{W}\setminus \Crit(\hat f)$ so that their trajectories
  agree up to reparametrization.  This implies that any $Z$-trajectory
  that is confined in a compact domain of $\widehat{W}$, converges by
  \Cref{convergence gradient trajectory} to some point in
  $\Crit(\hat f)$ (this is the only time in this proof that we use
  that $\hat f$ is Morse-Bott).
  
  Since $\lie{Z}\hat f = 1$, a $Z$-trajectory moves any point in the
  level set $\{\hat f = c\}$ (whenever defined) in time $t$ to the
  level set $\{\hat f = c + t \}$.  Thus it follows that if the
  trajectory through $x$ converges to a critical point~$x'$, the
  trajectory will only exist up to time
  $T_x := \hat f(x') - \hat f(x)$.

  \medskip

  If the $Z$-orbit is not confined in a compact domain, it will
  necessarily move at a certain moment into the cylindrical end of
  $\widehat{W}$ crossing one of the $s$-level sets transversely in
  positive direction.  We know from \Cref{cylindrical end convex with
    respect to gradient}.(a) that the underlying gradient trajectory
  continues to move from this point on in the cylindrical end upwards
  towards $s = +\infty$.  This implies that the $Z$-trajectory escapes
  through the cylindrical ends of $\widehat{W}$.  Note though that to
  prove our claim, we also need to show that the trajectory needs
  \emph{infinite} time to reach $s = +\infty$.

  We will show below that the norm of $Z$ is bounded along any
  $Z$-trajectory that moves up towards $s = + \infty$.  Since
  $\widehat{W}$ is by \Cref{geodesically complete} geodesically
  complete, the $Z$-trajectory cannot move in time~$t > 0$ further
  than distance~$K\,t$, where $\norm{Z} < K$.  In particular, none of
  the $Z$-trajectories can escape through the cylindrical end of
  $\widehat{W}$ in finite time.
    
  Recall from \Cref{eq: gradient in cylindrical end} that
  $ \nabla \hat f = f_V\,\partial_s + \nabla f_V$ on the cylindrical
  end.  The norm of $Z$ on the cylindrical end is thus bounded by
  \begin{equation*}
    \norm{Z} = \frac{1}{\norm{\nabla \hat f}}
    =  \frac{e^{-s/2}}{\sqrt{f_V^2 + \norm{\nabla f_V}_V^2}}
    \le  \frac{e^{-s/2}}{\abs{f_V}} \;.
  \end{equation*}
  The function~$f_V$ is monotonously increasing along the
  $Z$-trajectories because
  $df_V(Z) = \frac{\norm{\nabla f_V}^2}{\norm{\nabla \hat f}^2} \ge
  0$.  Thus, if $\gamma(t) = \bigl(s(t), p(t)\bigr)$ is a
  $Z$-trajectory in $[0,\infty)\times V$ such that $s'(0) > 0$, then
  $\norm{Z(\gamma(t))} \le K$ with $K = \frac{1}{\abs{f_V(p(0))}}$
  along this trajectory.
\end{proof}

In \Cref{stable sets are submanifolds}.(b) we showed for Morse-Bott
functions on compact domains with $\nabla$-convex boundary that the
interior of the stable and unstable subsets of the critical points are
always \emph{contained} in finite dimensional smooth submanifolds, but
we did not claim that the stable and unstable subsets are actually
smooth submanifolds themselves.  For cylindrical Morse-Bott functions,
we obtain the following stronger result.

\begin{proposition}\label{stable and unstable manifolds in cylindrical
    completion}
  Let $\widehat{C}_j$ be a connected component of $\Crit(\hat f)$.

  The stable and unstable sets are smooth submanifolds of dimension
  $\dim \widehat{C}_j + i^-(\widehat{C}_j)$ and
  $\dim \widehat{C}_j + i^+(\widehat{C}_j)$ respectively, where
  $i^-(\widehat{C}_j)$ and $i^+(\widehat{C}_j)$ are the indices of
  $\widehat{C}_j$.  The level sets $\{s_0\}\times \p W$ in the
  cylindrical end intersect the stable and unstable subsets
  transversely.
\end{proposition}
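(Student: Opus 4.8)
The plan is to treat separately the two kinds of connected components of $\Crit(\hat f)$: those contained in $\Interior W$, which are compact, and those meeting the hypersurface $V=\p W$, which extend into the cylindrical end as half-cylinders. The structural fact I would record first is that $\hat f$ is constant on each connected component $\widehat C_j$ (being Morse--Bott), and that on the cylindrical part of a boundary-touching component this value is exactly $c$, since $f_V$ vanishes on $\Crit(f_V)\cap f_V^{-1}(0)$. Combined with \Cref{lemma: Z flow complete}, this forces any point of $W^s(\hat f;\widehat C_j)$ lying in the end to satisfy $f_V<0$, so by \Cref{cylindrical end convex with respect to gradient}.(a) its forward trajectory has strictly decreasing $s$-coordinate and is driven back towards $W$ while $f_V$ increases monotonically to $0$. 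The symmetric statement with $-\hat f$ governs the unstable sets. This is precisely what prevents the stable and unstable sets from being \emph{cut off} in the way they can be in the compact case of \Cref{stable sets are submanifolds}, and it is what upgrades ``contained in a submanifold'' to ``is a submanifold''.

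For a component $\widehat C_j$ not meeting $V$, I would truncate at a large level $\{s_0\}\times V$ to obtain the compact domain $\widehat W_{\le s_0}$, whose boundary is strongly convex by \Cref{cylindrical end convex with respect to gradient}.(b). Since $\widehat C_j$ lies in the interior and does not touch $\p\widehat W_{\le s_0}$, \Cref{stable sets are submanifolds}.(c) already shows that $W^s(\hat f;\widehat C_j)\cap \widehat W_{\le s_0}$ is a smooth submanifold of the stated dimension meeting $\{s_0\}\times V$ transversely. I would then extend through the end by backward gradient flow: by the previous paragraph the part of the stable set in $\{s>s_0\}$ consists of points with $f_V<0$ that flow down into $\widehat W_{\le s_0}$ in finite time, and since $\widehat W$ has no boundary the gradient flow is a local diffeomorphism between neighbourhoods of interior points, so transporting the smooth structure along the flow makes all of $W^s(\hat f;\widehat C_j)$ a smooth submanifold.

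The substantive case is a component $\widehat C_j$ meeting $V$, namely the non-compact half-cylinder $C_j^W\cup\bigl([0,\infty)\times N\bigr)$ over the Morse--Bott critical submanifold $N=\p C_j\subset V$ of $f_V$. Locally along $\widehat C_j$ I would invoke the Hadamard--Perron theorem, as packaged in \Cref{box around critical component}, to produce a smooth local stable manifold near each critical point; the delicate point is uniformity as $s\to\infty$. Here I would exploit that $\nabla\hat f$ is invariant under the $s$-translations $\tau_a\colon(s,p)\mapsto(s+a,p)$ of the end, which is immediate from \Cref{eq: gradient in cylindrical end}; thus $\tau_a$ carries gradient trajectories to gradient trajectories and hence the stable set to itself within the end, reducing the local analysis to a compact piece of the cylinder and propagating it to all heights. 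Because $\abs{f_V}\to 0$ exponentially along the $\nabla f_V$-flow into $N$ (by Morse--Bott non-degeneracy, where on $W^s_V(f_V;N)$ one has $f_V\le 0$), the height integral $s(t)=s_0+\int_0^t f_V\,d\tau$ converges, so the stable set in the end is smoothly parametrised by $W^s_V(f_V;N)\subset V$ together with a limiting height. A dimension count using \Cref{indices of boundary crit in relation to normal index} then gives $\dim W^s_V(f_V;N)+1=\dim N+i^-(\p C_j)+1=\dim\widehat C_j+i^-(\widehat C_j)$, as required. Globalising again proceeds by flowing out: every stable-set point reaches this neighbourhood of $\widehat C_j$ in finite time, by \Cref{convergence gradient trajectory} after truncating and doubling via \Cref{doubling Morse-Bott}, together with \Cref{lemma: Z flow complete} to rule out escape to infinity, and the flow-out is by local diffeomorphisms. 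Transversality of $W^s$ (and, after reversing the sign of $\hat f$, of $W^u$) with each $\{s_0\}\times V$ follows from the strong convexity in \Cref{cylindrical end convex with respect to gradient}.(b): a stable-set point on $\{s=s_0\}$ either has $f_V<0$, where its trajectory crosses the level transversely, or lies on the cylinder $\widehat C_j$ itself, which is manifestly transverse to $\{s=s_0\}$.

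I expect the main obstacle to be exactly the non-compact boundary-touching components: controlling the local stable manifolds uniformly out to $s=+\infty$ and verifying that the limiting-height parametrisation is genuinely smooth, not merely continuous, across the whole cylinder. The $s$-translation invariance of $\nabla\hat f$ together with the exponential decay of $f_V$ along the $V$-gradient flow is the mechanism I would use to turn the heuristic product picture $W^s_V(f_V;N)\times(\text{heights})$ into an honest smooth submanifold, while \Cref{lemma: Z flow complete} is what guarantees that no trajectory in the relevant region runs off to infinity before the flow-out argument can be applied.
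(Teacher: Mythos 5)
Your proposal is correct in substance and rests on the same decisive mechanism as the paper: the sign condition $f_V\le 0$ along the $V$-stable set of $N=\p C_j$, which rules out the cut-off phenomenon of \Cref{fig: stable subset is manifold if boundary suitable}, and your transversality argument at the end ($f_V\ne 0$ gives transversality via the gradient; $f_V=0$ forces the point into $\widehat C_j$, itself transverse to the $s$-levels) is word-for-word the paper's. Where you genuinely diverge is the boundary-touching case. The paper truncates at each level $s_0$, caps $\widehat{W}_{\le s_1}$ via \Cref{doubling Morse-Bott}, feeds $f_V\le 0$ into \Cref{lemma for stable sets with boundary} to get smoothness of each truncated stable set, and then takes the increasing union over $s\in\NN$ with an explicit check that the direct-limit structure is embedded. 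You instead exploit the $s$-independence of $\nabla\hat f=f_V\,\p_s+\nabla f_V$: the flow decouples, trajectories project to $\nabla f_V$-trajectories in $V$, heights are recovered by integration, and your dimension count via \Cref{indices of boundary crit in relation to normal index} is right. This buys a more concrete picture, at the price of two points you should make explicit: the product picture over $W^s_V(f_V;N)$ only captures trajectories that stay in the end --- stable-set points at arbitrary heights whose trajectories exit through $\{0\}\times V$ and converge to interior points of $C_j$ are recovered only by your flow-out step; and the translation mechanism exists only for $s\ge 0$, so the smooth matching across $\{0\}\times V$ near $\p C_j$ still needs the capped truncation with the box neighborhoods of \Cref{box around critical component}, using that a stable trajectory can never cross a height level outward (by \Cref{cylindrical end convex with respect to gradient}.(a) it would escape to $s=+\infty$). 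Since you cite precisely these lemmas, both gaps are fillable, and your flow-out globalization is a legitimate substitute for the paper's union-over-truncations embeddedness check.
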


We have seen in \Cref{example convex and concave boundary}.(b) that
there exist compact domains with boundary for which the boundary is
$\nabla$-convex with respect to some Morse-Bott function and some
Riemannian metric, but for which the stable and unstable subsets are
not smooth submanifolds.  The situation complicates even even further
because whether the stable or unstable subsets are smooth submanifolds
depends also on the choice of the Riemannian metric, see \Cref{example
  gradient along boundary depends on metric}.  It might be difficult
to decide if such a metric exists.

If we can find a boundary collar on which the Morse-Bott function
resembles the product form of a cylindrical end, the situation
simplifies:

\begin{corollary}
  Let $f\colon W \to \RR$ be a Morse-Bott function on a compact
  manifold with boundary.  Assume that $W$ has a boundary collar of
  the form $(-\epsilon,0]\times \p W$ on which $f$ restricts to
  $f(s,p) = e^s \, f_V(p) + C$, where $f_V$ is a function on
  $V : = \p W$, and $C\in \RR$ is a constant.

  If we choose a Riemannian metric that agrees on the boundary collar
  with $e^s\,\bigl(ds^2 \oplus g_V\bigr)$, then it follows that
  $W^s(f; C_j)$ and $W^u(f; C_j)$ are smooth proper submanifolds.
\end{corollary}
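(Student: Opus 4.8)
The plan is to reduce the statement to \Cref{stable and unstable manifolds in cylindrical completion} by completing $W$ to a manifold with cylindrical ends. First I would attach the end $[0,\infty)\times V$ to $W$ along $V=\p W$, obtaining $\widehat W = W\cup_V [0,\infty)\times V$, and extend $f$ to a function $\hat f\colon\widehat W\to\RR$ by the same formula $\hat f(s,p)=e^s f_V(p)+C$ for all $s\ge 0$. Since $f$ already has this product form on the collar $(-\epsilon,0]\times V$, the extension is smooth and is adapted to the cylindrical end in the sense of \Cref{definition function adapted to cylindrical end}; likewise I extend the metric by $e^s(ds^2\oplus g_V)$, which agrees with $g$ on the collar. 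Then $\hat f$ is a Morse-Bott function on $\widehat W$. Using the gradient formula $\nabla\hat f = f_V\,\partial_s+\nabla f_V$ from \Cref{eq: gradient in cylindrical end}, the critical set of $\hat f$ in the end is $\{(s,p)\mid p\in\Crit(f_V),\, f_V(p)=0\}$, which is precisely $\Crit(f)\cap\p W$ translated in the $s$-direction. Hence every component $C_j$ of $\Crit(f)$ extends to a boundaryless component $\widehat C_j = C_j\cup\bigl([0,\infty)\times(C_j\cap\p W)\bigr)$ of $\Crit(\hat f)$, on which $\hat f$ is constant.

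By \Cref{stable and unstable manifolds in cylindrical completion} applied to $\widehat W$, each set $W^s(\hat f;\widehat C_j)$ and $W^u(\hat f;\widehat C_j)$ is a smooth submanifold of $\widehat W$ of the expected dimension, and it meets every slice $\{s_0\}\times V$ in the end — in particular the slice $\{0\}\times V=\p W$ — transversely. Intersecting such a submanifold with the truncation $W=\widehat W_{\le 0}$ therefore produces a smooth neat (proper) submanifold of $W$, whose boundary $W^s(\hat f;\widehat C_j)\cap\p W$ lies in $\p W$ and is met transversely. It thus remains only to verify the identifications
\begin{equation*}
  W^s(f;C_j) = W^s(\hat f;\widehat C_j)\cap W, \qquad
  W^u(f;C_j) = W^u(\hat f;\widehat C_j)\cap W.
\end{equation*}

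I expect this last identification to be the only real point of the proof. The inclusion ``$\subseteq$'' is immediate: the two gradient fields agree on $W$, so a forward $\nabla f$-trajectory that stays in $W$ and converges to $C_j\subset\widehat C_j$ is also a convergent $\nabla\hat f$-trajectory in $\widehat W$. For ``$\supseteq$'' I would argue that a convergent trajectory cannot leave $W$. Indeed, $\p W=\{0\}\times V$ is strongly convex with respect to $\nabla\hat f$ by \Cref{cylindrical end convex with respect to gradient}.(b), and hence $\nabla$-convex, so a trajectory emanating from a point of $W$ can cross $\p W$ into the open end only transversely, i.e.\ at a point where the $\partial_s$-component $f_V$ is strictly positive. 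By \Cref{cylindrical end convex with respect to gradient}.(a) it would then have strictly increasing $s$-coordinate for all later times and tend to $s=+\infty$, so it could never converge to a critical point — contradicting $p\in W^s(\hat f;\widehat C_j)$. Therefore the trajectory stays in $W$ and converges to a point of $\widehat C_j\cap W = C_j$; since $W$ is compact with $\nabla$-convex boundary, \Cref{convergence gradient trajectory} and \Cref{stable sets partition} ensure this convergence is to an honest critical component, so $p\in W^s(f;C_j)$.

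The statement for the unstable sets follows by the symmetric argument run in backward time: a backward trajectory leaving $W$ crosses $\p W$ at a point with $f_V<0$, which by the parenthetical case of \Cref{cylindrical end convex with respect to gradient}.(a) forces $s\to+\infty$ as $t\to-\infty$ and thus prevents convergence. Equivalently, one may simply replace $f$ by $-f$, noting that $-\hat f$ is again adapted to the cylindrical end with boundary datum $-f_V$, and invoke the result already established for stable sets. Combining the two identifications with the transversality established above shows that $W^s(f;C_j)$ and $W^u(f;C_j)$ are smooth proper submanifolds of $W$, as claimed.
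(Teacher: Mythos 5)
Your proof is correct and follows exactly the route the paper intends: the corollary is stated in the paper without a separate proof, as an immediate consequence of \Cref{stable and unstable manifolds in cylindrical completion} obtained precisely by attaching the cylindrical end, extending $f$ and the metric in product form, and truncating back to $W$. The one substantive step you supply explicitly --- the identification $W^s(f;C_j)=W^s\bigl(\hat f;\widehat C_j\bigr)\cap W$ via the escape-to-infinity argument of \Cref{cylindrical end convex with respect to gradient} --- is the same mechanism the paper itself deploys inside its proof of that proposition, so your write-up simply makes the implicit argument explicit.
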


\begin{proof}[Proof of \Cref{stable and unstable manifolds in cylindrical
    completion}]
  Truncate $\widehat{W}$ at any hypersurface $\{s_0\}\times \p W$ to
  obtain a compact domain~$\widehat{W}_{\le s_0}$ whose boundary is
  convex with respect to $\nabla \hat f$, see \Cref{cylindrical end
    convex with respect to gradient}.(b).

  For a connected component~$\widehat{C}_j$ of $\Crit(\hat f)$ that
  does not intersect the boundary of $\widehat{W}_{\le s_0}$, it
  follows from \Cref{stable sets are submanifolds}.(c) that the
  interior of the stable subset of $\widehat{C}_j$ in
  $\widehat{W}_{\le s_0}$ is a smooth submanifold that we denote by
  $W^{\mathrm{stable}}_{< s_0}$.  We show now that
  \begin{equation*}
    W^s\bigl(\hat f; \widehat{C}_j\bigr) \cap \widehat{W}_{< s_0} =
    W^{\mathrm{stable}}_{< s_0} \;.
  \end{equation*}

  Clearly
  $W^{\mathrm{stable}}_{< s_0} \subset W^s\bigl(\hat f;
  \widehat{C}_j\bigr)$.  On the other hand, let
  $p\in W^s\bigl(\hat f; \widehat{C}_j\bigr)$ be a point that lies in
  $\widehat{W}_{< s_0}$.  If the gradient trajectory of $p$ intersects
  for positive time the height level $\{s_0\}\times \p W$, then it
  needs to do so transversely, because $\widehat{W}_{\le s_0}$ has
  $\nabla$-convex boundary.  Since we have shown in \Cref{cylindrical
    end convex with respect to gradient}.(a) that any gradient
  trajectory intersecting a height level $\{s\}\times \p W$
  transversely in outward direction continues towards $s = +\infty$,
  it does not converge a point in $\widehat{C}_j$.  It follows that
  the trajectory through $p$ stays inside $\widehat{W}_{< s_0}$
  proving as desired that $p\in W^{\mathrm{stable}}_{< s_0}$.

  Every $W^{\mathrm{stable}}_{<s}$ with $s\in \NN$ embeds smoothly as
  an open subset in $W^{\mathrm{stable}}_{<s+1}$.  This way,
  $W^s\bigl(\hat f; \widehat{C}_j\bigr) = \cup_{s\in\NN}
  W^{\mathrm{stable}}_{<s}$ is an abstract smooth manifold with the
  structure it obtains as a direct limit.  It is also clear that
  $W^s\bigl(\hat f; \widehat{C}_j\bigr)$ is injectively immersed into
  $\widehat{W}$ so that it only remains to show that it is smoothly
  embedded, that is, its topology agrees with the topology induced as
  a subset of $\widehat{W}$.

  A subset $U \subset W^s\bigl(\hat f; \widehat{C}_j\bigr)$ is an open
  subset, if and only if every $U_s := U\cap W^{\mathrm{stable}}_{<s}$
  for $s\in \NN$ is an open subset of $W^{\mathrm{stable}}_{<s}$.
  Since $W^{\mathrm{stable}}_{<s}$ is smoothly embedded in
  $\widehat{W}_{<s}$, we find an open subset
  $U_s' \subset \widehat{W}_{<s}$ such that
  $U_s = U_s' \cap W^{\mathrm{stable}}_{<s}$.  Consider now
  $U' := \cup_{s\in \NN} U_s'$, which is an open subset of
  $\widehat{W}$, then it follows that
  $U' \cap W^s\bigl(\hat f; \widehat{C}_j\bigr) = U$ so that $U$ is
  also an open subset with respect to the subset topology.  This shows
  that the stable subset $W^s\bigl(\hat f; \widehat{C}_j\bigr)$ is a
  smooth submanifold.  It intersects every hypersurface
  $\{s_0\}\times \p W$ by \Cref{cylindrical end convex with respect to
    gradient}.(a) transversely.

  \medskip

  Assume now that $\widehat{C}_j$ is a connected component of
  $\Crit(\hat f)$ that does intersect the hypersurface
  $\{s_0\}\times \p W$.  We want to use \Cref{lemma for stable sets
    with boundary} to show that the stable subset of
  $\widehat{C}_j\cap \widehat{W}_{\le s_0}$ in $\widehat{W}_{\le s_0}$
  is a smooth submanifold.  For this we first choose $s_1 > s_0$ and
  then we cap off the domain~$\widehat{W}_{\le s_1}$ to obtain a
  closed manifold that we will denote by
  $\widehat{W}_{\le s_1}^{\mathrm{cap}}$.  According to \Cref{doubling
    Morse-Bott}, this poses no problem, and we also obtain that
  $\widehat{W}_{\le s_1}^{\mathrm{cap}}$ carries a Morse-Bott
  function~$\hat f^{\mathrm{cap}}$ that agrees on
  $\widehat{W}_{\le s_1}$ with $\hat f$.  Choose now a Riemannian
  metric on $\widehat{W}_{\le s_1}^{\mathrm{cap}}$ that agrees on the
  subdomain~$\widehat{W}_{\le s_1}$ with the metric on $\widehat{W}$,
  and let $\widehat{C}_j^{\mathrm{cap}}$ be the component of
  $\Crit \bigl(\hat f^{\mathrm{cap}}\bigr)$ that contains
  $\widehat{C}_j\cap \widehat{W}_{\le s_1}$.

  There is a neighborhood of the boundary of $\widehat{W}_{\le s_0}$
  in $\widehat{W}_{\le s_1}^{\mathrm{cap}}$ such that
  $\hat f^{\mathrm{cap}}$ and the Riemannian metric look like $\hat f$
  and the metric on a neighborhood of $\{s_0\}\times \p W$ in the
  cylindrical end.  In particular it follows that
  $\widehat{W}_{\le s_0}$ has $\nabla$-convex boundary in
  $\widehat{W}_{\le s_1}^{\mathrm{cap}}$.
  
  We want to apply \Cref{lemma for stable sets with boundary} to
  $\widehat{W}_{\le s_0}$ in $\widehat{W}_{\le s_1}^{\mathrm{cap}}$.
  Recall that $\widehat{C}_j\cap \bigl(\{s_0\} \times V\bigr)$ with
  $V = \p W$ lies in the subset where both $f_V$ and $\nabla f_V$
  vanish.  It is easy to see that the stable subset of a point
  $(s_0,p_0) \in \widehat{C}_j$ in the cylindrical end necessarily
  projects under the map
  $(-\epsilon,\epsilon)\times V \to V, \;(s,p) \mapsto p$ onto the
  stable subset of $p_0$ with respect to $\nabla f_V$ in $V$.  Since
  the function $f_V$ increases along $\nabla f_V$, and since $f_V$
  vanishes at $p_0$, we obtain that $f_V \le 0$ along the stable
  subset of $p_0$.

  This in turn implies that $\nabla \hat f$ points along
  $W^s\bigl(\hat f; \widehat{C}_j\bigr)$ \emph{into}
  $\widehat{W}_{\le s_0}$ allowing us to use \Cref{lemma for stable
    sets with boundary}.  The interior of the stable manifold of
  $\widehat{C}_j\cap \widehat{W}_{\le s_0}$ in $\widehat{W}_{\le s_0}$
  is a smooth submanifold.

  Since this applies to any $s_0$, we can show that
  $W^s\bigl(\hat f; \widehat{C}_j\bigr)$ is a smooth submanifold of
  $\widehat{W}$, proceeding exactly as above by taking the union over
  all $W^{\mathrm{stable}}_{< s}$ with $s\in \NN$.
  
  Finally note that $W^s\bigl(\hat f; \widehat{C}_j\bigr)$ intersects
  every height level $\{s_0\} \times \p W$ transversely: if
  $(s_0,p_0) \in W^s\bigl(\hat f; \widehat{C}_j\bigr)$ is a point
  where $f_V(p_0) \ne 0$, then $\nabla \hat f$ will be transverse to
  the height level of the cylindrical end; if $f_V(p_0) = 0$, then
  $(s_0,p_0)$ has to lie in $\Crit(\hat f)$, because if $(s_0,p_0)$
  were a regular point, the gradient trajectory of $(s_0,p_0)$ would
  continue by \Cref{cylindrical end convex with respect to gradient}
  to $s = +\infty$ as $t$ increases so that
  $(s_0,p_0) \notin W^s\bigl(\hat f; \widehat{C}_j\bigr)$.  Finally,
  because
  $\Crit(\hat f) \cap W^s\bigl(\hat f; \widehat{C}_j\bigr) =
  \widehat{C}_j$ and because $\widehat{C}_j$ is transverse to the
  $s$-level sets in the cylindrical end, it follows that the stable
  set is also transverse to $\{s\} \times \p W$ at points where
  $f_V = 0$.
\end{proof}

We can now begin studying if the level sets in \Cref{connected level
  set for cylindrical end} are connected.  We first consider level
sets containing a local extremum separately:

\begin{lemma}\label{min connected in cylindrical completion if no index 1}
  In the situation of \Cref{connected level set for cylindrical end},
  assume that there exists a $c_0\in \RR$ such that $\hat f^{-1}(c_0)$
  contains a critical point that is a local minimum or a local
  maximum.

  Then it follows that $\hat f^{-1}(c_0)$ is a component of
  $\Crit(\widehat H)$ and $c_0$ is the absolute minimum/maximum of
  $\hat f$.
\end{lemma}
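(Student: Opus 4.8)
The plan is to reduce to the compact setting by truncating the cylindrical end and invoking \Cref{topology simple if no codim 1 stable manifolds}. Suppose $p^*\in\hat f^{-1}(c_0)$ is a critical point that is a local minimum (the local maximum case will follow by symmetry at the end), and let $C\subset\Crit(\hat f)$ be the connected component containing $p^*$; by the Morse-Bott condition every point of $C$ is then a local minimum and $\hat f\equiv c_0$ on $C$. Choose $s_0$ large enough that $p^*$ lies in the interior of the compact truncation $\widehat{W}_{\le s_0}$, whose boundary $\{s_0\}\times V$ is strongly convex, and hence $\nabla$-convex, with respect to $\nabla\hat f$ by \Cref{cylindrical end convex with respect to gradient}.(b).

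First I would check that $\hat f$ restricted to $\widehat{W}_{\le s_0}$ satisfies the hypotheses of \Cref{topology simple if no codim 1 stable manifolds} in its local-minimum form. The absence of components of index $i^+=1$ is inherited from $\widehat{W}$. The boundary condition on local minima holds because any local-minimum component meeting the truncation boundary does so at points $(s_0,p_0)$ with $p_0\in\Crit(f_V)$ and $f_V(p_0)=0$; being a local minimum of $\hat f=e^sf_V+c$ forces $f_V\ge 0$ near $p_0$, so the $\partial_s$-component $f_V$ of $\nabla\hat f$ is nonnegative there and $\nabla\hat f$ never points into $\widehat{W}_{\le s_0}$ along a neighborhood of $\partial C$. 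Since the local minimum $p^*$ is present, the theorem places us in its first alternative: there is a unique local-minimum component $C_{\min}$, which contains $p^*$ and hence equals $C\cap\widehat{W}_{\le s_0}$, on which $\hat f$ attains the global minimum of the truncation, with $\hat f>c_0$ strictly on $\widehat{W}_{\le s_0}\setminus C_{\min}$ and $\partial^-\widehat{W}_{\le s_0}=\emptyset$.

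The vanishing of $\partial^-\widehat{W}_{\le s_0}$ means exactly that $f_V\ge 0$ everywhere on $V$. I would then upgrade from the truncation to all of $\widehat{W}$: for any $(s,p)$ with $s>s_0$ one has $\hat f(s,p)-\hat f(s_0,p)=(e^s-e^{s_0})f_V(p)\ge 0$, while $\hat f(s_0,p)\ge c_0$, so $\hat f\ge c_0$ throughout $\widehat{W}$ and $c_0$ is the absolute minimum. Moreover equality $\hat f(s,p)=c_0$ with $s>s_0$ forces $\hat f(s_0,p)=c_0$ and $f_V(p)=0$; the former puts $(s_0,p)$ into $C_{\min}\subset C$, while $f_V(p)=0$ together with $p\in\Crit(f_V)$ makes the entire ray $\{(s',p):s'\ge s_0\}$ a family of critical points continuing $C$. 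Combined with the strict inequality on $\widehat{W}_{\le s_0}\setminus C_{\min}$, this yields $\hat f^{-1}(c_0)=C$, a single (possibly non-compact) component of $\Crit(\hat f)$.

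Finally, the local-maximum case follows by applying the result just proved to $-\hat f$, which is again adapted to the cylindrical end, with $f_V$ and $c$ replaced by $-f_V$ and $-c$, and has no index-$1$ components since $i^+$ and $i^-$ are interchanged. I expect the main subtlety to be the verification that the truncated function genuinely satisfies the boundary hypothesis of \Cref{topology simple if no codim 1 stable manifolds}, together with the passage from ``$f_V\ge 0$ and global minimum on the truncation'' to the corresponding global statement on $\widehat{W}$, that is, ruling out new points of value $c_0$ above height $s_0$; both rest on the product form $\hat f=e^sf_V+c$ and the monotonicity of $e^s$.
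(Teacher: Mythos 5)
Your proof is correct and takes essentially the same route as the paper: truncate the cylindrical end at a height $s_0$, verify via \Cref{cylindrical end convex with respect to gradient} and the product form $\hat f = e^s f_V + c$ that \Cref{topology simple if no codim 1 stable manifolds} applies to the compact domain $\widehat{W}_{\le s_0}$, and conclude uniqueness of the minimal component together with the strict inequality $\hat f > c_0$ off it. The only (harmless) difference is that the paper extends to all of $\widehat{W}$ by noting the argument reapplies at every level $s \ge s_0$, whereas you apply the theorem once and propagate up the end through the monotonicity $(e^s - e^{s_0})\, f_V \ge 0$ coming from $\p^-\widehat{W}_{\le s_0} = \emptyset$ --- and in doing so you spell out the identification $\hat f^{-1}(c_0) = C$ in more detail than the paper's terse closing sentence.
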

\begin{proof}
  Let $p\in \hat f^{-1}(c_0)$ be a local minimum.  Truncate
  $\widehat{W}$ at a sufficiently large height level~$s_0$ in the
  cylindrical end so that $p$ lies in the interior of the compact
  domain~$\widehat{W}_{\le s_0}$.
  
  Since $\widehat{W}_{\le s_0}$ is a connected compact manifold with
  $\nabla$-convex boundary, we can apply \Cref{topology simple if no
    codim 1 stable manifolds}.  It follows that there is only one
  component of $\Crit(\hat f)\cap \widehat{W}_{\le s_0}$ that is
  composed of local minima, and the restriction of $\hat f$ to
  $\widehat{W}_{\le s_0}$ is everywhere else on
  $\widehat{W}_{\le s_0}$ strictly larger than $c_0$.  This argument
  remains valid for any $s\ge s_0$, thus proving the lemma.  For a
  local maximum, simply invert the sign of $\hat f$.
\end{proof}

\begin{lemma}\label{regular level sets connected}
  In the situation of \Cref{connected level set for cylindrical end},
  every \emph{regular} level set of $\hat f$ is either empty or
  connected.
\end{lemma}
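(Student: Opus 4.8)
The plan is to track the number of connected components of the level set $\hat f^{-1}(c)$ as $c$ ranges over $\RR$, comparing nearby level sets by the rescaled gradient flow of $Z = \frac{1}{\norm{\nabla\hat f}^2}\,\nabla\hat f$; recall $\lie{Z}\hat f = 1$, so the time-$t$ flow carries $\{\hat f = c\}$ to $\{\hat f = c+t\}$ wherever defined. First I would pin down a connected level set at one end. If $f_V > 0$ somewhere on $V$, then $\hat f$ is unbounded above and, for $c \gg 0$, the level set $\hat f^{-1}(c)$ lies entirely in the cylindrical end, where by \Cref{eq: gradient in cylindrical end} it is the graph $s = \log\bigl((c-c_0)/f_V(p)\bigr)$ over $\{f_V > 0\}\subset V$ (here $c_0$ is the additive constant of the product form on the end); thus $\hat f^{-1}(c)$ is diffeomorphic to $\{f_V>0\}$. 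Applying \Cref{topology simple if no codim 1 stable manifolds} to the compact truncation $\widehat{W}_{\le s_0}$, whose boundary is convex with respect to $\nabla\hat f$ by \Cref{cylindrical end convex with respect to gradient}.(b) and whose positive part $\p^+\widehat{W}_{\le s_0}$ is exactly $\{f_V>0\}$, forces (since $\p^+$ is non-empty) the alternative in which $\p^+$ is connected; hence $\{f_V>0\}$, and so $\hat f^{-1}(c)$ for large $c$, is connected. If instead $\hat f$ is bounded above, then by \Cref{min connected in cylindrical completion if no index 1} its global maximum sits on a single connected critical component and the level set just below it is connected. The bottom end is treated symmetrically, giving in all cases a connected (or empty) level set near the bottom from which I would propagate upward.

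Next I would show that the number of components is constant between consecutive critical values. For regular $c_1 < c_2$ with no critical value in $[c_1,c_2]$, the forward $Z$-flow for time $c_2-c_1$ is defined on all of $\hat f^{-1}(c_1)$: by \Cref{lemma: Z flow complete} a trajectory fails to reach level $c_2$ only by hitting a critical point (which happens solely at a critical value, excluded here) or by escaping up the cylindrical end, and the latter requires \emph{infinite} time whereas reaching level $c_2$ takes the finite time $c_2-c_1$. Hence this flow is a diffeomorphism $\hat f^{-1}(c_1)\xrightarrow{\sim}\hat f^{-1}(c_2)$, and the two level sets have the same components.

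The crucial step is crossing a single critical value $c^*$, at which the critical set has finitely many components $C^{(1)},\dots,C^{(m)}$ (finitely many in the compact part, together with the finitely many components of $\Crit(f_V)\cap\{f_V=0\}$ in the end). The forward $Z$-flow is a diffeomorphism from $\hat f^{-1}(c^*-\epsilon)$ to $\hat f^{-1}(c^*+\epsilon)$ away from the descending slices $S^- = \bigsqcup_i W^s(\hat f;C^{(i)})\cap\hat f^{-1}(c^*-\epsilon)$, and its image misses exactly the ascending slices $S^+ = \bigsqcup_i W^u(\hat f;C^{(i)})\cap\hat f^{-1}(c^*+\epsilon)$. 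By \Cref{stable and unstable manifolds in cylindrical completion} the stable and unstable sets are genuine smooth submanifolds, and being $Z$-invariant and transverse to the level sets, $S^-$ and $S^+$ have codimension $i^+(C^{(i)})$ and $i^-(C^{(i)})$ inside the level sets. For a component that is neither a local maximum nor a local minimum one has $i^-,i^+\ge 1$, and the no-index-$1$ hypothesis upgrades both to $\ge 2$; thus $S^\pm$ are finite unions of submanifolds of codimension $\ge 2$. A general-position argument exactly as in the proof of \Cref{topology simple if no codim 1 stable manifolds} lets me perturb any path in $\hat f^{-1}(c^*-\epsilon)$ off $S^-$, so $\hat f^{-1}(c^*-\epsilon)\setminus S^-$ is path-connected whenever $\hat f^{-1}(c^*-\epsilon)$ is; transporting by the flow diffeomorphism makes $\hat f^{-1}(c^*+\epsilon)\setminus S^+$ path-connected, and since $S^+$ has codimension $\ge 2$ its complement is dense with locally connected complement, so $\hat f^{-1}(c^*+\epsilon)$ is connected. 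Components that are local extrema are excluded from this dichotomy and handled by \Cref{min connected in cylindrical completion if no index 1}: a local maximum (minimum) is then the unique global one with empty level sets just above (below), so the crossing is between an empty level set and a connected level set and preserves the "empty or connected" property. Propagating upward from the base case through the finitely many intervening critical values yields the claim.

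The main obstacle is the non-compactness of $\widehat{W}$, which enters in two ways. First, I must rule out $Z$-trajectories reaching infinity in finite time when comparing level sets across a band; this is precisely \Cref{lemma: Z flow complete}. Second, and more delicately, the codimension bookkeeping that drives the crossing step needs the stable and unstable sets to be \emph{honest} submanifolds of the expected dimension even for components that meet or run into the cylindrical end — in particular for the possibly non-compact critical components supported on $\{f_V=0\}\cap\Crit(f_V)$ when $0$ is a critical value of $f_V$. Both ingredients are supplied by \Cref{stable and unstable manifolds in cylindrical completion} and \Cref{cylindrical end convex with respect to gradient}, so the argument proceeds uniformly despite the ends.
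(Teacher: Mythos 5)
Your architecture is sound and genuinely different from the paper's. You sweep the level sets upward: a connected base level at one end, $Z$-flow diffeomorphisms across regular bands (justified exactly by \Cref{lemma: Z flow complete}), and a general-position crossing of each critical value along the stable/unstable slices, which \Cref{stable and unstable manifolds in cylindrical completion} makes honest submanifolds of codimension $i^+,i^-\ge 2$ in the level sets. These three steps all check out, including the identification of the sets $S^\pm$ missed by the flow and the density argument at the end of the crossing step. The paper never propagates: it removes $\Crit(\hat f)$ and the stable and unstable manifolds of all \emph{non-extremal} components, notes that the complement~$U$ is connected because only codimension-$\ge 2$ submanifolds were deleted, and uses the $Z$-flow to exhibit $U$ globally as a product $(T_-,T_+)\times\bigl(N_0\cap U\bigr)$, so that a disconnected regular level would disconnect $U$. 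The paper's route dispatches all regular values at once and never has to treat extrema or individual critical values; yours is the classical Atiyah-style sweep, more local but requiring a base case and a crossing analysis.

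The one genuine gap is your treatment of the levels adjacent to a local extremum. \Cref{min connected in cylindrical completion if no index 1} only says that the extremal level \emph{itself} equals the single connected critical component and is the absolute minimum (maximum); it says nothing about $\hat f^{-1}(c_{\min}+\epsilon)$, whose connectedness you nevertheless assert twice (``the level set just below it is connected''; ``the crossing is between an empty level set and a connected level set''). When $C_{\min}$ is compact this is the routine normal-sphere-bundle argument: small sublevels are compact, hence contained in a tubular neighborhood, and the adjacent level is an $S^{i^+-1}$-bundle over the connected $C_{\min}$ with $i^+\ge 2$ --- fine, but it should be said. The delicate case is a \emph{non-compact} extremal component running into a cylindrical end as an $s$-invariant cylinder over $\{f_V=0\}$: there every sublevel $\{\hat f\le c_{\min}+\epsilon\}$ is non-compact, the ``tube'' around $C_{\min}$ pinches as $s\to\infty$, the adjacent level set is itself non-compact, and nothing you cite supplies its connectedness. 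The cleanest repair inside your scheme is to reverse the sweep: boundedness below forces $f_V\ge 0$, and unless $f_V$ vanishes identically on a component of $V$ (which by \Cref{min connected in cylindrical completion if no index 1}, applied at a point of the resulting fully critical end, forces $\hat f$ to be constant, a vacuous case), one has $f_V>0$ somewhere, so $\hat f$ is unbounded above and your $\{f_V>0\}$ base case applies at the top; propagating \emph{toward} the extremum then delivers the connectedness of the levels adjacent to $C_{\min}$ as output rather than input.
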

\begin{proof}
  Since every component of $\Crit(\hat f)$ intersects $W$,
  $\Crit(\hat f)$ may only have finitely many components.  The stable
  and unstable sets of any component of $\Crit(\hat f)$ that is
  neither a maximum nor a minimum are by \Cref{stable and unstable
    manifolds in cylindrical completion} smooth submanifolds that are
  of codimension at least $2$ in $\widehat{W}$.  The set of critical
  points~$\Crit(\hat f)$ is also a finite collection of submanifolds
  of codimension at least $2$ in $\widehat{W}$.

  Let $U$ be the open set obtained from $\widehat{W}$ by removing
  $\Crit(\hat f)$ and all stable and unstable submanifolds of critical
  points that are not local extrema.  All these subsets are of
  codimension at least~$2$ in $\widehat{W}$, thus it follows that $U$
  is connected.

  \medskip

  Assume now that $c_0$ were a regular value of $\hat f$ with a
  non-empty and non connected preimage $N_0 := \hat f^{-1}(c_0)$.  To
  simplify the notation we replace $\hat f$ by $\hat f -c_0$ to assume
  that $c_0 = 0$.  The stable and unstable submanifolds of critical
  points intersect every regular level set transversely so that it is
  easy to convince oneself that $N_0\cap U$ is a codimension~$1$
  submanifold of $U$ that will also be disconnected.

  \smallskip
  
  We have shown in \Cref{lemma: Z flow complete} that the forward
  trajectories of $Z$ are either complete or they converge to critical
  points.  Consider the restriction of the vector field~$Z$ to the
  subset~$U$.  Having removed the smaller dimensional stable and
  unstable manifolds, the flow of $Z$ through a point
  $p \in N_0 \cap U$ is defined up to time $T_+ = c_{\max}$ if
  $\hat f$ has a maximum, or up to $T_+ = \infty$ otherwise.
  Similarly, the flow of $Z$ in backward time is defined up to
  $T_- = c_{\min}$ if $\hat f$ has a minimum, or up to $T_- = -\infty$
  otherwise.  The flow of $Z$ defines thus a diffeomorphism
  \begin{equation*}
    \Phi\colon (T_-,T_+) \times (N_0\cap U) \to U \;,
  \end{equation*}
  and if $N_0\cap U$ were disconnected, so would be
  $U = \Phi\bigl((T_-,T_+) \times (N_0\cap U)\bigr)$ giving a
  contradiction to the fact that $U$ is connected.  It follows that
  every regular level set of $\hat f$ needs to be connected.
\end{proof}

\medskip

\begin{proof}[Proof of \Cref{connected level set for cylindrical end}]
  Let $c\in \RR$ be a number such that the level set
  $N_c := \hat f^{-1}(c)$ is non-empty.  If $c$ is a regular value or
  if $c$ is the maximum or minimum of $\hat f$, then it follows by
  \Cref{regular level sets connected,min connected in cylindrical
    completion if no index 1} that $N_c$ is connected.  Assume thus
  that $c$ is a critical value that is neither a maximum nor a
  minimum.

  Choose any two points~$p_0, p_1\in N_c$.  The aim is to show that
  these points can be connected to each other by a path in $N_c$.  If
  $p_0$ or $p_1$ are critical points of $\hat f$, we connect them
  first with a smooth path in $N_c$ to a regular point: if $p_0$ lies
  for example in one of the components~$\widehat{C}_j$ of
  $\Crit(\hat f)$, choose a Morse-Bott chart as in
  \cite{BanyagaMorseBott} centered at $p_0$ with coordinates
  \begin{equation*}
    \bigl(x_1,\dotsc,x_{i^-}; y_1,\dotsc,y_{i^+};
    z_1,\dotsc,z_d\bigr)
  \end{equation*}
  where $i^- = i^-(\widehat{C}_j)$, $i^+ = i^+(\widehat{C}_j)$, and
  $d = \dim \widehat{C}_j$, such that $\hat f$ takes the form
  \begin{equation*}
    \hat f\bigl(x_1,\dotsc,x_{i^-}; y_1,\dotsc,y_{i^+};
    z_1,\dotsc,z_d\bigr) = c +  \sum_{i=1}^{i^+} y_i^2 - \sum_{i=1}^{i^-} x_i^2 \;.
  \end{equation*}
  Since we are assuming that $c = \hat f(p_0)$ is neither a local
  maximum or minimum, both $i^-(\widehat{C}_j)$ and
  $i^+(\widehat{C}_j)$ are different from $0$, and the path
  $\gamma(t) = \bigl(t,0,\dotsc,0;t,0,\dotsc,0;0,\dotsc,0\bigr)$ for
  $t\in [0,\epsilon]$ lies in the level set~$N_c$ and connects $p_0$
  to a regular point of $\hat f$.

  \smallskip
  
  Suppose thus from now on that both $p_0$ and $p_1$ are regular
  points of $\hat f$.  The function~$\hat f$ has only
  finitely many critical values, because all of them agree by
  \Cref{cylindrical end convex with respect to gradient} with those of
  $\restricted{\hat f}{W}$.  For any sufficiently small choice of
  $\delta > 0$, the interval~$[c,c+\delta]$ lies in the image of
  $\hat f$ and does not contain any critical values of
  $\hat f$ except for $c$.

  \smallskip
  
  We will follow the classical strategy to move $p_0$ and $p_1$ along
  the trajectories of the vector field~$Z$ in \Cref{lemma: Z flow
    complete} to points~$p_0'$ and $p_1'$ in the regular level
  set~$N_{c+\delta} := \hat f^{-1}(c+\delta)$.  By \Cref{regular level
    sets connected}, $N_{c+\delta}$ is connected and thus we can join
  $p_0'$ and $p_1'$ with a path~$\gamma$ in~$N_{c+\delta}$.

  The aim is to translate this path along the flow of $-Z$ into the
  initial level set~$N_c$.  For this strategy to work, we need to make
  sure that $\gamma$ does not meet any of the critical points of
  $\hat f$ as we push it along the vector field~$-Z$.  The only
  critical value in $[c,c+\delta]$ is $c$.

  To avoid any technical complication with respect to the limit of the
  flow of $-Z$, we simply use that all the unstable subsets
  $W^u(\hat f; \widehat{C}_j)$ of any component
  $\widehat{C}_j \subset \Crit(\hat f)$ lying in $N_c$ are smooth
  submanifolds of codimension at least $2$ in $\widehat{W}$, see
  \Cref{stable and unstable manifolds in cylindrical completion}, and
  they intersect $N_{c+\delta}$ transversely.  Thus there is no
  problem in perturbing the path~$\gamma$ inside $N_{c+\delta}$
  keeping the end-points fixed, so that $\gamma$ avoids all unstable
  submanifolds $W^u(\hat f; \widehat{C}_j)$ for
  $\widehat{C}_j \subset N_c$.
  
  After perturbing $\gamma$ , we can move it with the time~$\delta$
  flow of $-Z$ back to the initial level set~$N_c$ where it joins
  $p_0$ and $p_1$ proving as desired that $N_c$ is connected.
\end{proof}

\section{Hamiltonian $G$-manifolds with contact type boundary}
\label{sec: hamiltonian with contact type boundary}

\subsection{Definitions and preliminaries}\label{definitions and
  preliminaries}

In this section, we briefly give several definitions and technical
results about Hamiltonian group actions that are mostly well-known.
We recommend the reader to jump directly to \Cref{sec: hamiltonian
  morse-bott and convex boundary} and only consult \Cref{definitions
  and preliminaries} below when looking for a reference.

\medskip

The foremost tool when working with compact group actions on manifolds
consists in averaging certain sections in tensor bundles using the
corresponding Haar measure of the group.  This way, we can for example
easily obtain Riemannian metrics for which $G$ acts by isometries, and
with these techniques, we can also obtain invariant Liouville vector
fields.

\begin{proposition}
  Let $(W,\omega)$ be a symplectic manifold with contact type
  boundary~$V = \p W$, and let $G$ be a compact Lie group that acts on
  $W$ via symplectomorphisms.

  Then there is a $G$-invariant Liouville vector field~$Y$ in a
  neighborhood of $V$ that induces an invariant contact
  structure~$\xi$ on the boundary.  The choice of this contact
  structure is unique up to equivariant contactomorphisms.
\end{proposition}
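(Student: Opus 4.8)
The plan is to produce $Y$ by averaging an arbitrary Liouville field over the Haar measure of $G$, and to deduce uniqueness from an equivariant version of Gray's stability theorem. For existence, the contact type hypothesis provides a Liouville vector field~$Y_0$ on some neighbourhood~$U_0$ of~$V$ pointing transversely out of~$W$. Write $\phi_g\colon W\to W$ for the action of $g\in G$; since this action preserves~$W$ and hence its boundary~$V$, I first replace $U_0$ by the $G$-invariant open neighbourhood $U := \bigcap_{g\in G}\phi_g(U_0)$ of~$V$, which is open because $G$ is compact. On~$U$ each pushforward $(\phi_g)_* Y_0$ is again Liouville, since $(\phi_g)_*\omega = \omega$ gives
\[
  \lie{(\phi_g)_* Y_0}\omega = \lie{(\phi_g)_* Y_0}\bigl((\phi_g)_*\omega\bigr)
  = (\phi_g)_*\bigl(\lie{Y_0}\omega\bigr) = (\phi_g)_*\omega = \omega \;,
\]
and it still points out of~$W$ along~$V$ because $d\phi_g$ carries outward vectors to outward vectors.

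I then set $Y := \int_G (\phi_g)_* Y_0\,dg$ for the normalized Haar measure. The Liouville identity $d(\iota_Y\omega) = \omega$ and outward-transversality along~$V$ are both affine (hence convex) conditions on the field, so they pass to the average, while left invariance of the Haar measure gives $(\phi_h)_* Y = Y$ for all $h\in G$. Thus $Y$ is a $G$-invariant Liouville field transverse to~$V$; as $Y$ and $\omega$ are $G$-invariant, so is $\lambda := \iota_Y\omega$, and therefore the contact form $\alpha := \restricted{\lambda}{TV}$ and the contact structure $\xi := \ker\alpha$ are $G$-invariant.

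For uniqueness, suppose $Y_0,Y_1$ are two $G$-invariant Liouville fields along~$V$ inducing contact structures $\xi_0,\xi_1$, and interpolate linearly by $Y_t := (1-t)Y_0 + tY_1$. Each $Y_t$ is again $G$-invariant, Liouville, and transverse to~$V$ (all three conditions being convex), so $\alpha_t := \restricted{(\iota_{Y_t}\omega)}{TV}$ is a smooth path of $G$-invariant contact forms; note $d\alpha_t = \restricted{\omega}{TV}$ is independent of~$t$, which together with transversality makes the contact condition transparent. I would then apply equivariant Gray stability to the family $\xi_t := \ker\alpha_t$ on the closed manifold~$V$: Moser's trick looks for $X_t\in\xi_t$ with $\lie{X_t}\alpha_t = \mu_t\,\alpha_t - \dot\alpha_t$, which, after evaluating on the Reeb field and restricting to~$\xi_t$, reduces to the pointwise equation $\restricted{(\iota_{X_t}d\alpha_t)}{\xi_t} = -\restricted{\dot\alpha_t}{\xi_t}$, uniquely solvable because $d\alpha_t$ is non-degenerate on~$\xi_t$; its flow $\psi_t$ then satisfies $(\psi_1)_*\xi_0 = \xi_1$.

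The one point needing genuine care is the equivariant refinement of Gray stability. Since $\alpha_t$ and $\dot\alpha_t$ are $G$-invariant, the Reeb field $R_t$ is $G$-invariant by its uniqueness, hence so is the multiplier $\mu_t = \dot\alpha_t(R_t)$; the uniqueness of the solution~$X_t$ of the linear equation above then forces $X_t$ itself to be $G$-invariant, so that its flow $\psi_t$ is $G$-equivariant and $\psi_1$ is the sought equivariant contactomorphism from $(V,\xi_0)$ to $(V,\xi_1)$. I use compactness of~$V$ — automatic here, since $W$ is compact — to integrate $X_t$ to a globally defined isotopy, and compactness of~$G$ for the averaging step.
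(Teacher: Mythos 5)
Your proposal is correct and follows essentially the same route as the paper: existence by averaging an arbitrary Liouville vector field over the Haar measure of $G$ (with outward transversality preserved since the action respects the boundary coorientation), and uniqueness by linear interpolation between two invariant Liouville fields combined with Gray stability. Your careful verification that the Moser vector field in Gray's argument is itself $G$-invariant only fills in a step the paper invokes without detail, so there is nothing to correct.
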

\begin{proof}
  Let $Y'$ be any Liouville vector field on a neighborhood of $V$.  We
  can average $Y'$ (after possibly decreasing the neighborhood of
  $\p W$) and define
  \begin{equation*}
    Y := \int_G (g^*Y')  \, dg \;.
  \end{equation*}
  It is easy to see that $Y$ is still a Liouville vector field on a
  neighborhood of $V$ pointing outwards (every $g\in G$ respects the
  boundary coorientation of $V$, because to change the coorientation,
  the collar neighborhood of $V$ would have to be flipped by $g$ to
  the ``other side'' of $V$, that is, outside $W$).

  We obtain this way a $G$-invariant Liouville
  form~$\lambda_Y = \iota_Y \omega$ on a neighborhood of $V$ that
  induces an invariant contact structure~$\xi$ and an invariant
  contact form~$\alpha := \restricted{(\lambda_Y)}{TV}$ on $V$.
  
  Furthermore, if $Y_1$ and $Y_2$ are both $G$-invariant Liouville
  vector fields, we can linearly interpolate between them to see with
  Gray stability that the corresponding contact structures~$\xi_1$ and
  $\xi_2$ are $G$-equivariantly isotopic.
\end{proof}

Thus we can and we will from now on always assume that if a compact
Lie group~$G$ acts on a symplectic manifold~$(W,\omega)$ with contact
type boundary, then the contact structure and the Liouville vector
field are also $G$-invariant.

\begin{lemma}\label{lemma: hamiltonian function through liouville
    form}
  Let $(W,\omega)$ be a symplectic manifold, and let $\lambda$ be a
  local primitive of $\omega$ that is defined on some open subset
  $U\subset W$.
  
  \begin{itemize}
  \item [(a)] If $X$ is a symplectic vector field on $U$ that
    preserves $\lambda$, then $H_\lambda = \lambda(X)$ is a
    Hamiltonian function for $X$.
  \item [(b)] If $G$ is a Lie group that acts on $U$ preserving
    $\lambda$, then we obtain a $G$-equivariant moment map by setting
    \begin{equation*}
      \langle \mu_\lambda(p), X\rangle := \lambda_p(X_U)
    \end{equation*}
    for every $X\in \gfrak$ and every $p\in U$.
  \end{itemize}
\end{lemma}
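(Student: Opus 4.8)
The plan is to derive both parts from Cartan's magic formula, the only genuine work being the equivariance check in (b). For part (a), I would note that the hypothesis that $X$ preserves $\lambda$ means $\lie{X}\lambda = 0$; applying Cartan's formula together with $d\lambda = \omega$ gives
\[
  0 = \lie{X}\lambda = d\bigl(\iota_X\lambda\bigr) + \iota_X\,d\lambda = d\bigl(\lambda(X)\bigr) + \iota_X\omega \;,
\]
which rearranges to $\iota_X\omega = -d\bigl(\lambda(X)\bigr) = -dH_\lambda$, exactly the defining equation of a Hamiltonian function for $X$. (The symplectic hypothesis is in fact automatic here, since $\lie{X}\omega = d\,\lie{X}\lambda = 0$.)

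For part (b), I would first verify that $\mu_\lambda$ is a moment map at all. Because $G$ preserves $\lambda$, differentiating $\bigl(\exp(tX)\bigr)^*\lambda = \lambda$ at $t = 0$ yields $\lie{X_U}\lambda = 0$ for every $X\in\gfrak$; hence part (a), applied to the infinitesimal generator $X_U$, shows that $\langle\mu_\lambda(\cdot),X\rangle = \lambda(X_U)$ is a Hamiltonian function for $X_U$. Since this expression is linear in $X$, the assignment $\mu_\lambda\colon U \to \gfrak^*$ is a well-defined weakly Hamiltonian moment map.

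It remains to check $G$-equivariance, which I regard as the main (and only delicate) point. Writing $\psi_g\colon U\to U$ for the diffeomorphism $p\mapsto g\cdot p$, the key input is the identity $(\psi_g)_* X_U = (\Ad_g X)_U$, obtained by differentiating $g\exp(tX)g^{-1} = \exp(t\,\Ad_g X)$ and applying the chain rule. Replacing $X$ by $\Ad_{g^{-1}}X$ and evaluating at $\psi_g(p)$ gives $X_U(\psi_g(p)) = (d\psi_g)_p\bigl((\Ad_{g^{-1}}X)_U(p)\bigr)$, so that
\begin{align*}
  \langle\mu_\lambda(g\cdot p), X\rangle
    &= \lambda_{\psi_g(p)}\bigl(X_U(\psi_g(p))\bigr)
     = \lambda_{\psi_g(p)}\bigl((d\psi_g)_p\,(\Ad_{g^{-1}}X)_U(p)\bigr)\\
    &= \bigl(\psi_g^*\lambda\bigr)_p\bigl((\Ad_{g^{-1}}X)_U(p)\bigr)
     = \lambda_p\bigl((\Ad_{g^{-1}}X)_U(p)\bigr)
     = \langle\mu_\lambda(p), \Ad_{g^{-1}}X\rangle \;,
\end{align*}
where the fourth equality uses $\psi_g^*\lambda = \lambda$. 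By the definition of the coadjoint action in \eqref{eq:coadjoint equivariance}, the right-hand side equals $\langle\Ad_g^*\mu_\lambda(p), X\rangle$; as $X\in\gfrak$ was arbitrary, we conclude $\mu_\lambda(g\cdot p) = \Ad_g^*\mu_\lambda(p)$. The main obstacle is purely bookkeeping: orienting the pushforward identity correctly and matching it against the coadjoint convention so that $\Ad_g^*$ --- rather than its inverse --- appears; everything else is formal.
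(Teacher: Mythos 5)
Your proof is correct and follows essentially the same route as the paper's: part (a) is the same Cartan-formula computation, and in (b) you establish the identity $X_U(\psi_g(p)) = (d\psi_g)_p\bigl((\Ad_{g^{-1}}X)_U(p)\bigr)$ by differentiating $g\exp(tX)g^{-1} = \exp(t\,\Ad_gX)$ --- this is exactly the paper's \eqref{eq: Ad X is Dg X} --- and then run the identical chain of equalities through $\psi_g^*\lambda = \lambda$ and the coadjoint convention \eqref{eq:coadjoint equivariance}. Your two small additions (that preservation of $\lambda$ already forces $X$ to be symplectic, and the explicit check that $\mu_\lambda$ is a weakly Hamiltonian moment map before testing equivariance) are correct remarks that the paper treats implicitly.
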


\begin{proof}
  \textbf{(a)} By definition we need to show that
  $\iota_X \omega = -dH_\lambda$ which follows directly from Cartan's
  formula
  \begin{equation*}
    -dH_\lambda = -d\iota_X\lambda = - \lie{X} \lambda + \iota_X d\lambda \;,
  \end{equation*}
  because $\lie{X} \lambda = 0$.
  
  \smallskip

  \textbf{(b)} The map~$\mu_\lambda$ is clearly pointwise linear and
  it only remains to show its $G$-equivariance.  Let $g$ be any
  element of $G$ and let $X$ be an element in the Lie
  algebra~$\gfrak$.  Then we compute
  \begin{equation}
    \bigl(\Ad_gX\bigr)_U (p) =  \restricted{\frac{d}{dt}}{t=0} \bigl(\exp (t
    \Ad_gX)\bigr)\, p =  
    \restricted{\frac{d}{dt}}{t=0} \bigl(g \exp (tX) g^{-1}\bigr)\, p =
    Dg \, X_U\bigl(g^{-1} p\bigr) \label{eq: Ad X is Dg X}
  \end{equation}
  which confirms the desired property of the moment map
  \begin{equation*}
    \begin{split}
      \bigl\langle \mu_\lambda\bigl(gp\bigr), X\bigr\rangle &=
      \lambda_{gp} \bigl(X_U(gp)\bigr) =
      \lambda_{gp} \bigl(Dg\, Dg^{-1}\, X_U(gp)\bigr) \\
      &= \bigl(g^*\lambda\bigr)_p \bigl(Dg^{-1}\, X_U(gp)\bigr) =
      \lambda_p \Bigl(\bigl(\Ad_{g^{-1}} X\bigr)_U (p)\Bigr) =
      \bigl\langle \mu_\lambda\bigl(p\bigr), \Ad_{g^{-1}}
      X\bigr\rangle
    \end{split}
  \end{equation*}
  that is
  $\mu_\lambda\bigl(gp\bigr) = \Ad_g^* \bigl(\mu_\lambda(p)\bigr)$.
\end{proof}

\begin{corollary}\label{cor: exactness implies Hamiltonian}
  Symplectic actions of compact Lie groups on exact symplectic
  manifolds are always Hamiltonian.
\end{corollary}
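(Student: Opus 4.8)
The plan is to reduce the statement to \Cref{lemma: hamiltonian function through liouville form}.(b), whose hypothesis is that $G$ preserves a global primitive of $\omega$. Since $(W,\omega)$ is exact, I may fix a $1$-form $\lambda$ on $W$ with $d\lambda = \omega$. Because $G$ acts by symplectomorphisms, $g^*\omega = \omega$ for every $g\in G$, and hence each pulled-back form $g^*\lambda$ is again a primitive of $\omega$, as $d(g^*\lambda) = g^*(d\lambda) = g^*\omega = \omega$. The only obstruction to applying the lemma directly is that the chosen $\lambda$ need not be $G$-invariant, so the natural remedy is to average it over the group, exploiting that $G$ is compact.

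Concretely, I would use the normalized Haar measure $dg$ on $G$ and set
\[
  \lambda' := \int_G g^*\lambda \; dg \;.
\]
Since the action is smooth and $G$ is a compact manifold, $g\mapsto g^*\lambda$ depends smoothly on $g$, so this integral defines a smooth $1$-form. First I would check that $\lambda'$ is still a primitive of $\omega$: exterior differentiation commutes with the averaging integral, giving $d\lambda' = \int_G d(g^*\lambda)\, dg = \int_G \omega\, dg = \omega$, where the last step uses that $dg$ has total mass $1$. Next I would verify $G$-invariance: for any $h\in G$,
\[
  h^*\lambda' = \int_G h^* g^*\lambda\, dg = \int_G (gh)^*\lambda\, dg = \lambda' \;,
\]
the middle equality being $h^*g^* = (g\circ h)^* = (gh)^*$ for the left $G$-action, and the final equality following from the right-invariance of the Haar measure (which holds since compact groups are unimodular).

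With a $G$-invariant global primitive $\lambda'$ of $\omega$ now in hand, the conclusion is immediate: I would apply \Cref{lemma: hamiltonian function through liouville form}.(b) with $U = W$. Because $G$ preserves $\lambda'$, that lemma furnishes a $G$-equivariant moment map through $\langle \mu_{\lambda'}(p), X\rangle = \lambda'_p\bigl(X_W(p)\bigr)$ for $X\in\gfrak$, which exhibits the action as Hamiltonian. I do not expect any serious obstacle here: the entire argument is routine, and the single point requiring care is simply the averaging step, where compactness of $G$ is used to produce the Haar measure and to guarantee that the integral over $G$ converges to a smooth invariant form.
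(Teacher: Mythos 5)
Your proof is correct and is essentially the paper's intended argument: the paper derives this corollary directly from \Cref{lemma: hamiltonian function through liouville form}.(b), using exactly the Haar-measure averaging technique it announces at the start of \Cref{definitions and preliminaries} to replace a chosen primitive of $\omega$ by a $G$-invariant one. Your verification that averaging preserves both the primitive property and yields invariance (via unimodularity of the compact group) fills in the only detail the paper leaves implicit.
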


\begin{remark}\label{remark: hamiltonian function through liouville
    form}
  Clearly if $X$ is a globally defined Hamiltonian vector field with
  Hamiltonian function~$H\colon W\to \RR$, then it follows in the
  setup of the preceding lemma that $\restricted{H}{U}$ and
  $\lambda(X)$ agree up to addition of a constant.

  If $G$ acts symplectically on $(W,\omega)$ with moment map
  $\mu\colon W\to \gfrak^*$, then it follows in the setup of the
  preceding lemma that
  \begin{equation*}
    \restricted{\mu}{U} = \mu_\lambda + \nu_0
  \end{equation*}
  for a covector~$\nu_0 \in \gfrak^*$.
\end{remark}

By \Cref{lemma: hamiltonian function through liouville form}, it is
obvious that a symplectic action of a compact Lie group is Hamiltonian
when restricted to a collar neighborhood of a contact type boundary.
In \Cref{sec: Hamiltonian circle action} we will show that such
actions are in fact even globally Hamiltonian.

The difference between weakly Hamiltonian and Hamiltonian $G$-actions
can be detected on any $G$-invariant neighborhood.

\begin{lemma}\label{lemma: if weakly Ham and Ham on subset then Ham everywhere}
  Le $(W,\omega)$ be a connected symplectic $G$-manifold.  Assume that
  the $G$-action is weakly Hamiltonian and that there is a
  $G$-invariant connected open subset $U\subset W$ such that the
  restriction of the $G$-action to $U$ is Hamiltonian with moment map
  $\mu_U\colon U\to \gfrak^*$.

  Then it follows that the action is Hamiltonian on all of $W$.
\end{lemma}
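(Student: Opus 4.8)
The plan is to use the standard fact that a weakly Hamiltonian action carries a (not necessarily equivariant) moment map $\mu\colon W\to\gfrak^*$, assembled from a basis of $\gfrak$ as recalled in \Cref{sec: definitions}, and to show that its failure to be equivariant is governed by a single \emph{constant} $\gfrak^*$-valued cocycle. First I would define, for each $g\in G$, the function
\begin{equation*}
  \theta_g(p) := \mu(gp) - \Ad_g^*\mu(p)
\end{equation*}
and prove that it is constant on the connected manifold~$W$. Writing $H_X=\langle\mu,X\rangle$, this amounts to checking that $\langle\theta_g,X\rangle = H_X\circ L_g - H_{\Ad_{g^{-1}}X}$ has vanishing differential for every $X\in\gfrak$, where $L_g$ denotes the action of $g$. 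Using that $G$ acts symplectically together with the infinitesimal relation $(\Ad_{g^{-1}}X)_W(p)=Dg^{-1}\,X_W(gp)$ coming from \eqref{eq: Ad X is Dg X}, one computes $d(H_X\circ L_g)=dH_{\Ad_{g^{-1}}X}$, so that $d\langle\theta_g,X\rangle=0$ and hence $\theta_g\in\gfrak^*$ is a constant covector.

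Next I would compare the two moment maps available on~$U$. Both $\restricted{\mu}{U}$ and $\mu_U$ satisfy $\iota_{X_W}\omega=-d\langle\,\cdot\,,X\rangle$ on $U$, so their difference is closed as a $\gfrak^*$-valued function; since $U$ is connected there is a fixed covector~$\nu_0\in\gfrak^*$ with $\restricted{\mu}{U}=\mu_U+\nu_0$. Evaluating $\theta_g$ at a point $p\in U$ (this is where I use that $U$ is nonempty and $G$-invariant, so that $gp\in U$ as well) and inserting the equivariance $\mu_U(gp)=\Ad_g^*\mu_U(p)$ yields
\begin{equation*}
  \theta_g = \mu_U(gp)+\nu_0 - \Ad_g^*\bigl(\mu_U(p)+\nu_0\bigr) = \nu_0-\Ad_g^*\nu_0\;.
\end{equation*}
Thus the a priori unknown constant $\theta_g$ is forced to be the coboundary $(\id-\Ad_g^*)\nu_0$.

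Finally I would set $\mu':=\mu-\nu_0$ on all of $W$. Since $\nu_0$ is a constant covector, $\mu'$ is still a moment map for the weakly Hamiltonian action, and a direct substitution shows
\begin{equation*}
  \mu'(gp)-\Ad_g^*\mu'(p) = \theta_g - \nu_0 + \Ad_g^*\nu_0 = 0
\end{equation*}
for every $g\in G$ and every $p\in W$, i.e.\ $\mu'$ is $G$-equivariant. Hence the action is Hamiltonian on all of $W$.

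The only genuinely non-formal step is the constancy of $\theta_g$, where the symplectic invariance of the action and the identity \eqref{eq: Ad X is Dg X} enter; everything afterwards is bookkeeping with the coadjoint representation. I expect the main care to be needed in the pushforward/pullback conventions of that computation---in particular the placement of $g$ versus $g^{-1}$ and the sign of $\iota_{X_W}\omega=-dH_X$---rather than in any analytic difficulty, since connectedness of $W$ and of $U$ is all that the argument requires of the topology.
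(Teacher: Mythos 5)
Your proof is correct and follows essentially the same route as the paper: both hinge on the computation that $H_X\circ L_g$ and $H_{\Ad_{g^{-1}}X}$ have identical Hamiltonian vector fields (via \eqref{eq: Ad X is Dg X} and symplectic invariance), so the equivariance defect is a constant on the connected manifold, which is then eliminated using the equivariant $\mu_U$ on the $G$-invariant subset $U$. The only difference is organizational: the paper normalizes the moment map at the outset so that it agrees with $\mu_U$ on $U$ (forcing the constants $c_X$ to vanish there, hence everywhere), whereas you keep an arbitrary moment map, identify the constant defect as the coboundary $\nu_0-\Ad_g^*\nu_0$, and correct by subtracting $\nu_0$ at the end.
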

\begin{proof}
  Choose a basis $X_1,\dotsc,X_k$ for the Lie algebra of $G$.  Then we
  find for every $X_j$ a Hamiltonian function~$H_j$ on $W$ that is
  unique up to addition of a constant.  Choose these constants in such
  a way that every $H_j$ agrees on $U$ with
  $\langle\mu_U, X_j\rangle$.

  We then define a moment map~$\mu$ on all of $W$ by setting
  \begin{equation*}
    \bigl\langle \mu, a_1 X_1 + \dotsm + a_k X_k\bigr\rangle
    = a_1 H_1 + \dotsm + a_k H_k
  \end{equation*}
  for every $a_1,\dotsc,a_k\in \RR$.  It only remains to show that
  $\mu$ is $G$-equivariant.

  We essentially follow the strategy explained in
  \cite[Section~5.2]{McDuffSalamonIntro}.  Choose an $X\in \gfrak$,
  and a $g\in G$, and denote $\Ad(g^{-1}) X$ by $X'$.  The
  infinitesimal generator corresponding to $X'$ is given by
  $X_W'(p) = Dg^{-1} \, X_W(gp)$, as seen in \Cref{eq: Ad X is Dg X}.

  Compare now the function
  $p\mapsto \bigl\langle\mu(p), \Ad(g^{-1}) X\bigr\rangle$ to the
  function $p\mapsto \langle\mu(gp), X\rangle$.  The moment map is
  $G$-equivariant if and only if both functions are equal.  The
  Hamiltonian vector field of the first function is by definition
  $X'_W$, the vector field corresponding to the second function is
  $Dg^{-1} \, X_W\circ g$, because
  \begin{equation*}
    \omega\bigl(Dg^{-1} X_W\circ g, \cdot\bigr) =
    \omega\bigl(Dg\,Dg^{-1} X_W \circ g, Dg\,\cdot\bigr) =
    \omega\bigl(X_W \circ g, Dg\,\cdot\bigr) =
    g^*\bigl(\omega(X_W,\,\cdot)\bigr) =
    - g^*d\langle\mu, X\rangle \;.
  \end{equation*}
  As we have shown above, both Hamiltonian vector fields are
  identical, so that the respective Hamiltonian functions only differ
  by a constant.  For every $X\in \gfrak$, there is thus a
  constant~$c_X$ such that
  \begin{equation*}
    \langle\mu(gx), X\rangle =
    \bigl\langle\Ad^*_{g} \bigl(\mu(x)\bigr),X \bigr\rangle + c_X \;.
  \end{equation*}
  To prove \eqref{eq:coadjoint equivariance}, it only remains to show
  that $c_X = 0$ for every $X\in \gfrak$.
  
  Recall that the restriction of the moment map~$\mu$ to $U$ is equal
  to the $G$-equivariant map~$\mu_U$ so that $c_X = 0$ on $U$, but
  since $c_X$ is a constant it vanishes then on all of $W$ showing
  that \eqref{eq:coadjoint equivariance} holds everywhere.
\end{proof}

\bigskip

We describe now a normal form for the boundary collar of a symplectic
manifold with contact type boundary.  The result is well-known but we
give nonetheless a sketch of the construction to show that the model
respects the group action.

\begin{lemma}\label{collar neighborhood}
  Let $(W,\omega)$ be a symplectic manifold, and let $X$ be a
  Hamiltonian vector field on $W$ with Hamiltonian function
  $H\colon W\to \RR$.  Assume that $W$ has contact type
  boundary~$V = \p W$ and that there is a Liouville field~$Y$ defined
  in a neighborhood of $V$ that commutes with $X$, inducing an
  $X$-invariant Liouville form~$\lambda$, and a contact
  form~$\alpha := \restricted{\lambda}{TV}$.

  It then follows that $W$ admits a boundary collar that is
  diffeomorphic to $(-\epsilon,0]\times V$ such that
  \begin{itemize}
  \item $V$ is naturally identified with $\{0\}\times V$, and the
    Liouville field corresponds to $\partial_s$ where $s$ is the
    coordinate on $(-\epsilon,0]$;
  \item the Liouville form is diffeomorphic to $e^s\alpha$, and the
    symplectic structure~$\omega$ is symplectomorphic to
    $d(e^s\alpha)$;
  \item there is a function $f\colon V\to \RR$ and a vector
    field~$X_V$ on $V$ such that $X$ simplifies on the collar
    neighborhood to $X(s,p) = f(p)\, \partial_s + X_V(p)$ for any
    $(s,p) \in (-\epsilon,0]\times V$;
  \item the Hamiltonian function~$H$ restricts on the collar to
    $H(s,p) = e^s\, \alpha_p(X_V) + c$ where $c$ is a constant.
  \end{itemize}
\end{lemma}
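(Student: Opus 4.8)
The plan is to realise the collar as the flow of the Liouville field~$Y$ and then extract all four properties from the single identity $\lie{Y}\lambda = \lambda$ together with the commutation relation $[X,Y]=0$. First I would construct the diffeomorphism: since $Y$ points transversely out of $W$ along $V$, its flow~$\varphi_s$ is defined for small negative times and carries $V$ into the interior of $W$. For $\epsilon>0$ small enough the map
\[
  \Psi\colon (-\epsilon,0]\times V \to W,\qquad (s,p)\mapsto \varphi_s(p),
\]
is then a diffeomorphism onto a collar of $V$, under which $\{0\}\times V$ is identified with $V$ and $\Psi_*\,\p_s = Y$. This already establishes the first bullet.

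For the second bullet I would use that any Liouville form satisfies $\lie{Y}\lambda = \iota_Y d\lambda + d\iota_Y\lambda = \iota_Y\omega = \lambda$, since $\iota_Y\lambda = \iota_Y\iota_Y\omega = 0$. Integrating $\tfrac{d}{ds}\varphi_s^*\lambda = \varphi_s^*\lie{Y}\lambda = \varphi_s^*\lambda$ with $\varphi_0^*\lambda = \lambda$ yields $\varphi_s^*\lambda = e^s\lambda$. Evaluating $\Psi^*\lambda$ on $\p_s$ gives $\lambda(Y)=0$, and on a vector tangent to a slice it gives $e^s$ times the value of $\lambda = \alpha$ on $V$; hence $\Psi^*\lambda = e^s\alpha$, where $\alpha$ is viewed as pulled back along the projection to $V$. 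Applying $d$ then gives $\Psi^*\omega = d(e^s\alpha)$, establishing the second bullet.

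The third and fourth bullets follow from the commutation and the cited results. Writing $X$ in the collar as $X(s,p)=a(s,p)\,\p_s + X'(s,p)$ with $X'$ tangent to the slices, the relation $\lie{\p_s}X = \p_s X$ combined with $\Psi_*\p_s = Y$ and $[X,Y]=0$ forces $\p_s a = 0$ and $\p_s X' = 0$; thus $a$ and $X'$ are $s$-independent, and setting $f(p):=a(p)$ and letting $X_V$ be the induced vector field on $V$ produces $X(s,p)=f(p)\,\p_s + X_V(p)$. For the Hamiltonian, I would invoke \Cref{lemma: hamiltonian function through liouville form}.(a): since $X$ is symplectic and preserves the $X$-invariant form $\lambda$, the function $\lambda(X)$ is a Hamiltonian function for $X$ on the collar, so by \Cref{remark: hamiltonian function through liouville form} it agrees with $H$ up to an additive constant~$c$. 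A direct computation in the collar gives $\lambda(X) = (e^s\alpha)\bigl(f\,\p_s + X_V\bigr) = e^s\,\alpha_p(X_V)$, because $\alpha$ annihilates $\p_s$; therefore $H(s,p)=e^s\,\alpha_p(X_V)+c$.

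The computations here are all routine; the one step demanding genuine care is the translation of the commutation $[X,Y]=0$ into the $s$-independence of \emph{both} components of $X$ in the coordinates where $Y=\p_s$, and the verification that the slice-tangent part genuinely descends to a well-defined vector field $X_V$ on $V$. Everything else is a consequence of the Liouville identity $\lie{Y}\lambda = \lambda$ and of the two previously established results on Hamiltonian functions obtained from Liouville forms.
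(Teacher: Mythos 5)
Your proposal is correct and follows essentially the same route as the paper's own proof: the collar is built from the flow of $Y$, the identity $\lie{Y}\lambda=\lambda$ (integrated along the flow) gives $\lambda=e^s\alpha$ and $\omega=d(e^s\alpha)$, the commutation $[X,Y]=0$ forces both components of $X$ to be $s$-independent, and the Hamiltonian is recovered from $\lambda(X)$ via \Cref{lemma: hamiltonian function through liouville form} together with the uniqueness-up-to-constant observation of \Cref{remark: hamiltonian function through liouville form}. The only cosmetic difference is that your computation of $\varphi_s^*\lambda=e^s\lambda$ and of the bracket in collar coordinates is spelled out slightly more explicitly than in the paper.
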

\begin{proof}
  Using Leibniz formula
  $L_X({\iota_Y\omega})=(L_X\omega)(Y,\cdot)+\omega(L_XY,\cdot)$ one
  can easily see that the Liouville form $\lambda = \iota_Y\omega$ is
  $X$-invariant.  We use the flow of the Liouville vector field~$Y$ to
  obtain a collar neighborhood
  \begin{equation*}
    (-\epsilon, 0] \times V \to W, \quad (s,p) \mapsto \Phi^Y_s(p) \;.
  \end{equation*}
  With this identification, $Y$ corresponds to $\partial_s$.

  From $\iota_Y \omega = \lambda$ we deduce that
  $\iota_{Y} \lambda = 0$ and $\lie{Y} \lambda = \lambda$.  It then
  follows that the pull-back of $\lambda$ to the collar neighborhood
  does not have any $ds$-terms, and since it agrees with
  $\restricted{\lambda}{\{0\}\times V} = \alpha$ along $V$, we obtain
  $\lambda = e^s\, \alpha$ as desired.

  The vector field~$X$ takes on the collar model the form
  $X(s,p) = f(s,p)\, \partial_s + X_V(s,p)$.  Here $f$ is some
  function and $X_V$ is a vector field on the collar neighborhood that
  is tangent to the slices $\{s\}\times V$.  Since $[Y,X] = 0$, it
  follows that neither $f$ nor $X_V$ depend on the $s$-coordinate
  giving us the desired form for $X$.

  The Hamiltonian function is then obtained by combining that
  $\lambda = e^s\, \alpha$ with \Cref{lemma: hamiltonian function
    through liouville form}.
\end{proof}

A direct corollary of \Cref{collar neighborhood} is that if a Lie
group~$G$ acts symplectically on a manifold $(W,\omega)$ with contact
type boundary~$V = \p W$, then we can choose an invariant Liouville
form~$\lambda$ and an invariant contact form
$\alpha = \restricted{\lambda}{TV}$ such that the boundary collar of
$W$ is $G$-equivariantly diffeomorphic to $(-\epsilon, 0] \times V$
with $\lambda = e^s\alpha$, and such that the $G$-action agrees with
$g\cdot (s,p) = (s, gp)$ for any $g\in G$ and any
$(s,p) \in (-\epsilon,0]\times V$.  The natural moment
map~$\mu_\lambda$ associated to the $G$-action and the Liouville
form~$\lambda$ (see \Cref{lemma: hamiltonian function through
  liouville form}) simplifies in this neighborhood to
\begin{equation}\label{eq: moment map}
  \bigl\langle\mu_\lambda(s,p), X\bigr\rangle = e^s \alpha_p(X_V) \;,
\end{equation}
for every $X\in \gfrak$, and every $(s,p)\in (-\epsilon,0]\times V$.
Here $X_V$ denotes the infinitesimal generator associated to the
restriction of the $G$-action to $V$.

\begin{definition}\label{def: cylindrical end}
  Let $(W,\omega)$ be a symplectic manifold with contact type
  boundary~$V = \p W$.  Choose a collar neighborhood of the form
  $\bigl((-\epsilon,0]\times V, d(e^s\,\alpha)\bigr)$ as explained in
  \Cref{collar neighborhood}.
  
  We attach a \defin{cylindrical end} to $(W,\omega)$ by defining the
  open symplectic manifold
  \begin{equation}
    (W,\omega)\sqcup \bigl((-\epsilon,\infty)\times V,\;
    d(e^s\alpha)\bigr)
    \label{gluing cylindrical end}
  \end{equation}
  and gluing both parts smoothly to each other using the obvious
  identification along the collar neighborhood
  $(-\epsilon,0]\times V$.  Denote the resulting manifold by
  $\bigl(\widehat{W}, \widehat{\omega}\bigr)$.
\end{definition}
  
If $(W,\omega)$ comes with a Hamiltonian $G$-action with moment
map~$\mu$ and if the boundary collar of \Cref{collar neighborhood} has
been obtained using an invariant Liouville vector field, then the
action simplifies on this collar for any point
$(s,p)\in (-\epsilon, 0]\times V$ to
\begin{equation*}
  g\cdot (s,p) = (s,gp)
\end{equation*}
and there is an element~$\nu_0 \in \gfrak^*$ such that the moment map
is according to \eqref{eq: moment map} and \Cref{remark: hamiltonian
  function through liouville form} equal to
\begin{equation*}
  \bigl\langle\mu(s,p), X\bigr\rangle =
  e^s\, \alpha_p(X_V) + \langle\nu_0,X\rangle
\end{equation*}
for every $X\in \gfrak$.

We can thus extend the action and the moment map in a straightforward
way to the cylindrical end such that the cylindrical completion
$(\widehat{W},\widehat{\omega})$ will also be a Hamiltonian
$G$-manifold.

\begin{remark}\label{symplectic completion independent}
  The result of attaching a cylindrical end to a Hamiltonian
  $G$-manifold is up to $G$-equivariant symplectomorphisms independent
  of the choice of the boundary collar chosen according to
  \Cref{collar neighborhood}.
\end{remark}

For the sake of completeness, we give a proof of this remark in
\Cref{sec: proof symplectic completion independent}.

\subsubsection{Almost complex structures}
\label{sec:almost complex structures}

It is well-known that for a symplectic action of a compact Lie
group~$G$, the components of the fixed point set~$\Fix(G)$ are
isolated symplectic submanifolds.  Moreover, if $G = \SS^1$, then it
follows that the corresponding (local) Hamiltonian function has
critical points of Morse-Bott type along $\Fix(\SS^1)$, and the
indices of all critical points are even.

The proof of these facts relies on a local argument \cite{Frankel}
using a compatible $G$-invariant almost complex structure.  Recall
that a compatible almost complex structure~$J$ on a symplectic
manifold~$(W,\omega)$ is a bundle endomorphism $J\colon TW \to TW$
such that $J^2 = - \id_{TW}$ and such that
\begin{itemize}
\item $\omega(Jv,Jw) = \omega(v,w)$ for every $x\in W$ and for every
 $v,w \in T_xW$, and
\item $\omega(v,Jv) \ge 0$ for every $x\in W$ and for every
 $v \in T_xW$ with equality if and only if $v = 0$.
\end{itemize}

A compatible almost complex structure~$J$ defines a Riemannian metric
\begin{equation}
 \langle v, w \rangle := \omega(v,Jw) \;,
 \label{eq: riemannian metric given by almost complex structure}
\end{equation}
and we can easily verify the following relations between a Hamiltonian
vector field~$X_H$ associated to a function $H\colon W \to \RR$ and
the gradient vector field~$\nabla H$
\begin{equation}
 X_H = J\cdot \nabla H \quad \text{ and }\quad
 \nabla H = - J\cdot X_H \;,
 \label{eq: relation gradient and Hamiltonian vector field}
\end{equation}
because
$dH(v) = \langle \nabla H, v\rangle = \omega(\nabla H, J v) =
\omega(J\nabla H, J^2 v) = \omega(-J\nabla H, v)$ agrees with the
definition of the Hamiltonian vector field~$X_H$.

\smallskip

Slightly less direct as for a metric, it is nonetheless possible to
average a compatible~$J$ over a compact Lie group.  The fixed points
of the group are then almost complex submanifolds and the normal
bundle of the fixed points splits into complex subbundles showing that
the indices of the critical points of a Hamiltonian circle action are
all even.

\smallskip

It also follows directly from \eqref{eq: relation gradient and
  Hamiltonian vector field} that every Hamiltonian $\SS^1$-manifold
with an invariant compatible almost complex structure~$J$ is foliated
by $J$-holomorphic cylinders (that are singular at the fixed points).
The cylinders are obtained by taking the union of the $\SS^1$-orbits
of all points lying along a chosen gradient trajectory of the
Hamiltonian function.

\medskip

In the context of Hamiltonian $G$-manifolds with contact type
boundary, it would be natural to impose additional conditions on the
almost complex structures to make them compatible with the contact
type boundary (or even to cylindrical ends).  This way, the boundary
would be the regular level set of a pluri-subharmonic function.  For
the purposes of this article, such additional conditions are not
necessary, and we do not give any further details.

\subsection{Hamiltonian $\SS^1$-manifolds with contact type
  boundary}\label{sec: hamiltonian morse-bott and convex boundary}

Using Morse-Bott techniques, Atiyah \cite{AtiyahConvexity} and
independently Guillemin-Sternberg \cite{GuilleminSternbergConvexity}
analyzed the moment map of a Hamiltonian $\SS^1$-action and deduced
many important results.  With their method it follows for example
easily that the moment map of such an action on a closed connected
manifold must have a unique component of local minima and a unique
component of local maxima.

The argument is as follows: The Hamiltonian function is Morse-Bott and
the stable manifolds of all components of critical points are even
dimensional.  Consider the subset~$U$ of all stable manifolds of
codimension~$0$, that is, the union of stable subsets that correspond
to components of local maxima.  The codimension~$0$ stable manifolds
are open and they partition $U$.  Since we have only excluded a
collection of finitely many submanifolds of codimension at least two,
it follows that $U$ is connected.  This implies that there is exactly
one codimension~$0$ stable manifold proving also that there is exactly
one component of local maxima.

\smallskip

In this section, we will generalize this result to Hamiltonian
manifolds with contact type boundary.  We begin this section by
showing that such manifolds fit into the more general framework of
Morse-Bott functions with convex boundary described in \Cref{sec:
  morse-bott with convex boundary}.  In particular, the Hamiltonian
functions are Morse-Bott, the boundary is convex with respect to their
gradient vector fields, and the components of critical points
intersecting the boundary have a suitable neighborhood so that we can
apply \Cref{topology simple if no codim 1 stable manifolds} to such a
Hamiltonian manifold.

\begin{lemma}\label{boundary is convex with respect to gradient}
  Let $(W,\omega)$ be a compact symplectic manifold with boundary.
  Suppose that there is boundary component $V \subset \p W$ such that
  \begin{itemize}
  \item $X$ and $Y$ are vector field that are defined on a
    neighborhood of $V$, such that $Y$ is a Liouville field, and $X$
    is Hamiltonian;
  \item $X$ and $Y$ commute, $X$ is tangent to $V$, and $Y$ is
    positively transverse to $V$ so that $V$ is of (convex) contact
    type;
  \item $H = \lambda(X)$ is the Hamiltonian function of $X$ associated
    via \Cref{lemma: hamiltonian function through liouville form} to
    the Liouville form~$\lambda := \iota_Y\omega$.
  \end{itemize}
  
  Then there exists a Riemannian metric~$g$ on $W$ that can be chosen
  arbitrarily away from $V$ such that
  \begin{itemize}
  \item [(a)] $V$ is convex with respect to the corresponding gradient
    field~$\nabla H$;
  \item [(b)] we have
    \begin{equation*}
      \begin{aligned}
        V^+ &:= \bigl\{ p\in V\bigm|\, \text{$\nabla H(p)$ points out
          of $W$}\bigr\} &= \{p\in V|\; H(p) > 0\} \;, \\
        V^0 &:= \bigl\{ p\in V\bigm|\, \nabla H(p)\in T_p V\bigr\} &= \{p\in V|\; H(p) = 0\} \;,  \\
        V^- &:= \bigl\{ p\in V\bigm|\, \text{$\nabla H(p)$ points into
          $W$}\bigr\} &= \{p\in V|\; H(p) < 0\} \;;
      \end{aligned}
    \end{equation*}
  \item [(c)] if $C$ is a component of $\Crit(H)$ such that $C\cap V$
    is composed of local minima (local maxima) of $\restricted{H}{V}$,
    then there exists a neighborhood~$U_C \subset V$ of $C\cap V$ such
    that $\nabla H$ does not point anywhere on $U_C$ transversely into
    $W$ (transversely out of $W$).
  \end{itemize}
\end{lemma}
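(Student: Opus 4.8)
The plan is to reduce all three claims to local computations in the boundary collar normal form of \Cref{collar neighborhood}, the whole argument being driven by the conformal factor $e^s$ in the Liouville form. First I would apply \Cref{collar neighborhood} to the commuting pair $X,Y$ to obtain a collar $(-\epsilon,0]\times V$ with coordinate $s$ in which $Y=\partial_s$, $\lambda=e^s\alpha$ with $\alpha=\restricted{\lambda}{TV}$, and $X(s,p)=f(p)\,\partial_s+X_V(p)$. Since $X$ is tangent to $V=\{0\}\times V$ and $f$ is $s$-independent, the function $f$ vanishes identically, so $X=X_V$ is tangent to every slice $\{s\}\times V$. Setting $h:=\alpha(X_V)\colon V\to\RR$, \Cref{lemma: hamiltonian function through liouville form} gives on the collar the crucial product form
\begin{equation*}
  H=\lambda(X)=e^s\,h(p)\;,\qquad\text{so that}\qquad dH(\partial_s)=\partial_s H=H\quad\text{and}\quad \restricted{H}{V}=h\;.
\end{equation*}
This identity $dH(\partial_s)=H$ is the heart of the matter.

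Next I would fix the metric. Choose $g$ arbitrarily away from $V$ and, on a neighborhood of $V$, of the product form $g=ds^2\oplus g_V$ (so that the outward field $Y=\partial_s$ is a unit normal orthogonal to the slices), interpolating with a partition of unity. For such $g$ the gradient decomposes on the collar as $\nabla H = e^s h\,\partial_s + e^s\,\nabla_{g_V}h$, the second summand being tangent to the slices. Claim (b) is then immediate: along $V$ the component of $\nabla H$ in the outward normal direction equals $g(\nabla H,\partial_s)=dH(\partial_s)=H=h$, so $\nabla H$ points out of $W$, is tangent to $V$, or points into $W$ exactly according to whether $h=\restricted{H}{V}$ is positive, zero, or negative.

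For (a) I would verify strong convexity and invoke \Cref{read off convexity from derivatives}. In the decomposition \eqref{eq: decomposition of vector field with respect boundary collar} we have $S=e^s h$ and $Z=e^s\nabla_{g_V}h$. At a point $p\in\p^0W=\{h=0\}$ one has $S(0,p)=0$; if $\nabla_{g_V}h(p)=0$ then $Z(0,p)=0$, and otherwise, using $ds(Z)=0$ and $h(p)=0$,
\begin{equation*}
  dS(Z)\big|_{(0,p)}=Z\bigl(e^s h\bigr)\big|_{(0,p)}=\norm{\nabla_{g_V}h(p)}^2>0\;.
\end{equation*}
Thus $\p W$ is strongly convex with respect to $\nabla H$ in the sense of \Cref{def: strongly convex}, and \Cref{read off convexity from derivatives} gives (a).

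Finally, (c) follows from (b). The components of $\Crit(H)$ meeting $V$ consist of zeros of $X=X_V$, where $h=\alpha(X_V)=0$, so $C\cap V\subset\{\restricted{H}{V}=0\}$. If $C\cap V$ is composed of local minima of $\restricted{H}{V}=h$, then $h\ge 0$ on some neighborhood $U_C\subset V$ of $C\cap V$, whence by (b) the field $\nabla H$ is tangent to $V$ where $h=0$ and points out of $W$ where $h>0$, and in particular never points transversely into $W$ along $U_C$; the local-maximum case is identical after reversing signs. The main obstacle is not any single step but rather extracting the normal form and the identity $dH(\partial_s)=H$ cleanly; once these are in place every remaining verification is a one-line local computation, the only care being to keep the metric in product form near $V$ while leaving it free elsewhere.
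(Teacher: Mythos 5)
Your proof is correct and takes essentially the same route as the paper: the paper likewise reduces to the collar normal form $H = e^s\,\alpha(X_V)$ and obtains (a) via strong convexity, merely packaging the computation by attaching a cylindrical end with the conformally rescaled metric $e^s\bigl(ds^2\oplus g_V\bigr)$ and citing \Cref{cylindrical end convex with respect to gradient}, whereas you inline the same verification on the compact collar with the product metric $ds^2\oplus g_V$ --- a cosmetic difference, since the conformal factor only matters for geodesic completeness of the noncompact completion used elsewhere. Your treatments of (b) and (c) coincide with the paper's.
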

\begin{proof}
  By our assumption, $X$ restricts to a vector field~$X_V$ on $V$.
  Taking the boundary collar given by \Cref{collar neighborhood}, we
  see then that $X$ simplifies on $(-\epsilon,0] \times V$ to
  $X(s,p) = X_V(p)$, and the Hamiltonian function~$H$ can be written
  as $H(s,p) = e^s\, \alpha_p(X_V)$ for
  $(s,p) \in (-\epsilon,0] \times V$.

  Attach to $W$ a cylindrical end as described in \Cref{def:
    cylindrical end}, and denote the resulting manifold by
  $(\widehat{W}, \hat \omega)$.  The Hamiltonian function~$H$ extends
  naturally to a function on $\widehat{W}$ that is adapted to the
  cylindrical end in the sense of \Cref{sec: Morse-Bott with
    cylindrical ends}, and that we denote for simplicity also by $H$.

  Choose a Riemannian metric~$g$ on $\widehat{W}$ that agrees with
  \begin{equation*}
    \restricted{g}{[0,\infty) \times V} = e^s\cdot \bigl(ds^2 \oplus g_V\bigr) \;,
  \end{equation*}
  on $[0,\infty) \times V$, where $g_V$ is any metric on $V$, then it
  follows from \Cref{cylindrical end convex with respect to gradient},
  that the truncation of $\widehat{W}$ at any level set~$s = s_0$ in
  the cylindrical end gives a manifold~$\widehat{W}_{\le s_0}$ whose
  boundary is convex with respect to $\nabla H$.  In particular it
  follows that $V\subset \p W$ is convex with respect to the gradient
  field of $H$ proving (a).
  
  \medskip

  Recall now from \eqref{eq: gradient in cylindrical end} that
  $\nabla H$ is given with respect to $g$ by
  \begin{equation*}
    \nabla H(s,p) =  H_V(p)\, \partial_s + \nabla H_V(p) \;,
  \end{equation*}
  on the cylindrical end.  Here $H_V = \restricted{H}{V}$.  This shows
  immediately our claim~(b) that the sign of $H_V = H$ determines
  whether $\nabla H$ points into or out of the manifold.

  \smallskip
  
  For (c), use that a point $p\in V$ is only in $\Crit(H)$ if both
  $H_V(p)$ and $\nabla H_V(p)$ vanish.  If $p$ is additionally a local
  minimum of $H_V$, then it follows that $H_V \ge 0$ on a neighborhood
  of $p$ so that $\nabla H$ does not point anywhere on this
  neighborhood transversely into $W$.  For the statement in
  parenthesis, it suffices to invert the sign of $H$.
\end{proof}

The lemma above applied directly to compact Hamiltonian
$\SS^1$-manifolds with convex contact type boundary, showing that the
convex contact type boundary of such a manifold is also
$\nabla$-convex.

\begin{remark}
  Jean-Yves Welschinger pointed out to us that the convexity of $\p W$
  with respect to $\nabla H$ can also be obtained as a reformulation
  of the fact that non-constant $J$-holomorphic curves cannot touch
  the $J$-convex boundary from the inside.  In our situation, it is
  always possible to choose an almost complex structure making the
  boundary $J$-convex.  As stated in \Cref{sec:almost complex
    structures}, it follows that $X$ and $\nabla H = - J X$ span a
  $J$-holomorphic foliation.  Every leaf of this foliation touching
  $\p W$ from the inside needs thus to be a point.

  This approach is probably the one preferred by symplectic
  topologists, but note that it does not replace any of the arguments
  of \Cref{sec: morse-bott with convex boundary} as it only gives a
  different proof for the $\nabla$-convexity of the boundary.
  Additionally further effort would be needed to prove the
  requirements of find \Cref{lemma for stable sets with boundary} to
  also be able to treat $\SS^1$-manifolds having fixed points in the
  boundary.
\end{remark}

\begin{theorem_fixed_points_boundary_components}
  Let $(W,\omega)$ be a connected compact Hamiltonian $\SS^1$-manifold
  with convex contact type boundary, and let $H\colon W\to \RR$ be an
  associated Hamiltonian function.

  \medskip
  
  \textbf{(a)} Then it follows that the set of critical points
  $\Crit(H)$ is equal to the set of fixed points $\Fix(\SS^1)$ of the
  circle action.  These decompose into finitely many connected
  components
  \begin{equation*}
    \Fix(\SS^1) = \bigsqcup_j C_j \;,
  \end{equation*}
  that intersects $\p W$ transversely.  A component~$C_j$ of
  $\Fix(\SS^1)$ is a closed symplectic submanifold if and only if it
  does not intersect the boundary of $W$, otherwise $C_j$ is a compact
  symplectic manifold with convex contact type
  boundary~$\p C_j = C_j \cap \p W$.

  \medskip
  
  \textbf{(b)} The choice of an $\SS^1$-invariant Liouville vector
  field along the boundary defines an invariant contact
  structure~$\xi$ on $\p W$.  Decompose the boundary of $W$ into the
  three subsets
  \begin{align*}
    \xi^+ &= \bigl\{p\in \p W\bigm|
            \text{ $X(p)$ is positively transverse to $\xi_p$}\bigr\} \;, \\
    \xi^- &= \bigl\{p\in \p W\bigm|
            \text{ $X(p)$ is negatively transverse to $\xi_p$}\bigr\} \text{ and }\\
    \xi^0 &= \bigl\{p\in \p W\bigm| \; X(p) \in \xi_p\bigr\} \;.
  \end{align*}
  If $H' = \lambda(X)$ is the Hamiltonian function of $X$ associated
  via \Cref{lemma: hamiltonian function through liouville form} to the
  Liouville form~$\lambda := \iota_Y\omega$, then
  $\xi^+ = \{H' > 0\}$, $\xi^- = \{H' < 0\}$, $\xi^0 = \{H' = 0\}$.

  The closed subset~$\xi^0$ is composed of all fixed points in
  $\Fix(\SS^1) \cap \p W$ and all non-trivial isotropic $\SS^1$-orbits
  in $\p W$.  The fixed points form a finite collection of closed
  contact submanifolds.  The non-trivial isotropic orbits form a
  finite union of cooriented disjoint hypersurfaces in $\p W$ that
  separate $\xi^-$ on one side from $\xi^+$ on the other side, so that
  $\xi^0$ is nowhere dense in $\p W$.

  The hypersurfaces of non-trivial isotropic orbits do not need to be
  compact as their closure may contain fixed points.  If this is the
  case, we can also not expect that the closure of the hypersurface is
  a smooth submanifold.

  \medskip
  
  \textbf{(c)} The function~$H$ is Morse-Bott, and all the Morse-Bott
  indices of the different components of $\Crit(H)$ are even.
  
  Let $C_j$ be a fixed point component that intersects $\p W$.  It
  follows that $\p C_j = C_j \cap \p W$ necessarily lies in $\xi^0$.
  It is surrounded in $\p W$ by $\xi^+$, if and only if $C_j$ is a
  local minimum of $H$.  Similarly, $\p C_j$ is surrounded by $\xi^-$,
  if and only if $C_j$ is a local maximum of $H$.

  \smallskip
  
  All conclusions of \Cref{topology simple if no codim 1 stable
    manifolds} can be applied to $W$ and $H$ so that one of the
  following mutually exclusive statements holds:
  \begin{itemize}
  \item The set of critical points~$\Crit(H)$ has a unique
    component~$C_{\max}$ that is a local maximum (a unique
    component~$C_{\min}$ that is a local minimum) and $H$ is
    everywhere else on $W\setminus C_{\max}$ strictly smaller than on
    $C_{\max}$ (everywhere else on $W\setminus C_{\min}$ strictly
    larger than on $C_{\min}$), so that this local maximum (local
    minimum) is actually the global one.

    The subset~$\xi^+$ is empty ($\xi^-$ is empty).  The inclusion
    $C_{\max} \hookrightarrow W$ ($C_{\min} \hookrightarrow W$)
    induces a \emph{surjective} homomorphism
    $\pi_1(C_{\max}) \to \pi_1(W)$ ($\pi_1(C_{\min}) \to \pi_1(W)$).
  \item None of the components of $\Crit(H)$ is a local maximum (local
    minimum), and $H$ takes its global maximum on $\xi^+ \subset \p W$
    (its global minimum on $\xi^-$).

    The subset~$\xi^+$ ($\xi^-$) is open, non-empty and connected.
    The inclusion $\xi^+ \hookrightarrow W$ (or
    $\xi^- \hookrightarrow W$) induces a \emph{surjective}
    homomorphism $\pi_1(\xi^+) \to \pi_1(W)$ (or
    $\pi_1(\xi^-) \to \pi_1(W)$).
  \end{itemize}
\end{theorem_fixed_points_boundary_components}
\begin{proof}
  Let $Y$ be an $\SS^1$-invariant Liouville vector field defined in
  the neighborhood of the boundary pointing traversely out of $W$, and
  let $\lambda = \iota_Y \omega$ be the associated Liouville form.
  Choose the metric whose existence is guaranteed by \Cref{boundary is
    convex with respect to gradient} so that the boundary of $W$ is
  convex with respect to the gradient vector field~$\nabla H$.
  
  \medskip

  (a) Since $\iota_X\omega = - dH$, it is clear that $X$ only vanishes
  at the critical points of $H$ so that $\Crit(H) = \Fix(\SS^1)$.  For
  any choice of an $\SS^1$-invariant compatible almost complex
  structure, it is not hard to see that the fixed point set will be an
  almost complex submanifold.  In particular, it will thus be
  symplectic.  Furthermore since the Liouville vector field is tangent
  to $\Fix(\SS^1)$, the fixed point set is transverse to $\p W$, and
  $\Fix(\SS^1)\cap \p W$ is the contact type boundary of the
  components of $\Fix(\SS^1)$.
  
  \medskip

  (b) Recall that we can decompose $\p W$ with respect to the gradient
  of $H$ into $\p^0 W$, $\p^+W$, and $\p^-W$, the subsets of the
  boundary along which $\nabla H$ is tangent to $\p W$, or points
  transversely out of or into $W$.  As shown in \Cref{boundary is
    convex with respect to gradient}.(b), we can also characterize
  $\p^0 W$, $\p^+W$, and $\p^-W$ by the sign of $H'$.  Using that
  $H' = \lambda(X) = \alpha(X)$, verifies directly that $\xi^+$ agrees
  with $\p^+W$, $\xi^-$ agrees with $\p^-W$, and $\xi^0$ agrees with
  $\p^0W$.

  \smallskip

  Let $\alpha$ be the contact form
  $\alpha = \restricted{\lambda}{T\p W}$.  Along
  $\xi^0 = \bigl\{p \in \p W\bigm|\, H'(p) = 0\bigr\}$, we compute
  $\restricted{dH'}{T\p W} = d\bigl(\alpha(X)\bigr) = \lie{X} \alpha -
  \iota_X d\alpha = - \iota_X d\alpha$.  If $p \in \xi^0$ is not a
  fixed point so that $X(p) \ne 0$, then it follows that $dH_p'\ne 0$,
  because $d\alpha$ is a symplectic form on $\ker \alpha$.  This
  implies that $\xi^0 = (H')^{-1}(0)$ is a regular hypersurface close
  to $p$ separating $\xi^-$ on one side from $\xi^+$ on the other
  side, defining a coorientation.

  We already explained in (a) that
  $\Fix(\SS^1)\cap \p W \subset \xi^0$ are the contact type boundaries
  of the fixed point components of $W$.  That $\xi^0$ is usually not a
  collection of smooth closed submanifolds is shown in \Cref{circle
    action on 6-ball}.

  \medskip

  (c) With the choice of an $\SS^1$-invariant almost complex
  structure, the proof that the Hamiltonian function of a circle
  action is Morse-Bott reduces to a local study \cite{Frankel} that
  applies equally well to critical points on the boundary by
  considering the collar neighborhood from \Cref{collar neighborhood}.
  Being a self-adjoint operator, the Hessian of $H$ only has real
  eigenvalues.  The spaces of its eigenvectors for positive/negative
  eigenvalues are complex, and thus it follows that the
  indices~$i^-(C_j)$ and $i^+(C_j)$ for every component
  $C_j \subset \Crit(H)$ are even.  In particular, we see that
  $i^-(C_j)$ and $i^+(C_j)$ are always different from $1$.

  Let us now study the intersection between a local extremum of $H$
  and the boundary.  Assume that $C_j \subset \Fix(\SS^1)$ is a
  component that intersects $\p W$, then $H'$ clearly vanishes along
  $\p C_j = C_j \cap \p W$ so that $\p C_j\subset \xi^0$.  If $\p C_j$
  is surrounded in $\p W$ by $\xi^+$, then there is a neighborhood of
  $\p C_j$ in $\p W$ where $H'$ is strictly positive except for
  $\p C_j$ itself.  This implies that $H'$ and $\restricted{H}{\p W}$
  (which agrees up to the addition of a constant with $H'$) have a
  local minimum along $\p C_j$, and by \Cref{indices of boundary crit
    in relation to normal index}, $C_j$ is also a local minimum of $H$
  in $W$.

  Conversely, if the points in $C_j \subset \Fix(\SS^1)$ are local
  minima of $H$, and if $C_j$ intersects $\p W$, then it follows that
  $\p C_j$ will be composed of local minima for $\restricted{H}{\p W}$
  and thus also for $H'$.  This implies that $\p C_j$ is surrounded by
  $\xi^+$.
  
  The argument for $\xi^-$ and local maxima is identical after
  changing the sign of $H$.

  \smallskip

  To finish the proof of (c), we only need to apply \Cref{topology
    simple if no codim 1 stable manifolds}.  All necessary conditions
  are satisfied: In particular, $\xi^+$ agrees with $\p^+ W$, and
  $\xi^-$ with $\p^- W$ as we have shown in (b).  This implies that if
  $C_j\subset \Crit (H)$ is a local maximum of $H$ that intersects
  $\p W$, then $\p C_j$ will be surrounded in $\p W$ by $\p^- W$ as
  required in \Cref{topology simple if no codim 1 stable manifolds}.
  The remaining claims follow then directly from \Cref{topology simple
    if no codim 1 stable manifolds}.
\end{proof}

\begin{example}\label{circle action on 6-ball}
  Let $W$ be the unit ball in $\CC^3$ and define a circle action on
  $W$ by
  \begin{equation*}
    e^{i\varphi} \, (z_0, z_1,z_2) :=
    \bigl(z_0, e^{i\varphi}\,z_1, e^{i k\varphi}\, z_2\bigr) \;,
  \end{equation*}
  where $k$ is either $+1$ or $-1$.
  
  The action preserves both the standard symplectic form
  $\omega_0 = \frac{i}{2}\, \sum_{j=1}^3 dz_j\wedge d\bar z_j$, and
  the contact structure~$\xi$ given on $\p W$ as the kernel of the
  $1$-form
  $\lambda = \frac{i}{4}\, \sum_{j=1}^3 ( z_j\, d\bar z_j - \bar z_j\,
  dz_j)$.  The Hamiltonian function for the circle action is
  \begin{equation*}
    H(z_0,z_1,z_2) = \tfrac{1}{2}\,\abs{z_1}^2
    + \tfrac{k}{2}\,\abs{z_2}^2 \; ,
  \end{equation*}
  and the set of all fixed points is
  $\Fix(\SS^1) = \bigl\{(z_0,0,0) |\, \abs{z_0} \le 1\bigr\}$.

  \begin{itemize}
  \item If $k = 1$, then $H\ge 0$ on all of $W$, and $\Fix(\SS^1)$ is
    the global minimum of $H$.  The $\SS^1$-orbits are everywhere
    along $\p W$ positively transverse to the contact structure,
    except of course at the fixed point set.  Thus
    $\xi^- = \emptyset$, and
    $\xi^0 = \bigl\{(e^{i\vartheta}, 0, 0)\bigr\}$ is an isolated
    circle inside $\xi^+ = \p W \setminus \xi^0$.
  \item If $k = -1$, then the boundary decomposes into the two
    connected subsets $\xi^- = \bigl\{\abs{z_1} < \abs{z_2}\bigr\}$
    and $\xi^+ = \bigl\{\abs{z_1} > \abs{z_2}\bigr\}$ along which the
    $\SS^1$-orbits are either negatively or positively transverse to
    the contact structure.  These two subsets are separated by
    $\xi^0$.

    In this case, $\xi^0$ is composed of the loop
    $\bigl\{(e^{i\vartheta}, 0,0)\bigr\}$ that lies in
    $\Fix(\SS^1) \cap \p W$, and of the hypersurface
    \begin{equation*}
      \Bigl\{\bigl(z_0, \tfrac{r}{\sqrt{2}} \, e^{i\varphi_1},
      \tfrac{r}{\sqrt{2}}\,e^{i\varphi_2}\bigr)\Bigm|
      r = \sqrt{1 - \abs{z_0}^2}, \text{ and } \abs{z_0} < 1  \Bigr\} \;.
    \end{equation*}
    that is composed of (nontrivial) isotropic orbits.  This
    hypersurface is thus diffeomorphic to $\DD^2 \times \TT^2$ with
    the anti-diagonal $\SS^1$-action on the $\TT^2$-factor.
    
    To understand the shape of the subset~$\xi^0$, we can take
    $\overline{\DD}^2 \times \TT^2$ and collapse the tori lying over
    the boundary of the disk, that is, the neighborhood of
    $\bigl\{(e^{i\vartheta}, 0,0)\bigr\}$ in $\xi_0$ looks like a
    quotient space
    $\bigl((1-\epsilon, 1]\times \SS^1\bigr) \times \TT^2/\sim$ by the
    equivalence relation
    \begin{equation*}
      \bigl(1,e^{i\vartheta}; e^{i\varphi_1}, e^{i\varphi_2}\bigr) \sim
      \bigl(1,e^{i\vartheta}; e^{i\varphi_1'}, e^{i\varphi_2'}\bigr) \;.
    \end{equation*}

    To see that $\xi_0$ is not a manifold, we prefer to identify
    $\bigl((1-\epsilon, 1]\times \SS^1\bigr) \times \TT^2$ with
    $\SS^1\times \bigl((1-\epsilon, 1]\times \TT^2\bigr)$, that is, we
    think about this model as an $\SS^1$-family of thickened tori.
    The $\SS^1$-factor is not relevant, so we will ignore it from now
    on.  We collapse $\{1\}\times \TT^2$ in two steps: In the first
    one we quotient $(1-\epsilon, 1]\times \TT^2$ by
    $\bigl(1, e^{i\varphi_1}, e^{i\varphi_2}\bigr) \sim \bigl(1,
    e^{i\varphi_1'}, e^{i\varphi_2}\bigr)$, which allows us to
    identify $(1-\epsilon, 1]\times \TT^2/\sim$ smoothly with the
    solid torus $\DD^2_{<\epsilon} \times \SS^1$.  In the second step
    we collapse the core $\{0\}\times \SS^1$ of the solid torus to a
    point.

    In order to see that this quotient space is not a manifold,
    consider an arbitrarily small neighborhood of the core in the
    solid torus.  Then after removing the core, the neighborhood will
    always contain an incompressible torus.  When passing to the
    quotient space, the core reduces to a point, and thus we have just
    proven that none of the neighborhoods of this point is simply
    connected after removing the point.  This behavior is in stark
    contrast to that of a genuine $3$-dimensional manifold, where one
    can take a ball around a point and remove its center, to find a
    simply connected neighborhood.
  \end{itemize}
\end{example}

\begin{corollary}
  Let $(W,\omega)$ be a connected compact Hamiltonian $\SS^1$-manifold
  with convex contact type boundary.  If any of the fixed point
  components has codimension~$2$ then it is necessarily consists
  either of local maxima or minima of the Hamiltonian function.
\end{corollary}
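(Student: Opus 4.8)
The plan is to reduce the statement entirely to the parity of the Morse-Bott indices, which has already been established in part~\textbf{(c)} of \Cref{theorem: fixed points and boundary components of hamiltonian S1-manifolds}: the function~$H$ is Morse-Bott with $\Crit(H) = \Fix(\SS^1)$, and all the indices $i^-(C_j)$ and $i^+(C_j)$ of the components of $\Crit(H)$ are even. This is the only nontrivial input I would use.

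First I would recall the definition of the index from \Cref{def:Morse-Bott}: for a component $C_j$ of $\Crit(H)$ one has
\begin{equation*}
  \dim W = \dim C_j + i^-(C_j) + i^+(C_j) \;,
\end{equation*}
so that the codimension of $C_j$ in $W$ equals $i^-(C_j) + i^+(C_j)$. Hence the hypothesis that $C_j$ has codimension~$2$ is precisely the statement $i^-(C_j) + i^+(C_j) = 2$.

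Next I would combine this with the evenness of both indices. Since $i^-(C_j)$ and $i^+(C_j)$ are nonnegative \emph{even} integers summing to~$2$, the only possibilities are $\bigl(i^-(C_j),i^+(C_j)\bigr) = (0,2)$ or $(2,0)$; the ``saddle'' case $(1,1)$ is ruled out exactly because odd indices do not occur. If $i^-(C_j) = 0$, then the Hessian of $H$ is positive definite in the normal direction to $C_j$, so $H$ attains a local minimum along $C_j$; symmetrically, if $i^+(C_j) = 0$ the Hessian is negative definite in the normal direction and $C_j$ is a local maximum. This is exactly the claim of the corollary.

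The only point requiring a little care is the case in which $C_j$ meets $\p W$, where the indices are defined through $\restricted{H}{\p W}$. Here I would invoke \Cref{indices of boundary crit in relation to normal index}, which gives $i^-(C_j) = i^-(\p C_j)$ and $i^+(C_j) = i^+(\p C_j)$, so both the codimension bookkeeping and the identification of a vanishing index with a local extremum carry over verbatim. I do not expect any serious obstacle: the entire content of the corollary is already contained in the evenness of the Morse-Bott indices, and what remains is elementary integer arithmetic together with this boundary bookkeeping.
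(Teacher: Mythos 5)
Your proposal is correct and matches the paper's own proof, which likewise observes that since all indices are even, a codimension-$2$ component must have $\bigl(i^-(C_j),i^+(C_j)\bigr) = (0,2)$ or $(2,0)$, forcing a local minimum or maximum respectively. Your extra care about boundary components via \Cref{indices of boundary crit in relation to normal index} is a harmless elaboration of the same argument.
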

\begin{proof}
  Since all indices are even, if $C_j\subset \Fix(\SS^1)$ is of
  codimension~$2$, we either have $i^+(C_j) = 2$ and $i^-(C_j) = 0$ so
  that $C_j$ is a local minimum, or $i^+(C_j) = 0$ and $i^-(C_j) = 2$
  so that $C_j$ is a local maximum.
\end{proof}

\begin{corollary}\label{at most two boundary components}
  Let $(W,\omega)$ be a connected compact Hamiltonian $\SS^1$-manifold
  with convex contact type boundary, then $W$ can have at most two
  boundary components.
\end{corollary}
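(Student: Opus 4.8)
The plan is to extract everything from the structural \Cref{theorem: fixed points and boundary components of hamiltonian S1-manifolds}, which already does all the heavy lifting. That theorem decomposes the boundary as $\p W = \xi^+ \sqcup \xi^- \sqcup \xi^0$, it tells us that $\xi^0$ is nowhere dense in $\p W$, and --- via the dichotomy in part~(c) --- it guarantees that $\xi^+$ is either empty or open, non-empty and \emph{connected}, and symmetrically for $\xi^-$. These three pieces of information are all that is needed.

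First I would observe that $\xi^+ \cup \xi^-$ is dense in $\p W$, since it is precisely the complement of the nowhere dense set $\xi^0$. Because $\p W$ is a compact manifold, it has only finitely many connected components, each of which is a non-empty \emph{open} subset of $\p W$ (manifolds being locally connected). Being open and non-empty, every such boundary component must therefore meet the dense set $\xi^+ \cup \xi^-$. Equivalently, this also follows from \Cref{convex boundary cannot be everywhere tangent to gradient}, which forbids any boundary component from lying entirely inside $\xi^0 = \p^0 W$.

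Next, because $\xi^+$ is connected, it is contained in a single connected component of $\p W$; likewise $\xi^-$ is contained in a single component. Hence the set $\xi^+ \cup \xi^-$ can touch at most two distinct components of $\p W$, namely the one containing $\xi^+$ and the one containing $\xi^-$ (which may well coincide). Combining this with the previous paragraph, where we saw that \emph{every} boundary component meets $\xi^+ \cup \xi^-$, forces $\p W$ to have at most two connected components, which is exactly the claim.

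There is essentially no obstacle here: the corollary is a formal consequence of the main theorem, and the only point that deserves a moment's care is the density step --- that is, verifying that no boundary component can hide entirely within $\xi^0$. This is supplied by the nowhere-density of $\xi^0$ together with the fact that the connected components of $\p W$ are open, and once this is in place the connectedness of $\xi^+$ and of $\xi^-$ immediately caps the number of components at two.
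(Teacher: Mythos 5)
Your proof is correct and takes essentially the same route as the paper's: both deduce the corollary formally from \Cref{theorem: fixed points and boundary components of hamiltonian S1-manifolds}, using the partition $\p W = \xi^-\sqcup\xi^0\sqcup\xi^+$, the connectedness (or emptiness) of the open sets $\xi^+$ and $\xi^-$, and the fact that $\xi^0$ is nowhere dense to cap the number of boundary components at two. The only cosmetic difference is that you make the density step (no boundary component can lie entirely in $\xi^0$) fully explicit, while the paper conveys the same point through a brief case analysis on which of $\xi^\pm$ is non-empty.
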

\begin{proof}
  Choose an invariant Liouville field close to the boundary of $W$ to
  define a contact structure on $\p W$.  \Cref{theorem: fixed points
    and boundary components of hamiltonian S1-manifolds} shows that
  $\p W$ can be partitioned into $\xi^-\sqcup \xi^0 \sqcup \xi^+$,
  where $\xi^-$ and $\xi^+$ are open subsets that are either connected
  or empty, and $\xi^0$ is a thin subset.

  Thus, either $\p W = \emptyset$, or only one of $\xi^-$ and $\xi^+$
  is non-empty and $\p W$ is the closure of this subset and thus
  connected, or if both $\xi^-$ and $\xi^+$ are non-empty, then either
  both of them lie in the same boundary component so that $\p W$ is
  connected, or each of them lies in a different component of $\p W$
  so that there are two boundary components.
\end{proof}

\begin{remark}
  As we have just proven, the number of boundary components for
  Hamiltonian $\SS^1$-manifolds with contact type boundary is at most
  two.  Except for dimension~$2$, where the cylinder
  $\SS^1\times [-1,1]$ is an obvious example, we do not know of any
  other example with disconnected boundary.  We show in \Cref{sec:
    dimension 4} that no such example exists in dimension~$4$, but it
  is unknown to us if there are examples in dimension$>4$.
  
  Note that even ignoring any circle action, it is already far from
  trivial to find symplectic manifolds with disconnected contact
  boundary, see for example \cite{McDuff_contactType,
    Geiges_disconnected, WeakFillabilityHigherDimension}.
\end{remark}

We will now do a case by case analysis of the different scenarios that
can occur depending on the existence of local extrema and the number
of boundary components.

\begin{corollary}[Empty boundary]\label{max and min and no boundary}
  Let $(W,\omega)$ be a \emph{closed} connected Hamiltonian
  $\SS^1$-manifold with Hamiltonian function $H\colon W\to \RR$.

  Clearly, $H$ has a global minimum and a global maximum that will be
  attained on a subset~$C_{\min}$ and $C_{\max}$ respectively.  It is
  well-known \cite{AtiyahConvexity, GuilleminSternbergConvexity} that
  these subsets are connected components of $\Crit(H)$, and that no
  other component of $\Crit(H)$ can be a local minimum or maximum.
  Furthermore, the natural homomorphisms
  $\pi_1(C_{\min}) \to \pi_1(W)$ and $\pi_1(C_{\max}) \to \pi_1(W)$
  are both surjective.
\end{corollary}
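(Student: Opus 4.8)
The plan is to deduce everything from \Cref{topology simple if no codim 1 stable manifolds}, applied to the closed manifold $W$ regarded as a compact manifold whose (empty) boundary is trivially $\nabla$-convex. First I would fix an $\SS^1$-invariant compatible almost complex structure and work with the induced metric, so that $\nabla H = -JX$ and, since $\iota_X\omega = -dH$, one has $\Crit(H) = \Fix(\SS^1)$. The local Frankel argument recalled in \Cref{sec:almost complex structures} shows, exactly as in part~(c) of \Cref{theorem: fixed points and boundary components of hamiltonian S1-manifolds} (the boundary collar there plays no role once $\p W = \emptyset$), that $H$ is Morse-Bott and that every component $C_j \subset \Crit(H)$ has even indices $i^-(C_j)$ and $i^+(C_j)$. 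In particular no component has index $i^-(C_j) = 1$ or $i^+(C_j) = 1$, so all hypotheses of \Cref{topology simple if no codim 1 stable manifolds} are satisfied; the assumptions there concerning local extrema meeting the boundary are vacuous since $\p W = \emptyset$.

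Next I would invoke the dichotomy of \Cref{topology simple if no codim 1 stable manifolds}. Because $\p W = \emptyset$, the set $\p^+ W$ is empty, so the second alternative (in which $\p^+ W$ is non-empty and connected) cannot occur. We are therefore forced into the first alternative: there is a unique component $C_{\max} \subset \Crit(H)$ consisting of local maxima, the function $H$ is strictly smaller than its maximum on all of $W \setminus C_{\max}$, and every loop in $W$ is homotopic to one lying in $C_{\max}$. The strict inequality identifies $C_{\max}$ with the global-maximum set, shows it is a single connected component of $\Crit(H)$, and shows that no other component of $\Crit(H)$ can be a local maximum; the loop statement is exactly the surjectivity of the induced homomorphism $\pi_1(C_{\max}) \to \pi_1(W)$.

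Finally I would repeat the argument with $H$ replaced by $-H$ (equivalently, read off the statements in parentheses in \Cref{topology simple if no codim 1 stable manifolds}) to obtain the unique global-minimum component $C_{\min}$, the fact that no other component of $\Crit(H)$ is a local minimum, and the surjection $\pi_1(C_{\min}) \to \pi_1(W)$. There is essentially no obstacle here beyond bookkeeping, since all of the real work sits in the earlier theorem: the only points that genuinely need checking are that the even-index computation goes through verbatim in the closed case and that the boundary conditions of \Cref{topology simple if no codim 1 stable manifolds} degenerate harmlessly when $\p W = \emptyset$, both of which are immediate.
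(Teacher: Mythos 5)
Your proposal is correct and takes essentially the same route as the paper: the paper deduces the corollary in one line from \Cref{theorem: fixed points and boundary components of hamiltonian S1-manifolds}, whose proof in the boundaryless case consists of exactly your two ingredients, namely the Frankel-type argument giving that $H$ is Morse-Bott with even indices, followed by \Cref{topology simple if no codim 1 stable manifolds} with the empty boundary vacuously $\nabla$-convex, so that the second alternative of the dichotomy is excluded. Your unpacking is sound, including the sign flip $H \mapsto -H$ for the minimum and the observation that the loop statement (obtained in the paper via the gradient-flow retraction of $\Interior W_0$ onto a neighborhood of $C_{\max}$) yields the surjectivity of $\pi_1(C_{\max}) \to \pi_1(W)$.
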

\begin{proof}
  The statement follows directly from \Cref{theorem: fixed points and
    boundary components of hamiltonian S1-manifolds}.
\end{proof}

\begin{corollary}[Non-empty boundary and a local minimum]\label{local
    min and boundary}
  Let $(W,\omega)$ be a connected compact Hamiltonian $\SS^1$-manifold
  with contact type boundary, and let $H\colon W\to \RR$ be a
  Hamiltonian function for the $\SS^1$-action.  Assume that $\Crit(H)$
  has a component~$C_{\min}$ that is composed of local minima.

  \smallskip
  
  Then it follows that $H$ takes its \emph{global} minimum on
  $C_{\min}$, and that $H$ is everywhere else on $W\setminus C_{\min}$
  strictly larger.  None of the other components of $\Crit(H)$ can be
  a local minimum or maximum.  The boundary of $W$ is connected, and
  the only component of $\Crit(H)$ that may intersect $\p W$ is
  $C_{\min}$.

  Choose an invariant Liouville field close to the boundary of $W$ to
  define a contact structure~$\xi$ on $\p W$.  All $\SS^1$-orbits in
  the boundary are positively transverse to the contact structure,
  except for the fixed points in $C_{\min}\cap \p W$.  If $C_{\min}$
  does not intersect $\p W$, then $(\p W, \xi)$ is contactomorphic to
  the prequantization of a symplectic orbifold.

  We can choose $H$ to agree in a neighborhood of $\p W$ with the
  natural Hamiltonian function induced by the Liouville form.

  Furthermore, the natural homomorphisms
  $\pi_1(C_{\min}) \to \pi_1(W)$ and $\pi_1(\xi^+) \to \pi_1(W)$ are
  surjective.
\end{corollary}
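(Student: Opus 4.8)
The plan is to derive nearly everything from \Cref{theorem: fixed points and boundary components of hamiltonian S1-manifolds}, which already packages the dichotomy we need. Since by hypothesis $\Crit(H)$ contains a component $C_{\min}$ of local minima, we fall automatically into the first of the two mutually exclusive alternatives of that theorem (in its minimum version): $C_{\min}$ is the unique component of local minima, $H$ is strictly larger on $W\setminus C_{\min}$ so that $C_{\min}$ carries the global minimum, the subset $\xi^-$ is empty, and the inclusion induces a surjection $\pi_1(C_{\min})\to\pi_1(W)$. This already yields the first sentence of the corollary and the first surjectivity claim.

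Next I would rule out any local maximum. If some component of $\Crit(H)$ were a local maximum, then the maximum version of the same theorem would force $\xi^+=\emptyset$ as well; combined with $\xi^-=\emptyset$ this gives $\p W=\xi^0$, contradicting that $\xi^0$ is nowhere dense in the non-empty boundary. Hence there is no local maximum, and together with the uniqueness already noted, $C_{\min}$ is the only local extremum of $\Crit(H)$. Because there is now no local maximum, the maximum version of the theorem places us in its \emph{second} alternative, so $\xi^+$ is open, non-empty and connected and $\pi_1(\xi^+)\to\pi_1(W)$ is surjective, settling the second surjectivity claim.

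For the boundary I would argue as in \Cref{at most two boundary components}: with $\xi^-=\emptyset$ the partition $\p W=\xi^+\sqcup\xi^0\sqcup\xi^-$ reduces to $\p W=\overline{\xi^+}$, which is connected since $\xi^+$ is. Moreover, with $\xi^-=\emptyset$ there can be no separating hypersurface of non-trivial isotropic orbits, since each such hypersurface has $\xi^-$ on one side; therefore $\xi^0=\Fix(\SS^1)\cap\p W$ consists only of fixed points. A component $C_j$ meeting $\p W$ then has $\p C_j\subset\xi^0$, and as the only $\xi^0$-points near $\p C_j$ are $\p C_j$ itself, it is surrounded by $\xi^+$, which by the theorem forces $C_j$ to be a local minimum, i.e.\ $C_j=C_{\min}$. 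This proves that $C_{\min}$ is the only critical component touching the boundary and that every orbit in $\p W$ is either positively transverse to $\xi$ or a fixed point lying in $C_{\min}\cap\p W$. The normalization of $H$ near $\p W$ then follows from \Cref{remark: hamiltonian function through liouville form}, where $H$ agrees with $H'=\lambda(X)$ up to an additive constant; since for a circle action $H$ is only defined up to such a constant, I may simply subtract it.

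The one point requiring genuine work is the prequantization statement when $C_{\min}\cap\p W=\emptyset$. In that case $\xi^0=\emptyset$, so the generator $X$ is everywhere transverse to $\xi$ on $\p W$, and the plan is to rescale the invariant contact form to $\tilde\alpha:=\alpha/\alpha(X)$. This keeps the kernel $\xi$, satisfies $\tilde\alpha(X)=1$, and remains invariant, so $\lie{X}\tilde\alpha=0$ together with Cartan's formula gives $\iota_X d\tilde\alpha=-d\bigl(\tilde\alpha(X)\bigr)=0$; thus $X$ is the Reeb field of $\tilde\alpha$ and its flow is exactly the circle action. The quotient $\p W/\SS^1$ is then a symplectic orbifold with symplectic form descending from $d\tilde\alpha$, and $(\p W,\xi)$ is its Boothby--Wang prequantization. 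I expect the main (though still mild) obstacle here to be the careful treatment of finite isotropy: all stabilizers are finite because no point of $\p W$ is fixed, so the action is only locally free, and it is this point that forces the quotient to be an orbifold rather than a manifold, which is precisely why the statement is phrased in the orbifold language.
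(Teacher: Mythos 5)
Your proof is correct and takes essentially the same route as the paper: both extract everything from \Cref{theorem: fixed points and boundary components of hamiltonian S1-manifolds} --- emptiness of $\xi^-$ with $C_{\min}$ unique and carrying the global minimum, the nowhere-density of $\xi^0$ in the non-empty boundary forcing $\xi^+\ne\emptyset$ and hence $C_{\max}=\emptyset$, the absence of non-trivial isotropic orbits giving $\xi^0=\Crit(H)\cap\p W$, and boundary fixed-point components being surrounded by $\xi^+$, hence local minima, hence equal to $C_{\min}$. The only (welcome) difference is that you actually carry out the Boothby--Wang argument, rescaling to $\tilde\alpha=\alpha/\alpha(X)$ so that $X$ becomes the Reeb field of an invariant contact form with locally free action, whereas the paper merely asserts the prequantization statement; and your normalization of $H$ near $\p W$ by a single constant is legitimate precisely because you establish the connectedness of $\p W$ beforehand, a dependency the paper flags explicitly.
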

\begin{proof}
  Decompose $\p W$ with respect to an invariant contact structure into
  $\p W = \xi^-\sqcup \xi^0 \sqcup \xi^+$.  It follows from
  \Cref{theorem: fixed points and boundary components of hamiltonian
    S1-manifolds} that $C_{\min}$ is connected and $\xi^-$ is empty.
  Furthermore, since $\p W\ne \emptyset$ and $\xi^0$ is not dense in
  $\p W$, we deduce that $\xi^+$ is non-empty and connected.  This
  implies that $C_{\max} = \emptyset$.  Finally, since $\p W$ is the
  closure of $\xi^+$, the boundary is also connected.

  The Hamiltonian function~$H$ is unique up to addition of a constant.
  Here $\p W$ is connected, thus we can assume that $H$ agrees close
  to the boundary with $\lambda(X)$, where $X$ is the infinitesimal
  generator of the circle action and $\lambda$ is the Liouville form.

  Recall that $\xi^0$ is by \Cref{theorem: fixed points and boundary
    components of hamiltonian S1-manifolds}.(b) the union of boundary
  fixed points and of non-trivial isotropic orbits.  The non-trivial
  orbits lie in hypersurfaces that touch on one side $\xi^-$ and on
  the other one $\xi^+$, but since $\xi^-$ is empty, there cannot be
  any non-trivial isotropic orbits, and thus
  $\xi^0 = \Crit(H)\cap \p W$.  Obviously, every fixed point component
  in $\p W$ will be surrounded by $\xi^+$ so that $\xi^0$ can only be
  $\xi^0 = C_{\min} \cap \p W$ as we wanted to show.  If $C_{\min}$
  does not intersect the boundary, then the $\SS^1$-orbits in $\p W$
  are everywhere positively transverse to $\xi$ and $(\p W, \xi)$ is
  the prequantization of a symplectic orbifold.

  It was shown in \Cref{theorem: fixed points and boundary components
    of hamiltonian S1-manifolds}.(c) that $H$ takes its global minimum
  on $C_{\min}$ and is everywhere else strictly larger than on
  $C_{\min}$.
\end{proof}

The corresponding corollary about Hamiltonian $\SS^1$-manifolds with
non-empty contact type boundary and a fixed point component that is a
local maximum of the Hamiltonian function, can be deduced from the
previous corollary simply by inverting the orientation of the
$\SS^1$-orbits and the sign of the Hamiltonian function.  Note that if
there is no fixed point on the boundary, then the boundary will in
this case be contactomorphic to a prequantization with inverted circle
action.

\begin{corollary}[No local extrema]\label{no extrema but boundary}
  Let $(W,\omega)$ be a connected compact Hamiltonian $\SS^1$-manifold
  with contact type boundary, and let $H\colon W\to \RR$ be a
  Hamiltonian function for the $\SS^1$-action.  Assume that none of
  the components of $\Crit(H)$ is a local minimum or maximum.  Choose
  on a neighborhood of the boundary of $W$ an invariant Liouville
  field to define a contact structure~$\xi$ on $\p W$.
  
  \smallskip

  Then it follows that $W$ has either one or two boundary components.
  \begin{itemize}
  \item Assume that $\p W$ has two components: Then, all fixed points
    lie in the interior of $W$, and the $\SS^1$-orbits are everywhere
    along the boundary transverse to the contact structure.  On one of
    the boundary components the $\SS^1$-orbits will be positively
    transverse to the contact structure, so that the boundary
    component will be the prequantization of a symplectic orbifold; on
    the other boundary component the $\SS^1$-orbits will be negatively
    transverse to the contact structure, and the boundary component
    will be the prequantization of a symplectic orbifold for the
    inverted circle action.
  \item If $\p W$ is connected, then it decomposes as
    $\p W = \xi^-\sqcup \xi^0 \sqcup \xi^+$, where $\xi^-$ and $\xi^+$
    are non-empty and connected and $\xi^0$ necessarily contains
    non-trivial isotropic orbits, but also possibly fixed points all
    of which need to lie in the closure of the isotropic orbits.  We
    can choose $H$ to agree in a neighborhood of $\p W$ with the
    natural Hamiltonian function induced by the Liouville form so that
    $H>0$ on $\xi^+$ and $H< 0$ on $\xi^-$.
  \end{itemize}

  Furthermore, the natural homomorphisms $\pi_1(\xi^-) \to \pi_1(W)$
  and $\pi_1(\xi^+) \to \pi_1(W)$ are in both cases surjective.
\end{corollary}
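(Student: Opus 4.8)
The plan is to derive everything from \Cref{theorem: fixed points and boundary components of hamiltonian S1-manifolds} together with \Cref{at most two boundary components}. Since by hypothesis no component of $\Crit(H)$ is a local maximum, the dichotomy in part~(c) of the main theorem forces the second alternative for maxima: $\xi^+$ is open, non-empty and connected, $H$ attains its global maximum on $\xi^+$, and $\pi_1(\xi^+)\to\pi_1(W)$ is surjective. Applying the same dichotomy to minima (equivalently, replacing $H$ by $-H$ and reversing the orbit orientation), the absence of local minima gives that $\xi^-$ is open, non-empty, connected, carries the global minimum, and that $\pi_1(\xi^-)\to\pi_1(W)$ is surjective. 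This already establishes the surjectivity statements, and combined with \Cref{at most two boundary components} it shows that $\p W$ has exactly one or two components (it is non-empty because $\xi^+\ne\emptyset$).

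Next I would separate the two cases following the proof of \Cref{at most two boundary components}: two boundary components occur precisely when the connected sets $\xi^+$ and $\xi^-$ lie in distinct components $V_+$ and $V_-$ of $\p W$. In this situation I claim $\xi^0=\emptyset$. Indeed, by part~(b) every non-trivial isotropic orbit lies on a hypersurface that has $\xi^-$ on one side and $\xi^+$ on the other, so no such hypersurface can be contained in $V_+$ (which misses the empty set $\xi^-\cap V_+$), and likewise none lies in $V_-$. Moreover a fixed-point component $C_j$ meeting, say, $V_+$ would have $\p C_j$ surrounded only by $\xi^+$, so part~(c) would make $C_j$ a local minimum, contradicting the hypothesis. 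Since $\xi^0$ is composed precisely of boundary fixed points and non-trivial isotropic orbits, it follows that $\xi^0=\emptyset$, the orbits are everywhere transverse to $\xi$, we have $V_+=\xi^+$ and $V_-=\xi^-$, and all fixed points lie in $\Interior W$. A boundary component along which the orbits are positively (negatively) transverse to $\xi$ is, by the quotient construction already used in \Cref{local min and boundary}, the prequantization of the symplectic orbifold $V_\pm/\SS^1$ (with inverted action in the negative case).

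In the connected case, $\p W=\xi^-\sqcup\xi^0\sqcup\xi^+$ with $\xi^\pm$ open, non-empty and connected, so $\xi^0$ is non-empty and separates them. Since the boundary fixed points form closed contact submanifolds of codimension at least two (part~(b)), they cannot by themselves separate the open sets $\xi^\pm$; the separation must therefore be realized by the codimension-one hypersurfaces of non-trivial isotropic orbits, so $\xi^0$ necessarily contains such orbits. Any fixed points lying in $\p W$ cannot be local extrema, hence their $\p C_j$ is adjacent to both $\xi^+$ and $\xi^-$ and must sit in the closure of the separating isotropic hypersurfaces, exactly as permitted by the final paragraph of part~(b). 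Finally, because $\p W$ is connected the Hamiltonian is unique up to a single additive constant, so I can normalize $H$ to coincide near $\p W$ with $H'=\lambda(X)$; part~(b) then yields $H>0$ on $\xi^+$ and $H<0$ on $\xi^-$.

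The only genuinely delicate point is the structure of $\xi^0$: proving that it is empty in the two-component case and that in the connected case it must contain a separating family of non-trivial isotropic orbits, with boundary fixed points confined to their closure. Both rely on the separation property of the isotropic hypersurfaces and the codimension of the boundary fixed-point loci recorded in part~(b), together with the observation that a boundary fixed point surrounded by only one of $\xi^\pm$ would have to be a local extremum.
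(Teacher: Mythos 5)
Your proof is correct and takes essentially the same route as the paper: both arguments derive the dichotomy and the $\pi_1$-surjectivity directly from \Cref{theorem: fixed points and boundary components of hamiltonian S1-manifolds} together with \Cref{at most two boundary components}, exclude isotropic orbits and boundary fixed points in the two-component case via the separation property of the isotropic hypersurfaces and the surrounded-by-$\xi^{\pm}$ criterion of part~(c), and in the connected case obtain non-trivial isotropic orbits from the fact that the boundary fixed-point loci have codimension at least~$2$ and hence cannot separate $\xi^+$ from $\xi^-$. The only cosmetic difference is that you get $\xi^0 \ne \emptyset$ from the connectedness of $\p W$ rather than from $H$ having to vanish somewhere on the boundary, which is an equivalent observation.
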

\begin{proof}
  As explained in \Cref{at most two boundary components}, the
  boundary~$\p W$ can have at most two components.  If $\p W$ is
  disconnected, then $\xi^+$ and $\xi^-$ need to lie in different
  components.  This implies that $\xi^0$ does not contain any
  isotropic orbits, because these lie always in hypersurfaces that
  have $\xi^+$ on one and $\xi^-$ on the other side.  If there were
  any fixed points in the boundary, these would necessarily either be
  surrounded only by $\xi^+$ or only by $\xi^-$.  According to
  \Cref{theorem: fixed points and boundary components of hamiltonian
    S1-manifolds}.(c) such a fixed point component is a local maximum
  or a local minimum which is both in contradiction to our
  assumptions.  It follows that the circle action is everywhere along
  the boundary transverse to $\xi$.

  \smallskip

  If $\p W$ is connected, then we can choose $H$ to agree close to
  $\p W$ with the Hamiltonian function induced by the Liouville field,
  and it is clear that $\xi^+$ and $\xi^-$ can then be recovered from
  the sign of $H$.  Clearly $\xi^0$ cannot be empty, because $H$ needs
  to vanish somewhere on the boundary.  Furthermore, since
  $\Crit(H)\cap \p W$ is a finite union of contact submanifolds of
  codimension at least~$2$, there need to be non-trivial isotropic
  orbits in $\xi^0$.  By our assumption, $\Crit(H)$ does not contain
  any local extrema and so it follows that all boundary fixed points
  need to lie in the closure of the set of non-trivial isotropic
  orbits.

  \smallskip
  
  The statement about the fundamental groups follows directly from
  \Cref{theorem: fixed points and boundary components of hamiltonian
    S1-manifolds}.(c).
\end{proof}

It is easy to show that a connected compact Hamiltonian
$\TT^k$-manifold with $k\ge 2$ has never disconnected boundary.

\begin{theorem_rank_two_connected_boundary}
  Let $G$ be a compact Lie group of rank at least~$2$.  If
  $(W,\omega)$ is a connected compact Hamiltonian $G$-manifold with
  convex contact type boundary, then it follows that $\p W$ cannot be
  disconnected.
\end{theorem_rank_two_connected_boundary}
\begin{proof}
  Let $X,Y\in \gfrak$ be two elements such that $S_X := \exp (\RR X)$
  and $S_Y := \exp (\RR Y)$ are isomorphic to $\SS^1$, and such that
  $X,Y$ generate together a $2$-torus in $G$.  If $W$ has two boundary
  components, then it follows by \Cref{no extrema but boundary} that
  the orbits of $S_X$ and $S_Y$ are everywhere along the boundary
  transverse to the contact structure.

  Let $\alpha$ be a contact form for the contact structure, and define
  a smooth function $f\colon \p W \to \RR$ by
  $f := -\alpha(X_W) / \alpha(Y_W)$.  It follows that
  $X_W(p) + f(p) Y_W(p)$ lies inside the contact structure.  If there
  is a point $p_0\in \p W$ such that $c = f(p_0)$ is rational, then
  $Z := X + c Y \in \gfrak$ generates a circle~$S_Z$ in $G$.  The
  $S_Z$-action cannot be trivial, because $G$ acts effectively, so
  that $(W,\omega)$ is a Hamiltonian $S_Z$-manifold with disconnected
  boundary.  The $S_Z$-action has either a fixed point in
  $p_0\in \p W$ or the orbit through $p_0$ is tangent to the contact
  structure.  Both possibilities contradict \Cref{no extrema but
    boundary}, and we obtain that $f$ can never take a rational value.
  
  The continuity of $f$ implies then that $f$ has to be constant on
  each of the boundary components of $W$.  If we denote the value of
  $f$ on one of the boundary components by $c$, it follows that
  $Z = X + c Y$ induces the vector field~$Z_W$ that lies along one of
  the boundary components in the contact structure.  Using that
  $\alpha(Z_W) = 0$, and $\lie{Z_W}\alpha = 0$ it follows that
  $\iota_{Z_W}d\alpha = 0$ so that $Z_W$ vanishes along the considered
  boundary component.  Since $\exp (\RR Z)$ is dense in the $2$-torus
  generated by $X$ and $Y$, we deduce that all points in the
  considered boundary component are fixed points of $\exp (\RR X)$,
  but this is again a contradiction to \Cref{no extrema but boundary}.

  This shows that $W$ cannot have disconnected boundary.
\end{proof}

\begin{remark}
  According to Lutz \cite{Lutz_invariantes}, the $3$-dimensional
  closed contact manifolds with a free circle action can be understood
  by considering their orbit spaces, which are closed surfaces, and
  marking on them the domains over which the circle action is
  positively or negatively transverse to the contact structure.  (See
  \cite{S1-contact_dim3} for a general classification of
  $3$-dimensional closed contact $\SS^1$-manifolds).

  This decomposition corresponds to the decomposition of the contact
  boundary into $\xi^+$ and $\xi^-$, and we see in particular that all
  examples of Lutz, where $\xi^+$ or $\xi^-$ has more than one
  component cannot be contact type boundaries of a Hamiltonian
  $\SS^1$-manifold.
    
  Of course, it is already well-known that such contact manifolds are
  in many cases not even contact type boundary of general symplectic
  manifolds without any type of $\SS^1$-symmetry, see for example
  \cite{WendlCobordisms} for dimension~$3$ or
  \cite{WeakFillabilityHigherDimension} for higher dimensions.
\end{remark}


\subsection{From symplectic to Hamiltonian group action} \label{sec:
  Hamiltonian circle action}

As mentioned in \Cref{cor: exactness implies Hamiltonian}, a
symplectic action of a compact Lie groups on an exact symplectic
manifolds $(W,\omega = d\lambda)$ is always Hamiltonian.  This covers
in particular cotangent bundles which are probably the most important
example from classical mechanics.

On \emph{closed} manifolds, it is easy to find non-Hamiltonian
symplectic actions: consider for example the $2$-torus with the
standard rotation.  A necessary condition for a symplectic circle
action on a closed manifold to be Hamiltonian is the existence of
fixed points.  In dimension~$4$, it was shown by McDuff
\cite{McDuffFixedPointsHamiltonianGroupsActions} that the existence of
a fixed point is sufficient for a symplectic circle action to be
Hamiltonian.  On the other hand, she also constructed closed
symplectic $\SS^1$-manifolds in any dimension$\ge 6$ that do have
fixed points but that are nonetheless non Hamiltonian.

\smallskip

In contrast to the closed case, a symplectic $G$-manifold with contact
type boundary is always Hamiltonian.  We will first prove this result
for symplectic circle actions and then generalize it to actions of
general compact Lie groups.  McDuff had shown that a closed
\cite{McDuffFixedPointsHamiltonianGroupsActions} symplectic manifold
with a symplectic circle action is Hamiltonian if and only if it has a
fixed point that looks like a maximum or minimum of a local
Hamiltonian function.  Her result was the main motivation for our
question, and we briefly reprove it here.  (In \Cref{sec: dimension
  4}, we also reprove that a symplectic circle action on a closed
$4$-dimensional manifold is Hamiltonian if and only if it has fixed
points.)

\begin{theorem}\label{theorem: boundary+symplectic is Hamiltonian}
  Every symplectic circle action on a compact symplectic manifold with
  non-empty contact type boundary is Hamiltonian.
\end{theorem}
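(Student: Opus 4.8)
The plan is to reduce the statement to the exactness of a single closed $1$-form and then invoke \Cref{theorem:exactness with boundary}. Let $X = X_W$ be the infinitesimal generator of the circle action and set $\eta := \iota_X\omega$. Since the action is symplectic, $\lie{X}\omega = 0$, and Cartan's formula together with $d\omega = 0$ gives $d\eta = \lie{X}\omega - \iota_X d\omega = 0$, so $\eta$ is closed. A primitive $H$ of $-\eta$ is exactly a Hamiltonian function for $X$, so the action is weakly Hamiltonian if and only if $\eta$ is exact; and for a circle action, weakly Hamiltonian is the same as Hamiltonian (as recalled in \Cref{sec: definitions}). Thus it suffices to prove that $\eta = \iota_X\omega$ is exact.

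First I would check that $\eta$ is of Morse-Bott type with no critical points of index $i^-=1$ or $i^+=1$. The critical set $\Crit(\eta)$ is exactly the zero set of $X$, that is $\Fix(\SS^1)$, and around any fixed point a local primitive of $\eta$ is a local Hamiltonian function; by Frankel's local argument using an invariant compatible almost complex structure (see \Cref{sec:almost complex structures}) these local primitives are Morse-Bott and all of their indices $i^\pm$ are even, hence never equal to $1$. Near the contact type boundary the action is already Hamiltonian: choosing a $G$-invariant Liouville field $Y$ (which commutes with $X$) and the Liouville form $\lambda = \iota_Y\omega$, \Cref{lemma: hamiltonian function through liouville form} provides a genuine Hamiltonian $H = \lambda(X)$ on the collar of \Cref{collar neighborhood}, with $\eta = -dH$ there. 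Because $Y$ is tangent to $\Fix(\SS^1)$, the fixed-point components meet $\p W$ transversely, as required by the Morse-Bott condition on manifolds with boundary.

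Next I would fix the Riemannian metric supplied by \Cref{boundary is convex with respect to gradient}, applied to the pair $X,Y$ on the collar (all of its hypotheses—$X$ tangent to $\p W$, $Y$ an invariant Liouville field commuting with $X$, and $H=\lambda(X)$—are available in the merely symplectic setting). It makes $\p W$ convex with respect to $\nabla H$; since near $\p W$ we have $\eta = -dH$, the dual field is $X_\eta = -\nabla H$ there, and as convexity of the boundary is insensitive to reversing the sign of the vector field (the unparametrised trajectories are the same), $\p W$ is also convex with respect to $X_\eta$. Part~(c) of the same lemma furnishes precisely the neighborhood conditions demanded by \Cref{theorem:exactness with boundary}: a boundary fixed-point component that is a local maximum of the primitive $-H$ of $\eta$ is a local minimum of $H$, and there $\nabla H$ never points transversely into $W$, i.e.\ $X_\eta$ never points out of $W$; the local-minimum case is symmetric. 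These conditions hold whether or not fixed points are present.

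Finally, condition~(b) of \Cref{theorem:exactness with boundary} is automatic: on the collar $\eta = -dH$, so its restriction to any boundary component $\p_0 W$ equals $-d\bigl(\restricted{H}{\p_0 W}\bigr)$ and is exact. Applying \Cref{theorem:exactness with boundary} then yields that $\eta$ is exact, and hence that the action is Hamiltonian. The real substance of the argument is entirely contained in \Cref{theorem:exactness with boundary}; for the present reduction the only genuine care needed is bookkeeping the sign conventions relating $\eta$, $H$, $X_\eta$ and $\nabla H$, and confirming that the hypotheses of \Cref{boundary is convex with respect to gradient} remain valid even though the global action is only assumed symplectic rather than Hamiltonian.
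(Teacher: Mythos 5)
Your proposal is correct and follows essentially the same route as the paper: reduce to exactness of $\eta=\iota_X\omega$, verify the Morse-Bott and even-index conditions by Frankel's local argument, obtain the local primitive $\lambda_Y(X)$ on the boundary collar, invoke \Cref{boundary is convex with respect to gradient} for the convexity and neighborhood hypotheses, and conclude via \Cref{theorem:exactness with boundary}. If anything, you are more careful than the paper's own proof, which leaves the sign conventions and the verification of hypothesis~(b) (exactness of $\restricted{\eta}{\p_0 W}$, immediate from the collar primitive) implicit.
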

\begin{proof}
  Let $(W,\omega)$ be a symplectic $\SS^1$-manifold, and denote the
  infinitesimal generator of the circle action by $X$.  The $1$-form
  $\eta := \iota_X\omega$ is closed, and we will deduce from
  \Cref{theorem:exactness with boundary} that $\eta$ is exact so that
  the circle action is Hamiltonian.  We only need to verify that all
  the conditions of the theorem are satisfied.

  Note that $\eta$ restricts on a small neighborhood of the critical
  points to an exact $1$-form so that it has a local primitive, and
  this primitive is a Morse-Bott function with only even indices,
  because the corresponding proof for the Hamiltonian case is purely
  local.

  Let $Y$ be an $\SS^1$-invariant Liouville vector field on a
  neighborhood of $\p W$.  Denote the corresponding Liouville form by
  $\lambda_Y$, then it follows by \Cref{lemma: hamiltonian function
    through liouville form} that $f := \lambda_Y(X)$ is a primitive
  for $\eta$ on the neighborhood of $\p W$.

  Choosing a Riemannian metric as in \Cref{boundary is convex with
    respect to gradient}, it follows that the boundary of $W$ is
  convex with respect to the gradient of the local primitive~$f$.
  According to \Cref{boundary is convex with respect to gradient}.(c),
  any component of $\Crit(\eta)\cap \p W$ that is composed of local
  minima / local maxima admits a neighborhood~$U \subset \p W$ such
  that the gradient field does never point along $U$ traversely into
  $W$ / transversely out of $W$.  All conditions of
  \Cref{theorem:exactness with boundary} are satisfied, and it follows
  that $\eta$ is exact proving that the action is Hamiltonian.
\end{proof}

\begin{theorem_symplectic_action_always_hamiltonian}
  Let $G$ be a compact Lie group that acts symplectically on a compact
  symplectic manifold with non-empty contact type boundary.  Then it
  follows that the action is Hamiltonian.
\end{theorem_symplectic_action_always_hamiltonian}
\begin{proof}
  Let $G$ be a compact Lie group that acts symplectically on
  $(W,\omega)$, a symplectic manifold with a contact type boundary.
  We first show that this action is weakly Hamiltonian, that is, for
  every $X\in\gfrak$ there is a Hamiltonian function
  $H_X\colon W\to\RR$ such that $\iota_{X_W}\omega = -dH_X$.

  Take any $X\in\gfrak$.  If the flow of $X$ generates a circle
  action, then the claim follows directly from the previous theorem.
  In general however, the flow of $X$ is not cyclic and only generates
  an $\RR$-action.  Nonetheless, the closure of
  $\exp\bigl(\RR X\bigr)$ inside $G$ is a connected abelian subgroup,
  that is, a torus (\cite[Chapter~4]{BrockerDieck}).  This torus is
  the product of $k$ circles, each of which generates a symplectic
  circle action which again is Hamiltonian by \Cref{theorem:
    boundary+symplectic is Hamiltonian}.  Since $X_W$ is a linear
  combination of these $k$ Hamiltonian vector fields, it follows that
  $X$ also admits a Hamiltonian function.

  \smallskip
  
  This shows that the $G$-action is weakly Hamiltonian.  That the
  action is actually Hamiltonian, that is, it admits a $G$-equivariant
  moment map, follows then from \Cref{lemma: if weakly Ham and Ham on
    subset then Ham everywhere} by using that the Liouville form close
  to one of the boundary components defines a local moment map that is
  $G$-equivariant, see \Cref{lemma: hamiltonian function through
    liouville form}.(b).
\end{proof}

\Cref{theorem:exactness with boundary} can also be applied to obtain
the following result due to McDuff.  Our proof is not essentially
different, but much more detailed.  Note that every symplectic action
admits on a neighborhood of a fixed point a local Hamiltonian
function.

\begin{theorem}[McDuff,
  \cite{McDuffFixedPointsHamiltonianGroupsActions}]\label{theorem:
    maximum+symplectic is Hamiltonian}
  A symplectic circle action on a closed manifold is Hamiltonian if
  and only if it has a fixed point that is a local maximum or minimum
  of a locally defined Hamiltonian function.
\end{theorem}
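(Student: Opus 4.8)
The plan is to prove both implications by studying the closed $1$-form $\eta := \iota_X\omega$ attached to the infinitesimal generator~$X$ of the circle action, exactly as in the proof of \Cref{theorem: boundary+symplectic is Hamiltonian}, but now on a \emph{closed} manifold so that all boundary hypotheses become vacuous. The action is Hamiltonian precisely when $\eta$ is exact: if $\eta = dg$, then $\iota_X\omega = -d(-g)$ exhibits $-g$ as a global Hamiltonian function, and conversely. Recall also that for a circle action weakly Hamiltonian already means Hamiltonian.

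For the \emph{only if} direction I would argue directly, without invoking the exactness theorem. If the action is Hamiltonian there is a global function $H$ with $\iota_X\omega = -dH$. Since $W$ is closed and compact, $H$ attains its global maximum at some point~$p$, where $dH_p = 0$. Then $\iota_{X(p)}\omega = 0$, and the non-degeneracy of $\omega$ forces $X(p) = 0$, so that $p$ is a fixed point of the circle action. By construction $H$ is a locally (indeed globally) defined Hamiltonian function having a local maximum at~$p$, which is the desired fixed point.

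For the \emph{if} direction I would invoke \Cref{theorem:exactness with boundary} to conclude that $\eta$ is exact. First, $\eta$ is a closed $1$-form of Morse-Bott type: near each fixed point $\eta$ is exact, and its local primitive is a Morse-Bott function whose indices~$i^-$ and~$i^+$ are all even, by the local argument of Frankel~\cite{Frankel} recalled in \Cref{sec:almost complex structures}. In particular no critical point has index $i^-=1$ or $i^+=1$. Since $W$ is closed, $\p W = \emptyset$ and the three boundary conditions of \Cref{theorem:exactness with boundary} hold trivially. It remains to verify hypothesis~(a). By assumption some fixed point~$p$ is a local maximum or minimum of a local Hamiltonian function, hence a local extremum of a local primitive of $\eta$; say it is a local maximum, so that $i^+ = 0$ at~$p$. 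The point~$p$ lies in some connected component~$C$ of $\Crit(\eta)$, and because the Morse-Bott indices are constant along a connected component of the critical set, every point of~$C$ satisfies $i^+ = 0$ and is therefore also a local maximum. Thus~$C$ is a component composed entirely of local maxima, condition~(a) is met, and \Cref{theorem:exactness with boundary} yields that $\eta$ is exact, so the action is Hamiltonian.

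The heavy lifting is already done by \Cref{theorem:exactness with boundary}, so the main point to be careful about is the promotion of a \emph{single} extremal fixed point to an \emph{entire} extremal component of $\Crit(\eta)$, which rests on the constancy of the Morse-Bott indices along connected components of the critical set. The sign bookkeeping between $H$, the primitive $-H$ of $\eta$, and the labels ``maximum'' versus ``minimum'' is immaterial, since hypothesis~(a) of the exactness theorem is symmetric under replacing $\eta$ by $-\eta$.
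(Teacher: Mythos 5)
Your proposal is correct and follows essentially the same route as the paper, which proves this theorem by applying \Cref{theorem:exactness with boundary} to the closed $1$-form $\eta = \iota_{X_W}\omega$ exactly as in the proof of \Cref{theorem: boundary+symplectic is Hamiltonian}, only with hypothesis~(a) of the exactness theorem playing the role that the boundary hypothesis~(b) played there. Your additions are precisely the details the paper leaves implicit under ``minor modifications'' --- the easy ``only if'' direction on a closed manifold, and the promotion of a single extremal fixed point to an entire extremal component of $\Crit(\eta)$ via the constancy of the Morse--Bott indices along a connected critical component --- and both are carried out correctly.
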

\begin{proof}
  Let $X_W$ be the infinitesimal generator of the circle action.  Its
  contraction with the symplectic form yields a closed $1$-form, and
  it is easy to convince oneself that only minor modifications to the
  proof of \Cref{theorem: boundary+symplectic is Hamiltonian} are
  necessary to apply \Cref{theorem:exactness with boundary} also in
  this case.
\end{proof}

\subsection{Symplectic and Hamiltonian $\SS^1$-manifolds in
  dimension~$4$}\label{sec: dimension 4}

In this section, we will prove two results concerning manifolds of
dimension~$4$.  The first one is well-known and originally due to
McDuff and states that a closed \emph{symplectic} $\SS^1$-manifold of
dimension~$4$ is Hamiltonian if and only if the action has fixed
points.  Our proof follows roughly the same line as the one in
\cite{McDuffFixedPointsHamiltonianGroupsActions}, but is a more
``modular''.

The second result states that a $4$-dimensional compact Hamiltonian
$\SS^1$-manifolds cannot have disconnected contact type boundary.  We
do not know the answer to the following question.

\begin{question}
  Do there exist connected compact Hamiltonian $\SS^1$-manifolds of
  dimension$\ge 6$ that have disconnected contact type boundary?
\end{question}

Assume that $W$ is an $\SS^1$-manifold without fixed points.  A
\defin{(generalized) connection $1$-form~$A$} on $W$ is an
$\SS^1$-invariant $1$-form such that $A(X_W) = 1$ where $X_W$ denotes
the infinitesimal generator of the circle action.

The set of connection $1$-forms on a fixed point free $\SS^1$-manifold
is convex, and in particular every such manifold does admit a
connection $1$-form: Simply take any $1$-form~$\beta$ satisfying
$\beta(X_W) = 1$, and average it over the group action.  Similarly,
one can also see that if a connection form is already defined on a
compact $\SS^1$-invariant subset then we can extend it to a global
connection form.

\medskip

The proofs in this section are based on Stokes' theorem and the
following lemma.

\begin{lemma}\label{top wedge curvature form vanishes}
  Let $A$ be a generalized connection $1$-form on an
  $\SS^1$-manifold~$W$ that has no fixed points.
  \begin{itemize}
  \item If $W$ is $(2n)$-dimensional, then it follows that the
    product~$(dA)^n$ vanishes everywhere.
  \item If $W$ is $(2n+1)$-dimensional, then it follows that
    $A\wedge (dA)^n$ represents a cohomology class that is independent
    of the choice of the connection $1$-form.
  \end{itemize}
\end{lemma}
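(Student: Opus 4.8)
The plan is to exploit two elementary facts throughout. First, the $\SS^1$-invariance of $A$ forces $dA$ to be horizontal: since $A$ is invariant we have $\lie{X_W} A = 0$, and Cartan's formula together with $A(X_W) = 1$ gives
\begin{equation*}
  0 = \lie{X_W} A = \iota_{X_W}\, dA + d\bigl(\iota_{X_W} A\bigr) = \iota_{X_W}\, dA + d(1) = \iota_{X_W}\, dA \;,
\end{equation*}
so that $\iota_{X_W}\, dA = 0$. Second, interior multiplication by a nowhere-vanishing vector field is injective on forms of top degree: if $V(p)\neq 0$ and $\Omega$ has degree $\dim W$, then choosing a basis of $T_pW$ whose first vector is $V(p)$ shows at once that $\iota_V\Omega = 0$ at $p$ forces $\Omega = 0$ at $p$. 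Since the action has no fixed points, $X_W$ is nowhere zero, so this applies with $V = X_W$.

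The $(2n)$-dimensional case is then immediate. I would contract, using $\iota_{X_W}dA = 0$, to get $\iota_{X_W}(dA)^n = n\,\bigl(\iota_{X_W} dA\bigr)\wedge (dA)^{n-1} = 0$. As $(dA)^n$ has top degree $2n$ and $X_W$ never vanishes, the injectivity observation forces $(dA)^n = 0$ everywhere.

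For the $(2n+1)$-dimensional case, note first that $A \wedge (dA)^n$ has top degree, hence is automatically closed and defines a class in $H^{2n+1}(W)$; the content is independence of $A$. Given two connection forms $A_0, A_1$, I would use that the segment $A_t := (1-t)A_0 + tA_1$ consists of connection forms (by the convexity already recorded in the paper), so each $A_t$ is invariant and satisfies $\iota_{X_W} dA_t = 0$ by the same Cartan computation. Set $\beta := A_1 - A_0$, so $\tfrac{d}{dt}A_t = \beta$ and $\beta(X_W) = 0$. The crucial point is that $\beta \wedge (dA_t)^n$ \emph{vanishes identically}: it has top degree, and $\iota_{X_W}\bigl(\beta \wedge (dA_t)^n\bigr) = \bigl(\iota_{X_W}\beta\bigr)(dA_t)^n - \beta \wedge \iota_{X_W}(dA_t)^n = 0$ since both $\iota_{X_W}\beta = 0$ and $\iota_{X_W}(dA_t)^n = 0$. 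Differentiating and discarding this vanishing term, I find
\begin{equation*}
  \frac{d}{dt}\bigl[A_t \wedge (dA_t)^n\bigr] = \beta \wedge (dA_t)^n + n\, A_t \wedge d\beta \wedge (dA_t)^{n-1} = n\, A_t \wedge d\beta \wedge (dA_t)^{n-1} \;,
\end{equation*}
and a short transgression computation (expanding the exterior derivative and again using $\beta \wedge (dA_t)^n = 0$) shows this equals $d\bigl[n\,\beta \wedge A_t \wedge (dA_t)^{n-1}\bigr]$. Integrating over $t \in [0,1]$ and commuting $d$ with the parameter integral yields
\begin{equation*}
  A_1 \wedge (dA_1)^n - A_0 \wedge (dA_0)^n = d\left[\, n \int_0^1 \beta \wedge A_t \wedge (dA_t)^{n-1}\, dt\, \right] \;,
\end{equation*}
which is exact, so the two forms represent the same cohomology class.

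The genuinely delicate step is the transgression identity in the last display: one must expand $d\bigl[\beta \wedge A_t \wedge (dA_t)^{n-1}\bigr]$ carefully, track the Koszul sign when commuting the $1$-form $A_t$ past the $2$-form $d\beta$, and cancel the leftover $\beta \wedge (dA_t)^n$ using the vanishing established above; everything else is routine. I would also double-check that the parameter integration commutes with $d$ (it does, as $d$ acts only on the $W$-variables), and note that no compactness of $W$ is required — each form is automatically closed and the difference is literally exact.
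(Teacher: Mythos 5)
Your proof is correct, and its even-dimensional half coincides with the paper's: both derive $\iota_{X_W}\,dA = 0$ from invariance and Cartan's formula, and both use that contraction with the nowhere-vanishing generator is injective on top-degree forms, so $\iota_{X_W}(dA)^n = n\,(\iota_{X_W}dA)\wedge (dA)^{n-1} = 0$ forces $(dA)^n = 0$. In the odd-dimensional case you take a genuinely different route. The paper writes $A' = A + \beta$ with $\iota_{X_W}\beta = \iota_{X_W}d\beta = 0$, expands $(A+\beta)\wedge\bigl(d(A+\beta)\bigr)^n$ binomially, kills every term led by $\beta$ via the same top-degree contraction argument (the family of vanishings $\beta\wedge(d\beta)^k\wedge(dA)^{n-k} = 0$), and exhibits for each surviving cross term a closed-form primitive through the Leibniz identity $d\bigl(A\wedge\beta\wedge(d\beta)^{k-1}\wedge(dA)^{n-k}\bigr) = -A\wedge(d\beta)^k\wedge(dA)^{n-k}$. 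You instead run a Chern--Simons-type transgression along the linear path $A_t = (1-t)A_0 + tA_1$ --- legitimate because of the convexity of the space of connection forms recorded in the paper just before the lemma, which guarantees each $A_t$ is again invariant with $A_t(X_W)=1$, hence $\iota_{X_W}dA_t = 0$. Your signs check out: $d\bigl[\beta\wedge A_t\wedge(dA_t)^{n-1}\bigr] = d\beta\wedge A_t\wedge(dA_t)^{n-1} - \beta\wedge(dA_t)^n = A_t\wedge d\beta\wedge(dA_t)^{n-1}$, since the $2$-form $d\beta$ commutes with the $1$-form $A_t$ at no sign cost and $\beta\wedge(dA_t)^n = 0$ by the contraction argument; integrating $\frac{d}{dt}\bigl[A_t\wedge(dA_t)^n\bigr] = n\,d\bigl[\beta\wedge A_t\wedge(dA_t)^{n-1}\bigr]$ over $[0,1]$ and commuting $d$ with the parameter integral is unproblematic. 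What each approach buys: your transgression replaces the binomial bookkeeping and the whole family of vanishing statements by the single vanishing $\beta\wedge(dA_t)^n = 0$, packaging all primitives into one parameter integral, while the paper's expansion yields explicit primitives with no integration and no appeal to convexity of the connection forms. Both arguments, as you note, require no compactness of $W$, since the difference of representatives is literally exact.
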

\begin{proof}
  In the even dimensional case, choose a basis $v_1,\dotsc,v_{2n}$ for
  a tangent space~$T_xW$ such that $v_1 = X_W(x)$.  Using that
  $\iota_{X_W}dA = \lie{X_W} A - d\iota_{X_W}A = 0$, it follows that
  $(dA)^n(v_1,\dotsc,v_{2n}) = 0$.

  \smallskip

  In the odd dimensional case, choose a second connection
  $1$-form~$A'$.  We can write $A' = A + \beta$ where $\beta$ is an
  $\SS^1$-invariant $1$-form such that $\iota_{X_W} \beta = 0$.  With
  the Cartan formula it follows that $\iota_{X_W} d\beta = 0$, so that
  $\beta\wedge (d\beta)^k \wedge (dA)^{n-k}$ vanishes for any choice
  of $k \le n$.

  The Leibniz rule allows us to write (for $k$ at least $1$)
  \begin{equation*}
    \begin{split}
      d \bigl(A\wedge \beta\wedge (d\beta)^{k-1} \wedge (dA)^{n-k}
      \bigr) &= dA\wedge \beta\wedge (d\beta)^{k-1} \wedge (dA)^{n-k}
      -
      A\wedge(d\beta)^k \wedge (dA)^{n-k} \\
      &= - A\wedge (d\beta)^k \wedge (dA)^{n-k} \;.
    \end{split}
  \end{equation*}
  It follows that $A'\wedge (dA')^n = A\wedge (dA)^n + d\eta$, where
  $\eta$ is a $(2n)$-form as desired.
\end{proof}

Consider for any two positive integers~$k_1,k_2$ with
$\gcd(k_1,k_2) = 1$, the linear action on $\CC^2$ given by
\begin{equation*}
  e^{i\theta}\cdot (z_1,z_2) =
  \bigl(e^{ik_1\theta}\, z_1,  e^{-ik_2\theta}z_2\bigr)
\end{equation*}
for every $(z_1,z_2) \in \CC^2$ and $e^{i\theta} \in \SS^1$.

The restriction of
\begin{equation*}
  \lambda = \frac{1}{k_1}\,\bigl(x_1\,dy_1 - y_1\, dx_1\bigr)
  - \frac{1}{k_2}\, \bigl(x_2\,dy_2 - y_2\,  dx_2\bigr)
\end{equation*}
to the unit sphere is a connection $1$-form.  Note that $d\lambda$ is
a symplectic form that induces the \emph{negative} orientation on
$\CC^2$!  We easily compute
\begin{equation*}
  \int_{\SS^3} \lambda\wedge d\lambda =
  -\frac{4}{k_1k_2} \int_{\DD^4} dx_1\wedge dy_1\wedge dx_2\wedge dy_2 =
  -\frac{4}{k_1k_2}\, \operatorname{Vol}(\DD^4) < 0 \;,
\end{equation*}
and by \Cref{top wedge curvature form vanishes} it follows that every
connection $1$-form on $\CC^2\setminus\{\0\}$ integrates on the
$3$-sphere to this value.
  
\medskip

Let $W$ be an oriented $4$-dimensional $\SS^1$-manifold.  Then we call
a fixed point~$x\in W$ a \defin{fixed point of mixed weights} if there
exist positive integers~$k_1,k_2$ and an \emph{orientation preserving}
diffeomorphism from a neighborhood of $x$ onto the open unit ball in
$\CC^2$ with the circle action given above.

\begin{proposition}\label{not only index 2 fixed points in 4 manifolds}
  \begin{itemize}
  \item [(a)] A $4$-dimensional \emph{closed} oriented
    $\SS^1$-manifold is either fixed point free or it must have at
    least one fixed point that does not have mixed weights.
  \item [(b)] Let $W$ be a $4$-dimensional \emph{compact} oriented
    $\SS^1$-manifold with boundary.  Assume that none of the fixed
    points lies on the boundary, and in case that there are interior
    fixed points assume that they all have mixed weights.  If $A$ is
    any connection $1$-form on $\p W$, then it follows that
    $\int_{\p W} A\wedge dA$ cannot be positive.
  \end{itemize}
\end{proposition}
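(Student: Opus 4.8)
The plan is to prove both parts by one and the same computation, combining Stokes' theorem with \Cref{top wedge curvature form vanishes}. The guiding principle is that on the locus where the action is fixed-point-free a connection $1$-form~$A$ satisfies $(dA)^2 = 0$ (the even-dimensional case of \Cref{top wedge curvature form vanishes} with $2n = 4$), so that $A\wedge dA$ is closed there; excising the fixed points and applying Stokes' theorem then expresses a global boundary integral of $A\wedge dA$ as a sum of \emph{local} contributions, one small $3$-sphere around each interior fixed point. At a fixed point of mixed weights the local contribution is governed by the model computation $\int_{\SS^3}\lambda\wedge d\lambda = -\tfrac{4}{k_1k_2}\operatorname{Vol}(\DD^4) < 0$, and crucially this value is independent of the chosen connection $1$-form by the odd-dimensional case of \Cref{top wedge curvature form vanishes}.

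For part \textbf{(b)}, I would first extend the given connection $1$-form~$A$ from the compact invariant subset~$\p W$ to a connection $1$-form on all of $W\setminus \Fix(\SS^1)$, which is possible since the action is fixed-point-free there. Because each interior fixed point has mixed weights it is isolated, so I remove a small invariant ball~$B_i$ around each one, with boundary sphere~$S_i$ oriented as~$\p B_i$. On $W\setminus\bigcup_i B_i$ Stokes' theorem gives $\int_{\p W} A\wedge dA - \sum_i \int_{S_i} A\wedge dA = \int d(A\wedge dA) = \int (dA)^2 = 0$, the last equality by \Cref{top wedge curvature form vanishes}. Hence $\int_{\p W} A\wedge dA = \sum_i \int_{S_i} A\wedge dA$. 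Using the orientation-preserving identification of a neighborhood of the $i$-th fixed point with a ball in~$\CC^2$ carrying the mixed-weight action, the restriction of~$A$ to~$S_i$ is a connection $1$-form on a $3$-sphere, so $\int_{S_i} A\wedge dA = -\tfrac{4}{k_1^{(i)}k_2^{(i)}}\operatorname{Vol}(\DD^4) < 0$ by the quoted model computation together with the independence statement. Therefore $\int_{\p W} A\wedge dA$ is a sum of strictly negative numbers, or is empty (hence~$0$) if there are no interior fixed points, and in either case it cannot be positive.

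For part \textbf{(a)}, if the closed manifold has a fixed-point component that is not isolated (a fixed surface), then it is automatically not of mixed weights and there is nothing to prove. Otherwise all fixed points are isolated, and I would argue by contradiction: assuming the action has fixed points but every one of them has mixed weights, choose any connection $1$-form on $W\setminus \Fix(\SS^1)$ and run the same excision on the \emph{closed} manifold~$W$. Now there is no outer boundary, so Stokes' theorem together with $(dA)^2 = 0$ forces $\sum_i \int_{S_i} A\wedge dA = 0$; but every summand equals $-\tfrac{4}{k_1^{(i)}k_2^{(i)}}\operatorname{Vol}(\DD^4) < 0$, so the sum is strictly negative, a contradiction. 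Hence a closed $4$-dimensional oriented $\SS^1$-manifold with fixed points must have at least one fixed point that is not of mixed weights.

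The main obstacle I anticipate is the orientation bookkeeping rather than any analytic difficulty. One must check that the boundary orientation of each excised sphere~$S_i$, transported through the orientation-preserving model chart, matches the orientation of $\SS^3 = \p\DD^4$ used in the model integral, recalling the remark that $d\lambda$ induces the \emph{negative} orientation on~$\CC^2$, so that the sign of each local contribution is genuinely negative; a sign error here would reverse the inequality. The remaining points — that connection $1$-forms exist and extend over the required compact invariant sets, that they are $\SS^1$-invariant so that \Cref{top wedge curvature form vanishes} applies, and that the local integral is independent of the connection form so that the explicit model value may be substituted — are exactly the ingredients assembled in the preliminary discussion and in \Cref{top wedge curvature form vanishes}, and require no further work.
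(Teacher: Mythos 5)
Your proposal is correct and follows essentially the same route as the paper: excise model balls around the mixed-weight fixed points, apply Stokes' theorem together with \Cref{top wedge curvature form vanishes} to kill the interior term, and substitute the model value $-\tfrac{4}{k_1k_2}\operatorname{Vol}(\DD^4)$ for each sphere. The only (immaterial) difference is bookkeeping: you orient each excised sphere as the boundary of its ball so the local contributions are negative, whereas the paper orients them as components of $\p \mathring{W}$ so they appear positive — the resulting inequalities agree.
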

\begin{proof}
  \textbf{(a)} Assume that $W$ is a closed oriented $\SS^1$-manifold
  with $N$ fixed points, and assume that all of them have mixed
  weights.  Remove around each fixed point an open subset that
  corresponds to the unit ball around the origin in $\CC^2$ in the
  model neighborhood, and denote the complement of these balls by
  $\mathring{W}$.

  Choose now a connection $1$-form~$A$ on $\mathring{W}$.  Then we
  find with Stokes' theorem and \Cref{top wedge curvature form
    vanishes} that
  \begin{equation*}
    \int_{\p \mathring{W}} A\wedge dA =
    \int_{\mathring{W}} dA\wedge dA = 0 \;.
  \end{equation*}
  On the other hand, we explained above that
  \begin{equation*}
    \int_{\p \mathring{W}} A\wedge dA = + \operatorname{Vol}(\DD^4)\cdot 
    \sum_{j=1}^N \frac{1}{k_{j,1} k_{j,2}} > 0 \;,
  \end{equation*}
  where $(k_{j,1},-k_{j,2})$ are the weights of the $j$-th fixed
  point.  We have used here that the boundary orientation of
  $\p \mathring{W}$ is the opposite as the boundary orientation
  induced from the model neighborhood in $\CC^2$.  It follows that $W$
  needs either to be fixed point free or that there is a fixed point
  that does not have mixed weights.

  \medskip

  \textbf{(b)} Assume now that $W$ is a compact $4$-manifold with
  boundary.  Remove around each fixed point an open subset that is
  equivariantly diffeomorphic to the unit ball around the origin in
  $\CC^2$, and denote the remaining part of $W$ by $\mathring{W}$.

  Choose a connection $1$-form~$A$ on $\mathring{W}$.  Then we obtain
  again via Stokes' theorem and \Cref{top wedge curvature form
    vanishes} that
  \begin{equation*}
    \int_{\p \mathring{W}} A\wedge dA =
    \int_{\mathring{W}} dA\wedge dA = 0 \;.
  \end{equation*}
  Since the integral over the boundary of $\mathring{W}$ has to
  vanish, but the contribution from the boundary sphere around each
  fixed point is strictly positive, we deduce that
  $ \int_{\p W} A\wedge dA$ needs to be $0$ (if there are no fixed
  points) or strictly negative otherwise.
\end{proof}

\begin{theorem}[McDuff]
  A symplectic $\SS^1$-action on a closed connected $4$-manifold is
  Hamiltonian if and only if it has fixed points.
\end{theorem}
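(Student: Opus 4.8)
The plan is to prove the two implications separately; the forward direction is routine, while the reverse one---that the mere existence of a fixed point forces the action to be Hamiltonian---is where the work lies. For the direction ``Hamiltonian $\Rightarrow$ fixed points'', suppose the $\SS^1$-action admits a Hamiltonian function $H\colon W\to\RR$. Since $W$ is compact, $H$ attains a maximum and a minimum, and at any such extremum $dH=0$; from $\iota_{X_W}\omega=-dH$ and the non-degeneracy of $\omega$ we conclude $X_W=0$ there, so the extrema are fixed points of the action.

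For the converse I would orient $W$ by its symplectic orientation $\omega\wedge\omega$ and combine the topological dichotomy of \Cref{not only index 2 fixed points in 4 manifolds}.(a) with McDuff's criterion \Cref{theorem: maximum+symplectic is Hamiltonian}. Assuming the action has a fixed point, \Cref{not only index 2 fixed points in 4 manifolds}.(a) produces a fixed point $p$ that does \emph{not} have mixed weights. The heart of the argument is to show that such a $p$ is a local maximum or minimum of a locally defined Hamiltonian function; granting this, \Cref{theorem: maximum+symplectic is Hamiltonian} immediately yields that the action is Hamiltonian.

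To translate ``no mixed weights'' into ``local extremum'', I would linearize the action near $p$ using the equivariant Darboux/Frankel model \cite{Frankel} invoked in \Cref{sec:almost complex structures}: there is a symplectomorphism from a neighborhood of $p$ onto a ball in $(\CC^2,\omega_0)$ intertwining the $\SS^1$-action with a linear one. If $p$ is isolated, this model is $e^{i\theta}\cdot(z_1,z_2)=(e^{im_1\theta}z_1,e^{im_2\theta}z_2)$ with nonzero integer weights $m_1,m_2$, and the local primitive of $\iota_{X_W}\omega$ is, up to a constant, $\tfrac{m_1}{2}\abs{z_1}^2+\tfrac{m_2}{2}\abs{z_2}^2$, so $p$ is a local extremum exactly when $m_1,m_2$ share a sign. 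Since a symplectomorphism preserves the symplectic orientation, this chart is orientation preserving, and the orientation-preserving equivalence class of the weight pair is $(m_1,m_2)$ modulo transposition and the simultaneous sign change $(m_1,m_2)\mapsto(-m_1,-m_2)$ (realized by conjugating both complex coordinates, an orientation-preserving operation). Hence $p$ has mixed weights precisely when $m_1,m_2$ have opposite signs, so ``$p$ does not have mixed weights'' is equivalent to ``$m_1,m_2$ share a sign'', i.e.\ to ``$p$ is a local extremum''. If instead $p$ lies on a two-dimensional component of $\Fix(\SS^1)$, the normal model is $e^{i\theta}\cdot z=e^{im\theta}z$ with a single nonzero weight, the local Hamiltonian is $c+\tfrac{m}{2}\abs{z}^2$, so $p$ is automatically a local extremum; being non-isolated it cannot have mixed weights, and this case is consistent.

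The main obstacle is precisely this dictionary between the purely topological notion of mixed weights underlying \Cref{not only index 2 fixed points in 4 manifolds} and the symplectic notion of a local extremum of a Hamiltonian. Making it rigorous requires the equivariant linearization together with a careful matching of the symplectic orientation of $W$ against the orientation conventions built into the definition of mixed weights; in particular one must note that the primitive $\lambda$ appearing in that model induces the \emph{reversed} orientation on $\CC^2$, so the Morse index of the critical point has to be read off from $\omega_0$ rather than from $d\lambda$. Once the signs of the weights are correctly tied to the index of the local Hamiltonian, the two cited results assemble into the theorem.
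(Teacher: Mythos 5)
Your proof is correct and takes essentially the same route as the paper: the paper likewise combines \Cref{not only index 2 fixed points in 4 manifolds}.(a) with \Cref{theorem: maximum+symplectic is Hamiltonian}, reading off from the equivariant linearization that a $4$-dimensional fixed point which is not a local extremum must be isolated with mixed weights. Your careful matching of the weight signs against the orientation conventions just makes explicit a step the paper compresses into one sentence.
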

\begin{proof}
  If the action is Hamiltonian, then there are clearly fixed points.

  Assume now that the symplectic $\SS^1$-manifold has fixed points.
  From the linearization around a fixed point we can read off that
  every fixed point in dimension~$4$ that is neither a local minimum
  nor a local maximum needs to be an isolated fixed point with mixed
  weights.  Since \Cref{not only index 2 fixed points in 4
    manifolds}.(a) excludes that all fixed points have mixed weights,
  it follows that there will either be a local maximum or minimum.
  
  By \Cref{theorem: maximum+symplectic is Hamiltonian} every closed
  symplectic $\SS^1$-manifold is Hamiltonian if it has a fixed point
  that is a local maximum or local minimum.
\end{proof}

\begin{theorem_boundary_connected_dim4}
  A $4$-dimensional compact Hamiltonian $\SS^1$-manifold cannot have
  disconnected contact type boundary.
\end{theorem_boundary_connected_dim4}
\begin{proof}
  Let $(W,\omega)$ be a $4$-dimensional Hamiltonian $\SS^1$-manifold
  with disconnected contact type boundary.  We know from \Cref{no
    extrema but boundary} that $W$ can only have two boundary
  components.  Furthermore, all fixed points have to lie in the
  interior of $W$ and none of them can be a local maximum or a local
  minimum.  This implies in dimension~$4$ that there are only isolated
  fixed points that have mixed weights.  Thus we can apply part~(b) of
  \Cref{not only index 2 fixed points in 4 manifolds}.
  
  Note that the $\SS^1$-action is by \Cref{no extrema but
    boundary} positively transverse to the contact structure on one of
  the boundary components and negatively transverse to the contact
  structure on the other boundary component.  Denote the first type of
  boundary by $(\p_+ W, \xi_+)$ and the second one by
  $(\p_- W, \xi_-)$.  We can find contact forms~$\alpha_+$ and
  $\alpha_-$ for $\xi_+$ and $\xi_-$ respectively that are rescaled in
  such a way that $\alpha_+$ and $-\alpha_-$ are connection $1$-forms
  on $\p W$.

  Remember that by the definition of convex filling,
  $\alpha_+\wedge d\alpha_+$ and $\alpha_-\wedge d\alpha_-$ will be
  positive volume forms on $\p W$ so that
  $\int_{\p_+ W} \alpha_+\wedge d\alpha_+ >0$, and
  $\int_{\p_- W} (-\alpha_-)\wedge (-d\alpha_-) = \int_{\p_- W}
  \alpha_-\wedge d\alpha_- >0$.  Thus we have a contradiction to
  \Cref{not only index 2 fixed points in 4 manifolds}.(b).
\end{proof}

\subsection{Level sets of a Hamiltonian function generating a circle
  action are connected}\label{sec: hamiltonian circle manifold with
  cylindrical ends has connected level sets}


In the previous sections we have shown that many of the Morse-Bott
techniques used for Hamiltonian circle actions continue to work (with
some modifications) for compact symplectic manifolds with contact type
boundaries.  Unfortunately though, it is not true for such manifolds
that all level sets of the Hamiltonian of the $\SS^1$-action are
either connected or empty.  To generalize this classical result due to
\cite{AtiyahConvexity, GuilleminSternbergConvexity} to our case, we
will first attach cylindrical ends to the symplectic manifold.

The reason is that even though the topological properties of a
symplectic manifold are quite stable under perturbations of the
boundary, geometric properties are sensitive to such perturbations,
see the example below.

\begin{example}\label{disconnected level sets}
  Take the $\SS^1$-action on $\CC^2$ defined by
  $e^{i\varphi}\, (z_1,z_2) = \bigl(e^{i\varphi}\,z_1
  ,e^{i\varphi}\,z_2)$ with the standard symplectic structure and with
  the Hamiltonian function
  $H(z_1,z_2) = \frac{1}{2}\, \bigl(\abs{z_1}^2 + \abs{z_2}^2\bigr)$.

  We can consider the closed unit ball
  $W = \bigl\{(z_1,z_2)\in \CC^2\bigm|\; \abs{z_1}^2 + \abs{z_2}^2 \le
  1 \bigr\}$, which is a Hamiltonian $\SS^1$-manifold with contact
  type boundary.  The level sets of $H$ are concentric codimension~$1$
  spheres of varying radii, and thus clearly connected.

  It is easy to perturb the boundary of $W$ to split some of the level
  sets of $H$ into several components.  For example, choose a cut-off
  function $\rho\colon \RR \to [0,1]$ with compact domain in the
  interval $(-\delta, + \delta)$ for $\delta \ll 1$.  Then it follows
  that the compact domain
  \begin{equation*}
    W_\epsilon := \bigl\{(z_1,z_2)\in \CC^2\bigm|\;
    \abs{z_1}^2 + \abs{z_2}^2 \le 1 +
    \epsilon\, \bigl(\rho(\abs{z_1}^2) + \rho(\abs{z_2}^2)\bigr) \bigr\}
  \end{equation*}
  is for $\epsilon>0$ a deformation of the round ball~$W$ that has
  only changed in a neighborhood of the two orbits $\SS^1\times \{0\}$
  and $\{0\}\times \SS^1$.  The boundary of the Hamiltonian
  $\SS^1$-manifold~$W_\epsilon$ is transverse to the Liouville vector
  field and is thus of contact type.

  \begin{figure}[htbp]
    \centering \includegraphics[height=4cm,
    keepaspectratio]{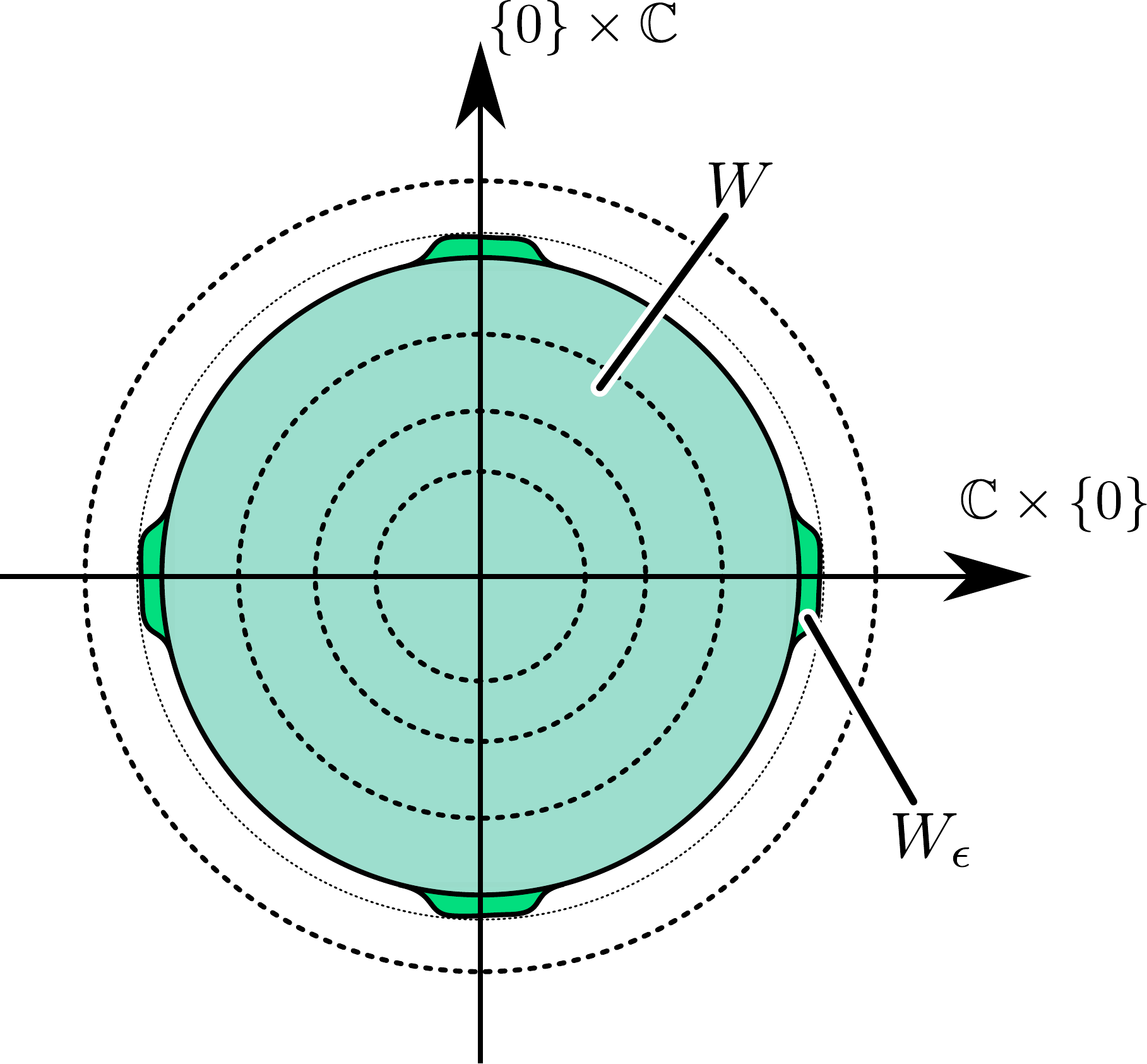}
  \end{figure}

  We see that the level set
  $\bigl\{H = \frac{1}{2}\, (1 + \epsilon) \bigr\}$ is composed of a
  non-connected neighborhood of the two circles
  $\{z_1=0\}\cup \{z_2=0\}$.
\end{example}

As already mentioned above, the connectedness theorem for circle
actions by Atiyah-Guillemin-Sternberg can still be saved by attaching
cylindrical ends to the Hamiltonian manifold.

\begin{theorem_connected_level_sets}
  Let $(W,\omega)$ be a connected compact Hamiltonian $\SS^1$-manifold
  that has convex contact type boundary, and let $H\colon W\to \RR$ be
  the Hamiltonian function of the circle action.  Complete $W$ by
  attaching cylindrical ends with respect to some invariant Liouville
  field (as described in \Cref{def: cylindrical end}) and denote the
  resulting manifold by $(\widehat{W}, \widehat \omega)$ and the
  extended Hamiltonian by $\widehat{H}$.

  Then it follows that the level sets of $\widehat{H}$ are either
  connected or empty.
\end{theorem_connected_level_sets}
\begin{proof}
  By the remarks made after \Cref{def: cylindrical end} if follows
  that $\widehat{H}$ is adapted to the cylindrical ends in the sense
  of \Cref{definition function adapted to cylindrical end}.
  Furthermore, $\widehat{H}$ is Morse-Bott and the indices of all its
  critical points are even, see \Cref{theorem: fixed points and
    boundary components of hamiltonian S1-manifolds}.(c).  The claim
  about the level sets follows then directly from \Cref{connected
    level set for cylindrical end}.
\end{proof}

The connectedness of the level sets of the moment map for closed
Hamiltonian manifolds has been further used by Atiyah
\cite{AtiyahConvexity} and by Guillemin and Sternberg
\cite{GuilleminSternbergConvexity} to show that the image of the
moment map for a Hamiltonian torus action on a closed symplectic
manifold is a convex set.

With \Cref{theorem: connected level sets}, we can easily deduce that
the image of the moment map for a Hamiltonian $\TT^2$-action on a
symplectic manifold with cylindrical ends is always convex.  We do not
have any doubt that this is also true for actions of higher
dimensional tori, but proving this would require us to first extend
the connectedness result above to torus actions.

As a first step, we show that if $(\widehat{W},\widehat{\omega})$ is a
Hamiltonian $\TT^k$-manifold with cylindrical ends, then the image of
its moment map is a closed set.  This, combined with \Cref{theorem:
  connected level sets} leads almost immeadiately to the proof of
convexity in the case $k=2$, using the classical proof for closed
manifolds.

\begin{proposition}\label{proposition: image is closed}
  Let $(\widehat{W},\widehat{\omega})$ be a manifold with cylindrical
  ends that is equipped with a Hamiltonian $\TT^k$-action as explained
  in \Cref{def: cylindrical end}.  The image of the moment map is then
  closed.
\end{proposition}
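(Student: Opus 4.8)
The plan is to use the explicit form of the moment map on the cylindrical end and reduce closedness of $\widehat\mu(\widehat W)$ to an analysis of how the image can escape to infinity. By \Cref{def: cylindrical end} and \Cref{eq: moment map} (applied to each circle factor of $\TT^k$), the restriction of $\widehat\mu$ to the end $[0,\infty)\times V$ has the product form $\widehat\mu(s,p)=e^s\,\Psi(p)+\nu_0$, where $\nu_0\in\tfrak^*$ is constant and $\Psi\colon V\to\tfrak^*$ is the continuous map determined by $\langle\Psi(p),X\rangle=\alpha_p(X_V)$ on the compact boundary $V=\p W$. Hence $\widehat\mu(\widehat W)=\widehat\mu(W)\cup\{e^s\Psi(p)+\nu_0 : s\ge0,\ p\in V\}$, and the first piece is compact since $W$ is. First I would take an arbitrary convergent sequence $\widehat\mu(x_n)\to q$ of image points; after passing to a subsequence, either infinitely many $x_n$ lie in $W$, whence $q\in\widehat\mu(W)$ by compactness, or all $x_n=(s_n,p_n)$ lie in the end. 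In the latter case compactness of $V$ lets me assume $p_n\to p_\infty$, and if the $s_n$ stay bounded the limit again lands in the compact set $\widehat\mu(W)$; so the only delicate case is $s_n\to\infty$, which forces $\Psi(p_\infty)=0$ and $e^{s_n}\Psi(p_n)\to w:=q-\nu_0$.

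The heart of the argument is therefore to show that such an escape limit $w$ is genuinely realised, i.e. that $w=t\,\Psi(p)$ for some $t\ge1$ and some $p$ near $p_\infty$. For this I would analyse $\Psi$ near each of its zeros $p_\infty\in\Psi^{-1}(0)$, distinguishing whether $p_\infty$ is a $\TT^k$-fixed point or not. If $p_\infty$ is fixed, the equivariant linearisation of the action together with the collar model of \Cref{collar neighborhood} (compare the quadratic Hamiltonians computed in \Cref{circle action on 6-ball} and the complex-linear normal form of \cite{Frankel}) shows that $\Psi$ is, in suitable coordinates, a nondegenerate quadratic whose local image is the closed convex polyhedral cone $\Gamma_{p_\infty}$ generated by the weight vectors of the action. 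If $p_\infty$ is not fixed, its isotropy is a proper subtorus, some generator $X$ satisfies $X(p_\infty)\neq0$ with $\restricted{d\langle\Psi,X\rangle}{TV}=-\iota_{X}d\alpha\neq0$ (the computation already carried out in the proof of \Cref{theorem: fixed points and boundary components of hamiltonian S1-manifolds}.(b)), while the isotropy generators contribute a quadratic term; so the local image of $\Psi$ is contained in, and fills out, a closed half-space cone $\Gamma_{p_\infty}$.

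In either case $\Gamma_{p_\infty}$ is a closed cone, invariant under positive scaling, such that every sufficiently small element of $\Gamma_{p_\infty}$ already lies in the local image of $\Psi$. Since $\Psi(p_n)\in\Gamma_{p_\infty}$ for large $n$ and $\Gamma_{p_\infty}$ is scale-invariant and closed, the escape limit $w=\lim e^{s_n}\Psi(p_n)$ lies in $\Gamma_{p_\infty}$; and because $w/t$ is a small element of $\Gamma_{p_\infty}$ for $t\ge1$ large, it is attained as $\Psi(p)$ for some $p$ near $p_\infty$, giving $w=t\,\Psi(p)$ and hence $q=\nu_0+w=\widehat\mu(\log t,p)\in\widehat\mu(\widehat W)$. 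Together with the easy cases this proves that the image is closed.

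I expect the main obstacle to be the local image computation at the non-fixed (isotropic) zeros of $\Psi$: away from fixed points one only controls the single component $\langle\Psi,X\rangle$ directly, and one must combine the transversality of that component with the quadratic behaviour in the isotropy directions to prove that the local image is precisely the closed cone $\Gamma_{p_\infty}$ --- both that it is contained in $\Gamma_{p_\infty}$ and that every sufficiently small element of $\Gamma_{p_\infty}$ is attained. Establishing these normal forms (and checking, via compactness of $\Psi^{-1}(0)$, that finitely many such cones suffice) is where the contact hypothesis is really used, and it is the technical core that rules out the parabola-type non-closed cones that could otherwise occur for an arbitrary continuous $\Psi$.
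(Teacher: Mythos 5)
Your proposal is correct and takes essentially the same route as the paper: split the image into the compact piece $\widehat\mu(W)$ and the end, extract a convergent subsequence $x_n \to x$ in the compact boundary $V$, solve for $s$ when $\mu_V(x)\neq 0$, and when $\mu_V(x) = 0$ combine the conical form $\widehat\mu(s,p) = e^s\,\mu_V(p)$ with the fact that the local moment-map image is a neighborhood of the origin in a closed, scale-invariant convex cone $C\times\RR^{k_1}$, concluding by rescaling. The only difference is that the ``technical core'' you propose to establish by hand is exactly the local convexity theorem of Guillemin--Sternberg \cite[Theorem~4.8]{GuilleminSternbergConvexity}, which the paper simply cites --- note that this normal form is purely symplectic (the contact hypothesis enters only through the scaling $e^s\,\mu_V$, not through the normal form), and that the cone is $C\times\RR^{k_1}$ in general, a half-space only when the isotropy is one-dimensional.
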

\begin{proof}
  For simplicity, we identify the dual of the Lie algebra of $\TT^k$
  with $\RR^k$ and denote by $\widehat\mu\colon \widehat W\to\RR^k$
  the corresponding moment map.

  Clearly since $W$ is compact, $\widehat\mu(W)$ is also
  compact. Thus, in order to prove that $\widehat{\mu}(\widehat W)$ is
  a closed subset, it is enough to show that the image of the
  cylindrical end under $\widehat\mu$ is closed.  Denote
  $\restricted{\widehat\mu}{V}$ by $\mu_V$ so that
  $\widehat\mu(s,x) = e^s\,\mu_V(x)$ for all
  $(s,x)\in [0,\infty)\times V$.
  
  Take a point $c = (c_1,\dotsc,c_k) \in \RR^k$ in the closure of
  $\widehat\mu\bigl([0,\infty)\times V\bigr)$.  Then, there is a
  sequence of points $(s_n, x_n)_n\subset [0,\infty)\times V$ such
  that $\widehat\mu(s_n,x_n) = e^{s_n}\,\mu_V(x_n)$ converges to
  $c\in\RR^k$.  We need to find an $(s,x)\in[0,\infty)\times V$ with
  $c = e^s\,\mu_V(x)$.  Since the boundary~$V$ is compact, the
  sequence~$(x_n)_n$ has a convergent subsequence that we still denote
  for simplicity by $(x_n)_n$ with a limit point~$x\in V$.

  If $\mu_V(x) = (\mu_1,\dotsc, \mu_k)\ne 0$, there is one component,
  say $\mu_1$, that is non-zero.  Clearly then $c_1$ does not vanish
  either, and $e^{s_n}$ converges to $\frac{c_1}{\mu_1}$, so that
  $s_n$ converges to $s = \log \frac{c_1}{\mu_1}$ and thus
  $c = \widehat\mu(s,x)$.

  Assume instead that $\mu_V(x) = 0$.  If $c = 0$ we are done, hence
  suppose that $c\ne 0$.  According to
  \cite[Theorem~4.8]{GuilleminSternbergConvexity} there is an open
  neighborhood $U_x\subset\widehat W$ around
  $(0,x)\in [0,\infty)\times V$ and an open neighborhood
  $U_0'\subset\RR^k$ around $\widehat\mu(0,x) = \mu_V(x) = 0$ such
  that
  \begin{equation}\label{equation:intersection}
    \widehat\mu(U_x) = U_0'\cap (C\times\RR^{k_1}) \;,
  \end{equation}
  where $C$ is a point if $x$ has a zero-dimensional stabilizer or it
  is a $(k-k_1)$-dimensional convex cone defined by the weights of the
  representation of the non-trivial stabilizer of $x$ on the tangent
  space~$T_x\widehat W$ and $0\le k_1\le k$ is the codimension of the
  stabilizer of $x$.  We may take the neighborhood~$U_x$ to be of the
  form $U_x = (-\epsilon,\epsilon)\times V_x$, for some open set
  $V_x\subset V$ containing $x\in V$ and some $\epsilon>0$
  sufficiently small, so that $U_x$ intersects the interior of $W$ in
  the standard collar neighborhood of the boundary~$V$.

  Let us prove that
  \begin{equation*}
    C\times\RR^{k_1}\subset\widehat\mu(\widehat W)  \;.
  \end{equation*}
  Note that $C\times\RR^{k_1}$ is for any $s\in \RR$ invariant under
  multiplication by $e^s$.  Thus if we take any point
  $b\in C\times\RR^{k_1}$, we can choose $s\gg 1$ so that
  $e^{-s}\, b\in U_0'\cap (C\times\RR^{k_1})$.  According to
  \eqref{equation:intersection} there is then a point
  $(s_b,x_b)\in (-\epsilon,\epsilon)\times V_x$ such that
  $\widehat\mu(s_b,x_b) = e^{s_b}\,\mu_V(x_b) = e^{-s}\, b$.  It
  follows that $b = \widehat\mu(s_b + s, x_b)$ so that
  $b \in \widehat\mu(\widehat W)$ as desired.
  
  Recall that we wanted to show that $c$ lies in the image of the
  moment map.  Since the sequence $x_n\in V$ converges to $x\in V$,
  all but finitely many of the $x_n$ will lie in $V_x$ so that
  $\widehat\mu(0,x_n) \in U_0'\cap (C\times\RR^{k_1})$.  It follows
  that
  \begin{equation*}
    \widehat\mu(s_n,x_n) = e^{s_n}\,  \mu_V(x_n) \in
    C\times\RR^{k_1}  \;,
  \end{equation*}
  and since the set on the right is closed, it contains the limit
  point~$c$.  We conclude that
  \begin{equation*}
    c\in C\times\RR^{k_1}\subset\widehat\mu(\widehat W) \;,
  \end{equation*}
  that is, $\widehat\mu(\widehat W)$ is a closed subset.
\end{proof}

\begin{corollary}
  If a Hamiltonian $\TT^k$-action on a symplectic manifold
  $(\widehat W,\widehat\omega)$ with cylindrical ends has only
  discrete stabilizers, then the moment map~$\widehat{\mu}$ is
  surjective.
\end{corollary}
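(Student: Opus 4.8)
The plan is to prove that the image $\widehat\mu(\widehat W)$ is a nonempty subset of $\RR^k$ that is simultaneously open and closed; since $\RR^k$ is connected, this forces $\widehat\mu(\widehat W) = \RR^k$, which is exactly surjectivity. The closedness is already available from \Cref{proposition: image is closed}, and $\widehat W$ is nonempty, so the only remaining point is to show that the image is open, for which I would use that the hypothesis of discrete stabilizers makes $\widehat\mu$ a submersion. Note that no connectedness of $\widehat W$ is needed, since openness and closedness refer to the image of the whole manifold.

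First I would verify the submersion property. Identifying the dual of the Lie algebra of $\TT^k$ with $\RR^k$ as in \Cref{proposition: image is closed}, the differential of $\widehat\mu$ at a point~$p$ satisfies
\begin{equation*}
  \bigl\langle d\widehat\mu_p(v), X\bigr\rangle =
  \widehat\omega\bigl(v, X_{\widehat W}(p)\bigr)
\end{equation*}
for every $v\in T_p\widehat W$ and every $X$ in the Lie algebra~$\tfrak$, which is a direct consequence of the defining relation $\iota_{X_{\widehat W}}\widehat\omega = -d\langle\widehat\mu, X\rangle$. The linear map $d\widehat\mu_p\colon T_p\widehat W\to \RR^k$ is surjective if and only if its transpose $X\mapsto \widehat\omega\bigl(\,\cdot\,, X_{\widehat W}(p)\bigr)$ is injective; by nondegeneracy of $\widehat\omega$ this transpose has kernel exactly $\bigl\{X\in\tfrak \bigm| X_{\widehat W}(p) = 0\bigr\}$, which is the Lie algebra of the stabilizer of~$p$. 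Since all stabilizers are by assumption discrete, this Lie algebra is trivial at every~$p$, so $d\widehat\mu_p$ is surjective everywhere and $\widehat\mu$ is a submersion.

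A submersion is an open map, so $\widehat\mu(\widehat W)$ is an open subset of $\RR^k$. Combined with the closedness from \Cref{proposition: image is closed} and the fact that $\widehat W$ is nonempty, the image is a nonempty subset of the connected space~$\RR^k$ that is both open and closed, hence all of~$\RR^k$. I do not expect any genuine obstacle at this stage: all of the real work---controlling the behaviour of the moment map along the cylindrical ends so that the image cannot fail to be closed---has already been carried out in \Cref{proposition: image is closed}, and what remains is the elementary observation that discreteness of the stabilizers is equivalent to $\widehat\mu$ being a submersion.
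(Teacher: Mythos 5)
Your proposal is correct and follows essentially the same route as the paper: closedness of the image comes from \Cref{proposition: image is closed}, discreteness of the stabilizers makes $\widehat\mu$ a submersion and hence an open map, and the image is then a nonempty clopen subset of the connected space~$\RR^k$. Your identification of the kernel of the transpose of $d\widehat\mu_p$ with the Lie algebra of the stabilizer is just a cleaner phrasing of the paper's linear-independence argument for the $d_x\widehat\mu_j$.
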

\begin{proof} 
  Since $\widehat{\mu}(\widehat W)$ is by \Cref{proposition: image is
    closed} a closed set, it is enough to show that
  $\widehat{\mu}(\widehat W)$ is also open.  We will see that
  $\widehat{\mu}$ is a submersion, so that it is actually an open map.
  Choosing a basis $X_1,\dotsc,X_k$ allows us to identify the Lie
  algebra~$\tfrak^k$ of the torus~$\TT^k$ with $\RR^k$ and to write
  the moment map~$\widehat\mu$ as
  $(\widehat\mu_1, \dotsc, \widehat\mu_k)$ where each $\widehat\mu_j$
  for $j\in \{1,\dotsc,k\}$ is the Hamiltonian function for the action
  generated by $X_j$
 
  Clearly if there were a point~$x\in \widehat{W}$ at which
  $d_x\widehat\mu_1,\dotsc, d_x\widehat\mu_k$ were not linearly
  independent, then after possibly reordering the indices
  $d_x\widehat\mu_k$ would be a linear combination of
  $d_x\widehat\mu_1,\dotsc, d_x\widehat\mu_{k-1}$.  In particular, the
  Hamiltonian vector field corresponding to $X_k$ would be spanned by
  the other $k-1$ vector fields, giving a contradiction to the
  assumption that $\TT^k$ has only discrete stabilizers.  This shows
  that $\widehat{\mu}$ needs to be a submersion, and thus an open map.
\end{proof}

\begin{corollary}[Convexity for $\TT^2$-ations]
  Let $(\widehat{W},\widehat{\omega})$ be a symplectic manifold with
  cylindrical ends equipped with a Hamiltonian $\TT^2$-action, as
  explained in \Cref{def: cylindrical end}.  The corresponding moment
  map image $\widehat{\mu}(\widehat W)$ is then a convex set.
\end{corollary}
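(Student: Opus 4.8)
The plan is to follow the classical Atiyah--Guillemin--Sternberg argument for closed manifolds, feeding in our two generalizations as replacements for the facts that are there supplied by compactness. Identify the dual Lie algebra of $\TT^2$ with $\RR^2$ and write $\widehat\mu = (\widehat\mu_1, \widehat\mu_2)$. For every $\xi = (a,b) \in \ZZ^2\setminus\{\0\}$ the one--parameter subgroup generated by $\xi$ is a circle $S_\xi \subset \TT^2$, and restricting the $\TT^2$--action to $S_\xi$ turns $\widehat W$ into the cylindrical completion of a compact Hamiltonian $\SS^1$--manifold with convex contact type boundary, with Hamiltonian $H_\xi := \langle \widehat\mu, \xi\rangle = a\,\widehat\mu_1 + b\,\widehat\mu_2$. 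Hence \Cref{theorem: connected level sets} applies to each $H_\xi$: its level sets are connected or empty. Since $\widehat\mu^{-1}\bigl(\{v : \langle v,\xi\rangle = c\}\bigr) = H_\xi^{-1}(c)$, it follows that $S := \widehat\mu(\widehat W)$ meets every \emph{rational line} $\ell_{\xi,c} = \{v : \langle v,\xi\rangle = c\}$ (with $\xi\in\ZZ^2$) in the connected set $\widehat\mu\bigl(H_\xi^{-1}(c)\bigr)$, and each projection $H_\xi(\widehat W)$ is an interval because $\widehat W$ is connected.

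First I would record the graph description of $S$ obtained from the slices of $\widehat\mu_1$ (the circle $\xi = (1,0)$). The image $I_1 := \widehat\mu_1(\widehat W)$ is an interval, and for each $c\in I_1$ the fibre $\widehat\mu_1^{-1}(c)$ is connected, so $\widehat\mu_2\bigl(\widehat\mu_1^{-1}(c)\bigr)$ is an interval with endpoints $m(c) := \inf$ and $M(c) := \sup$ of $\widehat\mu_2$ on that fibre. Thus $S = \{(c,d) : c\in I_1,\ m(c)\le d\le M(c)\}$, where $m,M$ may a priori take infinite values. Because $S$ is closed by \Cref{proposition: image is closed}, every finite value of $m$ or $M$ is attained, so the corresponding boundary points lie in $S$. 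With this description, the convexity of $S$ is equivalent to the convexity of the lower boundary $m$ together with the concavity of the upper boundary $M$ over the interval $I_1$.

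It then remains to prove that $m$ is convex (the concavity of $M$ being the same statement applied to $-\widehat\mu_2$). Suppose not: there are $c_1 < c_2$ in $I_1$ and a point $c^*\in(c_1,c_2)$ at which the affine interpolant $L$ of $\bigl(c_1,m(c_1)\bigr)$ and $\bigl(c_2,m(c_2)\bigr)$ satisfies $L(c^*) < m(c^*)$. The points $P_1 = \bigl(c_1,m(c_1)\bigr)$ and $P_2 = \bigl(c_2,m(c_2)\bigr)$ lie in $S$, the line $\ell$ through them meets the column over $c^*$ strictly below $m(c^*)$, hence outside $S$, so $S\cap\ell$ contains points over $c_1$ and $c_2$ but not over $c^*$ and is therefore disconnected. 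If $\ell$ has rational slope this already contradicts the connectedness established above; in general I would replace $\ell$ by a line of nearby rational slope through points of $S$ lying over values close to $c_1$ and $c_2$, keeping the strict inequality $L(c^*) < m(c^*)$, which is an open condition. The main obstacle is precisely this passage from rational to arbitrary directions: it is here that the closedness of $S$ is essential, both to guarantee that the lower--boundary points used to pin down $\ell$ actually belong to $S$ and to make the approximation argument legitimate (the only delicate case being a column $\{c\}\times\{m(c)\}$ that degenerates to a single point, handled by approximating $c_2$ by neighbouring columns). The resulting rational line meets $S$ in a disconnected set, contradicting \Cref{theorem: connected level sets}; hence $m$ is convex, $M$ is concave, and $S = \widehat\mu(\widehat W)$ is convex.
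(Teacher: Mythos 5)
You have isolated exactly the two ingredients the paper's proof also rests on---\Cref{theorem: connected level sets} applied to the circle subgroups $S_\xi$ for $\xi\in\ZZ^2$, and the closedness of the image from \Cref{proposition: image is closed}---but your reduction to a purely planar statement about $S=\widehat\mu(\widehat W)$ has a genuine gap at precisely the step you flag as delicate. Rotating the chord to a nearby rational slope requires points of $S$ near $P_1$ or $P_2$ lying \emph{off} the chord line $\ell$: rotating inside a nondegenerate column works, but if all columns near $c_1$ and near $c_2$ are degenerate, then $S$ is locally a graph near both base points, and if that graph happens to lie on the irrational line $\ell$ itself, every secant through two nearby points of $S$ has the \emph{same} irrational slope and no rotation is available. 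Your fix of ``approximating $c_2$ by neighbouring columns'' does not escape this, since the neighbouring columns may again be single points on $\ell$. Nothing in the planar hypotheses you actually invoke (closed set, connected vertical slices, connected rational-line sections, interval projections) rules this configuration out by a one-line openness remark; excluding it either needs a substantially longer planar bootstrap through the intermediate columns, or input from upstairs. (A smaller omission: the mixed case where $m$ is $-\infty$ at one endpoint and finite at the other has no affine interpolant and is not treated, though it is easily patched.)

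The paper's proof goes around this obstacle differently, and more cheaply, in two respects. First, it uses connectedness \emph{positively} rather than by contradiction: if two points of $S$ lie on a common line of rational direction, then $S$ intersected with that line is a connected subset of a line containing both points, hence contains the entire segment between them---no boundary functions $m$, $M$ are needed at all. Second, the rational approximation is performed in $\widehat W$ rather than in the image plane: since the action is irreducible/effective, points with trivial stabilizer are dense, $\widehat\mu$ is a submersion near such points, and so arbitrary $p_0,p_1\in S$ are limits of images $\widehat\mu(x_n),\widehat\mu(y_n)$ that can be perturbed to lie in $\QQ^2$; the segments between these rational points lie in $S$ by the rational case, and \Cref{proposition: image is closed} passes the conclusion to the limit. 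The submersion step is exactly the datum your argument lacks: it shows $S$ is two-dimensional near a dense set of its points (by the local normal form of Guillemin--Sternberg the image is even locally modelled on \emph{rational} cones), which is what kills the locally-collinear scenario. So your plan is repairable, but as written the passage from rational to irrational directions does not go through without going back to the manifold.
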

\begin{proof} 
  The only difference between our proof and the classical one for
  closed manifolds is that we need to appeal to \Cref{proposition:
    image is closed}, because $\widehat{\mu}(\widehat W)$ is not
  compact.  To clarify where this particular difference intervenes, we
  sketch now the proof.
  
  If the $\TT^2$-action is reducible, the image of $\widehat{W}$ is a
  connected subset inside a line so that it is convex.  Assume now
  that the action is irreducible and let
  $p_0,p_1\in\widehat{\mu}(\widehat W)\subset\RR^2$ be any two points
  in the image of $\widehat\mu$.  We need to show that the segment
  between $p_0$ and $p_1$ lies in the image of the moment map.  We can
  choose two constants $a_1,a_2 \in \RR$ such that the kernel of the
  linear map $\pi\colon \RR^2 \to \RR$ defined by
  $\pi(x_1,x_2) = a_1x_1+a_2x_2$ is parallel to the line~$\ell$
  passing through $p_0$ and $p_1$.

  The composition $\mu = \pi\circ\widehat\mu$ generates an
  $\RR$-action induced by the restriction of the $\TT^2$-action to the
  subgroup $t\mapsto (e^{ia_1 t},e^{ia_2 t})$.  If we denote by
  $c = \pi(\ell) \in \RR$ the projection of $\ell$, one easily
  verifies that
  \begin{equation}\label{equation:connected}
    \widehat\mu(\widehat W)\cap \ell = \widehat\mu(\mu^{-1}(c))  \;.
  \end{equation}

  If $a_1$ and $a_2$ are rational numbers, then it follows that $\mu$
  generates a circle action so that according to \Cref{theorem:
    connected level sets}, $\mu^{-1}(c)$ is as the level set of a
  Hamiltonian $\SS^1$-action connected.  We conclude that the
  intersection on the left side of \eqref{equation:connected} must
  also be connected set, and in particular, the segment joining $p_0$
  and $p_1$ lies as desired in $\widehat{\mu}(\widehat W)$.

  \smallskip
  
  If $a_1$ or $a_2$ are irrational, then there is a sequence of points
  $x_n, y_n\in\widehat W$ for $n\in\NN$ such that the sequences
  $\widehat\mu(x_n)$ and $\widehat\mu(y_n)$ converge to $p_0$ and
  $p_1$ respectively.  Since we assume that the $\TT^2$-action is
  irreducible, we can suppose that $x_n$ and $y_n$ have trivial
  stabilizers, so that $\widehat\mu$ will be a submersion on a
  neighborhood of $x_n$ and $y_n$.  By a perturbation, we can then
  further assume that $\widehat\mu(x_n)$ and $\widehat\mu(y_n)$ lie
  for all $n\in \NN$ in $\QQ^k$.
  
  By the argument stated above it follows that the segments joining
  the points~$\widehat\mu(x_n)$ and $\widehat\mu(y_n)$ lie for every
  $n\in \NN$ in the image of the moment map.  If $p'\in \ell$ is a
  point that lies between $p_0$ and $p_1$, then there is a
  sequence~$(p_n')_n$ such that $p_n' \to p$ and such that $p_n$ lies
  on the segment that connects $\widehat\mu(x_n)$ and
  $\widehat\mu(y_n)$.  Since $p_n' \in \widehat{\mu}(\widehat W)$ and
  since $\widehat{\mu}(\widehat W)$ is by \Cref{proposition: image is
    closed} a closed set, it follows that $p'$ also lies in the image
  of the moment map proving the convexity.
\end{proof}

\section{Outlook and open questions}

We already asked the following question in \Cref{sec: dimension 4}.

\begin{question}
  Do there exist connected compact Hamiltonian $\SS^1$-manifolds of
  dimension$\ge 6$ that have disconnected contact type boundary?
\end{question}

Karshon classified closed symplectic $4$-manifolds with a Hamiltonian
circle action \cite{KarshonHamiltonianCircleActions}.

\begin{question}
  Classify $4$-dimensional connected compact Hamiltonian
  $\SS^1$-manifolds with contact type boundary (or probably better
  suited for the classification result, with cylindrical ends).
\end{question}

\medskip

It would be very interesting to extend the results obtained in this
article to more general compact Lie groups.  In particular, it would
be interesting to see whether the convexity result of
Atiyah-Guillemin-Sternberg \cite{AtiyahConvexity,
  GuilleminSternbergConvexity} also holds in a suitable form for
connected compact symplectic $\TT^k$-manifolds with contact type
boundary.  The classical proof for closed manifolds is based on the
fact that all level sets of the moment map are connected or empty.
With \Cref{theorem: connected level sets} we have done a first step in
this direction by showing that this claim is valid for circle actions.

\begin{question}
  Do Hamiltonian torus actions on symplectic manifolds with contact
  type boundary satisfy some Atiyah-Guillemin-Sternberg convexity
  result?
\end{question}

\begin{question}
  What about symplectic toric manifolds with cylindrical ends?  Is
  there a Delzant classification \cite{DelzantToric} for such spaces?
\end{question}

Every symplectic toric manifold (of dimension$\ge 4$) with contact
type boundary will always have connected boundary by \Cref{rank two
  connected boundary}, and in fact, the boundary will be a contact
toric manifold as classified by Lerman \cite{Lerman1}.  Can one
characterize all symplectic toric manifolds having a certain contact
toric boundary?  In particular, can one understand combinatorically
how the contact moment map used by Lerman compactifies at the origin?

\bigskip

In symplectic topology, there is a hierarchy of different conditions
that can be imposed on the boundary of a symplectic manifold, reaching
from being the level set of a pluri-subharmonic function of a Stein
manifold to having weak contact type boundary, see
\cite{McDuff_contactType} in dimension~$4$ and
\cite{WeakFillabilityHigherDimension} in general.

\begin{question}
  In how far do these other boundary conditions interact with
  symplectic circle actions?  In particular, what can be said about
  compact symplectic $\SS^1$-manifolds that have weak contact type
  boundary?
\end{question}

\begin{question}
  Let $(W,\omega)$ be a Hamiltonian $G$-manifold that is a weak
  filling of some contact structure on its boundary.  Is it possible
  to find a $G$-invariant contact structure on $\p W$ such that the
  boundary is still of weak contact type?
\end{question}

\begin{example}
  Consider $W := \DD^2 \times \TT^2$ with symplectic structure
  $\omega = r\,dr\wedge d\phi + dx\wedge dy$, where $(r,\phi)$ are the
  polar coordinates on $\DD^2$ and $(x,y)$ are the coordinates on
  $\TT^2$.  Act on the $\TT^2$-factor by translations along the
  $x$-direction.  This circle action is clearly non-Hamiltonian.

  We can equip the boundary of $W$ with an $\SS^1$-invariant contact
  structure given as
  $\alpha_k = C\,d\phi + \sin(k\phi)\,dx - \cos(k\phi)\,dy$ for a
  constant $C>0$.  If $C$ is chosen large enough, then
  $\alpha_k\wedge \omega > 0$ so that $W$ will have weak contact type
  boundary \cite{Giroux_plusOuMoins}.

  It follows that there do exist symplectic $\SS^1$-manifolds with
  $\SS^1$-invariant weak contact type boundary that are not
  Hamiltonian, and that do thus not satisfy \Cref{corollary:
    symplectic group action}.  Bourgeois contact structures generalize
  this example to every dimension$\ge 4$, see
  \cite[Example~1.1]{WeakFillabilityHigherDimension} and
  \cite{LisiPropertiesBourgeoisStructures}.
\end{example}

\bigskip

With the introduction of holomorphic curves by Gromov
\cite{Gromov_HolCurves} symplectic topology bifurcated away from
symplectic geometry.  As mentioned in \Cref{sec: hamiltonian
  morse-bott and convex boundary}, if we choose a compatible almost
complex structure that is $\SS^1$-invariant, we obtain a
$J$-holomorphic foliation by holomorphic curves that are tangent to
the orbits of the circle action and to the gradient flow.

An interesting question in contact topology is to study the symplectic
fillings of a given contact manifold, that is, to study all compact
symplectic manifolds whose contact type boundary is the given contact
manifold.

Let $(W,\omega)$ be one of the compact Hamiltonian $\SS^1$-manifolds
with contact type boundary~$(V,\xi)$ studied in this article and
attach cylindrical ends along $V$.
\begin{itemize}
\item If $V$ is connected and decomposes into $\xi^+$ and $\xi^-$ as
  in the second case of \Cref{no extrema but boundary}, then the
  generic leaves of the holomorphic foliation will be holomorphic
  cylinders with one end that points out of $W$ through $\xi^-$ and
  the other one pointing out through $\xi^+$.
\item If $V$ is connected and if all boundary points lie either in
  $\xi^+$ or are fixed points as in \Cref{local min and boundary},
  then the generic leaves of the holomorphic foliation will be
  holomorphic cylinders that can be compactified on one of its ends by
  adding in a fixed point of the $\SS^1$-action.  This way, they will
  effectively be holomorphic planes.

  Assume that there are fixed points on the boundary, which
  necessarily have to be maxima of the Hamiltonian function.
\end{itemize}

In both situations considered above, the holomorphic foliation
contains many leaves that are contained in the cylindrical ends.  If
$W'$ is a different symplectic filling of $V$ that does not need to
have any $\SS^1$-symmetry, we will still keep the same cylindrical end
including the family of holomorphic cylinders or holomorphic planes.

If these holomorphic curves persist as they descent into the
symplectic filling, it could maybe be possible to find a relation
between the homology groups of $W$ and those of $W'$.

\begin{question}
  Let $(W,\omega)$ be a compact Hamiltonian $\SS^1$-manifold with
  contact type boundary~$(V,\xi)$ and attach cylindrical ends.  Assume
  we are in one of the situations mentioned above.

  Do the holomorphic curves in the cylindrical ends persist after
  entering the symplectic filling and allow us to study the filling
  with the strategy used by Wendl \cite{WendlGirouxTorsion}?
\end{question}

\appendix

\section{Łojasiewicz's theorem: Gradient trajectories of Morse-Bott
  functions converge to critical points}\label{section: Lojasiewicz}

It is clear that on closed manifolds, gradient trajectories of general
smooth functions always accumulate at the set of critical points.  For
Morse-Bott functions though, every trajectory even \emph{converges} to
a critical point.  The proof of this fact was explained to us by
Krzysztof Kurdyka and goes back to a result by Łojasiewicz concerning
analytic functions (see \cite{Lojasiewicz}) but it applies equally
well to our situation \cite{KurdykaGradientConjecture}.  Since the
proof is relatively elementary and not very well-known in the
symplectic community, we have decided to include it in this text.

\begin{convergence_trajectory}[Łojasiewicz]
  Let $(W, g)$ be a compact Riemannian manifold that might possibly
  have boundary, and let $f\colon W\to \RR$ be a Morse-Bott function.

  If $\gamma$ is a gradient trajectory such that $\gamma(t)$ is
  defined for all $t\ge 0$, then it follows that $\gamma(t)$ converges
  for $t\to \infty$ to a critical point of $f$.
\end{convergence_trajectory}
\begin{proof}
  Let $\gamma$ be a gradient trajectory that is defined for all
  $t\ge 0$.  The key point in the proof is that $\gamma$ is a
  trajectory of bounded length, and that such trajectories converge.

  We can assume that $W$ is a closed manifold by applying first
  \Cref{doubling Morse-Bott} in case that $\p W \ne \emptyset$.  This
  does not change anything about the existence or non-existence of a
  limit of $\gamma$, because the initial manifold is compact.

  Since $W$ is compact, $\gamma$ accumulates by \Cref{flow can only
    accumulate at critical points}.(a) at the set of critical points
  of $f$ so that there is a sequence~$(t_k)_k$ with $t_k \to \infty$
  with $\gamma(t_k)$ converging to a critical point~$p_\infty \in W$.
  For simplicity, we replace now $f$ by $f - f(p_\infty)$.  This
  modification does not change the gradient, but it simplifies our
  situation, because then $f(p_\infty) = 0$ and $f$ is strictly
  negative along $\gamma(t)$.
  
  By \Cref{gradient estimate close to critical point}, there exists a
  neighborhood~$U$ of $p_\infty$, and a constant~$c > 0$ such that
  \begin{equation*}
    \norm{\nabla f (p)} \ge c\, \sqrt{\abs{f(p)}}
  \end{equation*}
  for any $p\in U$.  Choose $\epsilon > 0$ so small that $U$ contains
  an $\epsilon$-neighborhood of $p_\infty$.  This allows us to apply
  \Cref{length bound for certain gradient curves} to $\gamma$ with
  $q = p_\infty$.  It follows that $\gamma(t)$ converges for
  $t\to \infty$ to $p_\infty$.
\end{proof}

\begin{lemma}\label{gradient estimate close to critical point}
  Let $(W,g)$ be a Riemannian manifold, and let $f\colon W\to \RR$ be
  a smooth function.  Assume that $q \in \Crit(f)$ is a critical point
  of Morse-Bott type that lies in the $0$-level set of $f$.

  Then there exists a neighborhood~$U_q$ of $q$ and a constant
  $c_q > 0$ such that
  \begin{equation*}
    \norm{\nabla f (p)} \ge c_q\, \sqrt{\abs{f(p)}}
  \end{equation*}
  for any $p\in U_q$.
\end{lemma}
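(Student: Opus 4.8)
The plan is to reduce the estimate to an elementary computation by placing $f$ into Morse-Bott normal form near $q$. Let $C$ denote the component of $\Crit(f)$ containing $q$; since $C$ is connected and the differential of $f$ vanishes along $\Crit(f)$, the function $f$ is constant on $C$, and as $q$ lies in the zero level set we have $f\equiv 0$ along $C$. By the Morse-Bott lemma (see \cite{BanyagaMorseBott}) there is a chart around $q$ with coordinates $(x_1,\dotsc,x_{i^-};y_1,\dotsc,y_{i^+};z_1,\dotsc,z_d)$, where $d=\dim C$ and $i^\pm = i^\pm(C)$ are the indices of $C$, in which $C$ corresponds to $\{x=y=0\}$ and
\begin{equation*}
  f(x,y,z) = \sum_{j=1}^{i^+} y_j^2 - \sum_{j=1}^{i^-} x_j^2 \;.
\end{equation*}
In particular $f$ does not depend on the $z$-coordinates. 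I would take $U_q$ to be a relatively compact subset of the domain of this chart.

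Next I would carry out the estimate for the Euclidean gradient $\nabla^{\mathrm{e}} f$ associated to the flat coordinate metric on the chart. A direct computation gives $\nabla^{\mathrm{e}} f = (-2x_1,\dotsc,-2x_{i^-};\,2y_1,\dotsc,2y_{i^+};\,0,\dotsc,0)$, whence
\begin{equation*}
  \norm{\nabla^{\mathrm{e}} f}^2 = 4\Bigl(\sum_{j=1}^{i^-} x_j^2 + \sum_{j=1}^{i^+} y_j^2\Bigr) \;.
\end{equation*}
On the other hand, the triangle inequality yields $\abs{f} = \bigl|\sum_j y_j^2 - \sum_j x_j^2\bigr| \le \sum_j x_j^2 + \sum_j y_j^2 = \tfrac14\,\norm{\nabla^{\mathrm{e}} f}^2$. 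Combining the two, I obtain $\norm{\nabla^{\mathrm{e}} f}\ge 2\sqrt{\abs{f}}$ at every point of the chart, which is exactly the desired inequality for the coordinate metric, with constant $2$.

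Finally, I would transfer this bound from the Euclidean coordinate metric to the given metric $g$. This is the only step requiring care, since the gradient — unlike the differential $df$ — depends on the choice of metric. The resolution is standard compactness: writing $\norm{\nabla f}_g = \norm{df}_{g^*}$ and $\norm{\nabla^{\mathrm{e}} f} = \norm{df}_{\mathrm{e}}$ in terms of the respective dual metrics, the two dual norms are uniformly equivalent over the relatively compact set $\overline{U_q}$, so there is a constant $\kappa>0$ with $\norm{df}_{g^*}\ge \kappa\,\norm{df}_{\mathrm{e}}$ throughout $U_q$. Setting $c_q := 2\kappa$ then gives $\norm{\nabla f (p)}\ge c_q\sqrt{\abs{f(p)}}$ for all $p\in U_q$, as claimed. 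I do not expect any genuine obstacle here; the essential content is the Łojasiewicz exponent $\tfrac12$, which is forced by the \emph{exact} quadratic normal form of a Morse-Bott singularity and is captured entirely by the elementary computation of the second paragraph.
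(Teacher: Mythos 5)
Your proof is correct and takes essentially the same route as the paper's: a Morse-Bott normal form chart (via \cite{BanyagaMorseBott}), the elementary Euclidean computation $\norm{\nabla^{\mathrm{e}} f}^2 = 4\bigl(\abs{\x}^2 + \abs{\y}^2\bigr) \ge 4\,\abs{f}$, and a compactness comparison between $g$ and the flat coordinate metric near $q$. The only cosmetic difference is that you split the normal coordinates into positive and negative directions, whereas the paper writes the quadratic form with signs $c_j = \pm 1$.
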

\begin{proof}
  Using a Morse-Bott chart~$(U_q,\phi)$ (see \cite{BanyagaMorseBott})
  centered around $q$, we have coordinates~$(\x,\y)$ with
  $\x = (x_1,\dotsc,x_k)$ and $\y = (y_1,\dotsc,y_{n-k})$ such that
  \begin{equation*}
    f(\x,\y) = c_1\, y_1^2 + \dotsm  + c_{n-k}\, y_{n-k}^2
  \end{equation*}
  where the $c_1,\dotsc, c_{n-k}$ are equal to $\pm 1$.  The
  points~$(\x,\y)$ with $\y = \0$ in this chart correspond to the
  points in $\Crit(f)$, and $(\0,\0)$ corresponds to $q$.

  The differential of $f$ is
  $df = 2c_1\,y_1\, dy_1 + \dotsm + 2c_{n-k}\, y_{n-k}\, dy_{n-k}$,
  and denoting the standard Euclidean metric on this chart by
  $g_{\mathrm{Eucl}}$, we easily verify that
  \begin{equation*}
    \norm{df}_{\mathrm{Eucl}}^2 = 4\,\bigl(y_1^2 + \dotsm  + y_{n-k}^2\bigr)
    \ge \abs{f(\x,\y)} \;.
  \end{equation*}

  Denote the pull-back of the metric~$g$ from $W$ to the Morse-Bott
  chart for simplicity also by $g$, and let $\norm{\cdot}$ be the norm
  with respect to this product.  We find a constant~$c_q>0$ such that
  $\norm{\cdot} \ge c_q\, \norm{\cdot}_{\mathrm{Eucl}}$ at any point
  close to the origin.

  On this smaller neighborhood of $(\0,\0)$ we obtain then the desired
  statement
  \begin{equation*}
    \norm{\nabla f}^2 = \norm{df}^2 \ge c_q^2\, \norm{df}_{\mathrm{Eucl}}
    \ge c_q^2\, \abs{f} \;.  \qedhere
  \end{equation*}
\end{proof}

\begin{lemma}\label{length bound for certain gradient curves}
  Let $(W,g)$ be a geodesically complete Riemannian manifold.  Let
  $f\colon W\to \RR$ be a smooth function and let $q$ be a point in
  the $0$-level set of $f$ such that there exist an $\epsilon>0$ and a
  constant~$c > 0$ with
  \begin{equation*}
    \norm{\nabla f (p)} \ge c\, \sqrt{\abs{f(p)}}
  \end{equation*}
  for any $p$ in the $\epsilon$-neighborhood of $q$.

  Let $\gamma\colon [0,T_{\max}) \to W$ be a gradient trajectory with
  $T_{\max}\in (0,\infty]$, and assume that there is a
  sequence~$(t_k)_k$ with $t_k \to T_{\max}$ such that all
  $\gamma(t_k)$ lie in an $\epsilon/2$-neighborhood around $q$ and
  such that $f\bigl(\gamma(t_k)\bigr) \to 0$ for $k\to \infty$.

  Then it follows that $\gamma$ has finite length and that it can be
  completed to a continuous path
  $\hat \gamma\colon [0,T_{\max}] \to W$.
\end{lemma}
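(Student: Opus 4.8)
The plan is to run the classical Łojasiewicz length argument, driven by the auxiliary quantity $u(t) := \sqrt{\abs{f(\gamma(t))}}$. First I would pin down the sign of $f$ along $\gamma$: since $\gamma$ is a gradient trajectory, $\tfrac{d}{dt}f(\gamma(t)) = \norm{\nabla f}^2 \ge 0$, so $f$ is non-decreasing along $\gamma$; as the subsequence $f(\gamma(t_k))$ tends to $0$ and the monotone limit $\lim_{t\to T_{\max}} f(\gamma(t))$ therefore exists and equals $0$, we conclude $f\le 0$ along $\gamma$ with $f(\gamma(t))\to 0$. Hence $u$ is well defined, non-increasing, and $u(t)\to 0$. (If $f(\gamma(t_0)) = 0$ at some finite time, then $f\equiv 0$ and $\gamma$ is constant afterwards, so the claim is trivial; I may thus assume $f<0$ and $u>0$ strictly.)

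The computational heart is a differential inequality for $u$. Wherever $f(\gamma(t))<0$ one computes
\begin{equation*}
  u'(t) = \frac{-\norm{\nabla f(\gamma(t))}^2}{2\,u(t)} \;,
\end{equation*}
and combining this with the hypothesis $\norm{\nabla f}\ge c\,\sqrt{\abs{f}} = c\,u$ (valid inside the $\epsilon$-neighborhood of $q$) yields the key estimate
\begin{equation*}
  \norm{\nabla f(\gamma(t))} \le -\frac{2}{c}\,u'(t) \;.
\end{equation*}
Integrating this converts control on the decrease of $u$ into control on the length of $\gamma$, since $\length(\gamma) = \int \norm{\gamma'}\,dt = \int \norm{\nabla f}\,dt$.

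The main obstacle — and the reason the hypotheses carefully distinguish the radii $\epsilon$ and $\epsilon/2$ — is that the estimate above only holds while $\gamma(t)$ remains inside the $\epsilon$-ball around $q$, whereas we are only told that $\gamma$ returns to the $\epsilon/2$-ball along a subsequence. I would close this gap with a continuity (bootstrapping) argument: fix $k$ so large that $\gamma(t_k)$ lies in the $\epsilon/2$-ball and $\tfrac{2}{c}u(t_k)<\epsilon/2$, and set $T^* := \sup\{T\in[t_k,T_{\max}) \mid \gamma([t_k,T])\subset \overline B_\epsilon(q)\}$. If $T^*<T_{\max}$, then the estimate holds on $[t_k,T^*]$ and integration gives $d(\gamma(t_k),\gamma(T^*))\le \tfrac{2}{c}u(t_k)<\epsilon/2$, whence $d(q,\gamma(T^*))<\epsilon$; so $\gamma(T^*)$ lies in the open $\epsilon$-ball and the trajectory cannot be leaving there, contradicting the definition of $T^*$. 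Thus $\gamma$ stays in the $\epsilon$-ball for all $t\ge t_k$, and the integrated estimate gives $\length(\gamma|_{[t_k,T_{\max})})\le \tfrac{2}{c}u(t_k)<\infty$.

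Finally, since $\gamma|_{[0,t_k]}$ is a smooth curve on a compact interval it has finite length, so $\gamma$ has finite total length. A finite-length curve is Cauchy as $t\to T_{\max}$ because its tail length tends to $0$, and geodesic completeness of $(W,g)$ — hence metric completeness via Hopf–Rinow — guarantees that $\lim_{t\to T_{\max}}\gamma(t)$ exists. Declaring $\hat\gamma$ to equal this limit at the endpoint extends $\gamma$ to a continuous map on $[0,T_{\max}]$, as required. I expect the bootstrapping step to be the only genuinely delicate point; the differential inequality is a routine computation and the existence of the limit is a standard consequence of completeness.
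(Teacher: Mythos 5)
Your proof is correct and follows essentially the same route as the paper's: the identical differential inequality $\tfrac{d}{dt}\sqrt{\abs{f\circ\gamma}} \le -\tfrac{c}{2}\,\norm{\gamma'}$ obtained from the Łojasiewicz-type gradient bound, the same use of the sequence $(t_k)$ to find a starting time in the $\epsilon/2$-ball with $\tfrac{2}{c}\sqrt{\abs{f}} < \epsilon/2$, the same trapping argument showing $\gamma$ cannot escape the $\epsilon$-neighborhood, and completeness to extract the limit point. Your explicit bootstrap via $T^*$ and the sign discussion of $f$ (including the degenerate case $f(\gamma(t_0))=0$) merely formalize what the paper states informally, and your appeal to Hopf--Rinow replaces the paper's separate lemma on finite-length paths in complete metric spaces with the same content.
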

\begin{proof}
  Note that since $f$ is monotonous along gradient trajectories,
  $f\bigl(\gamma(t)\bigr)$ is strictly negative so that
  $\abs{f\bigl(\gamma(t)\bigr)} = -f\bigl(\gamma(t)\bigr)$.  It
  follows that $\abs{f\bigl(\gamma(t)\bigr)}^{1/2}$ is differentiable,
  and using that $\gamma'(t)$ and $\nabla f\bigl(\gamma(t)\bigr)$ are
  co-linear, we compute
  \begin{equation*}
    \frac{d}{dt} \sqrt{\abs{f\bigl(\gamma(t)\bigr)}} =
    \frac{-\frac{d}{dt} f\bigl(\gamma(t)\bigr) }{2\,
      \sqrt{\bigl|f\bigl(\gamma(t)\bigr)\bigr|}} = -
    \frac{df\bigl(\gamma'(t)\bigr)}{2\,
      \sqrt{\bigl|f\bigl(\gamma(t)\bigr)\bigr|}} = -
    \frac{\bigl\langle\nabla f \bigl(\gamma(t)\bigr), \gamma'(t)
      \bigr\rangle}{2\, \sqrt{\bigl|f\bigl(\gamma(t)\bigr)\bigr|}} =
    - \frac{ \norm{\nabla f\bigl(\gamma(t)\bigr)}\cdot
      \norm{\gamma'(t)} }{2\,
      \sqrt{\bigl|f\bigl(\gamma(t)\bigr)\bigr|}} \;.
  \end{equation*}
  Denote the $\epsilon$-neighborhood of $q$ by $U$.  Then combining
  the previous equation with the lower bound of the gradient, we
  obtain for any $\gamma(t)\in U$ that
  \begin{equation*}
    \frac{d}{dt} \sqrt{\abs{f\bigl(\gamma(t)\bigr)}}
    \le - \frac{c\, \bigl|f\bigl(\gamma(t)\bigr)\bigr|^{1/2}\,
      \norm{\gamma'(t)}}
    {2\, \bigl|f\bigl(\gamma(t)\bigr)\bigr|^{1/2}}
    = - \frac{c}{2}\, \norm{\gamma'(t)} \;.
  \end{equation*}
  It follows for every $\gamma(t) \in U$ that $\norm{\gamma'(t)}$ is
  bounded from above by
  $-\frac{2}{c}\, \frac{d}{dt} \sqrt{\abs{f\bigl(\gamma(t)\bigr)}}$.
  Thus if $\gamma\bigl([t_0,t_1]\bigr)$ is a segment that lies inside
  $U$, its length is bounded by
  \begin{multline*}
    \ell\bigl(\restricted{\gamma}{[t_0,t_1]}\bigr) = \int_{t_0}^{t_1}
    \norm{\gamma'(t)}\,dt \le - \frac{2}{c}\; \int_{t_0}^{t_1}
    \frac{d}{dt} \sqrt{\abs{f\bigl(\gamma(t)\bigr)}} \,dt \\
    = \frac{2}{c}\; \Bigl( \sqrt{\abs{f\bigl(\gamma(t_0)\bigr)}} -
    \sqrt{\abs{f\bigl(\gamma(t_1)\bigr)}} \Bigr) < \frac{2}{c} \,
    \sqrt{\abs{f\bigl(\gamma(t_0)\bigr)}} \;.
  \end{multline*}
  Using the sequence~$(t_k)_k$, we can find a $t'$ such that
  $\gamma(t')$ will be at distance less than $\epsilon /2$ from $q$,
  and such that
  $\frac{2}{c} \, \sqrt{\abs{f\bigl(\gamma(t')\bigr)}} <
  \frac{\epsilon}{2}$.  To escape from the neighborhood~$U$, the
  length of $\gamma\bigl([t',\infty)\bigr)$ would need to be more than
  $\epsilon /2$, but since the length of any path
  $\gamma\bigl([t',t]\bigr)$ with $t>t'$ is bounded by $\epsilon /2$,
  $\gamma$ is trapped in $U$, and the length of
  $\gamma\bigl([t',T_{\max})\bigr)$ is also bounded by $\epsilon/2$.
  Due to \Cref{finite length path extends} below, $\gamma$ extends to
  a continuous path on $[0,T_{\max}]$.
\end{proof}

\begin{lemma}\label{finite length path extends}
  Let $(W,d)$ be a complete metric space and let
  $\gamma\colon [0,T) \to W$ with $T\in (0,\infty]$ be a continuous
  path of finite length (in the sense that the restriction of $\gamma$
  to every compact subinterval is rectifiable and of bounded length).
  Then it follows that $\gamma$ extends to all of $[0,T]$
  continuously.
\end{lemma}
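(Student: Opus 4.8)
The plan is to use the arc-length function of $\gamma$ to show that $\gamma(t)$ forms a Cauchy net as $t\to T$, and then to invoke the completeness of $W$. First I would introduce the non-decreasing arc-length function $L\colon [0,T)\to[0,\infty)$ defined by $L(t) := \length\bigl(\restricted{\gamma}{[0,t]}\bigr)$. The hypothesis that $\gamma$ has finite length means precisely that $L$ is bounded from above, so that the limit $L_\infty := \lim_{t\to T} L(t)$ exists and is finite. The key elementary estimate is that for any $0\le s\le t<T$ the distance of the endpoints is controlled by the length of the intermediate segment,
\[
  d\bigl(\gamma(s),\gamma(t)\bigr) \le \length\bigl(\restricted{\gamma}{[s,t]}\bigr) = L(t) - L(s) \;.
\]

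Next I would deduce convergence of $\gamma(t)$ as $t\to T$. Choosing any sequence $(t_n)_n$ in $[0,T)$ with $t_n\to T$, the numbers $L(t_n)$ converge to $L_\infty$ and hence form a Cauchy sequence; the displayed inequality then gives $d\bigl(\gamma(t_n),\gamma(t_m)\bigr)\le\abs{L(t_n)-L(t_m)}$, so that $\bigl(\gamma(t_n)\bigr)_n$ is Cauchy in $W$. By completeness it converges to some point~$p\in W$. To see that $p$ does not depend on the chosen sequence, I would interleave two sequences $s_n,t_n\to T$ into a single sequence tending to $T$; the same argument shows the merged sequence is Cauchy, hence convergent, forcing its two subsequences to share the limit. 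Thus $\lim_{t\to T}\gamma(t)=p$ exists.

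Finally I would set $\hat\gamma(t):=\gamma(t)$ for $t<T$ and $\hat\gamma(T):=p$, and verify continuity at the endpoint: for $t<T$ one has $d\bigl(\gamma(t),p\bigr)=\lim_{s\to T}d\bigl(\gamma(t),\gamma(s)\bigr)\le L_\infty-L(t)$, and the right-hand side tends to $0$ as $t\to T$. When $T=\infty$, the statement is read as asserting that $\lim_{t\to\infty}\gamma(t)$ exists, i.e.\ that the path converges, and $[0,T]$ is understood as the one-point compactification of $[0,\infty)$. I do not expect any serious obstacle here: the only point requiring care is the correct interpretation of ``finite length'' as the boundedness of $L$, which is exactly the bound produced in \Cref{length bound for certain gradient curves}, after which the length estimate together with completeness of $W$ does all the work.
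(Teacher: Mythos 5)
Your proposal is correct and follows essentially the same route as the paper's proof: introduce the monotone bounded arc-length function $\ell(t)$, use the estimate $d\bigl(\gamma(s),\gamma(t)\bigr)\le \abs{\ell(t)-\ell(s)}$ to show that $\gamma(t_k)$ is Cauchy along any sequence $t_k\to T$, and conclude by completeness. The extra details you supply (interleaving sequences to get independence of the limit, the endpoint continuity estimate, and the reading of the case $T=\infty$) are points the paper leaves implicit, but they do not change the argument.
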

\begin{proof}
  Define a function
  $\ell(t) := \length\bigl(\restricted{\gamma}{[0,t]}\bigr)$.  Since
  $\ell(t)$ is increasing and bounded, it converges for $t\to T$ to
  some real number.  In particular, if $(t_k)_k$ is any sequence with
  $t_k\to T$, then if follows that $\bigl(\ell(t_k)\bigr)_k$ is a
  Cauchy sequence.  Assuming without loss of generality that
  $t_k > t_n$, we find
  \begin{equation*}
    d\bigl(\gamma(t_n), \gamma(t_k)\bigr)
    \le \length\bigl(\restricted{\gamma}{[t_n,t_k]}\bigr)
    = \ell(t_n) - \ell(t_k) = \abs{\ell(t_n) - \ell(t_k)}
  \end{equation*}
  from which it follows for every sequence~$(t_k)_k$ with $t_k\to T$
  that $\bigl(\gamma(t_k)\bigr)_k$ is also a Cauchy sequence.  As
  desired, we obtain that $\gamma(t)$ converges for $t\to T$.
\end{proof}

\section{Technical lemmas about Morse-Bott flows}

In order to avoid some of the technicalities arising along the
boundary, we often make use of the following ``doubling trick'' that
was suggested to us by Marco Mazzucchelli.

\begin{lemma}\label{doubling Morse-Bott}
  Let $W$ be a compact manifold with boundary and let
  $f\colon W \to \RR$ be a Morse-Bott function (in the sense of
  \Cref{def:Morse-Bott}).
  
  We can cap-off $W$ to a closed manifold~$W^{\mathrm{cap}}$ and
  extend $f$ to a $C^2$-function
  $ f^{\mathrm{cap}}\colon W^{\mathrm{cap}} \to \RR$ that is
  Morse-Bott and such that every connected component of $\Crit(f)$
  with boundary is completed to a closed connected component of
  $\Crit(f^{\mathrm{cap}})$.
\end{lemma}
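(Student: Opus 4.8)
The plan is to construct the cap-off by doubling $W$ across its boundary, but in a controlled way near the critical points so that the extended function remains Morse-Bott. The key difficulty is that a naive double of $W$ would produce a non-smooth function at $\partial W$ and, more seriously, would turn each boundary component $\partial C_j = C_j \cap \partial W$ into an interior piece that must assemble into a smooth closed critical submanifold without introducing spurious degeneracies. Let me organize the argument around a good boundary collar.

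First I would fix a collar $(-\epsilon, 0] \times \partial W$ with coordinate $s$, chosen so that $f$ has a convenient normal form near the boundary critical points. By the Morse-Bott condition, at a point $p \in C_j \cap \partial W$ the Hessian of $f|_{\partial W}$ is non-degenerate in the normal direction of $\partial C_j$ inside $\partial W$; and since $C_j$ meets $\partial W$ transversely with $\partial C_j = C_j \cap \partial W$, the function $f$ looks, in suitable coordinates $(s, x_1, \dots, x_{n-1})$ on the collar, like a sum of the boundary normal-form quadratic part plus a term controlled by $s$. I would modify $f$ on the collar to a function of the form $f(s,q) = \phi(s) + f|_{\partial W}(q)$ near each boundary critical component, where $\phi$ is a smooth even-symmetric bump reaching a strict local extremum at $s=0$; this is the step that lets the double be $C^2$ and that collapses $\partial C_j$ into the interior.

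Next I would define $W^{\mathrm{cap}}$ by gluing $W$ to a mirror copy $W'$ (or, equivalently, by attaching a cap region diffeomorphic to $(-\epsilon, \epsilon) \times \partial W$) along $\partial W$, and extend $f$ to $f^{\mathrm{cap}}$ so that on the cap it is $\phi(s) + f|_{\partial W}$ with $\phi$ even in $s$. Because $\phi$ is even and smooth, $f^{\mathrm{cap}}$ is $C^2$ across the seam $\{s=0\}$ (indeed the odd-order normal derivatives vanish there). The critical set of $f^{\mathrm{cap}}$ in the cap consists exactly of the points where $\phi'(s) = 0$ and $q \in \mathrm{Crit}(f|_{\partial W})$; choosing $\phi$ with a single non-degenerate critical value at $s=0$ ensures that each $\partial C_j$ thickens to $\{0\} \times \partial C_j$, which joins with $C_j$ across the seam to form a single \emph{closed} submanifold $C_j^{\mathrm{cap}}$ with no boundary. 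I would then verify that the index in the new $s$-direction is contributed by $\phi''(0)$, so the Hessian of $f^{\mathrm{cap}}$ stays non-degenerate normal to $C_j^{\mathrm{cap}}$, and $f^{\mathrm{cap}}$ is Morse-Bott.

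The main obstacle I expect is the compatibility of the normal forms: to guarantee that the product decomposition $f = \phi(s) + f|_{\partial W}$ holds on a genuine collar, one must first straighten $f$ near $\partial W$ via a Morse-Bott-type chart adapted simultaneously to $\partial W$ and to the critical submanifolds, which requires the transversality of $C_j$ to $\partial W$ built into \Cref{def:Morse-Bott}. I would rely on \Cref{indices of boundary crit in relation to normal index} to track the indices and confirm that the cap-off completes each boundary component of $\mathrm{Crit}(f)$ to a closed component without altering $i^-$ and $i^+$. The regularity claim (only $C^2$, not $C^\infty$) reflects that a smooth even extension matching $f$ exactly to all orders generally cannot be arranged, but $C^2$ suffices for all gradient-flow and Hessian arguments used elsewhere, so I would be content to stop at $C^2$ smoothness of $f^{\mathrm{cap}}$.
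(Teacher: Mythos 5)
There is a genuine gap, and it sits in the central step of your construction: you \emph{modify} $f$ on the collar to the product form $\phi(s)+f|_{\p W}(q)$ with $\phi$ having a strict non-degenerate extremum at $s=0$. This fails twice. First, the lemma asks for an \emph{extension}: $f^{\mathrm{cap}}$ must restrict to the original $f$ on $W$. Every application in the paper depends on this (e.g.\ in \Cref{stable sets are submanifolds} one compares $W^s(f^{\mathrm{cap}};C_j^{\mathrm{cap}})$ with the stable sets of the original $f$ on $W$); replacing $f$ near $\p W$ changes its critical set and gradient flow inside $W$ and makes the lemma useless for its purpose. Second, even as a construction of \emph{some} Morse--Bott function, your choice of $\phi$ does not complete $C_j$: the critical set of $\phi(s)+h(q)$, $h=f|_{\p W}$, is $\Crit(\phi)\times\Crit(h)=\{0\}\times\Crit(h)$, so the points of $C_j$ lying in the collar at $s<0$ (where $\phi'(s)\neq 0$) cease to be critical. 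The component $C_j$ gets truncated, and at the seam you only produce the $(\dim C_j-1)$-dimensional manifold $\{0\}\times\p C_j$, which carries one extra normal direction with Hessian eigenvalue $\phi''(0)$; the union of the truncated $C_j$, this lower-dimensional seam piece, and the mirror copy is not a closed submanifold, and the indices jump by one, contradicting the bookkeeping of \Cref{indices of boundary crit in relation to normal index}. What the doubling actually requires is an extension that is \emph{constant in the normal direction along $\p C_j$} (morally, ``$\phi'\equiv 0$''), so that the critical cylinder $(-\epsilon,0]\times\p C_j$ continues across the seam as $(-\epsilon,\epsilon)\times\p C_j$ and joins $C_j$ to its mirror to form the double $C_j\cup_{\p C_j}\overline{C_j}$.

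The paper's proof avoids both problems without ever touching $f$ on $W$: it builds the collar from a vector field chosen \emph{tangent} to the components $C_j$ meeting $\p W$, so that $\Crit(f)$ is a genuine $t$-product $(-\epsilon,0]\times\p C_j$ in collar coordinates, and then extends $f$ outward by Lichtenstein's explicit $C^2$ reflection $f_\epsilon(t,p)=6f(0,p)+3f(-t,p)-8f\bigl(-\tfrac12 t,p\bigr)$. Since $f$ is constant in $t$ along $\p C_j$, this extension is automatically $t$-constant there, so the critical cylinder persists; a cut-off interpolation with the mirror copy glues the two halves, and transverse non-degeneracy along $(-\epsilon,\epsilon)\times\p C_j$ follows from openness of non-degeneracy for $\delta$ small. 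Finally, note a second point you leave unaddressed: away from the $\p C_j$, the gluing region can contain arbitrarily degenerate critical points --- in your version these arise because $f|_{\p W}$ is \emph{not} assumed Morse--Bott ($\Crit(f|_{\p W})$ contains tangency points of $\nabla f$ with $\p W$ that are not in $\Crit(f)$ and satisfy no non-degeneracy hypothesis, so your seam critical set $\{0\}\times\Crit(f|_{\p W})$ need not even be a union of submanifolds), and in the paper's version they arise in the interpolation zone. The paper disposes of them with a final perturbation, making the function Morse away from the locus where it is already Morse--Bott. Your even-reflection idea for $C^2$ regularity is sound and parallels the paper's, but the normal-form modification must be replaced by a genuine extension with the constancy property above, and the perturbation step must be added.
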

\begin{proof}
  We obtain $W^{\mathrm{cap}}$ by doubling $W$ along the boundary.
  For this, choose a vector field~$X$ that is positively transverse to
  $\p W$ and that is tangent to any component $C_j \subset \Crit(f)$
  that intersects $\p W$.  Recall that by our assumption, any such
  $C_j$ intersects $\p W$ transversely.  By following the flow of $X$
  from $\p W$ in negative time direction, we define a collar
  neighborhood~$U$ via the diffeomorphism
  $(-\epsilon, 0] \times \p W \to W,\; (t,p) \mapsto \Phi^X_t(p)$.  If
  $\epsilon > 0$ has been chosen sufficiently small, it follows that
  $\Crit(f) \cap U$ is given by
  $(-\epsilon,0] \times \bigl(\cup_j \p C_j\bigr)$ where the union is
  over all components $C_j \subset \Crit(f)$ with
  $\p C_j := C_j\cap \p W \ne \emptyset$.  This collar neighborhood
  allows us to glue a small open collar $[0, \epsilon) \times \p W$ to
  $W$ using the obvious identification.  We denote the extended
  manifold by $W_\epsilon$.

  Using an elementary construction by Lichtenstein
  \cite{Lichtenstein_C1doubling}, we define an explicit
  extension~$f_\epsilon$ of $f$ as
  \begin{equation*}
    f_\epsilon(t,p) =
    \begin{cases}
      f(t,p) &  \text{ if $(t,p)\in (-\epsilon,0] \times \p W$} \\
      6f(0,p) + 3 f(-t,p) - 8 f\bigl(-\tfrac{1}{2}\,t,p\bigr) & \text{
        if $(t,p)\in (0,\epsilon) \times \p W$}
    \end{cases} \;.
  \end{equation*}
  This $f_\epsilon$ is only a $C^2$-function, but we could have easily
  improved the construction by including additional terms to obtain
  higher regularity (or even a smooth extension
  \cite{Seeley_SmoothDoubling}), but $C^2$-regularity is sufficient
  for us.

  After possibly shrinking the size of $\epsilon >0$ further, all
  critical points of $f_\epsilon$ in the collar
  $(-\epsilon, \epsilon) \times \p W$ are of the form
  $(-\epsilon,\epsilon)\times \p C_j$ and satisfy the Morse-Bott
  condition.

  \medskip

  Let $\overline{W}_\epsilon$ be a copy of $W_\epsilon$ with reversed
  orientation.  We double $W$ by gluing the disjoint union
  \begin{equation*}
    W_\epsilon \sqcup \overline{W}_\epsilon \;,
  \end{equation*}
  along the boundary collar using  the diffeomorphism
  \begin{equation*}
    (-\epsilon,\epsilon) \times \p W \to (-\epsilon,\epsilon) \times \p
    W,\; (t,p) \mapsto (-t,p) \;.
  \end{equation*}
  We call the resulting manifold~$W^{\mathrm{cap}}$.

  \smallskip
  
  We construct now an extension~$f^{\mathrm{cap}}$ that agrees on
  $W^{\mathrm{cap}}\setminus \bigl((-\epsilon,\epsilon)\times \p
  W\bigr)$ with $f$ (both on $W$ and on $\overline{W}$); on the
  boundary collar $(-\epsilon,\epsilon)\times \p W \subset W_\epsilon$
  we need to do an interpolation to glue both halves together.  For
  this choose a sufficiently small~$\delta >0$ and define a cut-off
  function $\rho_\delta\colon (-\epsilon,\epsilon) \to [0,1]$ that is
  equal to $1$ on $(-\epsilon,0]$, equal to $0$ on
  $[\delta, \epsilon)$, and that is monotonously decreasing in
  between.  We define $f^{\mathrm{cap}}$ piecewise on
  $W^{\mathrm{cap}}$ by setting
  \begin{equation*}
   f^{\mathrm{cap}} \colon W^{\mathrm{cap}} \to \RR, \quad
    \begin{cases}
      p \mapsto f(p) &  \text{ for all $p\in W_\epsilon \setminus (-\epsilon,\epsilon)\times \p W$} \\
      (t,p) \mapsto \rho_\delta(t)\, f_\epsilon (t,p)
      + \bigl(1 - \rho_\delta(t)\bigr)\, f_\epsilon (-t,p) &
      \text{ for all $(t,p)\in (-\epsilon,\epsilon)\times \p W$} \\
      p \mapsto f(p) & \text{ for all
        $p\in \overline{W}_\epsilon \setminus (-\epsilon,\epsilon)
        \times \p W$}
    \end{cases}
  \end{equation*}
  The function~$ f^{\mathrm{cap}}$ is a $C^2$-regular extension of $f$ on $W$.
  Outside the gluing collar
  $(0,\epsilon)\times \p W \subset W_\epsilon$, the critical points of
  $ f^{\mathrm{cap}}$ are identical to the ones of $f$ on $W$ and to the ones of
  the mirror of $f$ on
  $\overline{W}_\epsilon \setminus (-\epsilon, \epsilon)\times \p W$.
  These are all of Morse-Bott type, and it only remains to understand
  $\Crit( f^{\mathrm{cap}})$ on $(0,\epsilon)\times \p W \subset W_\epsilon$.

  If $C_j \subset \Crit(f)$ is a component that intersects $\p W$, we
  easily check that the first derivatives of $ f^{\mathrm{cap}}$ vanish along
  $(0,\epsilon)\times \p C_j$, because
  $f_\epsilon(t, p) \equiv f_\epsilon(0, p)$ for every $p\in \p C_j$,
  and because $df_\epsilon = 0$ on $(0,\epsilon)\times \p C_j$.  It
  follows that $(0,\epsilon)\times \p C_j\subset \Crit( f^{\mathrm{cap}})$.

  To check the non-degeneracy condition, choose in $\p W$ a chart
  around $p\in \p C_j$ with coordinates
  $(\x,\y) = \bigl(x_1,\dotsc,x_k, y_1, \dotsc, y_l\bigr)$ such that
  $\p C_j$ corresponds in this chart to the points where $\x = \0$.
  We can compute the Hessian of $ f^{\mathrm{cap}}$ in coordinates $(t; \x, \y)$
  by computing the second partial derivatives.  We are only interested
  in the restriction of the Hessian to the normal bundle of
  $(0,\epsilon)\times \p C_j$, so that it is sufficient to consider
  $\frac{\partial^2  f^{\mathrm{cap}}}{\partial x_i \partial x_j} (t; \0, \y)$
  for all $1\le i,j \le k$.
  
  Using that all first derivatives of $f_\epsilon$ vanish along
  $(0,\epsilon)\times \p C_j$, we find on this subset
  \begin{equation*}
    \frac{\p^2  f^{\mathrm{cap}}}{\partial x_i \partial x_j} (t; \0, \y) =
    \rho_\delta(t)\, \frac{\p^2  f_\epsilon }{\partial x_i \partial x_j} (t; \0, \y) +
    \bigl(1 - \rho_\delta(t)\bigr)\,
    \frac{\p^2 f_\epsilon}{\partial x_i \partial x_j}  (-t; \0, \y) \;.
  \end{equation*}
  The non-degeneracy condition is open, and
  $\frac{\p^2 f_\epsilon }{\partial x_i \partial x_j} (t; \x, \y)$ and
  $\frac{\p^2 f_\epsilon }{\partial x_i \partial x_j} (-t; \x, \y)$
  obviously agree for $t=0$.  Thus if we choose $\delta > 0$
  sufficiently small, it follows that
  $\frac{\p^2  f^{\mathrm{cap}}}{\partial x_i \partial x_j} (t; \0, \y)$ is a
  non-degenerate $k\times k$-matrix, and all points in
  $(-\epsilon,\epsilon) \times \p C_j$ are Morse-Bott singularities.

  \smallskip

  So far we have only shown that $\hat f$ has Morse-Bott singularities
  on $W$, on
  $\overline{W}_\epsilon \setminus \bigl((-\epsilon,\epsilon) \times
  \p W\bigr)$, and on a neighborhood of
  $(0,\epsilon) \times \bigl(\Crit(f) \cap \p W\bigr)$.  In the
  remaining subset of $(0,\epsilon) \times \p W$, the critical points
  can be arbitrarily degenerate, but we use that every function can be
  perturbed to a $C^2$ Morse function without modifying it on the
  domain where the function is already Morse-Bott
  \cite{MilnorHCobordism}.
\end{proof}

We thank Marco Mazzucchelli for also having told us about the
following classical results.

\begin{lemma}\label{flow can only accumulate at critical points}
  \label{box around critical component}
  Let $(W,g)$ be a Riemannian manifold that might have boundary, but
  that does not need to be compact, and let $f\colon W \to \RR$ be a
  smooth function.
  \begin{itemize}
  \item [(a)] Choose a compact subset~$U\subset W$.  Every gradient
    trajectory that is defined for all positive times and that is
    trapped in $U$ in forward time direction, accumulates at the
    critical points of $f$.
  \item [(b)] Let $C_0$ be a compact isolated path-connected component
    of $\Crit(f)$.  Then there exist arbitrarily small open
    neighborhoods~$U_0$ and $U_1$ of $C_0$ with $U_0\subset U_1$ such
    that every gradient trajectory entering $U_0$ either
    \begin{itemize}
    \item escapes for some positive time from the larger
      neighborhood~$U_1$ and does never again reenter afterwards the smaller
      neighborhood~$U_0$;
    \item or it is trapped inside $U_1$.
    \end{itemize}
    In the second situation, the trajectory either ends at a point in
    $\p W$, or it tends asymptotically towards $C_0$.
  \end{itemize}
\end{lemma}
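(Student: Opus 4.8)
The plan is to treat the two parts separately, deriving both from the single observation that $f$ is non-decreasing along its gradient trajectories, with $\frac{d}{dt} f(\gamma(t)) = \norm{\nabla f(\gamma(t))}^2$.

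For part~(a), first I would note that since $\gamma$ stays in the compact set~$U$, the monotone function $t\mapsto f(\gamma(t))$ is bounded and hence converges to some value~$c$ as $t\to\infty$. Integrating the identity above then shows $\int_0^\infty \norm{\nabla f(\gamma(t))}^2\,dt = c - f(\gamma(0)) < \infty$. I would then pass to the $\omega$-limit set $\Omega$ of $\gamma$, which is non-empty and compact because $\gamma$ is trapped in~$U$, on which $f\equiv c$ (as $f(\gamma(t))\to c$), and which is invariant under the forward flow. For any $q\in\Omega$ the forward orbit stays in $\Omega$, so $f$ is constant along it and $\norm{\nabla f(q)}^2 = \frac{d}{ds}\big|_{s=0} f\bigl(\Phi^{\nabla f}_s(q)\bigr) = 0$; thus every accumulation point is critical. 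To avoid the subtleties of the flow near $\p W$, I would first apply \Cref{doubling Morse-Bott} to reduce to a closed manifold, which changes neither the trajectory nor its accumulation set.

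For part~(b) I would work inside a Morse-Bott chart for $C_0$ (\cite{BanyagaMorseBott}), in which $f$ equals $f(C_0)$ plus a non-degenerate quadratic form in the normal coordinates, and take $U_0\subset U_1$ to be concentric tubular neighborhoods $\{r<\rho_0\}\subset\{r<\rho_1\}$ of $C_0$, where $r$ denotes the normal distance to $C_0$; I choose $\rho_1$ so small that $C_0$ is the only critical set in $\overline{U_1}$. The decisive point is a quantitative comparison: on the compact shell $\overline{U_1}\setminus U_0$ the gradient is bounded below, $\norm{\nabla f}\ge\delta>0$, so any trajectory segment crossing the shell from $\p U_0$ to $\p U_1$ gains at least
\[
  \int \norm{\nabla f}^2\,dt = \int \norm{\nabla f}\,\norm{\gamma'}\,dt \ge \delta\cdot(\text{length}) \ge \delta\, w
\]
in $f$-value, where $w$ is the width of the shell. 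Since $f$ is increasing, a trajectory that enters $U_0$ and later leaves $U_1$ therefore exits with $f$-value at least $\bigl(\min_{\overline{U_0}} f\bigr) + \delta w$, and once this exceeds $\max_{\overline{U_0}} f$ the trajectory can never return to $U_0$. The remaining trapped case follows from part~(a): a trajectory confined to $\overline{U_1}$ for all forward time accumulates only at $\Crit(f)\cap\overline{U_1}=C_0$, while a trajectory that cannot be continued must terminate on $\p W$.

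The main obstacle is exactly this no-return estimate in part~(b). Because $C_0$ may be a saddle, $f$ is not monotone in the radial direction and the naive idea of separating $U_0$ from the exterior of $U_1$ by level sets fails. What saves the argument is that both the $f$-gain across the shell and the oscillation of $f$ over $U_0$ scale quadratically in $\rho_0$ in the Morse-Bott normal form --- the gain like $\delta w \sim \rho_0(\rho_1-\rho_0)$ and the oscillation like $\rho_0^2$ --- so the condition $\delta w > \operatorname{osc}_{\overline{U_0}} f$ needed to forbid return can be met by choosing $\rho_1$ a fixed multiple of $\rho_0$ larger (for instance $\rho_1 = 2\rho_0$), uniformly as both radii shrink to zero. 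Verifying this scaling, together with the comparison between the ambient metric and the Euclidean chart metric on $\overline{U_1}$, is where the real work lies.
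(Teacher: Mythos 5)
Your part~(a) is correct, and it takes a genuinely different route from the paper: you run the standard LaSalle-type argument ($f$ is constant on the non-empty, compact, forward-invariant $\omega$-limit set, hence the one-sided derivative $\norm{\nabla f(q)}^2$ vanishes at every $q$ in it), whereas the paper argues by contradiction, placing a flow-box around a putative regular accumulation point and using that $f$ strictly increases across its level set, so the trajectory can never return to it. One correction: you cannot invoke \Cref{doubling Morse-Bott} here, since that lemma requires $W$ compact and $f$ Morse--Bott, while the present statement allows noncompact $W$ and an arbitrary smooth $f$. All you need --- and all the paper does --- is to attach an open collar along $\p W$ and extend $f$ and $g$ smoothly, so that the flow is defined near the compact set $U$ and the $\omega$-limit set is forward invariant. (The finiteness of $\int_0^\infty \norm{\nabla f(\gamma(t))}^2\,dt$ plays no role in your argument and can be dropped.)

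Part~(b) contains a genuine gap relative to the statement: your no-return estimate is built on the Morse--Bott normal form (gain across the shell $\sim \rho_0(\rho_1-\rho_0)$ versus oscillation over $U_0$ $\sim \rho_0^2$), but the lemma assumes only that $f$ is \emph{smooth} and that $C_0$ is a compact isolated path-connected component of $\Crit(f)$ --- no nondegeneracy is available, and for a degenerate critical component neither scaling claim is justified: the minimum of $\norm{\nabla f}$ on the shell can be far too small for $\delta w$ to dominate $\operatorname{osc}_{\overline{U_0}} f$. The paper sidesteps this with a softer trick. On the compact shell $\overline{U_1}\setminus U_0$ one has $\norm{\nabla f}\ge m_\nabla > 0$ simply because $\overline{U_1}\cap \Crit(f) = C_0 \subset U_0$, so any crossing gains at least $\epsilon := \rho\, m_\nabla$ in $f$-value (\Cref{function increases to leave box around critical component} --- the same computation as yours). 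Then, instead of \emph{proving} that the gain exceeds the oscillation, the paper \emph{manufactures} the inequality: it replaces $U_0$ by $U_0 \cap f^{-1}\bigl((f(C_0)-\epsilon/2,\, f(C_0)+\epsilon/2)\bigr)$, using that $f$ is constant on the path-connected critical set $C_0$, so that the oscillation over the new $U_0$ is smaller than $\epsilon$ by construction. Your forbidding mechanism (monotonicity of $f$ plus gain exceeding oscillation) is exactly the paper's; the difference is that the paper's version needs no chart, no metric comparison, and covers general smooth $f$. Your argument, once the uniformity of the estimates along the compact $C_0$ is supplied, would prove the lemma under an additional Morse--Bott hypothesis --- which happens to cover all of its applications in the paper --- but not the statement as written.

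A smaller omission: in the trapped case you assert that a trajectory that cannot be continued ``must terminate on $\p W$'', but this presupposes that a terminal point exists. Since $\norm{\nabla f}$ is bounded on $\overline{U_1}$, a trajectory defined only up to finite time has finite length and extends continuously to the closed interval by \Cref{finite length path extends}; one must then still exclude that the endpoint is an interior regular point (the flow would continue past it) or a critical point (unreachable in finite time by uniqueness of solutions of ODEs), leaving $\p W$ as the only possibility. These steps are standard, but they are part of the assertion being proved and should be spelled out.
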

\begin{proof}
  \textbf{(a)} Let $\gamma\colon [0,\infty)\to W$ be a trajectory that
  is trapped in $U$, in the sense that there is a $t_0\in \RR$ such
  that $\gamma(t)\in U$ for all $t > t_0$.  We have to show that for
  every open set~$U_\epsilon \subset U$ containing $\Crit(f) \cap U$,
  there exists a time~$T \in \RR$ such that all $\gamma(t)$ with
  $t > T$ lie in $U_\epsilon$.

  Attach a small collar to $W$ along the boundary to avoid
  technicalities, and extend $f$ and $g$ smoothly to this new set.
  Since $U$ is compact, this extension does not change anything about
  the existence or not of a limit of $\gamma$.
  
  If there exists for every $k\in \NN$, a $t_k > k$ such that
  $\gamma(t_k) \notin U_\epsilon$, then using that $U$ is compact, we
  find a subsequence $(t_k)_k$ with $t_k\to \infty$ such that
  $\gamma(t_k)$ converges to a point~$p_\infty$.  By construction,
  $p_\infty$ does not lie in $U_\epsilon$, and is thus a regular point
  of $f$.

  Since $\nabla f(p_\infty) \ne 0$, we can put a flow-box around
  $p_\infty$ with coordinates $(x_0,\dotsc, x_n)$ such that
  $\abs{x_j} < \delta$, such that $\nabla f$ agrees in this chart with
  the constant vector field $\frac{\partial}{\partial x_0}$, and such
  that $\{x_0 = 0\}$ is the level set of $f$ containing $p_\infty$.
  By our assumption there is a $k_0 > 0$ such that $\gamma(t_{k_0})$
  lies in the flow-box, and since the flow in the chart follows the
  $x_0$-direction, it takes $\gamma$ only finite time after $t_{k_0}$
  to reach and cross the hyperplane~$\{x_0 = 0\}$.  Since $f$
  increases along the gradient trajectories, $\gamma$ can never again
  approach the level set containing $p_\infty$, so that it is
  impossible that $\gamma(t_k) \to p_\infty$ for any sequence with
  $t_k\to \infty$.

  \textbf{(b)} The claim follows directly from the slightly more
  technical lemma below.  Choose any open neighborhood~$U_1$ of $C_0$
  that has compact closure, and assume that $\overline{U_1}$ does not
  intersect any other critical points of $f$ outside $C_0$.  Let $U_0$
  be now a smaller open neighborhood of $C_0$ such that
  $\overline{U_0} \subset U_1$.  Applying \Cref{function increases to
    leave box around critical component} to these neighborhoods
  provides a constant~$\epsilon > 0$ such that every gradient
  trajectory~$\gamma$ with $\gamma(0) \in U_0$ and
  $\gamma(T) \notin U_1$ for some $T > 0$ satisfies
  \begin{equation*}
    f\bigl(\gamma(T)\bigr) >  f\bigl(\gamma(0)\bigr) + \epsilon \;.
  \end{equation*}

  Replace $U_0$ now by the intersection
  \begin{equation*}
    U_0 \cap f^{-1}\bigl((f(C_0) - \tfrac{\epsilon}{2},
    f(C_0)+\tfrac{\epsilon}{2})\bigr) \;.
  \end{equation*}
  This new subset is still an open neighborhood of $C_0$ with
  $\overline{U_0}\subset U_1$, and if $\gamma$ is a gradient
  trajectory that passes through $U_0$ and later escapes from $U_1$,
  then the value of $f$ along $\gamma$ will have grown to more than
  $f(C_0) - \epsilon/2 + \epsilon = f(C_0) + \epsilon/2$ once $\gamma$
  has left $U_1$.  Since $f$ increases along gradient trajectories, it
  is impossible for $\gamma$ to ever come back to $U_0$ after escaping
  from $U_1$, because
  $f(U_0) \subset \bigl(f(C_0) -\epsilon/2, f(C_0) +
  \epsilon/2\bigr)$.

  \smallskip
  
  If $\gamma$ is a gradient trajectory that never leaves $U_1$ and
  that is defined for all positive times, then it follows from
  part~(a) that $\gamma$ has to accumulate at the critical points of
  $f$, and since $\overline{U_1}\cap \Crit(f) = C_0$, $\gamma$ will
  tend asymptotically to $C_0$.

  If $\gamma$ is a gradient trajectory that never leaves $U_1$ and
  that is only defined up to time~$T_{\max} \ge 0$, then because
  $\nabla f$ is bounded on the compact set~$\overline{U_1}$, it
  follows from \Cref{finite length path extends} that $\gamma$ extends
  continuously to \emph{closed} interval~$[0,T_{\max}]$ with
  $\gamma(T_{\max}) = p_\infty$.  The point~$p_\infty$ cannot lie
  outside $U_1$, because $f(p_\infty) < f(C_0) + \epsilon/2$.

  Furthermore, $p_\infty$ cannot be a regular point of $f$ in
  $U_1\setminus \p W$, because otherwise the gradient trajectory would
  continue for $t>T_{\max}$ even after it had reached the
  point~$p_\infty$ which is in contradiction to our previous
  assumption.  That $p_\infty$ cannot be a critical point of $f$
  follows from the uniqueness of solutions of ordinary differential
  equations.  The only option left is that $p_\infty$ lies on $\p W$.
\end{proof}

\begin{lemma}\label{function increases to leave box around critical
    component}
  Let $(W,g)$ be a Riemannian manifold that might have boundary, and
  let $f\colon W \to \RR$ be a smooth function that has a compact
  isolated path-connected component~$C_0$ of critical points.

  Let $U_0$ and $U_1$ be open neighborhoods of $C_0$ such that $U_1$
  has compact closure, such that $\overline{U_1}\cap \Crit(f) = C_0$,
  and such that $\overline{U_0}\subset U_1$.

  Then there exists a constant~$\epsilon > 0$ such that $f$ increases
  by more than $\epsilon$ along any gradient trajectory of $f$ that
  starts inside $U_0$, and that later escapes from $U_1$, that is, let
  $\gamma$ be a gradient trajectory such that $\gamma(0) \in U_0$ and
  $\gamma(T) \notin U_1$ for $T > 0$, then
  \begin{equation*}
    f\bigl(\gamma(T)\bigr) >  f\bigl(\gamma(0)\bigr) +  \epsilon \; .
  \end{equation*}
\end{lemma}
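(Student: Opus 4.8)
The plan is to exploit two elementary facts: that $f$ is monotonically non-decreasing along every gradient trajectory, and that $\norm{\nabla f}$ is bounded away from zero on the compact ``shell'' lying between $U_0$ and $U_1$. Concretely, along a gradient trajectory~$\gamma$ one has $\frac{d}{dt}f(\gamma(t)) = df(\gamma'(t)) = \norm{\nabla f(\gamma(t))}^2 \ge 0$, so $f\circ\gamma$ is non-decreasing. Set $K := \overline{U_1}\setminus U_0$. This set is compact, being a closed subset of the compact set~$\overline{U_1}$, and by hypothesis it contains no critical points of~$f$, since $\overline{U_1}\cap\Crit(f) = C_0 \subset U_0$. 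Hence $c := \min_{p\in K}\norm{\nabla f(p)} > 0$.

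First I would also record the positive number $\delta_0 := d\bigl(\overline{U_0},\, W\setminus U_1\bigr) > 0$, the Riemannian distance between the compact set~$\overline{U_0}$ and the disjoint closed set~$W\setminus U_1$. Its positivity follows from a standard compactness argument: if it were zero, sequences $x_n\in\overline{U_0}$ and $y_n\notin U_1$ with $d(x_n,y_n)\to 0$ would, after passing to a convergent subsequence $x_n\to x\in\overline{U_0}$, force $y_n\to x$; but then $x$ would lie in the closed set~$W\setminus U_1$ while also lying in $\overline{U_0}\subset U_1$, a contradiction.

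Now consider a gradient trajectory with $\gamma(0)\in U_0$ and $\gamma(T)\notin U_1$. Let $t_0$ be the last time in $[0,T]$ for which $\gamma(t_0)\in\overline{U_0}$, and let $t_1\in(t_0,T]$ be the first subsequent time for which $\gamma(t_1)\notin U_1$. For $t\in(t_0,t_1)$ the point $\gamma(t)$ lies in $\overline{U_1}\setminus\overline{U_0}\subset K$, so $\norm{\nabla f}\ge c$ there. Using $\norm{\nabla f}^2 \ge c\,\norm{\nabla f} = c\,\norm{\gamma'}$ together with the fact that the length of $\restricted{\gamma}{[t_0,t_1]}$ is at least the distance between its endpoints, I would estimate
\begin{equation*}
  f\bigl(\gamma(t_1)\bigr) - f\bigl(\gamma(t_0)\bigr)
  = \int_{t_0}^{t_1} \norm{\nabla f(\gamma(t))}^2\, dt
  \ge c\int_{t_0}^{t_1} \norm{\gamma'(t)}\, dt
  \ge c\, d\bigl(\gamma(t_0),\gamma(t_1)\bigr) \ge c\,\delta_0 \;.
\end{equation*}
Since $0\le t_0$ and $t_1\le T$ and $f\circ\gamma$ is non-decreasing, this yields $f(\gamma(T)) - f(\gamma(0)) \ge c\,\delta_0$, so the claim holds with any $\epsilon\in(0,c\,\delta_0)$.

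The only genuinely delicate point is the positivity of~$\delta_0$, i.e.\ that the shell between $U_0$ and $U_1$ cannot be crossed by arbitrarily short paths; this is where the compactness of~$\overline{U_1}$ (hence of~$\overline{U_0}$) is essential, and it is precisely why the hypotheses insist that $U_1$ have compact closure with $\overline{U_0}\subset U_1$. A minor bookkeeping issue is the possible presence of~$\p W$: the trajectory could in principle run into the boundary, but since we only use the segment $\restricted{\gamma}{[t_0,t_1]}$, on which $\gamma$ is defined by assumption, this causes no difficulty.
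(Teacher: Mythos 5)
Your proposal is correct and follows essentially the same route as the paper's proof: the same shell $\overline{U_1}\setminus U_0$ with gradient bound $m_\nabla$, the same positive distance $\rho$ between $\overline{U_0}$ and $W\setminus U_1$, and the same estimate $f(\gamma(t_1))-f(\gamma(t_0)) = \int \norm{\gamma'}^2\,dt \ge m_\nabla\cdot\length(\gamma) \ge \rho\,m_\nabla$. If anything you are marginally more careful, since choosing $\epsilon$ strictly below $c\,\delta_0$ cleanly delivers the strict inequality asserted in the statement.
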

\begin{proof}
  Choose open neighborhoods~$U_0, U_1$ of $C_0$ as in the lemma, and
  denote the distance between $\overline{U_0}$ and $W\setminus U_1$
  (note that if $W\setminus U_1 = \emptyset$, there is nothing to
  show) by $\rho$. Denote the minimum of $\norm{\nabla f}$ over
  $\overline{U_1} \setminus U_0$ by $m_\nabla$, and define
  $\epsilon := \rho\, m_\nabla$.  Clearly $m_\nabla > 0$ because
  $\overline{U_1}\cap \Crit(f) = C_0$ and $C_0\subset U_0$.

  Assume that $\gamma$ is a gradient trajectory such that
  $\gamma(0) \in U_0$ and such that $\gamma(T) \notin U_1$ for some
  $T > 0$.  Then there is a time~$t_0$ and a time~$t_1$ with
  $0\le t_0 < t_1\le T$ such that $\gamma(t_0)$ lies in the boundary
  of $U_0$, $\gamma(t_1)$ lies in the boundary of $U_1$, and all
  $\gamma(t)$ with $t\in (t_0,t_1)$ lie in
  $U_1 \setminus \overline{U_0}$.  This yields the following lower
  bound
  \begin{equation*}
    \begin{split}
      f\bigl(\gamma(T)\bigr) - f\bigl(\gamma(0)\bigr) &\ge
      f\bigl(\gamma(t_1)\bigr) - f\bigl(\gamma(t_0)\bigr) =
      \int_{t_0}^{t_1} \frac{d}{dt} f\bigl(\gamma(t)\bigr) \; dt =
      \int_{t_0}^{t_1}
      df\bigl(\gamma'(t)\bigr) \; dt \\
      &= \int_{t_0}^{t_1} \norm{\gamma'(t)}^2 \; dt \ge m_\nabla \cdot
      \int_{t_0}^{t_1} \norm{\gamma'(t)} \; dt \ge \rho\, m_\nabla =
      \epsilon \;.  \qedhere
    \end{split}
  \end{equation*}
\end{proof}

\begin{theorem}\label{stable subset smooth on closed manifold}
  Let $(W,g)$ be a \emph{closed} Riemannian manifold, and let
  $f\colon W\to \RR$ be a Morse-Bott function.  The stable and
  unstable subset of a component~$C_j$ of $\Crit(f)$ are smooth
  submanifolds of dimension $\dim C_j + i^-(C_j)$ and
  $\dim C_j + i^-(C_j)$ respectively.
\end{theorem}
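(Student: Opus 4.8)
The plan is to reduce the global statement to a local model near $C_j$ via the Hadamard-Perron theorem, and then to propagate that local structure along the gradient flow, exactly as in the globalization step already carried out in the proof of \Cref{stable sets are submanifolds}.(c). I treat only the stable set; the unstable set follows by replacing $f$ with $-f$. For the local model, note that since $W$ is closed and $f$ is Morse-Bott, $C_j$ is a compact submanifold on which $\nabla f$ vanishes identically, hence a compact invariant set of the gradient flow on which the flow is stationary. The Morse-Bott condition says the Hessian of $f$ is non-degenerate in the directions normal to $C_j$, and since $\nabla f = g^{-1}\,df$ its linearization along $C_j$ is the metric Hessian; thus the normal bundle of $C_j$ splits $\Phi^{\nabla f}$-invariantly into the $i^-(C_j)$-dimensional negative eigenspace bundle $E^-$ and the $i^+(C_j)$-dimensional positive eigenspace bundle $E^+$, so that $C_j$ is normally hyperbolic. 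The Hadamard-Perron theorem (\cite[Theorem~4.1]{HirshInvariantManifolds}) then produces a smooth local stable manifold $W^s_{\mathrm{loc}}$, tangent along $C_j$ to $TC_j \oplus E^-$ and hence of dimension $\dim C_j + i^-(C_j)$, consisting of the points whose forward trajectory stays in a fixed small neighborhood of $C_j$.

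Next I would match the local and global stable sets near $C_j$. Choose neighborhoods $U_0 \subset U_1$ of $C_j$ as in \Cref{box around critical component}.(b), small enough that $W^s_{\mathrm{loc}}$ is defined in $U_1$ and that any trajectory entering $U_0$ and later escaping $U_1$ never returns to $U_0$. A point lies in $W^s(f;C_j)$ if and only if its forward trajectory converges to $C_j$; by \Cref{convergence gradient trajectory} every forward trajectory converges to a critical point, and by the trapping property a trajectory converging to $C_j$ is eventually confined to $U_0$, where it must coincide with the local stable manifold. This gives $W^s(f;C_j) \cap U_0 = W^s_{\mathrm{loc}} \cap U_0$, so that $W^s(f;C_j)$ is a smooth submanifold of the asserted dimension in a neighborhood of $C_j$.

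Finally I would globalize using invariance under the flow. If $p \in W^s(f;C_j)$ is arbitrary, then by \Cref{convergence gradient trajectory} its trajectory converges to $C_j$, so there is a time $T>0$ with $q = \Phi^{\nabla f}_T(p) \in U_0$. Because $W$ is closed the gradient flow is complete, and $\Phi^{\nabla f}_{-T}$ is a diffeomorphism of a neighborhood of $q$ onto a neighborhood of $p$; since $W^s(f;C_j)$ is flow-invariant, near $p$ it equals $\Phi^{\nabla f}_{-T}\bigl(W^s(f;C_j)\cap U_q\bigr)$, the diffeomorphic image of a piece of $W^s_{\mathrm{loc}}$, hence a smooth submanifold of dimension $\dim C_j + i^-(C_j)$. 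As this holds at every point, $W^s(f;C_j) = \bigcup_{t\ge 0}\Phi^{\nabla f}_{-t}\bigl(W^s_{\mathrm{loc}}\bigr)$ is a smooth submanifold, and symmetrically $W^u(f;C_j)$ has dimension $\dim C_j + i^+(C_j)$.

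The main obstacle I anticipate is the correct invocation of Hadamard-Perron in the Morse-Bott rather than Morse setting: one must treat $C_j$ as a normally hyperbolic invariant manifold consisting entirely of fixed points, and verify that the center directions are exactly $TC_j$, so that the resulting invariant manifold is genuinely the stable manifold rather than a center-stable manifold depending on auxiliary center data. The monotonicity of $f$ along trajectories, together with \Cref{convergence gradient trajectory}, is precisely what removes this ambiguity: it forces any forward trajectory remaining near $C_j$ to converge to a point of $C_j$, identifying the Hadamard-Perron manifold with $W^s(f;C_j)$ near $C_j$. In particular this is why no special form of the metric near the critical points is required, and the remaining globalization is routine.
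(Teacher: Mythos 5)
Your proposal is correct and follows essentially the same route as the paper's own proof: the Hadamard--Perron theorem for the local stable manifold, the trapping neighborhoods $U_0\subset U_1$ from \Cref{box around critical component}.(b) to identify $W^s(f;C_j)\cap U_0$ with $W^s_{\mathrm{loc}}\cap U_0$, and globalization by the flow diffeomorphisms $\Phi^{\nabla f}_{-T}$. Your extra care in verifying normal hyperbolicity of $C_j$ and in using \Cref{convergence gradient trajectory} to rule out center-stable ambiguity is a welcome elaboration of what the paper leaves implicit (and you correctly state the unstable dimension as $\dim C_j + i^+(C_j)$, fixing a typo in the theorem's statement).
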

\begin{proof}
  The Hadamard-Perron Theorem in
  \cite[Theorem~4.1]{HirshInvariantManifolds} gives us locally the
  existence of a stable manifold~$W^s_{\mathrm{loc}}$ for
  $C_j\subset \Crit(f)$ of dimension $\dim C_j + i^-(C_j)$.  Clearly
  $W^s_{\mathrm{loc}}$ lies in $W^s(f;C_j)$, but this does not yet
  imply that $W^s(f;C_j)$ is itself (even locally around $C_j$) a
  smooth submanifold.

  For this, we can choose arbitrarily small neighborhoods~$U_0$ and
  $U_1$ of $C_j$ as in \Cref{box around critical component}.(b) such
  that $U_0\subset U_1$ and such that every gradient trajectory of
  $\nabla f$ that passes through $U_0$ and later escapes from $U_1$
  can never again return to $U_0$.  Choosing $U_1$ so small that it
  lies in the neighborhood of $C_j$ in which the Hadamard-Perron
  Theorem gives us the local existence of $W^s_{\mathrm{loc}}$, we can
  guarantee that the only points in $U_0$ that lie in the stable set
  $W^s(f;C_j)$ are those that lie in $W^s_{\mathrm{loc}}$, because a
  trajectory through a point of $U_0$ that converges to $C_j$ has to
  stay inside $U_1$.  This shows that the intersection
  $W^s(f;C_j)\cap U_0 = W^s_{\mathrm{loc}}\cap U_0$ is a smooth
  submanifold.

  With this information it follows easily that $W^s(f;C_j)$ is
  globally a submanifold, because if $p$ is any point in $W^s(f;C_j)$,
  then there is necessarily a $T> 0$ such that $q:=\gamma(T)$ lies in
  $W^s(f;C_j) \cap U_0$.  Choose an open neighborhood~$U_q$ of $q$
  inside $U_0$.

  The gradient flow~$\Phi_T^{\nabla f}$ is a diffeomorphism, thus
  setting $U_p := \Phi_{-T}^{\nabla f}\bigl(U_q\bigr)$, we obtain an
  open neighborhood of $p$, and by the invariance of $W^s(f;C_j)$ it
  follows that
  $W^s(f;C_j) \cap U_p = \Phi_{-T}^{\nabla f}\bigl(W^s(f;C_j)\cap
  U_q\bigr)$ so that $W^s(f;C_j)$ is globally a smooth submanifold.

  For the result about the unstable subset it suffices to invert the
  sign of $f$.
\end{proof}

\section{Proof of \Cref{symplectic completion independent}}
\label{sec: proof symplectic completion independent}

\begin{proof}[Proof of \Cref{symplectic completion independent}]
  A boundary collar is obtained with the help of an (invariant)
  Liouville vector field.  For any two such choices $Y_0$ and $Y_1$,
  attach cylindrical ends to $(W,\omega)$, and denote the resulting
  manifolds by $(\widehat{W}_0, \widehat \omega_0)$ and
  $(\widehat{W}_1,\widehat \omega_1)$ respectively.  We will construct
  a symplectic fibration~$\widehat{W}_{[0,1]}$ over $[0,1]$ such that
  the fibers over $0$ and $1$ correspond to $\widehat{W}_0$ and
  $\widehat{W}_1$ respectively, and then use parallel transport to
  show that both fibers are isomorphic.

  \smallskip
  
  To construct the fibration, let
  $Y_\tau = (1-\tau)\,Y_0 + \tau\, Y_1$ for $\tau\in[0,1]$ be a family
  of $G$-invariant Liouville vector fields obtained by interpolating
  linearly between $Y_0$ and $Y_1$.  Choose $\epsilon > 0$
  sufficiently small so that the flow~$\Phi_s^{Y_\tau}(p)$ exists for
  every $\tau\in [0,1]$, every $s\in (-\epsilon,0]$, and every
  $p\in \p W$.

  Consider now $W\times [0,1]$ with the natural $G$-action, and
  generalize the construction of \Cref{collar neighborhood} by using
  the collar neighborhood of $\p W\times [0,1]$ obtained via a
  diffeomorphism
  \begin{equation*}
    \Psi\colon (-\epsilon,0]\times \p W \times [0,1] \to W\times
    [0,1], \; (s,p,\tau) \mapsto \bigl(\Phi_s^{Y_\tau}(p),
    \tau\bigr) \;.
  \end{equation*}
  On the collar, the group action pulls-back to
  $g\cdot (s,p,\tau) = (s,gp,\tau)$, and $\Psi$ is by construction
  $G$-equivariant.  We extend this group action in the obvious way to
  $(-\epsilon,\infty)\times \p W \times [0,1]$.

  The total space of the fibration is obtained by gluing the two
  components of
  \begin{equation*}
    W\times [0,1] \;\sqcup\; (-\epsilon,\infty)\times \p W \times [0,1]
  \end{equation*}
  with the map~$\Psi$ along $(-\epsilon,0]\times \p W \times [0,1]$.
  We denote the resulting $G$-manifold by $\widehat{W}_{[0,1]}$.  The
  $\tau$-coordinate is well-defined and $G$-invariant on
  $\widehat{W}_{[0,1]}$, and makes $\widehat{W}_{[0,1]}$ into a
  fibration over $[0,1]$ whose fibers~$\widehat{W}_\tau$ are the
  subsets with constant $\tau$-value.

  \smallskip  
  
  We define now a $2$-form~$\hat\omega$ on $\widehat{W}_{[0,1]}$ that
  restricts to a symplectic form on every fiber.  On $W\times [0,1]$
  define $\hat\omega$ to be the pull-back of $\omega$.  We then define
  on the collar neighborhood of $\p W \times [0,1]$ a $1$-form
  $\lambda = \iota_{Y_\tau} \omega$ that restricts on every
  $\tau$-slice to a Liouville form.  More explicitly we find
  $\lambda = (1-\tau)\,\lambda_0 + \tau\, \lambda_1$ so that
  $d\lambda = \omega + \bigl(\lambda_0 - \lambda_1\bigr)\wedge d\tau$.
  It follows that $\iota_{Y_\tau} \lambda = 0$ and
  $\lie{Y_\tau} \lambda = \iota_{Y_\tau} \omega +
  \bigl(\lambda_0(Y_\tau) - \lambda_1(Y_\tau)\bigr) \, d\tau = \lambda
  + \omega(Y_0,Y_1) \, d\tau$.

  Let $F\colon (-\epsilon,0]\times \p W \times [0,1] \to \RR$ be the
  solution of the ordinary differential equation
  \begin{equation*}
    \frac{\partial}{\partial u} F(u,p,\tau) =
    F(u,p,\tau)  
    + \omega(Y_0,Y_1)\circ \Phi_u^{Y_\tau}
  \end{equation*}
  with initial value~$F(0,p,\tau) = 0$.  Then it follows that
  \begin{equation*}
    (\Phi_u^{Y_\tau})^*\lambda  = e^u\,\lambda + F(u,p,\tau) \, d\tau
  \end{equation*}
  as can be easily seen by taking the derivative of both sides
  \begin{multline*}
    \frac{d}{du} (\Phi_u^{Y_\tau})^*\lambda = (\Phi_u^{Y_\tau})^*
    \lie{Y_\tau} \lambda =
    (\Phi_u^{Y_\tau})^* \Bigl(\lambda + \omega(Y_0,Y_1) \, d\tau\Bigr) \\
    = e^u\,\lambda + \Bigl(F(u,p,\tau) + (\Phi_u^{Y_\tau})^*
    \bigl(\omega(Y_0,Y_1)\bigr)\Bigr) \, d\tau \;.
  \end{multline*}

  The pull-back of $\lambda$ under $\Psi$ yields
  \begin{equation*}
    \Psi^* \lambda = e^s\,\alpha_\tau
    + G(s,p,\tau) \, d\tau
  \end{equation*}
  with $\alpha_\tau = \restricted{\lambda}{T\p W\times\{\tau\}}$, and
  some smooth function
  $G\colon (-\epsilon,0]\times \p W \times [0,1] \to \RR$.

  We can define on the total space~$\widehat{W}_{[0,1]}$ a
  $1$-form~$\hat \lambda$ that agrees on the collar neighborhood of
  $\p W\times [0,1]$ with $\lambda$ by setting
  $e^s\, \alpha_\tau + \hat G(s,p,\tau)\, d\tau$ on
  $(-\epsilon,\infty)\times \p W \times [0,1]$, where $\hat G$ is a
  smooth $G$-invariant extension of $G$ to all of
  $(-\epsilon,\infty)\times \p W \times [0,1]$ that vanishes for large
  $s$-values.

  We easily verify that the restriction of
  $\hat \omega = d \hat \lambda$ to every fiber~$\widehat{W}_\tau$ is
  the cylindrical completion of $(W,\omega)$ with respect to the
  collar neighborhood defined by $Y_\tau$.  In particular it follows
  that every fiber is symplectic.

  \smallskip
    
  This allows us to define a symplectic connection that we use to lift
  the vector field~$\partial_\tau$ from $[0,1]$ to
  $\widehat{W}_{[0,1]}$.  It remains to show that the associated
  parallel transport is defined for all $\tau \in [0,1]$ to prove that
  $\widehat{W}_0$ is $G$-equivariantly symplectomorphic to
  $\widehat{W}_1$.

  For large values of $s$, $\hat\lambda$ simplifies to
  $e^s\, \alpha_\tau$ on the cylindrical end so that
  $\hat \omega = e^s\, \bigl(ds\wedge \alpha_\tau + d\alpha_\tau -
  (\frac{\partial}{\partial \tau} \alpha_\tau) \wedge d\tau \bigr) =
  e^s\, \bigl(ds\wedge \alpha_\tau + d\alpha_\tau + \bigl(\alpha_0 -
  \alpha_1\bigr)\wedge d\tau \bigr)$.  The lift~$\hat \partial_\tau$
  is the unique vector field that lies in the kernel of $\hat \omega$
  and that projects onto $\partial_\tau$.  Writing
  $\hat \partial_\tau = A\, \partial_s + Z + \partial_\tau$, where
  $A(s,p,\tau)$ is a smooth function, and $Z$ is a vector field that
  is tangent to the $\p W$-slices.  We easily compute from
  $A\, \alpha_\tau - \alpha_\tau(Z)\, ds + d\alpha_\tau(Z,\cdot) +
  \bigl(\alpha_0(Z) - \alpha_1(Z)\bigr)\, d\tau - \alpha_0 + \alpha_1
  = 0$ that $A = \alpha_0(R_\tau) - \alpha_1(R_\tau)$ by plugging the
  Reeb vector field~$R_\tau$ of $\alpha_\tau$ into the first equation.

  In particular we see that $A$ does not depend on $s$ so that the
  parallel transport cannot escape to $s=+\infty$ in
  time~$\tau \le 1$.  This shows that the parallel transport is
  defined and $(\widehat{W}_0, \widehat \omega_0)$ and
  $(\widehat{W}_1,\widehat \omega_1)$ are thus symplectomorphic.
\end{proof}

\bibliographystyle{amsalpha}

\begin{thebibliography}{MNW13}

\bibitem[AH91]{AharaHattoriHamiltonianCircleActions}
K.~Ahara and A.~Hattori, \emph{{$4$}-dimensional symplectic {$S^1$}-manifolds
  admitting moment map}, J. Fac. Sci. Univ. Tokyo Sect. IA Math. \textbf{38}
  (1991), no.~2, 251--298.

\bibitem[Ati82]{AtiyahConvexity}
M.~F. Atiyah, \emph{Convexity and commuting {H}amiltonians}, Bull. London Math.
  Soc. \textbf{14} (1982), no.~1, 1--15.

\bibitem[BH04]{BanyagaMorseBott}
A.~Banyaga and D.E. Hurtubise, \emph{A proof of the {M}orse-{B}ott lemma},
  Expo. Math. \textbf{22} (2004), no.~4, 365--373.

\bibitem[BtD95]{BrockerDieck}
T.~Bröcker and T.~tom Dieck, \emph{Representations of compact {L}ie groups},
  Graduate Texts in Mathematics, vol.~98, Springer-Verlag, New York, 1995.

\bibitem[Del88]{DelzantToric}
T.~Delzant, \emph{Hamiltoniens périodiques et images convexes de l'application
  moment}, Bull. Soc. Math. France \textbf{116} (1988), no.~3, 315--339.

\bibitem[Fra59]{Frankel}
T.~Frankel, \emph{Fixed points and torsion on {K}ähler manifolds}, Ann. of
  Math. (2) \textbf{70} (1959), 1--8.

\bibitem[Gei94]{Geiges_disconnected}
H.~Geiges, \emph{Symplectic manifolds with disconnected boundary of contact
  type}, Internat. Math. Res. Notices (1994), no.~1, 23--30.

\bibitem[Gir94]{Giroux_plusOuMoins}
E.~Giroux, \emph{Une structure de contact, même tendue, est plus ou moins
  tordue}, Ann. Sci. École Norm. Sup. (4) \textbf{27} (1994), no.~6, 697--705.

\bibitem[Gro85]{Gromov_HolCurves}
M.~Gromov, \emph{Pseudo holomorphic curves in symplectic manifolds}, Invent.
  Math. \textbf{82} (1985), 307--347.

\bibitem[GS82]{GuilleminSternbergConvexity}
V.~Guillemin and S.~Sternberg, \emph{Convexity properties of the moment
  mapping}, Invent. Math. \textbf{67} (1982), no.~3, 491--513.

\bibitem[Hir94]{HirschDiffTopology}
M.~Hirsch, \emph{Differential topology}, Graduate Texts in Mathematics,
  vol.~33, Springer-Verlag, New York, 1994, Corrected reprint of the 1976
  original.

\bibitem[HPS77]{HirshInvariantManifolds}
M.~W. Hirsch, C.~C. Pugh, and M.~Shub, \emph{Invariant manifolds}, Lecture
  Notes in Mathematics, Vol. 583, Springer-Verlag, Berlin-New York, 1977.

\bibitem[Kar99]{KarshonHamiltonianCircleActions}
Y.~Karshon, \emph{Periodic {H}amiltonian flows on four-dimensional manifolds},
  Mem. Amer. Math. Soc. \textbf{141} (1999), no.~672, viii+71.

\bibitem[Kat20]{KatzMorseWithConvexBoundary}
G.~Katz, \emph{Morse theory of gradient flows, concavity and complexity on
  manifolds with boundary}, World Scientific Publishing Co. Pte. Ltd.,
  Hackensack, NJ, 2020.

\bibitem[KMP00]{KurdykaGradientConjecture}
K.~Kurdyka, T.~Mostowski, and A.~Parusiński, \emph{Proof of the gradient
  conjecture of {R}.\ {T}hom}, Ann. of Math. (2) \textbf{152} (2000), no.~3,
  763--792.

\bibitem[KT91]{S1-contact_dim3}
Y.~Kamishima and T.~Tsuboi, \emph{{CR}-structures on {S}eifert manifolds},
  Invent. Math. \textbf{104} (1991), no.~1, 149--163.

\bibitem[Lau11]{LaudenbachMorseBoundary}
F.~Laudenbach, \emph{A {M}orse complex on manifolds with boundary}, Geom.
  Dedicata \textbf{153} (2011), 47--57.

\bibitem[Ler03]{Lerman1}
E.~Lerman, \emph{Contact toric manifolds}, J. Symplectic Geom. \textbf{1}
  (2003), no.~4, 785--828.

\bibitem[Lic29]{Lichtenstein_C1doubling}
L.~Lichtenstein, \emph{Eine elementare {B}emerkung zur reellen {A}nalysis},
  Math. Z. \textbf{30} (1929), no.~1, 794--795.

\bibitem[LMN19]{LisiPropertiesBourgeoisStructures}
S.~Lisi, A.~Marinkovi\'{c}, and K.~Niederkrüger, \emph{On properties of
  {B}ourgeois contact structures}, Algebr. Geom. Topol. \textbf{19} (2019),
  no.~7, 3409--3451.

\bibitem[Lut79]{Lutz_invariantes}
R.~Lutz, \emph{Sur la géométrie des structures de contact invariantes}, Ann.
  Inst. Fourier (Grenoble) \textbf{29} (1979), no.~1, 283--306.

\bibitem[McD88]{McDuffFixedPointsHamiltonianGroupsActions}
D.~McDuff, \emph{The moment map for circle actions on symplectic manifolds}, J.
  Geom. Phys. \textbf{5} (1988), no.~2, 149--160.

\bibitem[McD91]{McDuff_contactType}
\bysame, \emph{Symplectic manifolds with contact type boundaries}, Invent.
  Math. \textbf{103} (1991), no.~3, 651--671.

\bibitem[Mil65]{MilnorHCobordism}
J.~Milnor, \emph{Lectures on the {$h$}-cobordism theorem}, Notes by L.
  Siebenmann and J. Sondow, Princeton University Press, Princeton, N.J., 1965.

\bibitem[MNW13]{WeakFillabilityHigherDimension}
P.~Massot, K.~Niederkrüger, and C.~Wendl, \emph{Weak and strong fillability of
  higher dimensional contact manifolds}, Invent. Math. \textbf{192} (2013),
  no.~2, 287--373.

\bibitem[Mor29]{MorseSingularPointsBoundary}
M.~Morse, \emph{Singular {P}oints of {V}ector {F}ields {U}nder {G}eneral
  {B}oundary {C}onditions}, Amer. J. Math. \textbf{51} (1929), no.~2, 165--178.
  \MR{1506710}

\bibitem[MS98]{McDuffSalamonIntro}
D.~McDuff and D.~Salamon, \emph{{Introduction to symplectic topology. 2nd
  ed.}}, {Oxford Mathematical Monographs. New York, NY: Oxford University
  Press. }, 1998.

\bibitem[Ori18]{MorseBottBoundary}
R.~Orita, \emph{{M}orse-{B}ott inequalities for manifolds with boundary}, Tokyo
  J. Math. \textbf{41} (2018), no.~1, 113--130.

\bibitem[See64]{Seeley_SmoothDoubling}
R.T. Seeley, \emph{Extension of {$C^{\infty }$} functions defined in a half
  space}, Proc. Amer. Math. Soc. \textbf{15} (1964), 625--626.

\bibitem[Wen10]{WendlGirouxTorsion}
C.~Wendl, \emph{Strongly fillable contact manifolds and {$J$}-holomorphic
  foliations}, Duke Math. J. \textbf{151} (2010), no.~3, 337--384.

\bibitem[Wen13]{WendlCobordisms}
\bysame, \emph{Non-exact symplectic cobordisms between contact 3-manifolds}, J.
  Differential Geom. \textbf{95} (2013), no.~1, 121--182.

\bibitem[Ło63]{Lojasiewicz}
S.~Łojasiewicz, \emph{Une propriété topologique des sous-ensembles
  analytiques réels}, Les {É}quations aux {D}érivées {P}artielles ({P}aris,
  1962), Éditions du Centre National de la Recherche Scientifique, Paris,
  1963, pp.~87--89.

\end{thebibliography}

\providecommand{\bysame}{\leavevmode\hbox to3em{\hrulefill}\thinspace}
\providecommand{\MR}{\relax\ifhmode\unskip\space\fi MR }
\providecommand{\MRhref}[2]{%
  \href{http://www.ams.org/mathscinet-getitem?mr=#1}{#2}
}
\providecommand{\href}[2]{#2}


\end{document}